\newtheorem{theorem}{Theorem}[section]
\newtheorem{lemma}[theorem]{Lemma}
\newtheorem{corollary}[theorem]{Corollary}
\newtheorem{proposition}[theorem]{Proposition}
\theoremstyle{definition}
\newtheorem{definition}[theorem]{Definition}
\newtheorem{example}[theorem]{Example}
\newtheorem{remark}[theorem]{Remark}
\newcommand{\I}{{\mathbbm{1}}}
\newcommand{\M}{\mathcal{M}}
\newcommand{\A}{\mathcal{A}}
\newcommand{\B}{\mathcal{B}}
\newcommand{\C}{\mathcal{C}}
\newcommand{\D}{\mathcal{D}}
\newcommand{\N}{\mathcal{N}}
\newcommand{\Rdb}{{\mathbb R}}
\newcommand{\Ndb}{{\mathbb N}}
\newcommand{\Cdb}{{\mathbb C}}
\newcommand{\Pdb}{{\mathbb P}}
\newcommand{\p}[1]{\mathbb{P}({#1})}
\newcommand{\Balg}{{\mathfrak B}}
\newcommand{\Malg}{{\mathfrak A}}
\newcommand{\vf}{\varphi}
\newcommand{\vniso}{\bm{\Phi}}
\newcommand{\dec}[1]{\mathfrak{d}(#1)}
\newcommand{\decp}[2]{\mathfrak{d}_{#1}(#2)}
\newcommand{\kk}[1]{\mathfrak{k}(#1)}
\newcommand{\magn}[2]{\mathfrak{m}_{#1}(#2)}
\newcommand{\ve}{\varepsilon}
\renewcommand{\b}[1]{\overline{#1}}
\newcommand{\cU}{\mathcal{U}}
\newcommand{\cP}{\mathcal{P}}
\begin{document}
\title[Abelian von Neumann algebras, measure, and $L^\infty$]{Abelian von Neumann algebras, measure algebras and $L^\infty$-spaces}

\subjclass[2020]{Primary:  28A60, 28C05, 46L10; Secondary: 06E15, 46E30, 46L51}

\author{David P. Blecher}
\address{Department of Mathematics, University of Houston, Houston, TX
77204-3008}
\email[David P. Blecher]{dpbleche@central.uh.edu}
\author{Stanis{\l}aw Goldstein}
\address{Faculty of Mathematics and Computer Science\\
   University of {\L}\'od\'z\\
   Banacha 22\\90-238 {\L}\'od\'z\\Poland}
\email[Stanis{\l}aw Goldstein]{stanislaw.goldstein@wmii.uni.lodz.pl}
\author{Louis E. Labuschagne}
\address{DSI-NRF CoE in Math. and Stat. Sci, Pure and Applied Analytics\\ Internal Box 209, School of Math. \& Stat. Sci.,
NWU, PVT. BAG X6001, 2520 Potchefstroom, South Africa}
\email[Louis Labuschagne]{louis.labuschagne@nwu.ac.za}

\thanks{DB is supported by a Simons Foundation Collaboration Grant (527078).}

\begin{abstract}
  We give a fresh account of the astonishing interplay between abelian von Neumann algebras, $L^\infty$-spaces and measure algebras, including an exposition of Maharam's theorem from the
von Neumann algebra perspective.  \end{abstract}

\keywords{$L^\infty$ space; von Neumann algebra; measure algebra; Maharam's theorem; Radon-Nikodym; Radon measures}

\maketitle

\medskip

\section{Introduction}

It is a beautiful fact, seemingly attributable to Segal \cite{Segal}, that the class of localizable measure spaces ``corresponds'' to the abelian von Neumann algebras, with the correspondence given by taking such a measure space to
its $L^\infty$ algebra (realized as multiplication operators acting on its $L^2$ space).
Contributions from other authors (see for example\ \cite{Sakai, tak1}) show that the classes of decomposable and Radon measure spaces fulfill essentially the same role, but with localizable spaces being the most general class among these three.
Segal demonstrated in this 
(now quite dated) paper that
localizable spaces ``constitute the most significant general class of measure spaces properly containing the finite ones, and are of maximum generality consistent with usefulness in any large sector of abstract analysis''.  He did this by identifying a number of important measure-theoretic and functional analytic properties that he showed to each be equivalent to localizability.

Segal also generalized to localizable spaces  Dorothy Maharam's stunning structural classification
of finite measure spaces \cite{maharam}.
He showed that such spaces can be decomposed into
finite pieces to which Maharam's result applies (see the lines above
\cite[Theorem 3.3]{Segal}).
Instead of measure spaces, Maharam's theorem 
naturally belongs in the setting of {\em measure algebras}, objects which have been thoroughly investigated by Fremlin (see \cite[Volume III]{Fremlin} and \cite{FremlinMA}) and which
 may be derived from measure spaces. See \cite[332B]{Fremlin} or \cite[Theorem 3.9]{FremlinMA} for the striking measure algebra
approach to the (generalized) Maharam's theorem. One sees that localizable measure algebras are essentially just simple products of measure algebras of the spaces $\Omega_\kappa = \{ 0, 1 \}^\kappa$ for various cardinals $\kappa$, with its usual measure (perhaps scaled by some factor).  The von Neumann algebraic formulation of
Maharam's theorem is usually phrased
 as the statement that every abelian von Neumann algebra is isomorphic to a direct sum of
$L^\infty$ spaces of such measure spaces $\Omega_\kappa$.
Although the latter is true
(see Corollary \ref{vncase} in conjunction with \ref{AkapaL}), it does not convey the full
import of the result, as we shall see.

It is a well known saying that `von Neumann algebras are noncommutative measure theory'.   Indeed much of von Neumann algebra theory may be viewed as a vast generalization
of classical integration theory, and is an exquisitely appropriate formalism for
``noncommutative'' analysis.    For both classical and noncommutative analysts it is therefore important to have a very firm grip on
the commutative case, and it is thus the purpose of our paper to assemble
the most significant aspects of this theory in one place.
We give a fresh account of the astonishing interplay between abelian von Neumann algebras, $L^\infty$-spaces and measure algebras. The exposition is directed towards non-experts, and somewhat towards graduate students, although we hope that even specialists 
will find some of the contents enlightening.
There are very many things 
 throughout our paper that are either not in the literature, or are not easy to find
there, at least in the form given here.  It is worth saying that there are many routes through much of this material, and while the route taken here seemed an optimal one to obtain all the results in the paper, it may not necessarily be the shortest route to the reader's favorite result.

We  develop all the theory essentially from `scratch', along a fresh path,
so that almost no background of $C^*$-  and von Neumann algebra theory is necessary.   Certainly not much beyond say the definition of a $C^*$-algebra, the simple exercise that $C(K)$ spaces and $L^\infty$ spaces are
commutative (abelian) 
unital $C^*$-algebras, and conversely any of the latter algebras
are $*$-isomorphic to a $C(K)$ space.
We will assume that the reader has taken a good real variables course, and we refer
to \cite{Fremlin} for definitions and results in measure and integration theory and basic functional analysis.
In the final section \ref{MahS}, where we offer a von Neumann algebraic proof of Maharam's theorem, we will assume  a
little more knowledge of $C^*$-  and von Neumann algebra theory, as
may be quickly gleaned from
 early sections of any one of the texts \cite{Black,KR,Ped, Sakai, tak1}.

Early results of Dixmier  and others characterize abelian von Neumann algebras in various ways,
most notably in terms of monotone completeness or hyperstonian spaces \cite{DixS}, Radon measure spaces \cite[Theorem III.1.18]{tak1}, 
 `decomposable spaces', and
$C(K)$ spaces having a predual:
Sakai's abelian $W^*$-algebras, see \cite[Theorem 1.18.1]{Sakai}.    (Definitions of these terms
are given below. 
  Although
Sakai uses the term `localizable' in the cited theorem, it is clear that what he has in mind there are what we here call decomposable spaces.)   A $W^*$-{\em algebra} is a $C^*$-algebra with a Banach space predual,
and Sakai's famous theorem (which we prove here in Corollary \ref{awvn} in the abelian setting)
states that these are precisely the von Neumann algebras, up to weak* homeomorphic
$*$-isomorphism \cite[Theorem 1.16.7]{Sakai}.
The reader will easily see that by the usual duality of $L^p$ spaces from a graduate real variables  class,
the $L^\infty$ spaces of $\sigma$-finite measure spaces are abelian $W^*$-algebras.
Conversely
one may rephrase most of the first lines of this paragraph as
saying that the abelian $W^*$-algebras `are exactly'
the $L^\infty$ spaces of localizable, or Radon, or decomposable, measure spaces.

Thus the above three classes of measure spaces (localizable, Radon, and decomposable)
are all  part of the same whole (see Theorem \ref{Radmeasalg}).  In fact it is precisely
this analysis which we develop carefully here over several segments of our paper,  and in particular
in the first few subsections of Section \ref{chmav}. 
We 
 show there for example that the category of abelian von Neumann algebras equipped with a faithful normal semifinite trace is functorially equivalent to the category of localizable measure algebras (see Theorem \ref{vna=ma}).

As seen from the above, there is a lot of information on abelian von Neumann algebras in the literature. Nevertheless, often much of this material is not easily accessible to a graduate student, 
and many questions are left unanswered.
For example, it is sometimes claimed that Maharam's theorem amounts to a classification of abelian von Neumann algebras.  This has entrenched itself as a folk theorem, yet a precise description of what exactly is meant by this claim -- let alone a detailed proof -- is hard to find in the literature. One of the aims of our paper is to remedy this fact. The connection between localizable measure algebras and abelian von Neumann algebras equipped with a faithful normal semifinite trace
stems from the fact that the projection lattice of such a von Neumann algebra is a localizable measure algebra when equipped with the restriction of the trace (see Example \ref{vnama}).
From this it is easy to make precise the sense in which Maharam's theorem
`is truly' a classification of abelian von Neumann algebras.
The aforementioned von Neumann algebraic proof of Maharam's theorem in Section
\ref{MahS}
gives important extra insights into the classification of abelian von Neumann algebras,
and some of the steps are actually
more natural, clearer, and more illuminating for a functional analyst.
In this regard Proposition \ref{akhom} is a case in point:
tensor products work well in the category of von Neumann measure algebras, but
this  cannot be said about free products in measure algebras.
There is one main
step which is simpler to perform in the measure algebra context, and for
this
we content ourselves with merely referencing
the relevant measure algebraic result.

We briefly mention a few other unique features of this paper.   In Sections
\ref{wol} and \ref{rns} we discuss the relations between weights and measures and the Radon-Nikodym theorem.
Although Segal did include a variant of the Radon-Nikodym theorem in his characterization of localizable spaces,
it is not the most general one possible, nor will it meet all of our needs.   In its place
we  give a new
and exceedingly general Radon-Nikodym theorem
(Corollary \ref{rn2} and Theorem \ref{rnd2}; see Theorem \ref{frec} (vi) for the equivalence
with localizability).   Sections
\ref{LinftyW1}, \ref{Ws2}, and  \ref{cmsvn}  study  $L^\infty$ spaces as $W^*$-algebras
from very many perspectives, and with an increasingly
sophisticated perspective, and with
new proofs in the main.   
Section \ref{Rad} contains  a structural analysis of the class of Radon measure spaces we need in the latter part of this paper, again with some new angles.    
At the end of Section \ref{Rad} we give an application both of 
the topics of that section and of our general Radon-Nikodym theorem from Section \ref{rns}: we give a new proof of the 
uniqueness of Haar measure.  For this we use an idea of von Neumann, which we make to work in the general case. Sections \ref{chmav} and \ref{MahS} have already been discussed, but contain many
other important results, such as the relationship with Boolean algebras and Stone's theorem
(see Subsection \ref{rmal}), and new proofs of
abelian variants of important theorems of Kadison, Pedersen, and Bade characterizing
 `concrete' commutative von Neumann algebras (see Subsection \ref{kpb}). 
 
This paper covers quite a swathe of topics, and readers interested in a specific topic need not wade through the entire paper 
or parts that they find a little too dense.   For example readers only interested in seeing, and appreciating the background to, an extremely general Radon-Nikodym 
theorem, need read no further than Section \ref{rns}. Those only interested in the main 
relationships between localizable measure spaces, $L^\infty$ 
which are $W^*$ algebras, topics related to 
Segal's objectives summarized in the second half of the first paragraph of 
our paper, etc, may skip the details of Section 4, 
and read Sections \ref{LinftyW1}, \ref{wol} and \ref{Ws2} for the semifinite setting, 
and Section \ref{cmsvn} for a more general result. 
Readers interested in 
the relationship between abelian von Neumann algebras and measure algebras, may take the earlier theory at face value, and 
focus on \S \ref{rmal}--\ref{ssmavn}.   Sections 
\ref{kpb} and \ref{cmsvn} could be read immediately after Section \ref{Ws2} if desired. 
Section \ref{Rad} is independent of earlier parts of the paper, as is  Section \ref{MahS} 
apart from some definitions from \S  \ref{rmal} and the contextualization afforded by Theorems \ref{vna=ma} and 
\ref{eqca}.

Turning to notation, the underlying field is the complex numbers $\Cdb$ unless stated to the contrary.
The `$C^*$-algebra approach' is a constant  motif in this paper.
Since $C^*$-algebras have an ordering $\leq$ on the selfadjoint part, suprema (least upper bounds) of bounded sets make sense in the selfadjoint part of any $C^*$-algebra, although for some  $C^*$-algebras they  may not exist.    We write $f_t \nearrow f$ to denote that $f$ is the supremum of the increasing net $(f_t)$.   It is an
exercise in spectral theory 
 that an algebraic isomorphism between abelian $C^*$-algebras (or $C(K)$ spaces) is an isometric
$*$-isomorphism (this follows  from e.g.\ Proposition I.4.5 and the proof of I.5.3 and I.5.4 in \cite{tak1}).

We denote the set of projections in a $C^*$-algebra $\M$ by $\p{\M}$.  Unless stated otherwise, the  $C^*$-algebras in this paper are abelian and unital,
and in this case the  projections in $\M$ are just the idempotent elements, and they
form an orthocomplemented lattice, indeed a Boolean algebra (defined later since we shall not need this concept in Sections \ref{LinftyW1}--\ref{Rad}), with
infimum $p \wedge q = pq$ and supremum $p\vee q= p+q - pq$. 
By \emph{orthogonal family in} $\M$  we mean a family of non-zero mutually orthogonal projections from $\M.$ 
The projections on a Hilbert space are the selfadjoint idempotent operators.

The term \emph{direct sum of $C^*$-algebras} is always understood
as an $\ell^\infty$-direct sum.
We will use the simple fact that such a direct sum of (commutative) $C^*$-algebras is a $C^*$-algebra with norm $\sup_i \, \| x_i \|$.
The reader should be warned that there is another  direct sum of $C^*$-algebras,
namely the $c_0$-direct sum, as in \cite[II.8.1.2]{Black} which reserves the name \emph{direct product} for our direct sum.

If $H$ is a Hilbert space then the set $B(H)$ of bounded linear operators of $H$ is a $C^*$-algebra, and indeed is
a $W^*$-algebra since it has a Banach space predual.  From a Banach space perspective the most elementary way to see the latter fact, and to identify the weak* continuous functionals,
 is via a rudimentary point about the {\em projective tensor product} $\hat{\otimes}$.
We will not take the time to define $\hat{\otimes}$, which may be found in many 
texts (for example IV.2 in \cite{tak1}).   Suffice it to say that $(H \hat{\otimes} \bar{H})^* = B(H)$ isometrically.  It is an exercise from this, and from the definition of $\hat{\otimes}$,  that the weak* continuous linear functionals on $B(H)$ are precisely the functionals
$\sum_{k=1}^\infty \langle \, \cdot \, \zeta_k , \eta_k \rangle$,
where $\zeta_k , \eta_k \in H$ with $\sum_{k=1}^\infty \, \| \zeta_k \| \, \| \eta_k \|
< \infty$.   A simple scaling trick shows that we may assume here that
$\sum_{k=1}^\infty \, \| \zeta_k \|^2$ and $\sum_{k=1}^\infty \, \| \zeta_k \|^2$ are finite. 
These facts are usually presented to students
via trace duality: viewing $B(H)$ as the dual of the
trace class operators (see for example \cite{DJT}.) 
  The weak* topology on $B(H)$ is often called the $\sigma$-{\em weak topology}.

A von Neumann algebra is a
weak* closed
 $*$-subalgebra $\M$ of $B(H)$
containing
the identity operator.
This is not the usual definition of a von Neumann algebra, so we
take a moment to describe the latter, although we do not really
use it.   By von Neumann's famous double commutant
theorem \cite{KR,Ped,Sakai,tak1} one may show that a  unital $*$-subalgebra $\M$ of $B(H)$ is
weak* closed, and hence is a von Neumann algebra, if and only if $\M$ equals its bicommutant.
Indeed, if and only if $\M$ is closed in the strong (resp.\ weak operator) topology. We recall that  
a net $T_t \to T$ strongly (resp.\ weak operator) if and only if $T_t \zeta \to T \zeta$
(resp.\ $\langle T_t \zeta , \eta \rangle \to \langle T \zeta , \eta \rangle$) for all $\zeta, \eta \in H$.   By a routine functional analytic argument,
on the unit ball the weak operator topology coincides with the $\sigma$-weak topology.
For increasing bounded nets convergence for these three locally convex topologies coincide.

Thus by the above and basic duality principles in functional analysis,
the predual (resp.\ positive part of the predual) of a von Neumann algebra $\M$
on $H$, which may be identified with the weak* continuous (resp.\ positive weak* continuous) linear functionals on $\M$,  consists of the functionals above of form
$\sum_{k=1}^\infty \langle \, \cdot \zeta_k , \eta_k \rangle$ (resp.\ with
$\zeta_k = \eta_k$).

A 
{\em normal} $*$-{\em representation} of a von Neumann algebra is a weak* continuous
$*$-homomorphism from $\M$ into $B(H)$ for a Hilbert space $H$.
Every von Neumann algebra is clearly a $W^*$-algebra (by the definitions and by Banach space duality theory), and as we mentioned above Sakai's theorem
gives the  converse: Every $W^*$-algebra is representable
via a normal $*$-isomorphism 
as a von Neumann algebra
(see Theorem 1.16.7 in \cite{Sakai}).  Thus sometimes the term von Neumann algebra is used loosely
without specifying a fixed Hilbert space representation.

\section{$L^\infty$-spaces as $W^*$-algebras - 1} \label{LinftyW1}

In this section we determine which $L^\infty$-spaces living on a semifinite measure space  $(X,\Sigma,\mu)$, are $W^*$-algebras.   As is usual we sometimes write $(X,\mu)$ for $(X,\Sigma,\mu)$ when $\Sigma$ is understood.

We will say that a measure space $(X,\Sigma,\mu)$ is  {\em decomposable} (sometimes called {\em strictly localizable})
if we can partition $X$ into measurable $\mu$-finite subsets $X_t$ such that
$$\Sigma = \{ E \subset X : E \cap X_t \in \Sigma \; \textrm{for all} \; t \},$$
and $\mu(E) = \sum_t \, \mu(E \cap X_t)$ for all $E \in \Sigma$.
For a measure space $(X,\Sigma,\mu)$ it is elementary to verify that $\M = L^\infty(X,\Sigma,\mu)$ is a commutative $C^*$-algebra; as is $C(K)$ for compact $K$.  The sets in $\Sigma$ correspond (but not bijectively)
to the idempotents (projections) in $L^\infty(X,\Sigma,\mu)$, namely the a.e.\ equivalence classes $[\chi_E]$  of the characteristic function of sets $E \in \Sigma$.
The projection lattice $\Pdb(\M)$ of $\M= L^\infty(X,\Sigma,\mu)$ is comprised of the above projections, although as is common in integration theory
we will sometimes blur the distinction between
$\chi_E$ and  $[\chi_E]$.
Then the {\em selfadjoint part}  $\M_{\rm sa}$ corresponds to the real valued elements of $\M$.

We say that $(X,\Sigma,\mu)$ is a
{\em Dedekind measure space} if suprema of increasing nets in $\Pdb(\M)$
always exist in $\Pdb(\M)$.  (Some other equivalent conditions for a measure space
to be Dedekind are listed in the paragraphs below
 Theorem \ref{ded}.)
Since  $\Pdb(\M)$ is closed under suprema of any {\em finite} subset of $\Pdb(\M)$, the same is
true for any  subset of $\Pdb(\M)$ (by taking increasing nets of finite subsets).
 A measure space is {\em localizable} if it is  semifinite and Dedekind.   We will give several characterizations of these later.   It is interesting that it is not
 so easy to find examples of nonlocalizable measure space (for example all finite and $\sigma$-finite measure spaces are
localizable).

We will often `decompose' a measure space $(X,\Sigma,\mu)$  into pieces with finite measure. Parts (iii) and (iv) of Theorem \ref{loc} contains a preliminary such `decomposition result' for semifinite Dedekind measure spaces.    It is partly for this reason that such spaces
are called localizable. 
For now we describe the `disjoint sum' of measure spaces
$(\Omega_i, \Sigma_i,\mu_i)$.
Let $\Omega$ be the disjoint union of the sets $\Omega_i$.  Define a $\sigma$-algebra $\Sigma$ on $\Omega$ by
$E \in \Sigma$ if and only if
$E \cap \Omega_{i} \in \Sigma_i$ for all $i$.
It is an exercise that $\Sigma$  is indeed a $\sigma$-algebra.  Define a measure $\nu$ on $\Sigma$ by
$\nu(E) = \sum_{i} \, \mu_i(E \cap \Omega_{i})$.
One may now check that  $(\Omega, \Sigma, \mu)$ is a measure space.
Indeed it is {\em decomposable} in the sense defined above
if $\mu_i(\Omega_{i}) < \infty$ for each $i$.
It is an exercise from the definition of $\Sigma$  that a scalar valued function $f$ on $\Omega$ is $\Sigma$-measurable if and only if its restriction to $\Omega_{i}$ is measurable on $\Omega_{i}$ for all $i$ (i.e.\ if and only if
$f$ is {\em locally measurable}).

We define a measure space $(X,\mu)$ to be
{\em dualizable} if $L^\infty(X,\mu) = L^1(X,\mu)^*$ isometrically via the canonical
map from $L^\infty(X,\mu)$ to $L^1(X,\mu)^*$. In particular, we learn in
a graduate real variable class that the latter holds
for $\sigma$-finite measures.

 In later results  (see for example Theorem \ref{frec}) we shall
prove that `dualizable', and indeed most of the assertions of the next result, are
each  {\em equivalent} to being `localizable'. 
By definition of the term `$W^*$-algebra',
$L^\infty(X,\Sigma,\mu)$ is a $W^*$-algebra for a dualizable measure space.  The interested reader will
note that then the last assertions of  the next result follow from the commutative
case of Sakai's theorem mentioned at the end of the introduction. 
We will not however use Sakai's theorem; instead we give a direct proof.

\begin{theorem} \label{lsigf1}  Let $(X,\Sigma,\mu)$  be a  dualizable measure space.
Then   $\M = L^\infty(X,\Sigma,\mu)$ is isometrically $*$-isomorphic and
 weak* homeomorphic to a commutative von Neumann algebra on $L^2(X,\Sigma,\mu)$
(this is the canonical representation as multiplication operators: $M_f g = fg$ for $f \in \M, g \in L^2$).
 Also,  $(X,\Sigma,\mu)$  is localizable, indeed  
every bounded increasing net in $\M_+$ has a supremum in $\M$, which coincides
with the weak* limit. \end{theorem}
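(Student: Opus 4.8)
The plan is to analyze the canonical multiplication representation $\pi \colon \M \to B(L^2(X,\Sigma,\mu))$, $\pi(f) = M_f$, and to establish in turn that it is an isometric $*$-isomorphism, that it is weak* continuous with weak* closed range, and finally that the resulting von Neumann algebra structure forces localizability. That $\pi$ is a unital $*$-homomorphism is immediate from $M_f^* = M_{\overline{f}}$ and $M_{fg} = M_f M_g$, so the first real point is injectivity and isometry. Here I would first extract semifiniteness from dualizability: if some $A \in \Sigma$ had positive measure but contained no subset of positive finite measure, then every $h \in L^1$ would vanish a.e.\ on $A$ (else $\{|h| > 1/n\} \cap A$ would be a positive-finite-measure subset for some $n$), so $\chi_A$ would act as the zero functional on $L^1$ while $\|\chi_A\|_\infty = 1$, contradicting that the canonical map $\M \to (L^1)^*$ is isometric. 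Thus $(X,\Sigma,\mu)$ is semifinite, and then for $c < \|f\|_\infty$ one finds $B \subseteq \{|f|>c\}$ with $0 < \mu(B) < \infty$ and tests against $\chi_B/\sqrt{\mu(B)} \in L^2$ to get $\|M_f\| \geq c$; together with $\|M_f\| \leq \|f\|_\infty$ this yields $\|M_f\| = \|f\|_\infty$.

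Next I would establish weak* continuity of $\pi$. Every $\sigma$-weakly continuous functional on $B(L^2)$ has the form $T \mapsto \sum_k \langle T \zeta_k, \eta_k\rangle$ with $\sum_k \|\zeta_k\|_2 \|\eta_k\|_2 < \infty$; composing with $\pi$ gives $f \mapsto \int f h \, d\mu$ where $h = \sum_k \zeta_k \overline{\eta_k} \in L^1$, which is precisely a functional in the predual $L^1$ of $\M = (L^1)^*$. Hence $\pi$ is weak*-to-weak* continuous. Combining this with the isometry, $\pi$ carries the (Banach--Alaoglu) weak* compact unit ball of $\M$ onto $\pi(\text{ball}_\M) = \pi(\M) \cap \text{ball}_{B(L^2)}$, which is therefore weak* compact, hence weak* closed; scaling and the Krein--Smulian theorem then show $\pi(\M)$ is weak* closed in $B(L^2)$. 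Since $\pi(1) = I$, the range is a commutative von Neumann algebra. The weak* homeomorphism claim follows because the restriction of $\pi$ to $\text{ball}_\M$ is a continuous bijection from a compact space onto the Hausdorff space $\text{ball}_{\pi(\M)}$, hence a homeomorphism, and weak* topologies on dual spaces are determined by their restrictions to balls (Krein--Smulian again).

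For the final assertion I would argue directly in $\M = (L^1)^*$. Given a bounded increasing net $(f_t)$ in $\M_+$ with $f_t \leq C$, the scalars $\int f_t h \, d\mu$ increase and are bounded for each $h \in L^1_+$, hence converge; since $L^1$ is spanned by its positive cone, $h \mapsto \lim_t \int f_t h \, d\mu$ defines a bounded functional on $L^1$, i.e.\ an element $f \in \M$ with $f_t \to f$ weak*. Testing against positive $h$ shows $f \geq f_t$ for all $t$ and that $f$ is dominated by every other upper bound, so $f = \sup_t f_t$ coincides with the weak* limit. Applying this to an increasing net of projections, the weak* limit $p$ is selfadjoint, and the relation $p_s p_t = p_s$ for $t \geq s$ (nested ranges) gives $p^2 = p$ in the strong operator topology, so $p \in \Pdb(\M)$; thus $(X,\Sigma,\mu)$ is Dedekind, and being also semifinite it is localizable.

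The main obstacle I anticipate is the weak* closedness of $\pi(\M)$: everything hinges on the clean interplay whereby the isometry identifies $\pi(\text{ball}_\M)$ with $\pi(\M) \cap \text{ball}_{B(L^2)}$, so that the weak* compactness transported from Banach--Alaoglu can be fed into Krein--Smulian. The derivation of semifiniteness from dualizability is the other delicate point, since without it the multiplication representation need not be isometric and the remaining arguments collapse.
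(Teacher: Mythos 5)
Your proposal is correct and follows essentially the same route as the paper: semifiniteness is extracted from the isometric identification $L^\infty \cong (L^1)^*$, the multiplication representation is shown isometric and weak* continuous via the description of the predual of $B(L^2)$, the range is weak* closed by the Krein--Smulian/compactness argument, and the supremum of a bounded increasing net is obtained by testing against $L^1_+$. The paper states the last assertions as following from elementary von Neumann algebra theory or as a direct exercise of exactly the kind you carry out, so your write-up merely supplies details the paper omits.
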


\begin{proof}  If $E$ is a measurable set of infinite measure then $\chi_E \neq 0$.  Hence there is a function $f \in L^1(X,\mu)$ of norm $1$
with $$0 < |\int_X \, \chi_E \,  f \, d \mu|  = |\int_E f \, d \mu| \leq \int_E |f|  \, d \mu \leq 1.$$ If $s$ is a nonnegative simple function
with $s \leq |f| \, \chi_E$, and if $F \subset E$ is one of the (by definition mutually disjoint) sets in the standard representation of $s$, then we may assume that $0 < \mu(F) <  K \int_E |f|  \, d \mu < \infty$
for a constant $K$. 
So $\mu$ is semifinite.

It is well known  that  $\mu$ is semifinite   if and only if  the canonical representation $\pi$ of $\M$ on $L^2(X,\mu)$ is
faithful.   Indeed if it is faithful and $\mu(E) = \infty$, then there must exist
a function $g \in L^2$, which we may take to be the characteristic function of a $\mu$-finite set $F$,
such that $\chi_E \, g =  \chi_E  \, \chi_F =  \chi_{E \cap F} \neq 0$.  So $0 < \mu(E \cap F) < \infty$.
Conversely suppose that $\mu$ is semifinite and $\epsilon > 0$ is given.
Then $|f| \geq \| f \|_\infty - \epsilon$ on a non-null set $E \in \Sigma$.
If $F \subset E$ with $0 < \mu(F) < \infty$ then $\| f \, \chi_F \|_2^2 \geq (\| f \|_\infty - \epsilon)^2 \|  \, \chi_F \|_2^2$.
Thus the canonical $*$-representation of $\M$ on $L^2 = L^2(X,\mu)$ is isometric.

  As we said at the end of the introduction,
elements of the predual of $B(L^2)$
 correspond to  convergent series of
pairings
 of $L^2$ functions $g_n$ and $h_n$.
  The sum $k$ of the actual product of these $L^2$ functions
 is in   $L^1$.    Since $L^\infty = (L^1)^*$,
 $\pi$ is also weak* continuous: if $f_t \to f$ weak* in $\M$ then
 $$\sum_n \, \langle \, f_t g_n , h_n \rangle = \langle f_t , k \rangle \to  \langle f , k \rangle
 = \sum_n \, \langle \, f_t g_n , h_n \rangle.$$    Hence by basic duality theory in
functional analysis,  the $*$-homomorphism
$\pi$ is a weak* homeomorphism onto its weak*  closed range.   Thus the latter  is a von Neumann algebra.

  The remaining assertions immediately follow from the corresponding facts in elementary von Neumann algebra
theory.
Alternatively these assertions may be proved  directly as an exercise, for example using the fact that
$f_t \to f$ weak*  if and only if $\int_E \, f_t \, d \mu \to \int_E \, f \, d \mu$ for every
$\mu$-finite $E \in \Sigma$.   Also $f \leq g$ in $\M_+$   if and only if $\int_E \, f \, d \mu \leq \int_E \, g \, d \mu$ for every
such  set $E$.  
See also the proof of Lemma \ref{WStone} for 
yet another route.
\end{proof}

Given a 
 measure space $(X,\Sigma,\mu)$, the measure $\mu$ clearly induces a function $\bar{\mu}$ on the projection lattice $\Pdb(\M)$ of $\M=L^\infty(X,\Sigma,\mu)$.
Namely, if $p\in \Pdb(\M)$ corresponds to  $\chi_E$ where $E\in \Sigma$,  simply define $\bar{\mu}(p) =\mu(E)$. We will not use this, but it is useful to note that one may `recover' the $L^p$ spaces from the function $\bar{\mu}$ on $\Pdb(L^\infty(X,\mu))$, 
for $1 \leq p \leq \infty$.
For example for a simple function $s = \sum_{k=1}^n \, c_k \, p_k$ with $(p_k)$ mutually
orthogonal projections, $\| s \|_1 = \| |s| \|_1 = \sum_{k=1}^n \, |c_k| \, \bar{\mu}(p_k)$.
And  $L^1(X,\mu)$ is the completion of such simple functions in this norm.  (See Proposition \ref{Wextn}  for a related result.) This makes the next result
almost formal:

 \begin{lemma} \label{isdual} Let $(X,\mu)$ and $(Y,\nu)$ be measure spaces with $(Y,\nu)$ dualizable.
 Suppose that $L^\infty(X,\mu) \cong L^\infty(Y,\nu)$  via
 a $*$-homomorphism $\theta$ satisfying $\bar{\mu}(p) = \bar{\nu}(\theta(p))$
 for all $p \in \Pdb(\M)$.   Then $(X,\mu)$ is dualizable. \end{lemma}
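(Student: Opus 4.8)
The plan is to transport the duality $L^\infty(Y,\nu) = L^1(Y,\nu)^*$ across $\theta$ by building a companion isometry at the $L^1$-level and checking that the two canonical duality maps are intertwined. First I note that, being an algebraic $*$-isomorphism of abelian $C^*$-algebras, $\theta$ is automatically isometric by the spectral-theory remark in the introduction. Since $\theta$ carries projections to projections with $\bar\mu(p) = \bar\nu(\theta(p))$, it sends $\mu$-finite projections bijectively to $\nu$-finite projections, and hence maps $\mu$-finite simple functions onto $\nu$-finite simple functions. Using the description of the $L^1$-norm of a simple function $s = \sum_k c_k p_k$ as $\|s\|_1 = \sum_k |c_k|\,\bar\mu(p_k)$ recalled just before the lemma, the equality $\bar\mu = \bar\nu\circ\theta$ gives $\|\theta(s)\|_1 = \|s\|_1$ on $\mu$-finite simple functions. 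As these are dense in $L^1(X,\mu)$ and their $\theta$-images are dense in $L^1(Y,\nu)$, $\theta$ extends uniquely to an isometric isomorphism $T\colon L^1(X,\mu)\to L^1(Y,\nu)$.

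Writing $J_X\colon L^\infty(X,\mu)\to L^1(X,\mu)^*$ and $J_Y\colon L^\infty(Y,\nu)\to L^1(Y,\nu)^*$ for the canonical maps $J(f)(g)=\int fg$, the goal reduces to the intertwining identity $J_X = T^*\circ J_Y\circ\theta$, that is, the pairing formula
\[ \int_X f g \, d\mu = \int_Y \theta(f)\,(Tg)\, d\nu, \qquad f \in L^\infty(X,\mu),\ g \in L^1(X,\mu). \]
Both sides are bounded functionals of $g\in L^1(X,\mu)$ for fixed $f$, so by linearity and $L^1$-density in $g$ it suffices to verify this for $g = p$ a $\mu$-finite projection. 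Here lies the main obstacle: one cannot simply approximate a general $f\in L^\infty$ by $\mu$-finite simple functions, since these are not norm-dense in $L^\infty$ on an infinite measure space. The device is to cut down to the finite corner. With $F$ the set underlying $p$, the element $fp$ lies in $p\,L^\infty(X,\mu)\cong L^\infty(F,\mu|_F)$, a \emph{finite} measure space, on which finite-support simple functions \emph{are} uniformly dense. Approximating $fp$ uniformly by such $h_n$, the identity $\int_X h_n\,d\mu = \int_Y \theta(h_n)\,d\nu$ (immediate from $\bar\mu=\bar\nu\circ\theta$) passes to the uniform limit — every $\theta(h_n)$ being supported in the $\nu$-finite set underlying $\theta(p)$ — yielding $\int_X fp\,d\mu = \int_Y \theta(f)\theta(p)\,d\nu$, which is the desired formula since $\theta(f)\theta(p)=\theta(f)\,(Tp)$.

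With the intertwining identity in hand, the conclusion is formal. The map $\theta$ is an isometric isomorphism by hypothesis; $J_Y$ is an isometric isomorphism precisely because $(Y,\nu)$ is dualizable; and $T^*$ is an isometric isomorphism as the Banach-space adjoint of the surjective isometry $T$ (surjective since an isometry with dense range onto a Banach space). Hence $J_X = T^*\circ J_Y\circ\theta$ is an isometric isomorphism of $L^\infty(X,\mu)$ onto $L^1(X,\mu)^*$, which is exactly the statement that $(X,\mu)$ is dualizable.
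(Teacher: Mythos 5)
Your proof is correct and follows essentially the same route as the paper's: restrict $\theta$ to integrable simple functions to get an $L^1$-isometry, extend it to $T\colon L^1(X,\mu)\to L^1(Y,\nu)$, and check that $T^*\circ J_Y\circ\theta$ is the canonical map $L^\infty(X,\mu)\to L^1(X,\mu)^*$. The only (harmless) difference is in the final verification, where the paper tests the identity on pairs of projections using norm-density of the span of all projections in $L^\infty$, while you fix a general $f$ and cut down to the $\mu$-finite corner $pL^\infty$ before approximating.
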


\begin{proof}
The measurable simple functions in $L^p$ are norm dense in $L^p$.
By the equality above the lemma $\theta$ restricts to an isometry $\theta'$ on the space of  measurable simple integrable functions.
Hence  $L^1(X,\mu) \cong L^1(Y,\nu)$ isometrically via a (unique) continuous map $\iota$ extending
$\theta'$.  Taking the dual of $\iota$
we get a chain of isometries
$$L^\infty(X,\mu) \overset{\theta}{\cong}  L^\infty(Y,\nu)  \cong L^1(Y,\nu)^* \overset{\iota^*}{\cong} L^1(X,\mu)^* .$$
  One may check that the composition $\rho$ of these
isomorphisms is the canonical map $L^\infty(X,\mu) \to L^1(X,\mu)^*$. (Indeed by linearity and density it suffices to test
this on $\rho(q)(p)$ where $q$ (resp.\ $p$) is the characteristic function of a measurable (resp.\  $\mu$-finite measurable) set
(viewing $p  \in L^1(X,\mu)$).) \end{proof}

We shall say that a $*$-homomorphism $\theta$ satisfying the condition $\bar{\mu}(p) = \bar{\nu}(\theta(p))$
in the lemma is {\em measure preserving}.

\begin{theorem} \label{loc} Let $(X,\Sigma,\mu)$  be a measure space, and let $\M = L^\infty(X,\Sigma,\mu)$.
The following are equivalent:
\begin{itemize} \item [(i)]  $(X,\Sigma,\mu)$ is dualizable.
\item [(ii)] $(X,\Sigma,\mu)$ is localizable
(i.e.\ semifinite Dedekind). 
\item [(iii)]
 $\M \cong \oplus_i \, \M_i$  via a
  $*$-isomorphism $\theta$, with each $\M_i$ of the form $L^\infty(\Omega_i,\mu_i)$ for some finite measure
 space $(\Omega_i, \Sigma_i,\mu_i)$, and with $\bar{\mu}(p) = \sum_i \, \overline{\mu_i}(\theta(p)_i)$
 for all $p \in \Pdb(\M)$.
 \item [(iv)] There is a decomposable measure
 space $(Y,\Sigma_0,\nu)$ such that
 $\M \cong L^\infty(Y,\Sigma_0,\nu)$  via a  measure preserving $*$-isomorphism.
 \end{itemize}
 Also the  $*$-isomorphisms in {\rm (iii)} and {\rm (iv)}  are order-isomorphisms, hence  preserves suprema in
 the real part when these suprema exist.
  \end{theorem}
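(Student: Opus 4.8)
The plan is to establish the four equivalences by the cycle (i)$\Rightarrow$(ii)$\Rightarrow$(iii)$\Rightarrow$(iv)$\Rightarrow$(i), since (i)$\Rightarrow$(ii) is precisely Theorem \ref{lsigf1} and the return arrow (iv)$\Rightarrow$(i) is exactly what Lemma \ref{isdual} is designed to deliver. The concluding ``Also'' clause will then come essentially for free: any $*$-isomorphism of abelian $C^*$-algebras is isometric and sends positive elements (those of the form $y^*y$) to positive elements, with the inverse doing the same, so it is an order-isomorphism; and an order-isomorphism automatically transports any supremum of a subset of $\M_{\rm sa}$ to a supremum of its image wherever the former exists. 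So no extra argument beyond the equivalences is needed for the final sentence.

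For the structural heart (ii)$\Rightarrow$(iii), I would use semifiniteness together with Zorn's lemma to select a maximal orthogonal family $(p_i)_{i\in I}$ in $\Pdb(\M)$ with each $p_i=[\chi_{X_i}]$ of finite measure. The Dedekind hypothesis furnishes $p:=\sup_i p_i$, and semifiniteness forces $p=1$: if not, $1-p$ would dominate a nonzero finite-measure projection orthogonal to every $p_i$, contradicting maximality. Writing $\M_i:=p_i\M\cong L^\infty(X_i,\mu|_{X_i})$ (finite measure spaces), I define the norm-decreasing $*$-homomorphism $\theta:\M\to\oplus_i\M_i$ by $\theta(f)=(p_if)_i$. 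Injectivity is clean: if $f\neq0$ then $E=\{|f|>\varepsilon\}$ has positive measure for some $\varepsilon>0$, contains $F$ with $0<\mu(F)<\infty$ by semifiniteness, and $p_if=0$ for all $i$ would make $[\chi_F]\perp p_i$ for every $i$, whence $\sup_i p_i\le[\chi_F]^\perp$, i.e.\ $[\chi_F]=0$, a contradiction.

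The hard part is surjectivity of $\theta$: given $(g_i)$ with $\sup_i\|g_i\|_\infty=C<\infty$, I must assemble a single $f\in\M$ with $p_if=g_i$, even when $I$ is uncountable and $\bigcup_i X_i$ need not be a measurable set. This is exactly where the projection-level Dedekind property must be upgraded to monotone completeness of $\M_{\rm sa}$ (Theorem \ref{lsigf1} gives this under dualizability, but here I have only (ii), so I must derive it afresh from Dedekind). I would prove that a bounded increasing net in $\M_+$ has a supremum in $\M$ by applying the Dedekind hypothesis to the increasing nets of spectral projections $[\chi_{\{f_t>\lambda\}}]$ and reconstituting the limit through a layer-cake integral in $\lambda$. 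Granting this, for $g_i\ge0$ the finitely supported partial sums $\sum_{i\in F}g_ip_i$ form an increasing net bounded by $C$, and its supremum $f$ satisfies $p_if=g_i$; the general case follows by splitting $g_i$ into its real/imaginary and then positive/negative parts. The measure-preserving identity $\bar{\mu}(p)=\sum_i\overline{\mu_i}(\theta(p)_i)$ reduces to $\mu(E)=\sum_i\mu(E\cap X_i)$, which for finite-measure $E$ holds since only countably many terms are nonzero and they sum correctly because $\sup_i(p_i\wedge[\chi_E])=[\chi_E]$, while for $\bar\mu(p)=\infty$ semifiniteness forces divergence of the sum as well.

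The two remaining implications are comparatively routine. For (iii)$\Rightarrow$(iv) I would identify $\oplus_i L^\infty(\Omega_i,\mu_i)$ with $L^\infty(Y,\Sigma_0,\nu)$ for the disjoint sum $(Y,\Sigma_0,\nu)=\bigsqcup_i(\Omega_i,\Sigma_i,\mu_i)$ constructed before the theorem; since each $\mu_i(\Omega_i)<\infty$ this space is decomposable, and a member of $L^\infty(Y)$ is exactly a locally measurable essentially bounded function, i.e.\ a bounded family of classes in the $L^\infty(\Omega_i)$. The formula $\bar\mu(p)=\sum_i\overline{\mu_i}(\theta(p)_i)$ is then precisely the assertion that $\theta$ is measure preserving for $\nu(E)=\sum_i\mu_i(E\cap\Omega_i)$. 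For (iv)$\Rightarrow$(i) I would first verify directly that a decomposable space is dualizable: given $\varphi\in L^1(Y,\nu)^*$, restrict to each finite piece $Y_t$ to obtain $f_t\in L^\infty(Y_t)$ with $\|f_t\|_\infty\le\|\varphi\|$, patch them by local measurability into a single $f\in L^\infty(Y)$, and check $\varphi(\cdot)=\int_Y f\,\cdot\,d\nu$ using that $g=\sum_t g\chi_{Y_t}$ converges in $L^1$. With $(Y,\nu)$ dualizable and $\theta$ measure preserving, Lemma \ref{isdual} yields dualizability of $(X,\mu)$, closing the cycle.
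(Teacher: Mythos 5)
Your proposal is correct and its architecture coincides with the paper's: the same cycle (i)$\Rightarrow$(ii)$\Rightarrow$(iii)$\Rightarrow$(iv)$\Rightarrow$(i), the same maximal orthogonal family of finite-measure projections for (ii)$\Rightarrow$(iii), the disjoint sum for (iii)$\Rightarrow$(iv), and Lemma \ref{isdual} to close the loop. The one genuinely different step is the surjectivity of $\theta$ in (ii)$\Rightarrow$(iii). You upgrade the Dedekind hypothesis (suprema of projections) to monotone completeness of $\M_{\rm sa}$ via the spectral projections $[\chi_{\{f_t>\lambda\}}]$ and a layer-cake reconstruction, and then realize a bounded family $(g_i)$ as $\sup_F\sum_{i\in F}g_ip_i$. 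This is viable (it is in effect 363M of \cite{Fremlin}, a fact the paper records much later without proof), but it is the most expensive part of your argument and silently uses two further facts: that the norm-limit of the Riemann sums of the $e_\lambda$ is genuinely the \emph{least} upper bound of the net, and that multiplication by $p_i$ commutes with suprema of increasing nets (provable by the trick that $k\geq \sum_{i\in F}p_i g_i$ forces $k+fp_i^\perp\geq \sum_{i\in F}g_i$, as in the proof of Lemma \ref{mus}(1) — you also need this for $p_if=g_i$ and for your measure-preserving computation $\sup_i(p_i\wedge[\chi_E])=[\chi_E]$). The paper sidesteps all of this by staying at the level of projections: it shows $\theta$ is an isometry (hence has closed range), observes that every projection $(q_i)$ of the target has preimage $r=\sup_i q_i$ using only the Dedekind hypothesis, and then invokes the norm-density of the span of projections in the $W^*$-algebra $\oplus_i\M p_i$ to get dense, hence full, range. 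That route is shorter; yours buys as a by-product the bounded completeness of $\M_{\rm sa}$ for Dedekind spaces. Everything else, including your direct patching proof that decomposable implies dualizable (the paper instead uses the duality $(\oplus_i^1 L^1(\Omega_i))^*\cong\oplus_i L^\infty(\Omega_i)$) and the closing order-isomorphism remark, matches the paper in substance.
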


\begin{proof}  (ii) $\Rightarrow$ (iii) \
Let $(p_i)$ be a maximal set of orthogonal projections in $\M$ such that $\bar{\mu}(p_i) < \infty$.
 Such a maximal set exists by Zorn's lemma.  Let $p = \sup_i \, p_i$.   If $p \neq 1$ then by semifiniteness
 there exists
a projection $q \leq 1-p$ such that $0 < \bar{\mu}(q) < \infty$.  This contradiction shows that
$p = 1$.

If $E_i$ is a $\mu$-finite set in
$\Sigma$ corresponding to $p_i$ then   $\M p_i \cong L^\infty(E_i,\mu_{| E_i})$, and this is
a $W^*$-algebra since $\mu_i = \mu_{| E_i}$ is finite.

Let  $\theta: \M \to \oplus_i \, \M p_i$ be the contractive $*$-homomorphism
$\theta(x) = (x p_i)$ for $x \in \M$.
 Suppose that $x \in \M_+$ and that $0 \leq t < \| x \|$.
Then there exists  nonzero $q \in \Pdb(\M)$ with $tq \leq x$.
Since $q = \sup_i \, p_i  q \neq 0$, we have
 $$\sup_i \, \| x p_i \| \geq \sup_i \, \| x p_i q \|  \geq  \sup_i \, \| t p_i q \| = t .$$
Thus
$\theta$  is an isometry, and hence has norm closed range.

To see that $\theta$  is also surjective suppose that
 $q = (q_i)$ is a projection in $\oplus_i \, \M p_i$.
If $r = \sup_i \, q_i$ in $\Pdb(\M)$, then $p_i r
= \inf \{ p_i , r \}
= q_i$ and
  $\theta(r) = q$.
The projections in the 
$W^*$-algebra $\oplus_i \, \M p_i$ are norm densely spanning.
Alternatively, this follows from the facts that simple functions are norm dense in $L^\infty$, and 
that $\oplus_i \, \M p_i$ is an $L^{\infty}$ space as we shall prove in the next paragraph (the argument is not circular).
Thus $\pi$ has norm dense and closed range, hence is surjective.

If $q_i$ and $r$ above correspond to sets $F_i$ and $F$ in $\Sigma$ then 
$\sum_i \, \overline{\mu_i}(q_i) = \sum_i \, \mu(F_i)$, 
the partial sums of which are dominated by $\mu(F) =  \bar{\mu}(r)$.
So if the sum equals $\infty$ we are done.
If the sum is finite then all but a countable number of its terms are 
zero.
So we may assume that $\{ q_i \}$ is countable.  Then $F = \cup_i \, F_i$ a.e., and
$\sum_i \, \mu(F_i)
= \mu(F)$.
  Thus
$\sum_i \, \overline{\mu_i}(\theta(r)_i) = \bar{\mu}(r)$.

(iii) $\Rightarrow$ (iv) \ Let $(\Omega, \Sigma, \mu)$ be the  `disjoint sum' of the measure spaces
$(\Omega_i, \Sigma_i,\mu_i)$ (introduced a few paragraphs above Theorem \ref{lsigf1}). We leave it as an exercise that $L^{1}(\Omega,\Sigma,\nu) \cong \oplus_{i}^1 \,
L^{1}(\Omega_i,\mu_i)$ isometrically, where the latter is the $L^1$ direct sum of Banach spaces,
and similarly $L^{\infty}(\Omega,\Sigma,\nu) \cong \oplus_{i} \,
L^{\infty}(\Omega_i,\mu_i)$ $*$-isomorphically.
By Banach space duality, $$L^{1}(\Omega,\Sigma,\nu)^* \cong
(\oplus_{i}^1 \,
L^{1}(\Omega_i,\mu_i))^* \cong \oplus_{i} \,
L^{\infty}(\Omega_i,\mu_i)
 \cong L^{\infty}(\Omega,\Sigma,\nu).$$
 Tracing through these identifications, one can check that the resulting map $L^{\infty}(\Omega,\Sigma,\nu) \to
L^{1}(\Omega,\Sigma,\nu)^*$ is the canonical one. 
So $(\Omega,\Sigma,\nu)$ is dualizable.
Moreover, we have  $$\M \cong \oplus_{i} \, \M_{i}  \cong \oplus_{i} \,
L^{\infty}(\Omega_i,\mu_i)  \cong L^{\infty}(\Omega,\Sigma,\nu).$$
The $*$-isomorphisms here are all easily checked
 to be measure preserving.

 (iv)  $\Rightarrow$ (i) \ We may repeat the argument in last paragraph, but with $(\Omega, \Sigma, \mu)$ and $\Omega_i$
 replaced by $(Y,\Sigma_0,\nu)$ and the space $X_t$ in the decomposition guaranteed by the definition of a `decomposable measure space' above.   One sees that $(Y,\Sigma_0,\nu)$  is dualizable, and
 the $*$-isomorphisms employed in the proof again are all measure preserving.
 So (i) follows from Lemma \ref{isdual}.

Theorem \ref{lsigf1} shows that (i) implies (ii).
Finally, any $*$-isomorphism of $C^*$-algebras  is an order-isomorphism, and hence preserves suprema.
\end{proof}

\begin{example} \label{nonl}  Perhaps the simplest example of a semifinite but non-Dedekind (so non-localizable)
measure, is the following:
Let $X$ be an uncountably infinite set. Let $\Sigma$ be the set
of subsets of $X$ which are either countable or their complement is
countable, and let $\mu$ be  counting measure.
Then $(X,\Sigma,\mu)$ is a complete semifinite measure space, but it is not Dedekind.
 Indeed if $E$ is an uncountable  subset with uncountable complement
 then the countable subsets of $E$ have no supremum in $\Sigma$,
 and the matching projections in $L^\infty(X,\mu)$ have no supremum there.
 This measure `has support' $X$, in the sense that the empty set is the largest set in $\Sigma$ of measure zero .
 (This example will also be interesting to a subset of our readers in that if ${\mathcal P}$ is the power set of
 $X$, then $\M = L^\infty(X,{\mathcal P},\mu)$ contains $\N = L^\infty(X,\Sigma,\mu)$
 as a $C^*$-subalgebra, but there is no
 measure-preserving conditional
 expectation of $\M$ onto $\N$.)

Another semifinite non-Dedekind space may be found in  216D in \cite{Fremlin}.   
Note that Fubini's theorem is not valid for localizable measure spaces 
(see e.g.\ 252K and Volume 3 of \cite{Fremlin}), 
although there is a good variant for Radon measure spaces  (see e.g.\ \cite[Volume 4]{Fremlin}). 
See also \cite{OR} for some other classes of localizable measures and examples of such.  \end{example}

\section{Weights and measures} \label{wol}

 In this section all measures are positive measures unless stated to the contrary, and
  $\M = L^\infty(X,\Sigma,\mu)$ for a measure space $(X,\Sigma,\mu)$.
  For a measure
  $\nu \ll \mu$ on $(X,\Sigma)$ we write $\psi_\nu(f) = \int_X \, f \, d \nu$ for all $f \in \M_+$.

A {\em weight} on a
 $C^*$-algebra $N$ is a $[0,\infty]$-valued map on $N_+$ which is
additive, and satisfies $\omega(tx) = t \omega(x)$ for $t \in [0,\infty]$ (interpreting $0\cdot \infty = 0$).
We say that a weight $\omega$ on $N_+$ is {\em normal}
(resp.\ $\sigma$-{\em normal})
if $\omega(\sup_t \, x_t) = \sup_t \, \omega(x_t)$ whenever
$(x_t)$ is an increasing net (resp.\ sequence) in $N_+$ which possesses a supremum in $N_+$.
We say that  $\omega$ is
{\em completely additive} (resp.\ {\em countably additive} or $\sigma$-{\em additive})
if  whenever we have a set  (resp.\ countably indexed set) $\{ f_i : i \in I \}$
 in $N_+$  with the partial sums of $\sum_i \, f_i$ uniformly bounded
and having a supremum $f$ in $N$,
then $\omega(f) = \sum_i \,\omega (f_i)$.
We say that  $\omega$ is {\em completely additive on projections} if the latter holds
when  in addition $\{ f_i : i \in I \}$ are mutually orthogonal projections.
 Clearly a normal weight is $\sigma$-normal and completely additive,
 and a completely additive weight is $\sigma$-additive and is completely additive on projections.
 If $\M$ is a $W^*$-algebra then it is well known that functionals on $\M$ are normal  if and only if they
are weak* continuous (for example see Theorem \ref{abW}),
 but we will not use this
until after it has been proved.

If $\nu$ is a measure  on $\Sigma$ with $\nu \ll \mu$ then $\psi_\nu(f) = \int \, f \, d \nu$
is a $\sigma$-additive weight  on $\M_+$.
It is then an elementary exercise to show that $\nu$
is a semifinite measure if and only if  $\psi = \psi_\nu$
is a {\em semifinite weight} on $\M_+$, in the sense that whenever $\psi(p) = \infty$ for a projection $p$ in $\M$,
then $p$ has a subprojection $q$ in $\M$ with $0 < \psi(q) < \infty$.   For example, if the latter holds and if $\nu(E) = \infty$ then $\nu(E) = \psi (\chi_E ) = \infty$, so that there exists a projection $q = \chi_F \leq \chi_E$
 with $0 \neq \nu(F) = \psi ( \chi_F ) < \infty.$  So $\nu$ is semifinite.

 Since
the pointwise limit $f$ of an increasing  bounded sequence  of positive measurable functions
on $(X,\A,\mu)$  is measurable, $f$ clearly
corresponds to the supremum  of the sequence in $\M_{\rm sa}$.
 Note that a weight $\psi$ on $\M_+$ is countably additive if and only if  it is $\sigma$-normal: if
  $f_n \nearrow f \in \M_+$
   then
 $\sum_{n=0}^\infty \, (f_{n+1} - f_n) = f$,
 where $f_0 = 0$.   So
 countably additivity gives $\sum_{n=0}^\infty \, \psi(f_{n+1} - f_n) = \psi(f)  = \lim_n \, \psi(f_n)$.
 The converse is easier.

 If $(X,\Sigma,\mu)$ is dualizable/localizable  and $(f_t)$ is an increasing net in $\M_+$
then $f_t \nearrow f$  (that is, $f$ is the supremum in $\M_+$ of  $(f_t)$)
if and only if $f_t \to f$ weak*, by Theorem \ref{lsigf1}.

The following is the simple abelian case of part of Haagerup's characterization of normal weights \cite{haag-Nw}:

\begin{lemma} \label{lsigf0}   Suppose that  $(X,\Sigma,\mu)$ is localizable,
 and $\M = L^\infty(X,\Sigma,\mu)$.
 A weight $\omega$ on $\M_+$
is completely  additive if and only if $\omega$ is normal.
\end{lemma}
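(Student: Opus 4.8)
The plan is to prove the non-trivial implication, that complete additivity forces normality, since the reverse is the ``clearly'' remark preceding the lemma. Throughout I would use that a weight is monotone (from $\omega(b)=\omega(a)+\omega(b-a)\ge\omega(a)$ when $a\le b$), so for any increasing net $(x_t)$ with supremum $x$ one automatically has $\sup_t\omega(x_t)\le\omega(x)$; the content is the reverse inequality. First I would record that complete additivity forces countable additivity, hence $\sigma$-normality (exactly the discussion above the lemma), and use this to reduce to simple functions: applying $\sigma$-normality to the usual dyadic simple functions $s_n\nearrow x$ approximating $x$ from below yields
$$\omega(x)=\sup\{\omega(s):s\text{ simple},\ 0\le s\le x\}.$$
Thus it suffices to dominate $\omega(s)$ by $\sup_t\omega(x_t)$ for each simple $s\le x$.

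The technical heart is a projection lemma: if $(p_t)$ is an increasing net of projections with $p=\sup_t p_t$ in $\Pdb(\M)$, then $\omega(p)=\sup_t\omega(p_t)$. Here I would use that $\Pdb(\M)$ is a complete Boolean algebra (localizable means Dedekind). By Zorn, choose a maximal family $(q_\alpha)$ of nonzero mutually orthogonal projections each satisfying $q_\alpha\le p_{t(\alpha)}$ for some $t(\alpha)$; the infinite distributive law $(\sup_t p_t)\wedge r=\sup_t(p_t\wedge r)$ together with maximality forces $\sup_\alpha q_\alpha=p$. Complete additivity on projections then gives $\omega(p)=\sum_\alpha\omega(q_\alpha)$, while for each finite $F$ directedness supplies a $t$ with $\sum_{\alpha\in F}q_\alpha\le p_t$, whence $\sum_{\alpha\in F}\omega(q_\alpha)\le\omega(p_t)\le\sup_t\omega(p_t)$; taking the supremum over finite $F$ finishes this step.

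Then I would bootstrap to a simple $s=\sum_k c_k p_k\le x$ with orthogonal $p_k$ and $c_k>0$. For small $\epsilon>0$ set $q_t^k=p_k\wedge e_t^k$, where $e_t^k$ is the spectral projection of $x_t$ for the set $(c_k-\epsilon,\infty)$. Using that compression $y\mapsto yr$ is weak* continuous, so that $\sup_t(x_t r)=xr$ (Theorem \ref{lsigf1}), one checks $\sup_t e_t^k\ge p_k$, hence $q_t^k\nearrow p_k$; the projection lemma gives $\omega(q_t^k)\nearrow\omega(p_k)$. Since $(c_k-\epsilon)q_t^k\le x_t q_t^k$ and the $q_t^k$ are orthogonal, $\sum_k(c_k-\epsilon)q_t^k\le x_t$, so $\sum_k(c_k-\epsilon)\omega(q_t^k)\le\omega(x_t)\le\sup_t\omega(x_t)$; letting $t$ run through the net and then $\epsilon\to0$ yields $\omega(s)\le\sup_t\omega(x_t)$ (the degenerate case $\omega(p_k)=\infty$ already forces $\sup_t\omega(x_t)=\infty$). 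Combined with the displayed formula this gives $\omega(x)\le\sup_t\omega(x_t)$, as required.

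The main obstacle is precisely the passage from sequences to arbitrary, possibly uncountable, nets: $\sigma$-normality is cheap, but taming an uncountable increasing net is exactly what complete additivity beyond countable sums, and the Dedekind completeness of $\Pdb(\M)$, are there to supply. I expect the projection lemma -- the maximal-orthogonal-family argument fused with the infinite distributive law -- to carry all the real weight, whereas the reductions to projections and to simple functions are comparatively routine once $\sigma$-normality and weak* continuity of compression are in hand.
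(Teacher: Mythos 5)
Your proof is correct, but it takes a genuinely different route from the paper's. The paper first invokes the decomposition $\M \cong \oplus_i \M p_i$ into finite-measure pieces from Theorem \ref{loc}, proves that on a finite piece any $\sigma$-additive weight is already normal (the faithful weak*-continuous functional $\psi_\nu$ lets one extract from an increasing net a cofinal increasing \emph{sequence} with the same supremum, so $\sigma$-normality suffices there), and only at the very end uses complete additivity, twice, to glue the pieces back together along the orthogonal family $(p_i)$. You instead work globally: you reduce to projections via dyadic simple functions and $\sigma$-normality, prove the key ``projection lemma'' by exhaustion with a maximal orthogonal family refining the net (using Dedekind completeness of $\Pdb(\M)$ and the infinite distributive law, which holds in any Boolean algebra whenever the relevant suprema exist), and then bootstrap to simple functions with spectral projections of the $x_t$, leaning on Theorem \ref{lsigf1} for $\sup_t(x_t r)=xr$. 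Your argument uses complete additivity more heavily (on arbitrary orthogonal families, not just the fixed family $(p_i)$), but it avoids the structure theorem entirely and is essentially the abelian case of the standard von Neumann algebra proof that completely additive states/weights are normal, so it transfers verbatim to the noncommutative setting (this is in effect Haagerup's setting, which the paper cites); the paper's route is shorter given the machinery already on the table and exploits the measure-theoretic decomposition instead. All the individual steps you outline check out: monotonicity gives the easy inequality, maximality plus distributivity gives $\sup_\alpha q_\alpha=p$, directedness handles the finite partial sums, and the degenerate case $\omega(p_k)=\infty$ is correctly disposed of.
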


\begin{proof}
One direction of our lemma is obvious by applying `normality' to the increasing net of partial sums.
For the other direction, by Theorem \ref{loc}  (ii) we may assume that $\M$ is a direct sum
of `finite' pieces $\M p_i$ for $i \in I$, each of the form  $L^\infty(X,\nu)$ for a finite measure $\nu$.
On $\N = L^\infty(X,\nu)$ for a finite measure $\nu$, we claim that any $\sigma$-additive weight $\omega$ is normal.
Indeed suppose that $f_t \nearrow f$ in $\N_+$.  Then $\psi_\nu(f_t) \nearrow \psi_\nu(f)$ by the last assertion of Theorem \ref{lsigf1} (note
$1 \in L^1(X,\Sigma,\nu)$ so that $\psi_\nu$
is weak* continuous).
Choose  $t_1 \leq t_2 \leq \cdots$ with
$\psi_\nu(f_{t_k}) \nearrow \psi_\nu(f)$.  Setting  $g = \sup_k \, f_{t_k}$ in $\N_+$ we have $\psi_\nu(g - f) = 0$, so that $f = g$.
Then $f_{t_{1}} + \sum_k \, (f_{t_{k+1}}  - f_{t_{k}}) = f$, so that
$$\omega(f) = \omega(f_{t_1}) + \sum_k \, \omega(f_{t_{k+1}}  - f_{t_{k}}) = \sup_k \, \omega(f_{t_k}) = \sup_t \, \omega(f_{t}). $$
 Thus $\omega$ is normal.

Finally, if $f_t \nearrow f$ in $\M_+$
then, using complete  additivity twice, we have $$\sup_t \omega(f_t) = \sup_t  \, \sup_J  \sum_{i \in J} \, \omega(p_i \, f_t) = \sup_J  \sum_{i \in J} \, \omega(p_i \, f) = \omega(f) .$$
Here $J$ ranges over finite subsets of $I$.
\end{proof}

We write $\mu \cong \nu$ if $\nu  \ll  \mu$ and $\mu \ll \nu$; and say the measures {\em are equivalent}.

   \begin{lemma} \label{tol}  Set $\M = L^\infty(X,\mu)$ for a measure space $(X,\Sigma,\mu)$.
   If $\omega$  is a $\sigma$-normal
 (i.e.\ countably additive) weight on $\M_+$
 then there exists a measure $\nu$ on $(X,\Sigma)$ such that $\nu  \ll  \mu$ and
 $\omega(f) = \int_X \, f \, d \nu$ for all $f \in \M_+$.

 Indeed the above implements a bijection between $\sigma$-additive weights $\psi$ on $\M_+$ and measures $\nu$ on $(X,\Sigma)$
 such that $\nu  \ll  \mu$.
 Finally, $\psi_\nu$ is faithful on $\M_+$  if and only if $\mu \ll \nu$ also (so that $\mu \cong \nu$), and in this case
 $\psi_\nu$ is completely additive.
  \end{lemma}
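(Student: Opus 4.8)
The plan is to read the measure $\nu$ off the weight by testing on projections, and then to bootstrap from projections to simple functions to all of $\M_+$ using $\sigma$-normality and the monotone convergence theorem. Concretely, I would define $\nu(E) = \omega(\chi_E)$ for $E \in \Sigma$, where $\chi_E$ denotes the projection in $\Pdb(\M)$ it determines. This is well defined on a.e.-equivalence classes, and since a $\mu$-null $E$ gives $\chi_E = 0$ in $\M$ we get $\nu(E) = \omega(0) = 0$, so $\nu \ll \mu$ is automatic. To see $\nu$ is a measure, take disjoint $E_n \in \Sigma$ with union $E$: the projections $\chi_{\cup_{n \le N} E_n} = \sum_{n \le N} \chi_{E_n}$ increase to $\chi_E$ in $\M_+$, so $\sigma$-normality (equivalently countable additivity, as noted before the lemma) together with finite additivity of $\omega$ gives $\nu(E) = \sum_n \nu(E_n)$.

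Next I would show $\omega = \psi_\nu$. On a nonnegative simple function $s = \sum_k c_k \chi_{E_k}$ with disjoint $E_k$, additivity and positive homogeneity of $\omega$ give $\omega(s) = \sum_k c_k \nu(E_k) = \int_X s \, d\nu$. For general $f \in \M_+$, choose simple $s_n \nearrow f$ in $\M_+$; then $\sigma$-normality of $\omega$ yields $\omega(f) = \sup_n \omega(s_n)$, while monotone convergence yields $\psi_\nu(f) = \sup_n \psi_\nu(s_n)$, and the two agree on each $s_n$. This proves existence. The bijection is then formal: the map $\nu \mapsto \psi_\nu$ lands among the $\sigma$-additive weights (already observed in the text), its inverse is $\omega \mapsto (E \mapsto \omega(\chi_E))$, and the two compositions are the identity — in one direction by the paragraph just given, and in the other because $\psi_\nu(\chi_E) = \nu(E)$ recovers $\nu$, which simultaneously gives injectivity.

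For the faithfulness clause both implications are short. If $\psi_\nu$ is faithful and $\nu(E) = 0$, then $\psi_\nu(\chi_E) = \nu(E) = 0$ forces $\chi_E = 0$ in $\M$, i.e. $\mu(E) = 0$; hence $\mu \ll \nu$ and so $\mu \cong \nu$. Conversely, if $\mu \ll \nu$ and $\psi_\nu(f) = 0$ for $f \in \M_+$, then $f = 0$ $\nu$-a.e., so the set $\{f > 0\}$, being a countable union of the sets $\{f > 1/n\}$, is $\nu$-null, hence $\mu$-null, so $f = 0$ in $\M$.

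Finally, for complete additivity when $\psi_\nu$ is faithful: given a family $\{f_i\}_{i \in I} \subset \M_+$ with bounded partial sums and supremum $f$, monotonicity of $\psi_\nu$ always yields $\sum_i \psi_\nu(f_i) \le \psi_\nu(f)$. For the reverse, if $\sum_i \psi_\nu(f_i) = \infty$ there is nothing to prove; otherwise only countably many $f_i$ satisfy $\psi_\nu(f_i) > 0$, and faithfulness forces every remaining $f_i$ to vanish in $\M$. Thus $f$ is already the supremum of the countable subfamily, and $\sigma$-additivity (=\,$\sigma$-normality) of $\psi_\nu$ closes the gap. I expect this last reduction to be the only genuinely non-routine point: it is precisely faithfulness that lets one discard the zero-weight terms and collapse an arbitrary family to a countable one, and without it (as in the semifinite non-Dedekind settings of Example \ref{nonl}) complete additivity is exactly what can break down.
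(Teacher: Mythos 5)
Your proposal is correct and follows essentially the same route as the paper: define $\nu(E)=\omega(\chi_E)$, verify countable additivity and $\nu\ll\mu$ via $\sigma$-normality, pass from projections to simple functions to $\M_+$ by monotone convergence, and reduce complete additivity to the countable case by using faithfulness to discard the zero-weight terms. The only cosmetic difference is in the faithfulness clause, where you argue via the sets $\{f>1/n\}$ while the paper extracts a nonzero simple function below $f$; both are routine.
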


   \begin{proof}  If $\omega$  is  $\sigma$-normal 
   (or even just countably additive on mutually orthogonal projections)
   then we may define a
   measure on $(X,\Sigma)$ by $\nu(E) = \omega(\chi_E)$.   If  $\{ E_k : k \in \Ndb \} \subset \Sigma$ are mutually
 disjoint with union $E$ then $\sum_k \, \chi_{E_k} = \chi_{E}$ pointwise,
 hence $\chi_{E}$
 is the a.e.\ supremum of the partial sums, so that
 $$\sum_k \, \nu(E_k) = \sum_k \, \omega(\chi_{E_k}) =  \omega(\chi_E) = \nu(E).$$
 Hence $\nu$ is a measure.
 It is easy to see that  $\nu  \ll  \mu$.     Let $\psi(f) = \int_X \, f \, d \nu$ for $f \in \M_+$.  Note that this is well defined since
 if $f, g  : X \to [0,\infty]$ are measurable but differ on a
 $\mu$-null (hence $\nu$-null) set then $\psi(f) = \psi(g)$.    Then $\psi = \omega$ on projections in $\M$, hence on
 simple functions in $\M_+$.   If $f \in \M_+$ and $0 \leq s_n \nearrow f$ pointwise a.e.\  then by Lebesgue's monotone convergence theorem we have that $\psi(s_n) = \omega(s_n) \nearrow \psi(f)$.
 Thus $\psi = \omega$ on $\M_+$.

 Conversely suppose that $\nu$ is another measure on $(X,\Sigma)$
 with $\nu  \ll  \mu$.     Let $\psi(f) = \int_X \, f \, d \nu$.   If $f, g  : X \to [0,\infty]$ are measurable but differ on a
 $\mu$-null set then $\psi(f) = \psi(g)$.   Thus $\psi$ may be viewed as a weight on $\M_+$, and by Lebesgue's monotone convergence theorem it is $\sigma$-normal
 and countably additive on $\M_+$.

  Next, note that $\psi$ is faithful on $\M_+$  if and only if $\mu \ll \nu$ also (so that $\mu \cong \nu$).
 Indeed suppose that also $\mu \ll \nu$, so that $\M = L^\infty(X,\nu)$. Now suppose that  $f \in \M_+$ with $\psi(f) = 0$.
 If $f \neq 0$ in $\M$ then there exists a nonzero simple function $s \leq f$ (because there exist simple functions $0 \leq s_n \nearrow f$
 with $\int_X \, s_n \, d \mu \to \int_X \, f \, d \mu \neq 0$).  Thus there exists a non $\mu$-null measurable subset $E$ of $F$ and $c > 0$ with
 $c \, \chi_E \leq f$,
 and $\nu(E) > 0$ since $\mu \ll \nu$.  Whence
 the contradiction $\psi(f) \geq c \, \psi(\chi_E) = c \, \nu(E) > 0$.   Thus $\psi$ is faithful on $\M_+$.

 Suppose now that $\psi$ is faithful on $\M_+$.  We show that
  $\psi$ is completely additive on $\M_+$.
  Suppose that $f_i$ are measurable nonnegative functions on $X$ with the partial sums of $\sum_i \, f_i$ uniformly bounded,
and suppose that this net of partial sums has supremum $g$ in $\M_{\rm sa}$.  We claim that
$\psi(g) = \sum_i \,\psi( f_i)$. Clearly $\sum_i \,\psi( f_i) \leq \psi(g)$.
Hence if $\sum_i \,\psi( f_i) = \infty$ we are done.
If $\sum_i \,\psi( f_i) < \infty$ then the sum is countable.   In this case it is easy to see
that $g$ may be represented as a measurable function on $X$ which is the function supremum of the partial sums, that
is the pointwise limit.  The claim now follows from Lebesgue's monotone convergence theorem.  \end{proof}

 \begin{corollary} \label{intno}   Suppose that  $(X,\Sigma,\mu)$ is localizable,
 and $\M = L^\infty(X,\Sigma,\mu)$.   Then  integration with respect to $\mu$  is a normal weight on $\M$.
\end{corollary}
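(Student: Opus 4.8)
The plan is to obtain this as an immediate consequence of Lemmas \ref{lsigf0} and \ref{tol}. Integration against $\mu$ is precisely the weight $\psi_\mu(f) = \int_X \, f \, d\mu$ on $\M_+$, which in the notation of Lemma \ref{tol} is the weight $\psi_\nu$ corresponding to the choice $\nu = \mu$. So the whole point is to identify which of the already-established properties of $\psi_\nu$ apply in this special case.

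First I would check faithfulness. Since $\mu \ll \mu$ holds trivially, the equivalence $\mu \cong \mu$ is automatic, and the final clause of Lemma \ref{tol} applies directly with $\nu = \mu$. This tells us two things at once: $\psi_\mu$ is faithful on $\M_+$ (indeed $\int_X \, f \, d\mu = 0$ for $f \in \M_+$ forces $f = 0$ a.e., i.e.\ $f = 0$ in $\M$), and, crucially, that $\psi_\mu$ is completely additive. It is worth emphasizing that it is the \emph{self}-equivalence $\mu \cong \mu$ that triggers complete additivity here; this is the one observation that makes the corollary work.

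Next, since $(X,\Sigma,\mu)$ is assumed localizable, Lemma \ref{lsigf0} upgrades complete additivity to normality: a weight on $\M_+$ is completely additive if and only if it is normal. Applying the relevant direction to the completely additive weight $\psi_\mu$ yields that $\psi_\mu$ is normal, which is exactly the assertion. (Alternatively one could bypass complete additivity and argue normality more directly from the last statement of Theorem \ref{lsigf1}, namely that on a localizable space bounded increasing nets in $\M_+$ have suprema coinciding with their weak* limits, combined with weak* continuity of $\psi_\mu$ on finite pieces; but routing through the two lemmas is cleanest.)

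There is no genuine obstacle in this proof; the corollary is essentially a packaging of the two preceding lemmas. The only step demanding any care is recognizing that setting $\nu = \mu$ makes the hypothesis $\mu \cong \nu$ of Lemma \ref{tol} trivially satisfied, thereby handing us complete additivity for free, after which localizability and Lemma \ref{lsigf0} supply normality.
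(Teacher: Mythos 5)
Your proposal is correct and is essentially identical to the paper's proof, which likewise cites the last assertion of Lemma \ref{tol} (with $\nu = \mu$, so $\mu \cong \mu$ gives complete additivity) and then Lemma \ref{lsigf0} to upgrade complete additivity to normality on a localizable space.
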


\begin{proof}  This follows  from Lemmas \ref{lsigf0} and (the last assertion of) \ref{tol}.
 \end{proof}

 \section{The Radon-Nikodym theorem} \label{rns}
No analysis of measure spaces for which $L^\infty(X,\mu)=L^1(X,\mu)^*$ is complete without giving due attention
to the Radon-Nikodym theorem. However to do justice to the task initiated in Section \ref{LinftyW1},
we will want the most general Radon-Nikodym theorem achievable for semifinite or Dedekind measure spaces. This then is the focus of this section.

In this section again all measures are positive measures unless stated to the contrary, and
  $\M = L^\infty(X,\Sigma,\mu)$ for a measure space $(X,\Sigma,\mu)$.  We consider
  measures $\nu$ on $(X,\Sigma)$, and
 $\nu \ll \mu$
 signifies {\em absolute continuity}: $\nu(E) = 0$ whenever $\mu(E) = 0$.    As we learn in a real variables course, the complex
 or signed measure case of the Radon-Nikodym theorem follows immediately from the positive
 measure case, using the Hahn decomposition or via the variation measure $|\nu|$.

Fix a measure space $(X,\Sigma,\mu)$.  We will say that a measure $\nu$  on $(X,\Sigma)$ {\em has $\mu$-support}  if there is a set $E_0 \in \Sigma$ such that
$\nu(E) = \nu(E \cap E_0)$ for all $E \in \Sigma$, and $\nu(E) > 0$ for all $E \in \Sigma$ with $\mu(E) > 0$ and $E \subset E_0$.
We call $E_0$ the $\mu$-support of $\nu$ in this case.   If it exists then it is unique up to $\mu$-null sets.
Indeed if $E_1$ was another $\mu$-support of $\nu$, and if  for example 
$\mu(E_0 \setminus E_1) \neq 0$,
then $\nu(E_0  \setminus E_1) > 0$  while  $\nu(E_0  \setminus E_1) = \nu((E_0  \setminus E_1) \cap E_1) = 0$.
Indeed if further $\nu \ll \mu$ then adding or subtracting a $\mu$-null (hence $\nu$-null) set to or from $E_0$ will not change it being a $\mu$-support.

\medskip

{\bf Examples.} If $\lambda$ is Lebesgue measure and $\mu$ is counting measure on the Lebesgue $\sigma$-algebra of $[0,1]$ then $\lambda$   satisfies
$\lambda \ll \mu$ but $\lambda$ does not have $\mu$-support.   Any measure $\nu$ has $\nu$-support.
\medskip

Given measures $\mu, \nu$ on a $\sigma$-algebra $\Sigma$, we say that $\nu$ is $\mu$-{\em semifinite}
if whenever $\nu(E) > 0$ then $E$ has a $\mu$-finite measurable subset $F$ with $\nu(F) > 0$.
This condition was considered in some of the Radon-Nikodym theorems
in  \cite[Volume II]{Fremlin}.    It is automatic if $\mu$ is finite, and coincides
with semifiniteness if $\mu = \nu$.
We say that $\nu$ is {\em strongly} $\mu$-{\em semifinite} if  whenever $\nu(E) > 0$ then $E$ has a $\mu$-finite measurable subset $F$ with $0 <
\nu(F) < \infty$.   One may check that $\nu$ is strongly $\mu$-semifinite if and only if $\nu$ is $\mu$-semifinite and semifinite.

Below we write $\mu_f$ for $\mu_f(E) = \int_E \, f \, d \mu$.

\begin{lemma} \label{fis} Fix a measure space $(X,\Sigma,\mu)$.
\begin{itemize} \item [(1)]  If $\mu(X) < \infty$ then every
measure  $\nu$  on $(X,\Sigma)$ with $\nu \ll \mu$ has $\mu$-support.
\item [(2)]  If $f : X \to [0,\infty]$ is measurable then $\mu_f$ has $\mu$-support.
\end{itemize}
\end{lemma}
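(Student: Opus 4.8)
The plan is to treat the two parts separately, since the natural witness for the $\mu$-support is different in each case, and in neither case should we invoke a Radon--Nikodym derivative (which is precisely what Section~\ref{rns} is setting out to construct, so using it would be circular).

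For part (2) I would simply guess the answer: take $E_0 = \{ x \in X : f(x) > 0 \}$, which lies in $\Sigma$ as $E_0 = \bigcup_n \{ f > 1/n \}$ and $f$ is measurable. The first defining property of a $\mu$-support is immediate, because $f$ vanishes off $E_0$ and hence $\mu_f(E) = \int_E f \, d\mu = \int_{E \cap E_0} f \, d\mu = \mu_f(E \cap E_0)$ for every $E \in \Sigma$. For the second property, suppose $E \subset E_0$ with $\mu(E) > 0$; I must show $\int_E f \, d\mu > 0$. The key step is the level-set decomposition $E = \bigcup_n E_n$, where $E_n = \{ x \in E : f(x) > 1/n \}$ increase to $E$: since $\mu(E) > 0$, some $E_n$ has $\mu(E_n) > 0$, whence $\mu_f(E) \geq \int_{E_n} f \, d\mu \geq \frac{1}{n} \mu(E_n) > 0$. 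This settles (2), and note it needs no finiteness hypothesis on $\mu$.

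For part (1), where $\nu$ may well be neither finite nor $\sigma$-finite, the idea is an exhaustion argument producing a largest $\nu$-null set, using the finiteness of $\mu$ to bound it. Concretely, set $\alpha = \sup \{ \mu(N) : N \in \Sigma, \ \nu(N) = 0 \}$, which is finite since $\alpha \leq \mu(X) < \infty$. Choosing $N_k \in \Sigma$ with $\nu(N_k) = 0$ and $\mu(N_k) \to \alpha$, and putting $Z = \bigcup_k N_k$, countable subadditivity gives $\nu(Z) = 0$ while monotonicity forces $\mu(Z) = \alpha$ (it is both $\geq \alpha$ and, being $\nu$-null, $\leq \alpha$). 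I then propose $E_0 = X \setminus Z$. The first defining property is immediate from $\nu(Z) = 0$ via $\nu(E) = \nu(E \cap E_0) + \nu(E \cap Z) = \nu(E \cap E_0)$.

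The one step that requires the maximality of $\alpha$, and which I expect to be the crux of the lemma, is the second property. Suppose $E \subset E_0$ with $\mu(E) > 0$ but, for contradiction, $\nu(E) = 0$. Then $Z \cup E$ is again $\nu$-null, and since $E$ is disjoint from $Z$ we obtain $\mu(Z \cup E) = \alpha + \mu(E) > \alpha$, contradicting the definition of $\alpha$. Hence $\nu(E) > 0$, completing (1). It is worth remarking that this argument never actually uses the hypothesis $\nu \ll \mu$; that hypothesis is rather what guarantees (as observed in the paragraph preceding the lemma) that the resulting $E_0$ is then determined up to $\mu$-null sets.
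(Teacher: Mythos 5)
Your proposal is correct. Part (2) is essentially the paper's argument: the same witness $E_0 = f^{-1}((0,\infty])$, with your level-set decomposition simply being a written-out proof of the ``vanishing principle'' that the paper cites. Part (1), however, takes a genuinely different route. The paper disposes of (1) by citing the finite-measure Radon--Nikodym theorem (reducing to part (2)), or alternatively by invoking Lemmas \ref{lsigf0} and \ref{mus}; you instead give a direct exhaustion argument, taking $\alpha = \sup\{\mu(N) : \nu(N)=0\}$, realizing it by a countable union $Z$, and setting $E_0 = X\setminus Z$. This is in effect the finite-measure special case of the essential-supremum construction the paper only deploys later, in Lemma \ref{mu}(2), for Dedekind spaces: there the $\mu$-support is likewise the complement of an (essential) supremum of the $\nu$-null sets, but obtaining that supremum requires Dedekind completeness, whereas finiteness of $\mu$ lets you manufacture it by hand from a maximizing sequence. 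What your route buys is self-containment (no appeal to an external Radon--Nikodym statement, and no risk of circularity with Section \ref{rns}) and the observation, which is accurate, that absolute continuity $\nu \ll \mu$ is not needed for the existence of a $\mu$-support when $\mu$ is finite; what the paper's route buys is brevity and a pointer to the standard result a reader likely already knows. Both arguments are sound.
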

\begin{proof}   (2) \ Let $E_0 = f^{-1}((0,\infty])$.   If
 $f = 0$ $\mu$-a.e.\ on a set $E$, then $\mu_f(E) = 0$.
In particular $\mu_f(E_0^c) = 0$.
If $E \in \Sigma$ with $\mu(E) > 0$ and  $E \subset E_0$, that is if $f > 0$ on
$E$ $\mu$-a.e.,  then $\mu_f(E) > 0$ by the `vanishing principle' in measure theory.
So $\mu_f$ has $\mu$-support.

(1) \ This  can be seen  in many ways.   For example it follows from (2) by a well known Radon-Nikodym theorem (often met in graduate real variable classes \cite[Exercise 3.14]{Fol}) for finite
$\mu$.     It can also be proved using
Lemma \ref{lsigf0} and the fact that any mutually orthogonal collection of nonzero projections in $\M = L^\infty(X,\mu)$ must be countable, alongside for example Lemma \ref{mus} (2).    \end{proof}

\begin{lemma} \label{mu}   Suppose that  $(X,\Sigma,\mu)$ is a Dedekind
measure space and  $\M = L^\infty(X,\Sigma,\mu)$. 
\begin{itemize} \item [(1)]  There exists $S \in \Sigma$ such that $(S, \mu_{|S})$ is semifinite and
the projection lattice of $L^\infty(S, \mu_{|S})$ is Dedekind complete,
and such that $\mu(E) = \infty$ for any measurable  subset $E$ of $X \setminus S$ with $\mu(E) > 0$.
 \item [(2)]    Let $\nu$  be a measure on $(X,\Sigma)$ with $\nu \ll \mu$.
If $\nu$ is $\mu$-semifinite then $\nu$ has $\mu$-support.  Moreover $\nu$ is $\mu_{|S}$-semifinite on
the set $S$ in  {\rm(1),} and
$\nu(E) = 0$ for any  measurable subset $E$ of $X \setminus S$ \end{itemize}
\end{lemma}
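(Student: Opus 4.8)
The plan is to build the \emph{semifinite part} $S$ for (1) directly from the projection lattice, and then to reduce the support problem in (2) to that part, on which $(S,\mu_{|S})$ is localizable. Since $(X,\Sigma,\mu)$ is Dedekind, $\Pdb(\M)$ is a complete Boolean algebra, so I would set $p := \sup\{q \in \Pdb(\M) : \bar{\mu}(q) < \infty\}$ and choose $S \in \Sigma$ with $p = [\chi_S]$. The complement is purely infinite: if some $E \subset X\setminus S$ had $0 < \mu(E) < \infty$, then $[\chi_E]$ would be a finite-measure projection $\le 1-p$, hence $[\chi_E] \le p$ by definition of $p$, forcing $[\chi_E] \le p\wedge(1-p) = 0$, a contradiction. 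For semifiniteness of $(S,\mu_{|S})$ the workhorse is the infinite distributive law $r \wedge \sup_j q_j = \sup_j(r\wedge q_j)$, which holds in any Boolean algebra once $\sup_j q_j$ exists: given $E \subseteq S$ with $\mu(E)=\infty$ we have $[\chi_E]\ne 0$ and $[\chi_E] = [\chi_E]\wedge p = \sup_q([\chi_E]\wedge q)$ over finite-measure $q$, so some $[\chi_E]\wedge q = [\chi_{E\cap F}] \ne 0$ with $\mu(E\cap F)\le\mu(F)<\infty$, giving the required positive-finite-measure subset. Finally, the projection lattice of $L^\infty(S,\mu_{|S})$ is identified with the principal ideal $[0,p]$ of $\Pdb(\M)$, and a principal ideal of a complete Boolean algebra is complete, so it is Dedekind complete.

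For (2), the two secondary assertions are quick. Since a $\mu$-finite subset of any $E\subseteq S$ is automatically $\mu_{|S}$-finite, $\mu$-semifiniteness of $\nu$ passes to $S$ verbatim. For the vanishing off $S$: if $\nu(E)>0$ for some measurable $E \subset X\setminus S$, then $\mu$-semifiniteness yields a $\mu$-finite $F\subseteq E$ with $\nu(F)>0$; but by (1) a $\mu$-finite subset of $X\setminus S$ is $\mu$-null, whence $\nu(F)=0$ by $\nu\ll\mu$ -- a contradiction. In particular $\nu(X\setminus S)=0$, so $\nu$ is concentrated on $S$.

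The substance of (2) is the existence of the $\mu$-support. Here I would exploit that $(S,\mu_{|S})$ is localizable: reusing the maximal-orthogonal-family device from the proof of Theorem \ref{loc}, fix a maximal orthogonal family $(p_j)=([\chi_{E_j}])$ of finite-measure projections in $L^\infty(S,\mu_{|S})$ with $\sup_j p_j = p$. On each finite measure space $(E_j,\mu_{|E_j})$, Lemma \ref{fis}(1) furnishes a $\mu$-support $F_j \subseteq E_j$ of $\nu_{|E_j}$, and I set $E_0 \subseteq S$ with $[\chi_{E_0}] = \sup_j [\chi_{F_j}]$. The charging property is then easy via distributivity: for $E\subseteq E_0$ with $\mu(E)>0$ we have $[\chi_E]=\sup_j([\chi_E]\wedge[\chi_{F_j}])\ne 0$, so $\mu(E\cap F_j)>0$ for some $j$, and the local support property of $F_j$ gives $\nu(E)\ge\nu(E\cap F_j)>0$.

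The main obstacle is the concentration property $\nu(E)=\nu(E\cap E_0)$, equivalently $\nu(S\setminus E_0)=0$. I expect to argue by contradiction: if $\nu(S\setminus E_0)>0$, then $\mu$-semifiniteness produces a $\mu$-finite $H\subseteq S\setminus E_0$ with $\nu(H)>0$. Because $\mu(H)<\infty$ and the disjoint sets $H\cap E_j$ satisfy $\sup_j[\chi_{H\cap E_j}]=[\chi_H]$, only countably many have positive measure and $H=\bigcup_j(H\cap E_j)$ up to a $\mu$-null (hence $\nu$-null) set; countable additivity then gives $\nu(H)=\sum_j\nu(H\cap E_j)$, so $\nu(H\cap E_j)>0$ for some $j$. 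The local support property upgrades this to $\nu(H\cap F_j)>0$. But $F_j\subseteq E_0$ while $H\subseteq S\setminus E_0$, so $H\cap F_j$ is $\mu$-null and hence $\nu$-null -- the desired contradiction. This reduction from an a priori uncountable orthogonal family down to a countable subfamily, legitimized by the finiteness of $\mu(H)$, is the one genuinely delicate point, and it is exactly where the hypothesis that $\nu$ is $\mu$-\emph{semifinite} (rather than merely $\nu\ll\mu$) is indispensable.
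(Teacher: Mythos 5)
Your proof is correct, and for part (2) it takes a genuinely different route from the paper's. Part (1) is essentially the paper's argument in different clothing: the paper takes $S$ to be the essential supremum of the non-null $\mu$-finite sets and proves semifiniteness by contradiction with the defining property of that supremum, whereas you package the same step via the infinite distributive law $r\wedge\sup_j q_j=\sup_j(r\wedge q_j)$ and dispose of the Dedekind completeness of $\Pdb(L^\infty(S,\mu_{|S}))$ by observing it is a principal ideal of a complete Boolean algebra; both are sound. The real divergence is the construction of the $\mu$-support in (2). The paper defines $E_0$ as the complement of the essential supremum $B$ of the $\nu$-null, non-$\mu$-null sets, so the charging property is automatic and the labour goes into showing $\nu(B)=0$: one extracts (via $\mu$-semifiniteness) a $\mu$-finite $F\subseteq B$ with $\nu(F)>0$, applies Lemma \ref{fis}(1) on $F$, and contradicts the defining property of $B$. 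You instead route through part (1), decompose $S$ into a maximal orthogonal family of finite-measure pieces $E_j$ as in Theorem \ref{loc}, take local supports $F_j$ from Lemma \ref{fis}(1), and glue them by $[\chi_{E_0}]=\sup_j[\chi_{F_j}]$; for you the charging half is the automatic one (by distributivity) and the concentration $\nu(S\setminus E_0)=0$ is where $\mu$-semifiniteness and the countable reduction enter. Both arguments use the same two ingredients (the finite-measure case of Lemma \ref{fis} and a single application of $\mu$-semifiniteness); yours buys an explicit description of the support as a supremum of local supports, in the spirit of the decomposition philosophy of Theorem \ref{loc}, at the cost of depending on part (1), while the paper's support construction is self-contained and works directly on $X$.
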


\begin{proof}  We define the {\em essential supremum} of a collection ${\mathcal C}$ in $\Sigma$ to be a set
$B$ in $\Sigma$ (the equivalence class of) whose characteristic function is the supremum in $\Pdb(\M)$ of $\{ \chi_E : E \in {\mathcal C} \}$.  The essential supremum is clearly unique `up to measure zero': for any other  essential supremum $D$ we have $\mu(C \Delta D) = 0$.
 (1) \ The collection of $\mu$-finite sets that are not $\mu$-null
 have an essential supremum $S$ say.
We claim that $(S, \mu_{|S})$ is semifinite. 
  Indeed suppose that
$E \subset S$ with $\mu(E) = \infty$. Suppose that $E$ contained no $\mu$-finite sets that are not $\mu$-null.
Then $S \setminus E$  contains ($\mu$-a.e.)\ all $\mu$-finite sets that are not $\mu$-null, contradicting that
$S$ is the essential supremum.  So $E$ must contain a set $F \in \Sigma$ with $0 < \mu(F) < \infty$.  This
proves that $(S, \mu_{|S})$ is semifinite. 
The projection lattice assertion holds by the ideas above, since an essential supremum  in $X$ of any collection of measurable subsets of $S$, is $\mu$-a.e.\ contained in $S$.

For any measurable subset $E$ of $X \setminus S$ with $\mu(E) > 0$, we must have
$\mu(E) = \infty$ (or else $E \subset S$ $\mu$-a.e.).

(2) \ The set of $\nu$-null sets in $\Sigma$  that are not $\mu$-null have an essential supremum
$B \in \Sigma$.   Let $E_0 = B^c$.      We will show that $E_0$ is a $\mu$-support for $\nu$.
 If $E \in \Sigma$ with $\mu(E) > 0$ and $E \subset E_0$  then $\nu(E) > 0$ (or else $E \subset E_0^c$
up to a $\mu$-null set, that is $\mu(E \cap E_0) = \mu(E) = 0$).   If $\nu(B) = 0$ then $B^c$ is a $\mu$-support.
By way of contradiction suppose that $\nu(B) > 0$.
Then by hypothesis $B$ has a $\mu$-finite measurable subset $F$ with $\nu(F) > 0$. 
Now $(F, \mu_{|F})$ is a finite measure space, and $\nu_{|F}$ is a  measure with
$\nu_{|F} \ll \mu_{|F}$, and we may appeal to Lemma \ref{fis} to see that $\nu_{|F}$ has $\mu$-support in $F$.

Suppose that $G$ is the  $\mu$-support of $\nu_{|F}$ in $F$.   We must have $\mu(G) > 0$ or else
$\nu(G) = \nu(F \setminus G) = 0$.
Suppose that $H \subset F$ is a measurable set $\mu$-a.e.\ containing every $\nu$-null subset of $F$.
 Then $H \cup (B \setminus F)$ is a measurable set $\mu$-a.e.\ containing every $\nu$-null subset $T$ of $X$, since $T = (T \cap F) \cup (T \setminus F)$.
 So $B \subset H \cup (B \setminus F)$ $\mu$-a.e., and so $F = F \cap B \subset H$ $\mu$-a.e., and $\mu(F \setminus H) = 0$.
 Now set $H = F \setminus G$.   Every $\nu$-null subset $E$ of $F$  must have $\mu(E \cap G) = 0$, or else $\nu(E \cap G) > 0$.
 Hence $E \subset F  \setminus G = H$  $\mu$-a.e.  It follows that $\mu(F \setminus H) = \mu(G) = 0$, a contradiction.
 Thus  $\nu(B) = 0$, so that   $E_0$ is a $\mu$-support for $\mu$.

If $\nu \ll \mu$ and $S$ is as above, then $\nu$ is $\mu_{|S}$-semifinite on $S$.   Indeed if $E \subset S$
with $\nu(E) > 0$ then $E$ has a $\mu$-finite measurable subset $F$ with $\nu(F) > 0$.

Suppose that $E$ is a subset of the `anti-semifinite part' $S^c$ of $\mu$, with $\mu(E) = \infty$.   If $\nu(E) > 0$ then by
$\mu$-semifiniteness we obtain the
contradiction that   $E$ has a $\mu$-finite, hence $\mu$-null, measurable subset $F$ with $\nu(F) > 0$.
 \end{proof}

We call $(S, \mu_{|S})$ the {\em semifinite part}  of  $\mu$, and $(S^c, \mu_{|S^c})$ the {\em anti-semifinite part}.
It is unique up to $\mu$-null sets.    Indeed suppose that $A \in \Sigma$ is another, with  $(A, \mu_{|A})$  semifinite
and $\mu(E) = \infty$ for any measurable  subset $E$ of $X \setminus A$ with $\mu(E) > 0$.
If $\mu(S \setminus A) > 0$ then any measurable  subset $E$ of $S \setminus A$ with $\mu(E) >0$, satisfies
$\mu(E) = \infty$.  This contradicts that $(S, \mu_{|S})$ is semifinite.   So $\mu(S \setminus A) = 0$ and
similarly $\mu(A \setminus S) = 0$.

Note that (2) gives that $(S, \nu_{|S})$ is the `$\mu$-semifinite part'  of  $\nu$.

\begin{lemma} \label{mus}  Fix a measure space $(X,\Sigma,\mu)$
and a measure $\nu$  on $(X,\Sigma)$ with $\nu \ll \mu$.  Set $\M = L^\infty(X,\Sigma,\mu)$.
\begin{itemize} \item [(1)]  If $\nu$ has $\mu$-support then  $\psi_\nu$ is completely additive
 on $\M_+$, and indeed is normal if $(X,\Sigma,\mu)$ is localizable.
 \item [(2)]  If  $(X,\Sigma,\mu)$ is localizable and $\psi_\nu$ is completely additive on $\M_+$ then $\nu$ has $\mu$-support.
\item [(3)]  If  $\mu$ is semifinite
and $\nu$ has $\mu$-support then $\nu$ is $\mu$-semifinite.
\end{itemize}
  \end{lemma}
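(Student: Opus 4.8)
For (1) the governing idea is that a $\mu$-support makes $\mu$ and $\nu$ equivalent on it. I would first record that for $E \subset E_0$ the support condition ($\mu(E)>0 \Rightarrow \nu(E)>0$) together with $\nu \ll \mu$ gives $\mu(E)=0 \iff \nu(E)=0$, so $\mu_{|E_0} \cong \nu_{|E_0}$ and hence $L^\infty(E_0,\mu_{|E_0})$ and $L^\infty(E_0,\nu_{|E_0})$ coincide as ordered $*$-algebras. Writing $p_0 = \chi_{E_0}$, the concentration property $\nu(E)=\nu(E\cap E_0)$ gives $\psi_\nu(f) = \int_{E_0} f\, d\nu = \psi_\nu(fp_0)$, so everything pushes into the corner $\M p_0$. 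The one structural fact I need is that compression $f \mapsto fp_0$ preserves suprema: if $f_t \nearrow f$ in $\M_+$ and $g \in (\M p_0)_+$ dominates every $f_t p_0$, then $g + f(1-p_0)$ dominates every $f_t$ in $\M$, hence dominates $f$, and multiplying by $p_0$ gives $g \geq fp_0$; thus $fp_0 = \sup_t f_t p_0$.

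With this in hand, let $\{f_i\}$ have uniformly bounded partial sums $F_J$ with $F_J \nearrow f$. If $\sum_i \psi_\nu(f_i) = \infty$ then monotonicity already forces $\psi_\nu(f) \geq \sup_J \psi_\nu(F_J) = \infty$. If the sum is finite, only countably many indices $I_0$ contribute, and for $i \notin I_0$ one has $f_i = 0$ $\nu$-a.e.\ on $E_0$, hence (by the equivalence above) $f_i p_0 = 0$ in $\M$. Compression then gives $fp_0 = \sup_J F_J p_0 = \sup_{K \subset I_0 \text{ finite}} F_K p_0$, a countable increasing net, realised as a pointwise $\nu$-a.e.\ supremum on $E_0$; the monotone convergence theorem yields $\psi_\nu(f) = \sum_{i \in I_0} \psi_\nu(f_i) = \sum_i \psi_\nu(f_i)$. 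This establishes complete additivity, and normality in the localizable case is then immediate from Lemma \ref{lsigf0}.

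For (2), I would take $E_0$ to be the complement of the essential supremum $B$ of all $\nu$-null sets, which exists since localizable spaces are Dedekind. The support condition for $E \subset E_0$ with $\mu(E)>0$ is automatic: such an $E$ cannot be $\nu$-null, or it would lie under $B$, forcing $\mu(E)=0$. The real content is $\nu(B)=0$: I would choose a maximal mutually orthogonal family $\{\chi_{N_j}\}$ of nonzero projections with each $N_j$ $\nu$-null; maximality gives $\sup_j \chi_{N_j} = \chi_B$, and complete additivity of $\psi_\nu$ on these orthogonal projections gives $\nu(B) = \sum_j \nu(N_j) = 0$. Thus $\nu$ is concentrated on $E_0$, which is therefore a $\mu$-support. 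For (3) the argument is direct: if $\nu(E)>0$ then $\nu(E \cap E_0)>0$ by concentration, whence $\mu(E\cap E_0)>0$ since $\nu \ll \mu$; semifiniteness of $\mu$ furnishes $F \subset E \cap E_0$ with $0 < \mu(F) < \infty$, and the support condition gives $\nu(F)>0$, so $F \subset E$ is the required $\mu$-finite subset with positive $\nu$-measure.

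I expect the main obstacle to be the bookkeeping in (1): one must carefully keep apart the $\mu$-a.e.\ and $\nu$-a.e.\ notions of supremum, and the crux is the order continuity of compression together with the reduction of the possibly uncountable family to a countable one so that the classical monotone convergence theorem applies. Part (2)'s only delicate point is the maximality argument ensuring that $\sup_j \chi_{N_j}$ recovers the full essential supremum $B$, while part (3) is routine.
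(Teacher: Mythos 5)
Your proof is correct and follows essentially the same route as the paper's: in (1) both arguments hinge on $\mu_{|E_0}\cong\nu_{|E_0}$ together with the order-continuity of the compression $f\mapsto f\chi_{E_0}$ (your verification of that compression fact is word-for-word the paper's), in (2) both take $E_0$ to be the complement of the essential supremum of the $\nu$-null sets and use (complete) additivity to kill $\nu(B)$, and (3) is identical. The only cosmetic differences are that in (1) you inline the countable-reduction/monotone-convergence argument that the paper delegates to the last assertion of Lemma \ref{tol}, and in (2) you use a maximal orthogonal family of $\nu$-null projections where the paper invokes normality via Lemma \ref{lsigf0}.
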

\begin{proof} (1) \ Let $E_0$ be the $\mu$-support of $\nu$.
Note that $(E_0,\mu_{| E_0})$ is a   measure space with the property that if $\nu(F) = 0$
for a measurable subset $F$ of $E_0$ then $\mu(F) = 0$.
Thus $\mu_{| E_0} \cong \nu_{| E_0}$, so that by Lemma \ref{tol}, $\psi_0(f) =   \int_{E_0} \, f \, d \nu$ is
completely  additive on $\N = L^\infty(E_0,\mu_{| E_0}) = \M e$ where $e = \chi_{E_0}$.
Moreover $\psi_\nu(f) = \psi_0(f e)$, since $\nu$ is concentrated on $E_0$.

To see that  $\psi_\nu$ is completely  additive,
suppose that $f_i \in \M$ with the partial sums of $\sum_i \, f_i$ uniformly bounded,
and suppose that this net of partial sums has supremum $g \in \M$.
We claim that  $ge$ is the supremum in $\M e$  of the partial sums of $\sum_i \, e f_i$.
Indeed if $k \in \M e$ with $k \geq \sum_{i \in J} \, e f_i$ for finite sets $J$, then
$k + g e^\perp \geq \sum_{i \in J} \, f_i$.  So $k + g e^\perp \geq g$, and so $k  \geq g e$.
This proves the claim.   It follows that $$\sum_i \, \psi_\nu (f_i) =
\sum_i \, \psi_0(f_i e) = \psi_0(ge) = \sum_i \, \psi_\nu (g) .$$ So $\psi_\nu$ is completely  additive.
Thus by Lemma \ref{lsigf0} $\psi_\nu$ is normal if $(X,\Sigma,\mu)$ is localizable.

(2) \ Conversely, if  $(X,\Sigma,\mu)$ is localizable and $\psi = \psi_\nu$ is completely additive on $\M_+$ then $\psi$ is normal by Lemma \ref{lsigf0}.    If $p$ is the supremum
of the projections in $\M_+ \cap {\rm Ker}(\psi)$ then $\psi(p) = 0$.
Then $q = 1-p$ is the support projection of  $\psi$.
Indeed if $p$ is a nonzero subprojection of $q$ in $\M$ then
$\psi(p) > 0$.   Choosing $E \in \Sigma$ with  $q = \chi_E$ it is an exercise to check that
$E$ is a  $\mu$-support of $\nu$.

(3) \  If $\nu$ has $\mu$-support $E_0$ and  $\mu$ is semifinite  and $\nu(E) > 0$ then
$\nu(E \cap E_0) > 0$.   Hence $\mu(E  \cap E_0) > 0$, so that
 $E  \cap E_0$ has a $\mu$-finite measurable subset $F$ with $\mu(F) > 0$.     Hence $\nu(F) > 0$ by definition of $\mu$-support.
 So $\nu$ is $\mu$-semifinite.
  \end{proof}

The equivalence of (i) and (iii) below is essentially 234O in \cite{Fremlin}, but with a quite different proof.

 \begin{theorem} \label{rn} {\rm (Radon-Nikodym for localizable
 measures I)} \ Fix a localizable
 measure space $(X,\Sigma,\mu)$  and a positive measure $\nu$  on $(X,\Sigma)$.  Set $\M = L^\infty(X,\Sigma,\mu)$.
  The following are equivalent:
\begin{itemize} \item [(i)]
There exists measurable $h : X \to [0,\infty)$  with $\nu(E) = \int_E \, h \, d \mu$ for all $E \in \Sigma$.
\item [(ii)]
$\nu \ll \mu$ and $\nu$ is semifinite and has $\mu$-support.
\item [(iii)]
$\nu \ll \mu$ and  $\nu$ is strongly $\mu$-semifinite (that is, whenever $\nu(E) > 0$ then $E$ has a $\mu$-finite measurable subset $F$ with $0 <
\nu(F) < \infty$).
\item [(iv)]  $\nu \ll \mu$ and $\psi_\nu$ is a normal semifinite weight  on $\M_+$.  \end{itemize}
Indeed every normal semifinite weight on $\M_+$  is of the form in {\rm (iv)}.
The function $h$ in {\rm (i)} is  unique up to $\mu$-a.e.\ equality.
Moreover if these hold then $\mu \ll \nu$ if and only if
$h$ in {\rm (i)} may be chosen to be strictly positive.
\end{theorem}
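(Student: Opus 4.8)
The plan is to obtain the equivalences (ii)$\Leftrightarrow$(iii)$\Leftrightarrow$(iv) almost for free from the preceding lemmas, to settle (i)$\Rightarrow$(iii) by a one-line estimate, and to concentrate all the real work on the construction of the density, namely (ii)$\Rightarrow$(i). Since a localizable $\mu$ is semifinite and Dedekind, (ii)$\Leftrightarrow$(iii) is immediate from the remark that $\nu$ is strongly $\mu$-semifinite iff it is $\mu$-semifinite and semifinite, together with Lemma \ref{mu}\,(2) (a $\mu$-semifinite $\nu\ll\mu$ has $\mu$-support) and Lemma \ref{mus}\,(3) ($\mu$ semifinite and $\nu$ with $\mu$-support gives $\nu$ $\mu$-semifinite). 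For (ii)$\Leftrightarrow$(iv) I combine the elementary fact from Section \ref{wol} that $\nu$ is a semifinite measure iff $\psi_\nu$ is a semifinite weight, with Lemma \ref{mus}\,(1)--(2) and Lemma \ref{lsigf0}: for localizable $\mu$, ``$\nu$ has $\mu$-support'' is equivalent to ``$\psi_\nu$ completely additive'', which is equivalent to ``$\psi_\nu$ normal''. The closing sentence (every normal semifinite weight arises as in (iv)) is then nothing but Lemma \ref{tol}, which represents any countably additive --- in particular any normal --- weight as $\psi_\nu$ for a unique $\nu\ll\mu$. Finally (i)$\Rightarrow$(iii): writing $\nu=\mu_h$ with $h$ finite-valued gives $\nu\ll\mu$ at once, and if $\int_E h\,d\mu>0$ then $\mu(E\cap\{\tfrac1n<h\le n\})>0$ for some $n$; choosing inside this set a $\mu$-finite $F$ with $\mu(F)>0$ (possible as $\mu$ is semifinite) yields $\tfrac1n\mu(F)\le \nu(F)\le n\,\mu(F)$, so $\nu$ is strongly $\mu$-semifinite.

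The heart of the matter is (ii)$\Rightarrow$(i). Using the $\mu$-support $E_0$ of $\nu$ I set $h=0$ off $E_0$, where $\nu$ vanishes, and construct $h$ on $E_0$, so I may assume $X=E_0$. By the proof of Theorem \ref{loc}\,(ii)$\Rightarrow$(iii) there is a maximal orthogonal family of $\mu$-finite projections $\chi_{X_i}$ with $\sup_i\chi_{X_i}=1$ in $\Pdb(\M)$; choosing disjoint representatives and absorbing the ($\mu$-null) remainder, I get a partition $X=\bigsqcup_i X_i$ into $\mu$-finite pieces. On each $X_i$ the restriction of $\nu$ is $\ll\mu$ and semifinite, and since any orthogonal family of nonzero projections in $L^\infty(X_i,\mu_{|X_i})$ is countable (as in the proof of Lemma \ref{fis}), the essential supremum of the finite-$\nu$-measure subsets of $X_i$ is a countable union off which $\nu$ vanishes; hence $\nu$ is $\sigma$-finite on $X_i$, and the classical Radon--Nikodym theorem (for $\sigma$-finite $\nu$, finite $\mu$) supplies $h_i:X_i\to[0,\infty)$ with $\nu(\,\cdot\cap X_i)=\int_{\,\cdot\cap X_i}h_i\,d\mu$. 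Gluing the $h_i$ produces a locally measurable, hence measurable, $h$.

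The one genuinely nontrivial point --- and the step I expect to be the main obstacle --- is that these local densities reassemble, i.e.\ that
$$\nu(E)=\sum_i \nu(E\cap X_i)\qquad\text{for every }E\in\Sigma,$$
an identity that fails for a merely countably additive measure over an uncountable partition. It holds here precisely because $\nu$ has $\mu$-support: by Lemma \ref{mus}\,(1) the weight $\psi_\nu$ is completely additive, so applying complete additivity to the orthogonal family $(\chi_{X_i})$ gives $\nu(E)=\psi_\nu(\chi_E)=\sum_i\psi_\nu(\chi_E\chi_{X_i})=\sum_i\nu(E\cap X_i)$. The matching identity $\int_E h\,d\mu=\sum_i\int_{E\cap X_i}h\,d\mu$ holds because integration against $\mu$ is itself completely additive (Corollary \ref{intno} with Lemma \ref{lsigf0}). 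Combining these gives $\nu(E)=\int_E h\,d\mu$, which is (i); everything else in this step is classical or bookkeeping.

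Finally, for uniqueness and positivity: two densities agree $\mu$-a.e.\ on each $\mu$-finite $X_i$ by the standard finite-measure argument, and since the essential supremum of the $\mu$-null sets $\{h\neq h'\}\cap X_i$ is again $\mu$-null, $h=h'$ $\mu$-a.e. If $h$ may be chosen $>0$ then $\mu(E)>0$ forces $\int_E h\,d\mu>0$ by the vanishing principle (as in Lemma \ref{fis}\,(2)), so $\mu\ll\nu$; conversely if $\mu\ll\nu$ then $\{h=0\}$ is $\nu$-null, hence $\mu$-null, so redefining $h$ to be $1$ there makes it strictly positive while changing no integral.
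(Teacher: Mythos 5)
Your reductions among (ii), (iii), (iv) and your argument for (i)$\Rightarrow$(iii) are fine and essentially match the paper's use of Lemmas \ref{mu}, \ref{mus}, \ref{lsigf0} and \ref{tol}. The problem is in the step you yourself identify as the heart of the matter, (ii)$\Rightarrow$(i), and it occurs one line \emph{before} the reassembly identity you flag as the main obstacle. From a maximal orthogonal family of $\mu$-finite projections $\chi_{X_i}$ with $\sup_i\chi_{X_i}=1$ you claim to get, after ``choosing disjoint representatives and absorbing the ($\mu$-null) remainder,'' a genuine partition $X=\bigsqcup_i X_i$, and then that the glued $h$ is ``locally measurable, hence measurable.'' Neither claim is justified for a merely localizable space. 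The supremum $\sup_i\chi_{X_i}=1$ is an \emph{essential} supremum in $\Pdb(\M)$; for an uncountable family the literal union $\bigcup_i X_i$ need not be measurable at all, and the ``remainder'' need not be a null set one can absorb. Worse, even granting a literal partition into $\mu$-finite pieces, the implication ``measurable on each piece $\Rightarrow$ measurable'' is precisely the definition of \emph{decomposability} with respect to that partition, and localizable spaces need not be decomposable (Fremlin's 216E is a complete localizable space that is not strictly localizable). So the measurability of your glued $h$ is exactly the nontrivial content you would need to prove, not a consequence of local measurability. The paper is aware of this: Remark (2) following Theorem \ref{rnd2} describes your decomposition strategy as an alternative proof and warns that ``there will be some work in showing that $h$ is measurable.''

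The paper's own proof of (iii)$\Rightarrow$(i) sidesteps the issue entirely. For $\nu\le\mu$ it uses dualizability: integration against $\nu$ is a bounded functional on $L^1(\mu)$, so $L^\infty(\mu)=L^1(\mu)^*$ hands you a single globally measurable $h\in L^\infty(\mu)$ with $0\le h\le 1$, and normality plus semifiniteness upgrades the identity $\nu(E)=\int_E h\,d\mu$ from $\mu$-finite sets to all of $\Sigma$. The general case is then reduced to this one by the classical $\mu+\nu$ device (checking that $\mu+\nu$ is again localizable and that $\mu$ and $\nu$ are $(\mu+\nu)$-semifinite), yielding $h=g k^{-1}$ for the two bounded densities $k,g$ relative to $\mu+\nu$. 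If you want to keep your piecewise route you must either first replace $(X,\Sigma,\mu)$ by a decomposable model via the measure-preserving $*$-isomorphism of Theorem \ref{loc} (iii)--(iv) and then transport $\nu$ and $h$ back --- which requires an argument that the transported object is an actual $\Sigma$-measurable function on the original $X$ --- or supply a direct proof of measurability of the assembled $h$; as written, the proof has a genuine gap at this point. Your reassembly identity via complete additivity of $\psi_\nu$, and your uniqueness and strict-positivity arguments, are fine once measurability of $h$ is secured.
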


 \begin{proof}    (i) $\Rightarrow$ (ii) \
 Note that $h^{-1}((0,\infty])$ is the $\mu$-support by Lemma \ref{fis}.  It is well known and an exercise that $\mu_h(E) = \int_E \, h \, d \mu$ is semifinite if  $\mu$ is.

 (ii) $\Leftrightarrow$ (iii) \ This follows from Lemmas  \ref{mu} (2) and \ref{mus} (3), plus the fact that $\nu$ is strongly $\mu$-semifinite if and only if $\nu$ is $\mu$-semifinite and semifinite.

(ii) $\Rightarrow$ (iv) \  Suppose that $\nu \ll \mu$ and $\nu$ has $\mu$-support.
 Then $\psi = \psi_\nu$ is normal
 by Lemma \ref{mus} (1).
  Moreover, if $\nu$ is semifinite then $\psi$ is  a semifinite weight by a remark in the
third paragraph  of Section \ref{wol}.

 (iv) $\Rightarrow$ (ii) \  By the same remark just alluded to, $\nu$ is semifinite.
 The $\mu$-support exists by Lemma \ref{mus} (2).

 (iii) $\Rightarrow$ (i) \   If $\nu \leq \mu$, then integration with respect to $d \nu$ is a bounded linear functional on $L^1(\mu)$ since
$$| \int \, f \, d \nu | \leq  \int \, |f| \, d \nu  \leq \int \, |f| \, d \mu = \| f \|_{L^1(\mu)}.$$
Thus there exists $h \in L^\infty(\mu)$ with $$ \int \, f \, d \nu =  \int \, f h \, d \mu, \qquad f \in L^1(\mu) . $$
Then $\nu(E) = \psi_\nu (\chi_E) =  \int_E \,  h \, d \mu$ for
all $\mu$-finite $E \in \Sigma$.
Since this is positive and dominated by $\int_E \,  1 \, d \mu$,
a standard argument in elementary measure theory using the semifiniteness of $\mu$ 
(the kind of 
argument found for example in the last paragraph of the 
proof of Theorem \ref{frec} below, where it is shown that $h = 1$ a.e.\ there), 
shows that $0 \leq h \leq 1$ $\mu$-a.e.
If we assume that $\nu$ is semifinite and $\mu$-semifinite, then $\psi_\nu$ and $\psi_{\mu_h}$ are normal by the fact that (iv) in Theorem \ref{rn} is implied by the other conditions.
The semifiniteness now ensures that any $p \in \Pdb(\M)$ is a
 supremum
 of $\bar{\mu}$-finite projections.
It follows that $\nu =  \mu_h$ on  $\Pdb(\M)$.
Thus $\nu(E) = \mu_h(E)$ for all  $E \in \Sigma$.

In the general case if $\nu \ll \mu$ then we first argue that $(X,\Sigma,\mu + \nu)$ is localizable.
We are  assuming that $\nu$ is
strongly $\mu$-semifinite.   If $\nu(E) + \mu(E) > 0$ then $\mu(E) > 0$, and
so there is a set $F \subset E$ with $0 < \mu(F) < \infty$.   If $\nu(F) = 0$  then
 $0 < \nu(F) + \mu(F) < \infty$.      If $\nu(F) > 0$ then there is a subset $F' \subset F$ with $0 < \nu(F') < \infty$,
 in which case  $0 < \nu(F') + \mu(F') < \infty$.  So $\mu + \nu$ is semifinite.
 Note that $\mu$ and $\mu + \nu$ have the same null sets, so the corresponding projection lattices are equal.
 Thus $(X,\Sigma,\mu + \nu)$ is localizable.

 We have $\mu + \nu \ll \mu \ll \mu + \nu$. We finish in the standard way:
 First note that $\mu$ is $(\mu+\nu)$-semifinite: indeed if $\mu(E) > 0$ then
 there is a set $F \subset E$ with $0 < \mu(F) < \infty$.  If $\nu(F) = 0$ then $\mu(F) + \nu(F) < \infty$.
 If $\nu(F) > 0$ then there is a further $\mu$-finite subset $G \subset F$ with $0 < \nu(G) < \infty$
 so that $\mu(G) + \nu(G) < \infty$ and $\mu(G) > 0$.
  By the second last paragraph there
 exists $k : X \to [0,1]$ measurable with $\mu(E)  = \int_E \, k \, d (\mu + \nu)$.   Clearly $k> 0$ $\mu$-a.e., so we may suppose that
 $k$ is strictly positive.  Then
 $$\int_E \, k^{-1}  \, d \mu = \int_E \, k^{-1}  \, k \, d (\mu + \nu) = (\mu + \nu)(E) , \qquad E \in \Sigma .$$

 Note that $\nu$ is  $(\mu+\nu)$-semifinite: indeed if $\nu(E) > 0$ then
 there is a set $F \subset E$ with $0 < \nu(F) < \infty$,
 and there is a further $\mu$-finite subset $G \subset F$ with $\nu(G) > 0$.
 Clearly $\mu(G) + \nu(G) < \infty$.
  By the argument above there 
 exists $g : X \to [0,1]$ measurable with $\nu(E)  = \int_E \, g \, d (\mu + \nu)$.     Then
 $$\nu(E)  = \int_E \, g \, d (\mu + \nu) = \int_E \, g \, k^{-1}  \, d \mu , \qquad E \in \Sigma .$$
 So if we set $h = g \, k^{-1}$ then $d \nu = h d \mu$.

 The first of the final assertions follows from Lemma \ref{tol},
and the second is standard for semifinite measures.
 The last assertion is easy, because if for example $h$ is zero on a non-$\mu$-null set $E$, then
$\nu(E) = 0 \neq \mu(E)$.
 \end{proof}

The main part of the  last result  is simultaneously equivalent to Fremlin's Radon-Nikodym theorem  234O
in \cite{Fremlin}, and the Pedersen-Takesaki Radon-Nikodym theorem from \cite{PT} in the case of
 commutative von Neumann algebras.   Thus it shows that the latter two results are equivalent.
 Of course we need to know that the commutative von Neumann algebras correspond to localizable measures
 (see Theorems \ref{lsigf1} and \ref{frec}, and Corollary \ref{awvn}).    We also remark that some Radon-Nikodym theorems on certain measure algebras appear in connection with
 Lamperti's theorem in some recent papers of Gardella and Thiel, see e.g.\ \cite{GT}.

We have not seen the following result explicitly in the literature:

 \begin{corollary} \label{rn2} {\rm (Radon-Nikodym for localizable
 measures II)} \ Fix a localizable
 measure space $(X,\Sigma,\mu)$  and a positive measure $\nu$  on $(X,\Sigma)$.  Set $\M = L^\infty(X,\Sigma,\mu)$.
 The following are equivalent:
\begin{itemize} \item [(i)]
There exists a measurable $h : X \to [0,\infty]$  with $\nu(E) = \int_E \, h \, d \mu$ for all $E \in \Sigma$.
\item [(ii)]  $\nu \ll \mu$ and $\nu$  has $\mu$-support.
\item [(iii)]  $\nu \ll \mu$ and  $\nu$ is  $\mu$-semifinite.
\item [(iv)]  $\nu \ll \mu$ and $\psi_\nu$ is a normal weight  on $\M_+$.
  \end{itemize}
Indeed every normal weight on $\M_+$,
is of the form $\psi_\nu$ for $\nu$ as in {\rm (i)}.   The function $h$ in {\rm (i)} is  unique up to $\mu$-a.e.\ equality.
Moreover if these hold then in addition $\mu \ll \nu$ if and only if $h$ in {\rm (i)} may be chosen to be strictly positive.
\end{corollary}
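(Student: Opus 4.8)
The plan is to dispatch the equivalences among (ii), (iii), (iv) directly from the lemmas already in hand, and then reduce the real content, the equivalence of (i) with these, to the semifinite Radon-Nikodym theorem \ref{rn}. Since $\mu$ is localizable (hence semifinite and Dedekind), for any $\nu \ll \mu$ the implication (iii) $\Rightarrow$ (ii) is Lemma \ref{mu}(2), (ii) $\Rightarrow$ (iii) is Lemma \ref{mus}(3), and (ii) $\Leftrightarrow$ (iv) follows by combining Lemma \ref{mus}(1) and (2) with Lemma \ref{lsigf0}. For the penultimate assertion, a normal weight is in particular $\sigma$-normal, hence countably additive, so Lemma \ref{tol} represents it as $\psi_\nu$ for some $\nu \ll \mu$, and (iv) then places this $\nu$ among the equivalent conditions. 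The implication (i) $\Rightarrow$ (ii) is immediate: if $\nu = \mu_h$ then $\nu \ll \mu$, and $\nu$ has $\mu$-support $h^{-1}((0,\infty])$ by Lemma \ref{fis}(2).

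All the work is therefore in (ii) $\Rightarrow$ (i), and here the idea is to separate the part of $X$ on which $\nu$ is genuinely semifinite (where Theorem \ref{rn} will supply a finite-valued density) from the part on which the density is forced to equal $\infty$. First I would pass to the $\mu$-support $E_0$ of $\nu$, declaring $h = 0$ off $E_0$; this is harmless since $\nu$ is concentrated on $E_0$, and on $E_0$ one has $\mu_{|E_0} \cong \nu_{|E_0}$, with $(E_0, \mu_{|E_0})$ still localizable. Because $\mu$ and $\nu$ share their null sets over $E_0$, they induce the very same Dedekind complete projection lattice there, so essential suprema of families of measurable subsets may be formed with respect to $\nu$ exactly as for $\mu$. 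Letting $A$ be the essential supremum of the $\nu$-finite subsets of $E_0$ and running the argument of Lemma \ref{mu}(1) verbatim but with $\nu$ in the role of $\mu$, I would obtain that $(A, \nu_{|A})$ is semifinite and that every measurable $E \subset E_0 \setminus A$ with $\mu(E) > 0$ satisfies $\nu(E) = \infty$.

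On $A$ the measure $\nu_{|A}$ is now semifinite, and it inherits $\mu_{|A}$-semifiniteness from (iii), hence is strongly $\mu_{|A}$-semifinite; since $(A, \mu_{|A})$ is localizable, Theorem \ref{rn} (iii) $\Rightarrow$ (i) yields a measurable $h_A : A \to [0,\infty)$ with $\nu(E) = \int_E h_A \, d\mu$ for every measurable $E \subset A$. Defining $h = h_A$ on $A$, $h = \infty$ on $E_0 \setminus A$, and $h = 0$ on $E_0^c$, and splitting an arbitrary $E \in \Sigma$ into its intersections with $A$, $E_0 \setminus A$, and $E_0^c$, one verifies $\nu(E) = \int_E h \, d\mu$ piece by piece: Theorem \ref{rn} handles the first, the second contributes $\infty$ precisely when $\mu(E \cap (E_0 \setminus A)) > 0$, matching $\nu$, and the third vanishes. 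Uniqueness of $h$ up to $\mu$-a.e.\ equality and the characterization of $\mu \ll \nu$ by strict positivity of $h$ then go through as in Theorem \ref{rn}. I expect the main obstacle to be the construction of $A$ and the verification that $\nu_{|A}$ is semifinite: one must recognize that the Dedekind completeness needed to form the essential supremum is available for $\nu$ only because $\nu$ and $\mu$ agree on null sets over $E_0$, and that off $A$ the measure $\nu$ takes only the values $0$ and $\infty$ on $\mu$-subsets, which is exactly what forces $h = \infty$ there; once this splitting is in place the result reduces to the semifinite theorem \ref{rn} and routine bookkeeping across the three pieces.
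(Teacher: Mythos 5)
Your proposal is correct and follows essentially the same route as the paper: the equivalences among (ii), (iii), (iv) are dispatched via Lemmas \ref{mu}, \ref{mus} and \ref{lsigf0} exactly as in the text, and the substantive step splits $X$ into a piece on which $\nu$ is semifinite (where Theorem \ref{rn} supplies a finite-valued density) and a piece on which the density is forced to equal $\infty$. The only cosmetic difference is that you prove (ii) $\Rightarrow$ (i) directly by forming the essential supremum $A$ of the $\nu$-finite sets, whereas the paper routes through (iv) and calls the same object the semifinite part projection of the normal weight $\psi_\nu$; the decomposition and the appeal to Theorem \ref{rn} are identical (the paper does give a short separate argument for the a.e.\ uniqueness of the now $[0,\infty]$-valued $h$, which you defer to Theorem \ref{rn}, but this is a routine exercise for semifinite $\mu$).
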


 \begin{proof}    That (i) implies  (ii), and (iii) is equivalent to (ii), is as in Theorem \ref{rn}, for example using Lemma \ref{fis} again.

(ii) $\Rightarrow$ (iv) \  Suppose that $\nu \ll \mu$ and $\nu$ has $\mu$-support.
 Then $\psi = \psi_\nu$ is normal by Lemma \ref{mus} (1).

(iv) $\Rightarrow$ (i) \ If $p = \chi_S$ is the `semifinite' part projection of $\psi_\nu$ then $\psi_\nu$ is semifinite and normal
on $\M p = L^\infty(S,\mu_{|S})$. Also $\psi_\nu(r) = \infty$ for every nonzero projection $r \in \M (1-p)$.
By Theorem \ref{rn} there exists measurable $h : S \to [0,\infty)$  with $\nu(E) = \int_E \, h \, d \mu$ for all $E \in \Sigma$ with $E \subset S$.
Setting $h = \infty$ on $S^c$,  it is easy to see that $\nu(E) = \int_E \, h \, d \mu$ for all $E \in \Sigma$.

Most of the   final assertions  are as before, for example the first  follows from Lemma \ref{tol}.
It is an exercise
suitable for a graduate real variables course that if  $\mu$ is semifinite
and $f, g: X \to [0,\infty]$ are measurable
with $\mu_{f} = \mu_{g}$, then  $f = g$ $\mu$-a.e.
 For example suppose there is a set $E \in \Sigma$ where $g(x) = \infty$ but $f(x) < \infty$ a.e.
If  $\mu(E_n) > 0$ then  $E_n$ contains a set $F$ of strictly positive finite measure
and we get the contradiction $$\infty = \int_F \, g \, d \mu = \int_F \, f \, d \mu < \infty .$$
Thus $f^{-1}([0,\infty)) = g^{-1}([0,\infty))$
(and $f^{-1}(\{ \infty \}) = g^{-1}(\{ \infty \})$) $\mu$-a.e.
  \end{proof}

\begin{corollary} \label{sigmaf} Let $(X,\Sigma,\mu)$ be a  $\sigma$-finite measure space, and set $\M = L^\infty(X,\Sigma,\mu)$.
Then $\psi_\nu$ is a normal weight  on $\M_+$ for every  positive measure $\nu$  on $(X,\Sigma)$ with $\nu \ll \mu$.
\end{corollary}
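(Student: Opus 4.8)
The plan is to reduce everything to Corollary \ref{rn2}, whose standing hypothesis is that the base space be localizable. First I would record that a $\sigma$-finite measure space is localizable: by the remark in Section \ref{LinftyW1} a $\sigma$-finite $\mu$ is dualizable, and Theorem \ref{lsigf1} then shows that a dualizable space is localizable. This places us squarely in the setting of Corollary \ref{rn2}, so that to conclude $\psi_\nu$ is a normal weight it suffices to verify condition (iii) there, namely that $\nu \ll \mu$ (which is assumed) and that $\nu$ is $\mu$-semifinite.

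The heart of the argument is therefore checking $\mu$-semifiniteness, and this is where $\sigma$-finiteness is genuinely used. Writing $X = \bigcup_n X_n$ with each $\mu(X_n) < \infty$, I would take any $E \in \Sigma$ with $\nu(E) > 0$ and note $E = \bigcup_n (E \cap X_n)$. If $\nu(E \cap X_n) = 0$ for every $n$, then countable subadditivity of $\nu$ would force $\nu(E) \leq \sum_n \nu(E \cap X_n) = 0$, a contradiction. Hence $\nu(E \cap X_n) > 0$ for some $n$, and $E \cap X_n$ is a $\mu$-finite measurable subset of $E$. This is precisely the definition of $\mu$-semifiniteness, so Corollary \ref{rn2}, (iii) $\Rightarrow$ (iv), yields that $\psi_\nu$ is normal.

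There is no serious obstacle here; the one point worth flagging is a subtlety in the definition of $\mu$-semifiniteness, which demands only $\nu(F) > 0$ and not $\nu(F) < \infty$. Thus the subset $E \cap X_n$ works even when $\nu$ happens to be infinite on it, and in particular $\nu$ need not be semifinite for the conclusion to hold. This is exactly why the result produces normality (via Corollary \ref{rn2}) rather than the stronger normal-and-semifinite conclusion of Theorem \ref{rn}, and correspondingly the Radon--Nikodym density one may extract here is allowed to take the value $\infty$.
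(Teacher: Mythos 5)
Your proof is correct, but it takes a different route from the paper's. The paper's proof of Corollary \ref{sigmaf} reduces to the case of \emph{finite} $\mu$ and then appeals to the proof of Lemma \ref{fis} (1), i.e.\ it obtains a $\mu$-support for $\nu$ directly from the classical finite-measure Radon--Nikodym theorem (or from a countability argument), after which normality follows; implicit in this is a patching step to reassemble the $\sigma$-finite space from its finite pieces. You instead stay on the whole space, verify by a one-line countable-subadditivity argument that $\nu$ is $\mu$-semifinite (if $\nu(E)>0$ and $X=\bigcup_n X_n$ with $\mu(X_n)<\infty$, then some $E\cap X_n$ has $\nu(E\cap X_n)>0$ and finite $\mu$-measure), and then invoke the implication (iii) $\Rightarrow$ (iv) of Corollary \ref{rn2}, having first noted that $\sigma$-finite implies dualizable, hence localizable by Theorem \ref{lsigf1}. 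The two arguments ultimately both pass through the existence of a $\mu$-support (yours does so inside Corollary \ref{rn2}, via Lemma \ref{mu} (2) rather than Lemma \ref{fis} (1)), but yours buys a cleaner derivation: no reduction to finite measure, no gluing of the pieces, at the cost of using the full localizable Radon--Nikodym machinery as a black box --- which is legitimate, since Corollary \ref{rn2} precedes this statement. Your closing observation that only $\nu(F)>0$ (not $\nu(F)<\infty$) is required, so that $\nu$ need not be semifinite and the density may take the value $\infty$, is also accurate and is exactly why Corollary \ref{rn2} rather than Theorem \ref{rn} is the right tool.
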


 \begin{proof}  By the usual reduction of $\sigma$-finite to finite we may suppose that $\mu$ is finite.
 Then appeal to the proof of 
 Lemma \ref{fis} (1).
 \end{proof}

The following very general Radon-Nikodym theorem also does not seem to appear  in the literature.

\begin{theorem} \label{rnd2} {\rm (Radon-Nikodym for Dedekind measures)} \ Fix a
 Dedekind measure space $(X,\Sigma,\mu)$.
 Let $\nu$ be a positive  measure   on $(X,\Sigma)$ with $\nu \ll \mu$, and let $(S,\mu_{|S})$ be the semifinite part 
of $\mu$.   We assume that  either {\rm (i)} \ $\nu$ is $\mu$-semifinite; or  {\rm (ii)} \
   $\nu$ is $\mu_{|S}$-semifinite on $S$ and  $\nu = \infty$ on all nonempty 
measurable subsets of  $S^c$.
Then there exists a  measurable $h : X \to [0,\infty]$  with $\nu(E) = \int_E \, h \, d \mu$ for all $E \in \Sigma$.  Such $h$ is necessarily $\mu$-a.e.\ unique on $S$. 
Also, $\psi_\nu$ is normal on $L^\infty(X,\Sigma,\mu)_+$.

 Finally, if the above conditions all hold (including either (i) or (ii)), then
\begin{itemize} \item [(1)]   $\mu \ll \nu$   if and only if
$h$ may be chosen to be strictly positive,
 \item [(2)]  $\nu$ is semifinite on $S$  if and only if $h$ may be chosen to be finite on $S$.
  \end{itemize}
\end{theorem}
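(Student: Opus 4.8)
The plan is to split $X$ into its semifinite part $S$ and anti-semifinite part $S^c$ furnished by Lemma \ref{mu} (1), to handle $S$ with the localizable Radon--Nikodym theorem already proved, and to deal with $S^c$ by hand. First I would note that $(S,\mu_{|S})$ is semifinite with Dedekind complete projection lattice, hence localizable. In both cases (i) and (ii) the restriction $\nu_{|S}$ is $\mu_{|S}$-semifinite on $S$: in case (i) this is part of Lemma \ref{mu} (2), and in case (ii) it is the hypothesis. Since also $\nu_{|S} \ll \mu_{|S}$, Corollary \ref{rn2} applied to the localizable space $(S,\mu_{|S})$ produces a measurable $h_0 : S \to [0,\infty]$, unique $\mu$-a.e.\ on $S$, with $\nu(E) = \int_E h_0 \, d\mu$ for every $E \in \Sigma$ with $E \subset S$, and shows that the weight $f \mapsto \int_S f \, d\nu$ is normal on $L^\infty(S,\mu_{|S})_+$. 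This already gives the asserted $\mu$-a.e.\ uniqueness on $S$.

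Next I would extend $h_0$ across $S^c$. In case (i), Lemma \ref{mu} (2) tells us $\nu(E)=0$ for every measurable $E \subset S^c$, so $h = 0$ on $S^c$ works; in case (ii) the hypothesis gives $\nu(E)=\infty$ whenever $E \subset S^c$ and $\mu(E)>0$ (while $\nu(E)=0$ when $\mu(E)=0$, by $\nu \ll \mu$), so $h = \infty$ on $S^c$ works. In either case, for general $E \in \Sigma$ I split $E = (E\cap S)\cup(E\cap S^c)$ and verify $\nu(E) = \int_{E\cap S} h\,d\mu + \int_{E\cap S^c} h\,d\mu = \int_E h\,d\mu$, yielding the global density $h : X \to [0,\infty]$.

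For normality of $\psi_\nu$ on $\M_+$, I would write $e = \chi_S$ and use $\M \cong L^\infty(S,\mu_{|S}) \oplus L^\infty(S^c,\mu_{|S^c})$, so that $\psi_\nu(f) = \int_S fe\,d\nu + \int_{S^c} fe^\perp\,d\nu$. The first summand is normal by the previous paragraph. The second is the zero weight in case (i), and in case (ii) it is the weight that is $0$ at $0$ and $+\infty$ at every nonzero element of $(\M e^\perp)_+$; this is normal because if $g_t \nearrow g$ with $g \neq 0$ then some $g_{t_0} \neq 0$, forcing $\sup_t \int_{S^c} g_t \, d\nu = \infty = \int_{S^c} g\, d\nu$ (and the case $g=0$ is trivial). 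Since a sum of two normal weights across a direct sum is normal (the supremum of a sum of increasing $[0,\infty]$-valued nets is the sum of the suprema), $\psi_\nu$ is normal.

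Finally I would read off the two equivalences. For (1), $\mu \ll \nu$ restricts to $\mu_{|S} \ll \nu_{|S}$, so by the ``moreover'' clause of Corollary \ref{rn2} the density $h_0$ on $S$ may be taken strictly positive; on $S^c$ case (ii) already has $h = \infty$, while in case (i) the relation $\mu \ll \nu$ together with $\nu = 0$ on $S^c$ forces $S^c$ to be $\mu$-null, so $h$ may be redefined strictly positive there harmlessly. The converse is the vanishing principle: if $h > 0$ $\mu$-a.e.\ and $\int_E h\,d\mu = 0$ then $\mu(E)=0$. For (2), Theorem \ref{rn} on the localizable space $(S,\mu_{|S})$ says a finite density $h_0 : S \to [0,\infty)$ exists if and only if $\nu_{|S}$ is strongly $\mu_{|S}$-semifinite; since $\nu_{|S}$ is already $\mu_{|S}$-semifinite, this reduces to $\nu_{|S}$ being semifinite, which is precisely ``$\nu$ semifinite on $S$''. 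The step I expect to be the main obstacle is the normality argument on the anti-semifinite part in case (ii): one must argue carefully that the ``$+\infty$ off zero'' weight respects suprema and that reassembling the two summands preserves normality.
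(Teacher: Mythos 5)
Your proposal is correct and follows essentially the same route as the paper's proof: decompose via the semifinite part $(S,\mu_{|S})$ from Lemma \ref{mu}, apply Corollary \ref{rn2} (and Theorem \ref{rn} for item (2)) on the localizable piece $S$, set $h=0$ or $h=\infty$ on $S^c$ according to the case, and assemble normality from the two summands. Your explicit verification that the ``$+\infty$ off zero'' weight on the anti-semifinite part is normal is a detail the paper leaves implicit, but the argument is the same.
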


 \begin{proof}   By 
 Lemma \ref{mu} (2) and (1), 
 $\mu$ has a semifinite part $(S, \mu_{|S})$, and $(S, \mu_{|S})$ is Dedekind.
Also,  $\nu_{|S} \ll \mu_{|S}$.
 If $\nu$ is $\mu$-semifinite then  $\nu_{|S}$ is $\mu_{|S}$-semifinite.
 In this case it follows from Theorem \ref{rn2}  that there is a measurable function $h_S : S \to [0,\infty]$ such that
 $d \nu_{|S} = h_S \, d \mu_{|S}$.    On $X \setminus S$ we have that $\nu =0$ by Lemma \ref{mu} (2).
 Let $h =  h_S$ on $S$ and otherwise $0$.
  Then $\nu(E) = \int_E \, h \, d \mu$ for all $E \in \Sigma$.

Similarly in case (ii), except that now on $X \setminus S$ we have that $\nu = \infty$, and we set $h = \infty$ here.

  Integration with respect to $\nu$ on $S$ is normal on $\M = L^\infty(S, \mu_{|S})$ by Theorem \ref{rn2}
   (iv). 
   Clearly  in case (ii) integration with respect to $\nu$ on $S^c$ is (zero and) normal  on $\M(1-p) = L^\infty(S^c, \mu_{|S^c})$.
Hence one sees that $\psi_\nu$ is  normal.
Similarly in case (ii), except that now $\psi_\nu$ is  the trivial infinite weight on $\M(1-p)$.

Item (1)  is  proved just as in Theorem \ref{rn}, and 
(2) follows from the first part and from Theorem \ref{rn} applied on $S$.
  \end{proof}

\begin{remark}  Some closing remarks on the material in this section:

(1)\ For non-localizable measure spaces $(X,\Sigma,\mu)$, and a measure $\nu \ll \mu$,
  $\nu$ having $\mu$-support need not imply  $\mu$-semifiniteness, nor vice versa.  Nor need these be
  related to having $\nu(E) = \int_E \, h \, d \mu$ for some nonnegative measurable $h$.
       A first counterexample is a localizable measure $\nu$ on $(X,\Sigma)$ some of whose strictly positive values are changed to $\infty$ to yield a nonsemifinite measure $\mu$ with $\nu$-support.
Or consider Example \ref{nonl}, the  standard example of a semifinite non-Dedekind space,   an uncountable set   $X$ with counting measure $\mu$ on the
 countable/co-countable  $\sigma$-algebra. If $E$ is the uncountable  subset with uncountable complement  in that example,
 define $\nu(A) = \mu(E \cap A)$ for $A \in \Sigma$.    One can show easily that
 $\nu$ is
  $\mu$-semifinite but does not have $\mu$-support, nor satisfies the Radon-Nikodym theorem.

(2)\  
 We remark that an alternative proof of (i) in Theorem \ref{rn} is to  `decompose' $\mu$ and/or $\nu$ into finite pieces.
For example  if  $\mu_i$ are as in Theorem \ref{loc} (iii), then $\nu_{|E_i} \ll \mu_i$,
 so by the Radon-Nikodym theorem referred to in the proof of Lemma \ref{fis} (1) we get $d \nu_{|E_i}  = h_i d \mu_i$.  
Alternatively, $\psi_\nu$ may be `decomposed' as a sum of positive
functionals $\psi_i$ with mutually orthogonal supports  and corresponding to $h_i \in L_1(\mu)_+$.
 One may then assemble $h$ from the $h_i$.  There will be some work in showing that $h$ is measurable.
Since $\psi_\nu$ and $\psi_{\mu_h}$ are normal and agree on each `piece' of $\M$,  they must be equal. 

As usual we write the $h$ in (i) of Theorem \ref{rn}  as $\frac{d\nu}{d\mu}$, and call it the
  Radon-Nikodym derivative.
  As usual, $\frac{d(\nu + \sigma)}{d\mu} =
\frac{d\nu}{d\mu} + \frac{d\sigma}{d\mu}$ $\mu$-a.e.
Also if $\nu \ll \mu$ satisfies all the conditions of the theorem,
and if $\sigma  \ll \nu$ similarly satisfies all the conditions of that theorem (with $\mu$ replaced by $\nu$), then
$\frac{d\sigma}{d\nu} \frac{d\nu}{d\mu}
= \frac{d\sigma}{d\mu}$ $\mu$-a.e.
Indeed $$\sigma(E) = \int_E \, \frac{d\sigma}{d\nu} \, d \nu =
\int_E \,  \frac{d\sigma}{d\nu} \, \frac{d\nu}{d\mu} \, d \mu , \qquad E \in \Sigma.$$ So $\frac{d\sigma}{d\nu} \frac{d\nu}{d\mu}
= \frac{d\sigma}{d\mu}$ $\mu$-a.e.
It follows that $\sigma$ is strongly $\mu$-semifinite.

It follows that  if $\nu \ll \mu$ satisfies all the conditions of  Theorem  \ref{rn} and if also
 $\mu \ll \nu$,  then $\mu  \ll \nu$ similarly satisfies all the conditions of
 the theorem (with $\mu$ and  $\nu$ interchanged), and
$\frac{d\nu}{d\mu} = (\frac{d\mu}{d\nu})^{-1}$ $\mu$-a.e.   In particular,  $\mu$ is strongly $\nu$-semifinite.
To see this, note that if $\mu(E) > 0$ then $\nu(E) > 0$, and since $\nu$ is strongly $\mu$-semifinite
there is a $\nu$-finite set $F \subset E$ with $0 < \nu(F) < \infty$.   Clearly $\mu(F) > 0$, so $\mu$ is strongly $\nu$-semifinite.
Now apply the last paragraph to see that
$\frac{d\mu}{d\nu} \frac{d\nu}{d\mu}
= \frac{d\mu}{d\mu} = 1$ $\mu$-a.e.

(3)\ If $h$ is as in (i) of  Theorem \ref{rn} then $\mu_h$ is semifinite and in fact is Dedekind, 
hence localizable, on $\Sigma$ too.
To see this note that $\mu_h$  has  a $\mu$-support $E_0$ by Lemma \ref{fis} (2).
On $E_0$ we have that $\mu_h \cong \mu$,  hence
$\mu_h$ is Dedekind on $E_0$; and  $\mu_h$ is zero on $E_0^c$.
If $\D \subset \Sigma$ then $\{ D \cap E_0  : D \in \D \}$ has an $\mu$-essential supremum $S$ say
in the sense of the first lines of the proof of Lemma \ref{mu}, with $S \subset E_0$.    Then $S$ (and even $S \cup E_0^c$)
is an $\mu_h$-essential supremum  of $\D$.    For
$\mu_h(D \setminus S) =  \mu_h((D \cap E_0) \setminus S) = 0$ for all $D \in \D$.
Also, if $E \in \Sigma$ with $\mu_h(D \setminus E) = 0$ for all $D \in \D$,
then $\mu_h((D \cap E_0) \setminus (E \cap E_0)) = 0$.  Hence $E$ and $E \cap E_0$ contains $S$ $\mu$-a.e.

Note however that for set theoretic reasons (involving measurable cardinals) it would not be good to attempt to use
$\nu$ being localizable with  $\nu \ll \mu$ as an
equivalent condition in Theorem \ref{rn}.  Indeed 
(i) need not hold for finite measures
(which are all 
localizable), as we discuss momentarily.

One may ask if the condition that $\nu$ has $\mu$-support is necessary in  Theorem  \ref{rn}.
The answer is in the affirmative for infinite measures (see Example  \ref{nonl}).   If $\nu$ is also finite in the theorem then we may remove the
hypothesis in (ii) that $\nu$ is semifinite (it is automatic), and ask if the $\mu$-support
condition is necessary? 
In this case `strongly $\mu$-semifinite' in (iii) is
equivalent to $\mu$-semifiniteness, and we may ask whether the latter is automatic?
The answer to all of these questions depends on the existence of measurable cardinals.   A theorem to this effect was in an earlier draft of our paper: Every positive finite measure $\nu \ll \mu$, for every localizable measure space $(X, \mu)$, is of the form $\mu_h$ in the Radon-Nikodym theorem for a measurable function $h : 
X \to [0,\infty]$ (resp.\ has $\mu$-support, is $\mu$-semifinite), if and only if measurable cardinals do not exist.
And similarly with `finite'  replaced by `semifinite'.
We may present the proof of this elsewhere, although some of this is discussed in scattered 
parts of various volumes of \cite{Fremlin}.   

Similarly, one may ask if Theorem \ref{rn2} holds for every  positive measure $\nu$  on $(X,\Sigma)$ with $\nu \ll \mu$. Another counterexample, this time with $\nu(X) = \infty$,  is if  $\mu$
 is a localizable infinite measure some of whose strictly positive values
  are changed to $\infty$ to yield a nonsemifinite measure $\nu$.  Then $\nu \ll \mu$,
but $\nu$ is not $\mu$-semifinite, hence $\nu$ cannot be of the desired form $\mu_h$. 
However we can in some sense relax the condition in the Radon-Nikodym theorems above
 by dropping
 the condition that  $\nu$ is  $\mu$-semifinite, at the expense of the introduction of a `local $\mu$-finiteness'
related to
  234Yn in \cite{Fremlin}.

 (4)\  A simple but important example  due to Haagerup (\cite[Example 3.6]{GL} or \cite[Remark 1.12]{haag-Nw})
 is the weight $\omega$ on $l^\infty_+$ defined by $\omega((a_n)) = \sum_n \, a_n$ if $(a_n)$ is finitely
 supported, and is $\infty$ otherwise.
 This weight  is
completely additive on projections,
and the induced
measure $\nu(E) = \omega(\chi_E)$ agrees with counting measure
on $\Ndb$, but $\omega$ is not normal.

(5)\ Normality of $\psi_\nu$ does not imply  that $\nu$ is $\mu$-semifinite for general
Dedekind measures.   The Dixmier algebra example
at the end of Section \ref{cmsvn} shows this.
Similarly, 
 existence of a  measurable $h : X \to [0,\infty]$  with $\nu(E) = \int_E \, h \, d \mu$ for all $E \in \Sigma$,
does not imply  that $\nu$ is $\mu$-semifinite.
\end{remark}

\section{$L^\infty$ spaces as $W^*$-algebras - 2}  \label{Ws2}

We now complete the task initiated in Section \ref{LinftyW1}.
Theorem \ref{frec} is a key part of the characterization of commutative $W^*$-algebras in terms
of localizable measure spaces (see also Corollary \ref{awvn}),
but contains much more.

The assertions of  the next result  (and some other results in this section) follow
 easily from
the commutative case of Sakai's famous theorem (proved later in Corollary \ref{awvn}).
Indeed Lemma \ref{WStone}
is often used to prove that theorem, and may be skipped by  seasoned operator algebraists.

\begin{lemma} \label{WStone} Let $\M$ be an abelian $W^*$-algebra.
Then any bounded increasing net
in $\M_+$ has a supremum, which coincides with its weak* limit. In particular the supremum of an increasing net of projections will be a projection. In addition the weak* 
continuous states of $\M$ determine the order on $\M$.
\end{lemma}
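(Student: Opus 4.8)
The plan is to run the argument through the Gelfand representation $\M \cong C(K)$ for a compact Hausdorff space $K$, while exploiting that, being a dual space $\M = (\M_*)^*$, bounded subsets of $\M$ are relatively weak* compact by Banach--Alaoglu. Fix a bounded increasing net $(f_t)$ in $\M_+$, say with $\|f_t\| \le M$. The first step is a scalarization: for each positive weak* continuous functional $\varphi \in \M_*$, the net $(\varphi(f_t))$ is increasing and bounded, hence converges to $\sup_t \varphi(f_t)$. Consequently every weak* cluster point $f$ of $(f_t)$ -- and at least one exists by compactness -- satisfies $\varphi(f) = \sup_t \varphi(f_t)$ for every positive $\varphi \in \M_*$, since a cluster value of a convergent scalar net must equal its limit.

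Next I would upgrade this to genuine weak* convergence and to the supremum statement, and this is where the real work lies. Both require the structural fact that the positive weak* continuous functionals positively generate $\M_*$ (so that $\M_*$ is self-adjoint and spanned by $(\M_*)_+$), together with the equivalent assertion that they \emph{determine the order}: if $c = c^* \in \M$ satisfies $\varphi(c) \ge 0$ for all positive $\varphi \in \M_*$, then $c \ge 0$. Granting these, uniqueness of the cluster point is immediate, since the positive functionals then separate the points of $\M$ and $f$ is pinned down by the values $\sup_t \varphi(f_t)$; hence the whole net converges weak* to $f$. That $f = \sup_t f_t$ follows at once: from $\varphi(f) = \sup_t \varphi(f_t) \ge \varphi(f_s)$ for all positive $\varphi$ we obtain $f \ge f_s$ by order determination, and if $g$ is any upper bound then $\varphi(g) \ge \varphi(f_t) \to \varphi(f)$ forces $g \ge f$. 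This also yields the final (order-determining) assertion of the lemma.

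The main obstacle is therefore establishing that $\M_*$ is positively generated and that positive normal functionals determine the order. In the commutative picture I would identify $\M_*$ with a norm-closed subspace of $C(K)^*$, i.e.\ of the regular Borel measures on $K$, and reduce the claim to showing that the Jordan (Hahn) decomposition $\mu = \mu_+ - \mu_-$ of a measure $\mu$ representing an element of $\M_*$ again lies in $\M_*$; equivalently, that restrictions of such measures to Borel sets remain weak* continuous. This is the genuinely analytic heart of the matter, tied to the hyperstonian nature of $K$, and is precisely the ingredient that makes the lemma a stepping stone towards Sakai's theorem. Here I would either carry out this Jordan-decomposition argument directly or invoke the basic $W^*$-structure recalled in the cited texts; granting self-adjointness of the involution on $\M$ (so that $\varphi^*(x) = \overline{\varphi(x^*)}$ stays in $\M_*$), positive generation and order determination then follow by a routine Hahn--Banach/bipolar argument once the positive part of each normal functional is known to be normal.

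Finally, for the projection statement, once the supremum $f = \sup_t p_t$ of an increasing net $(p_t)$ in $\Pdb(\M)$ is known to exist by the above, I would argue purely algebraically inside the abelian $C^*$-algebra. Since $1$ is an upper bound we have $0 \le f \le 1$, whence $f - f^2 = f(1-f) \ge 0$ as a product of commuting positive elements, so $f^2 \le f$. On the other hand $0 \le p_t \le f$ and commutativity give $f^2 - p_t^2 = (f - p_t)(f + p_t) \ge 0$, so $p_t = p_t^2 \le f^2$ for all $t$; taking the supremum yields $f \le f^2$. Hence $f^2 = f$ and $f \in \Pdb(\M)$.
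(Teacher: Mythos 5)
There is a genuine gap: the entire argument is funneled through the claim that the positive weak* continuous functionals positively generate $\M_*$ and determine the order, and you never actually prove that claim. You correctly identify it as ``the genuinely analytic heart of the matter,'' but then defer it to a Jordan-decomposition argument for normal measures on a hyperstonian $K$ which you do not carry out, or to ``the basic $W^*$-structure recalled in the cited texts.'' Neither option is available here. The paper develops this material from scratch, and in its logical order the hyperstonian nature of the spectrum (Proposition \ref{sa3}, Theorem \ref{Dix}) is \emph{derived} using this very lemma, so appealing to it would be circular; and proving that the Jordan decomposition of a normal functional is again normal is at least as hard as the lemma itself. Without positive generation of $\M_*$ you also cannot justify two steps you treat as immediate: that positive normal functionals separate the points of all of $\M$ (needed for uniqueness of the weak* cluster point), and that the cluster point $f$ is selfadjoint (needed before you may apply order determination to $f-f_s$).

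The paper's proof sidesteps all of this. It first shows that $\M_{\rm sa}\cap{\rm Ball}(\M)$ is weak* closed by a purely metric trick: if selfadjoint $x_t$ with $\|x_t\|\le 1$ converge weak* to $a+ib$, then $\|x_t+is\|\le\sqrt{1+s^2}$, whence $\|b+s\|\le\|a+i(b+s)\|\le\sqrt{1+s^2}$ for every real $s$, forcing $b=0$; Krein--Smulian then gives that $\M_{\rm sa}$ and $\M_+={\rm Ball}(\M)_{\rm sa}\cap(1-{\rm Ball}(\M))$ (after scaling) are weak* closed. Order determination is then a one-line consequence of the bipolar theorem applied to the weak* closed convex cone $\M_+$ --- no decomposition of functionals is required --- and the supremum statement follows from weak* closedness of $\M_+$ alone: any weak* cluster point $x$ of the increasing net satisfies $x_t\le x$ (as $x-x_{t}$ is a weak* limit of positive elements) and $x\le y$ for any upper bound $y$, so the cluster point is the unique supremum and the bounded net converges to it. If you replace your deferred structural claim with this weak*-closedness argument, the rest of your proof goes through. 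Your treatment of the projection statement is correct and self-contained (squeezing $f^2$ between $f$ and the $p_t$), and is a legitimate alternative to the paper's computation $(f+1)f=\sup_t(f+1)f_t=2f$.
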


\begin{proof} It will be helpful to
recall that $\M = C(K)$ for compact $K$, so that elements of $\M$ will sometimes silently
 be treated as functions (on $K$) below.

It is easy to see that $\M_{\rm sa}$ (resp.\ $\M_+$), or equivalently by
the Krein-Smulian theorem its intersection with Ball$(\M)$, is weak* closed.   This follows for example by considering
$x_t \to a + ib$ for $x_t , a, b \in \M_{\rm sa}$ with $\| x_t \| \leq 1$. Then  $\| x_t + i s \| \leq \sqrt{1+ s^2}$,
so that $\| b + s \| \leq \| a + i(b+s) \| \leq \sqrt{1+ s^2}$ for all real $s$, which forces $b = 0$
(see \cite[Lemma 6.1.1]{JP}). But then Ball$(\M)_+ = {\rm Ball}(\M)_{\rm sa} \cap (1 - {\rm Ball}(\M))$ is weak* closed.

By the bipolar theorem 
it follows that weak* continuous positive functionals determine the order
on $\M$. In particular, for any $0 \neq x \geq 0$ there exists a weak* continuous positive functional $\varphi$ with
$\varphi(x) > 0$.

Any weak* limit point $x$ of a bounded increasing net $(x_t)$ in $\M_+$ is a supremum for the net.
Indeed $x$ is an (increasing) weak* limit of a subnet $(x_{t_\lambda})$,
so for any $t$ we can find $t_{\lambda} \geq t$ with $x_t \leq x_{t_\lambda} \leq x$.
Also if $y \geq x_{t_\lambda}$ for all $\lambda$, then clearly $y  \geq x$.
Thus $x = \sup_t \, x_t$ and $x_t \to x$  weak*.

Now suppose that $(x_t)$ is an increasing net of projections in $\M$ with supremum $x$.
Then $f \leq 1$ and so for each $t$ we have  $f_t = f f_t$.
Hence  $(f+1) f= \sup_t \,  (f+1)  f_t = \sup_t \, 2 f_t  = 2f$.
 That is, $f$ is a projection.
 \end{proof}

\begin{theorem} \label{frec}  Let $\mu$ be a measure on a measurable space $(X,\Sigma)$, and set $\M = L^\infty(X,\Sigma,\mu)$.
The following are equivalent:
\begin{itemize}
\item [(i)] $(X,\Sigma,\mu)$ is dualizable (or equivalently, localizable).
\item [(ii)] $\mu$ is semifinite and $\M$ is a $W^*$-algebra (that is, has a predual).
\item [(iii)] $\mu$ is semifinite and $\M_{\rm sa}$ is  boundedly complete (that is, suprema in $\M_{\rm sa}$ of bounded subsets of $\M_{\rm sa}$ always exist).
\item [(iv)] The canonical representation of $\M$ on $L^2(X,\mu)$ is
faithful and has range which is a von Neumann algebra.
\item [(v)] The canonical representation of $\M$ on $L^2(X,\mu)$ is
faithful and has range which is a masa (that is, a maximal abelian subalgebra of $B(L^2(X,\mu))$).
\item [(vi)]   $\mu$ is semifinite and every 
strongly  $\mu$-semifinite positive
measure $\nu$ on $(X,\Sigma)$
with $\nu \ll \mu$
satisfies
the Radon-Nikodym theorem. That is, there exists measurable  $h : X \to [0,\infty)$ on $X$ with $\nu = \mu_h$.   \end{itemize}
\end{theorem}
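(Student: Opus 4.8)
The plan is to prove all six equivalences as a web of implications anchored on (i). Since Theorem \ref{loc} already identifies dualizability with localizability, I treat (i) throughout as ``dualizable $=$ localizable $=$ semifinite $+$ Dedekind'' and show that each of (ii)--(vi) is equivalent to it.

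First the functional-analytic core (i)--(v). For (i) $\Rightarrow$ (ii), dualizability says $\M = L^1(X,\mu)^*$, so $\M$ has a predual and is a $W^*$-algebra, while Theorem \ref{lsigf1} supplies semifiniteness. For (ii) $\Rightarrow$ (iii), if $\M$ is a $W^*$-algebra then by Lemma \ref{WStone} every bounded increasing net in $\M_+$ has a supremum; a general bounded subset reduces to the increasing net of its finite suprema (finite suprema of self-adjoint elements existing in the commutative algebra as pointwise maxima), so $\M_{\rm sa}$ is boundedly complete. For (iii) $\Rightarrow$ (i), bounded completeness gives any increasing net of projections $(p_t)$ a supremum $f \in \M_{\rm sa}$ with $0 \le f \le 1$; the commutative inequalities $p_t = p_t^2 \le f p_t \le f^2$ show $p_t \le f^2$, so $f \le f^2 \le f$ and $f = f^2$ is a projection, whence the space is Dedekind and (with the assumed semifiniteness) localizable. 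This closes the cycle (i)$\Rightarrow$(ii)$\Rightarrow$(iii)$\Rightarrow$(i). For the representation conditions, (i) $\Rightarrow$ (iv) is immediate from Theorem \ref{lsigf1}, and (iv) $\Rightarrow$ (ii) holds because faithfulness forces semifiniteness (the discussion in Theorem \ref{lsigf1}) and yields an isometric $*$-isomorphism of $\M$ onto a von Neumann algebra, whose predual transports back to $\M$. Since a masa equals its own commutant and is thus weak* closed, (v) $\Rightarrow$ (iv) is automatic; for (i) $\Rightarrow$ (v) I would use the decomposition of Theorem \ref{loc}(iii) into finite pieces $\M p_i$, note that any $T$ commuting with the multiplication algebra commutes with each $p_i = M_{\chi_{E_i}}$ and so block-diagonalizes along $L^2 = \oplus_i p_i L^2$, recover $T|_{p_i L^2} = M_{g_i}$ on each finite piece by taking $g_i = T \chi_{E_i}$ and using boundedness of $T$ to place $g_i \in L^\infty$ with $\|g_i\|_\infty \le \|T\|$, and assemble $T = M_g$.

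It remains to attach (vi). The direction (i) $\Rightarrow$ (vi) is exactly the implication (iii) $\Rightarrow$ (i) of the localizable Radon--Nikodym theorem (Theorem \ref{rn}), together with the semifiniteness built into (i). The substantive direction is (vi) $\Rightarrow$ (i), and here is the construction I would use. Assuming $\mu$ semifinite, it suffices to produce the essential supremum of an arbitrary family $\mathcal{C}$ of $\mu$-finite sets, since semifiniteness reduces the Dedekind condition to such families. Let $\mathcal{D}$ be the upward-directed family of all finite unions of members of $\mathcal{C}$, and define $\nu(E) = \sup\{\mu(E \cap D) : D \in \mathcal{D}\}$. Directedness of $\mathcal{D}$ yields countable additivity (the nontrivial inequality $\nu(\bigsqcup_k E_k) \ge \sum_k \nu(E_k)$ comes from choosing one $D \in \mathcal{D}$ dominating finitely many near-optimal choices), so $\nu$ is a measure with $\nu \le \mu$, hence $\nu \ll \mu$; and $\nu$ is strongly $\mu$-semifinite, since any $D$ witnessing $\nu(E) > 0$ provides a $\mu$-finite $F = E \cap D$ with $0 < \nu(F) \le \mu(F) < \infty$. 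The Radon--Nikodym hypothesis (vi) then gives $h : X \to [0,\infty)$ with $\nu = \mu_h$ and $0 \le h \le 1$ $\mu$-a.e.

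Finally I would read the supremum off $h$. Testing $\nu(D) = \mu(D)$ for each $D \in \mathcal{D}$ (which holds as $\nu \le \mu$ and $\nu(D) \ge \mu(D \cap D)$) forces $\int_D (1-h)\,d\mu = 0$, so $h = 1$ $\mu$-a.e.\ on every $D$; hence $S := \{h = 1\}$ contains each $D$ modulo null sets and is an upper bound for $\mathcal{D}$. For minimality, if $E \in \Sigma$ contains every $D \in \mathcal{D}$ modulo null sets then $\mu(D \cap (S \setminus E)) = 0$ for all $D$, so $\nu(S \setminus E) = 0$; but $h = 1$ on $S$ gives $\mu(S \setminus E) = \int_{S \setminus E} h \, d\mu = \nu(S \setminus E) = 0$, i.e.\ $S \subseteq E$ modulo null sets. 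Thus $S$ is the essential supremum of $\mathcal{C}$, the space is Dedekind, and (i) follows. The main obstacle is precisely this last direction: checking that the inner-measure-type set function $\nu$ is genuinely countably additive and strongly $\mu$-semifinite, and then extracting the essential supremum from the level set $\{h = 1\}$ of its Radon--Nikodym derivative; the masa computation in (i) $\Rightarrow$ (v) is the only other step demanding real care.
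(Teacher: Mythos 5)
Your proposal is correct, and for the equivalences (i)--(v) it follows essentially the paper's own route: (i)$\Rightarrow$(ii),(iii),(iv) via Theorem \ref{lsigf1} and the definition of dualizability, the return to (i) via the supremum/projection argument from Lemma \ref{WStone}, (v)$\Rightarrow$(iv) by the weak* closedness of commutants, and (i)$\Rightarrow$(v) by exactly the paper's block-diagonalization of a commuting operator over the decomposition of Theorem \ref{loc}(iii). Your minor reroutings (going (ii)$\Rightarrow$(iii)$\Rightarrow$(i) instead of (ii)$\Rightarrow$(i) directly, and (iv)$\Rightarrow$(ii) instead of (iv)$\Rightarrow$(i)) change nothing of substance.

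The one genuinely different step is (vi)$\Rightarrow$(i). The paper proves dualizability: it takes an arbitrary positive $\varphi$ in the unit ball of $L^1(\mu)^*$, builds from it a strongly $\mu$-semifinite measure $\nu(E)=\sup\{\varphi(\chi_F): F\subseteq E,\ \mu(F)<\infty\}$, applies (vi) to get $h$ with $0\le h\le 1$, and checks $\varphi=\int \cdot\, h\,d\mu$, so that the (already isometric, by semifiniteness) canonical map $L^\infty\to (L^1)^*$ is onto. You instead prove Dedekindness directly: for a family $\mathcal{C}$ of $\mu$-finite sets (to which semifiniteness correctly reduces the problem) you form the directed family $\mathcal{D}$ of finite unions, set $\nu(E)=\sup_{D\in\mathcal{D}}\mu(E\cap D)$, verify this is a strongly $\mu$-semifinite measure with $\nu\le\mu$, and read the essential supremum off as the level set $\{h=1\}$ of the Radon--Nikodym derivative. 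Both arguments hinge on the same device (an inner-regularization-type auxiliary measure dominated by $\mu$, plus the observation that $\nu\le\mu$ forces $h\le 1$), and both use only the case $\nu\le\mu$ of (vi). The paper's version lands on condition (i) in its ``dualizable'' guise and recycles the classical isometry $L^\infty\hookrightarrow(L^1)^*$; yours lands on the ``localizable'' guise and is more self-contained measure-theoretically, at the cost of having to verify countable additivity of the supremum set function (which your directedness argument handles correctly) and the two halves of the essential-supremum property. Either is a complete and valid proof.
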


\begin{proof}  That (i) implies (iii) and (iv)  was done in Theorem \ref{lsigf1}.   Similarly (i) implies (ii)
by definition of a $W^*$-algebra and of dualizability. 
That (ii) implies (i) follows for example from Lemma
  \ref{WStone}.
 That (i)  implies (vi)  was done in  Theorem \ref{rn},  and that (v) implies (iv) is obvious.

 (iv) $\Rightarrow$  (i) \ 
  In the proof of Theorem \ref{lsigf1} we showed
  that the canonical representation of $\M$ on $L^2(X,\mu)$ is
faithful  if and only if  $\mu$ is semifinite.    Since  von Neumann algebras are $W^*$-algebras
we can deduce from for example  Lemma
  \ref{WStone} that $(X,\Sigma,\mu)$ is Dedekind.
  
(iii) $\Rightarrow$  (i) \ This is similar to the last lines, but we invoke the last paragraph of the proof of Lemma
  \ref{WStone}.

(i) $\Rightarrow$ (v) \   By Theorem \ref{loc} (iii)  we have $\M \cong \N$ via a $*$-isomorphism $\theta$, where $\N = \oplus_i \, \M_i$   with  each $\M_i$ of the form $L^\infty(\mu_i)$ for some finite measure $\mu_i$.    We can represent $\M_i$ on $H_i = L^2(\mu_i)$ via the regular representation $\pi_i$, and this induces a faithful normal $*$-representation
$\pi : \N \to B(H)$ where $H = \oplus_i \, H_i$.   We have that $\pi((x_i)) = (\oplus_i \, \pi_i(x_i))$ acts `diagonally', that is as a diagonal matrix when written in block matrix form with respect to the subspaces
$H_i$.
Thus if $T \in B(H)$ commutes with
$\pi(\M)$ it is an exercise to see that $T$ acts diagonally too, indeed $T = (T_i)$ where $T_i \in \pi_i(\M_i)'$.   It is a standard argument to see that
$T_i \in \pi_i(\M_i)$.
(Indeed if $f = T_i(1)$ then for $h \in L^\infty(\mu_i)$ we have $T_i(h) = T \M_h 1 = \M_h T(1) = hf$.   Since $\| f h \|_2 \leq \| T \| \| h \|_2$ we have
$f \in L^\infty(\mu_i)$ and $T_i = \M_f \in \pi_i(\M_i)$.)   Thus $\pi(\M)' = \pi(\M)$, and this forces $\pi(\M)$ to be a masa.

(vi)  $\Rightarrow$ (i) \   We will only use the case of (vi) where $\nu \leq \mu$.
It is classical 
 (see e.g.\  \cite[Theorem 6.15]{Fol}) that the canonical map $L^\infty \to (L^1)^*$ is an isometry for semifinite measures.
It suffices to show that any $\varphi \in {\rm Ball}(L^1(\mu)^*)_+$ is `integration' with respect to the measure
$h \, d \mu$, for
some $h \in L^\infty(\mu)$.   Define $\nu'(E) = \varphi ( \chi_E)$ for $\mu$-finite $E \in \Sigma$.   This is countably additive on
the measurable $\mu$-finite sets, by the matching part in the 
 standard proof   that 
$L^\infty =  (L^1)^*$ for finite measures (for example see the first paragraph of the proof of 
 \cite[Theorem 6.15]{Fol}).  Define 
$\nu(E) = \sup \, \nu'(F)$ where the supremum is over $\mu$-finite measurable $F \subset E$. 
This is easily checked to be a measure 
with $$\nu(E) = \sup \, \nu'(F) = \sup \, \varphi ( \chi_F ) \leq \sup \,\|  \chi_F \|_1 \leq \mu(E) .$$  
Moreover if $\nu(E) > 0$ then there exists $\mu$-finite measurable $F \subset E$ with $0 < \nu'(F) = \nu(F) < \infty$. 
So $\nu$ is  
strongly $\mu$-semifinite. 
By (vi) there exists measurable  $h : X \to [0,\infty)$ on $X$ with $\nu = \mu_h$.  
Let  $D$ be a set on which $h > 1$.  By semifiniteness we may assume  that $\mu(D) < \infty$. 
Since $0 \leq \int_D \, (h-1) \, d \mu = \nu(D) - \mu(D) \leq 0$ we see that $h = 1$ $\mu$-a.e.\ on 
$D$.  So $\mu(D) = 0$, and 
we may 
assume that $h : X \to [0,1]$ and $h \in L^\infty(\mu)$.   We have 
$$\varphi(f) = \int_X \, f \, d \nu = \int_X \, f  h \, d \mu$$ when $f$ is the characteristic function of a measurable $\mu$-finite set, hence
also for simple functions which are
linear combinations of such.   Now let  $f \in L^1(\mu)_+$, and $(s_n)$ be a sequence of the latter simple functions
with $s_n \geq 0$ and $s_n \nearrow f$.
 It follows from Lebesgue's monotone convergence theorem that
 $\int_X \, (f-s_n) \, d \mu \to 0$, so that $\varphi(s_n) \to \varphi(f)$, and
 $\varphi(s_n) = \int_X \, s_n h \, d \mu \to \int_X \, f  h \, d \mu$.   Thus $\varphi(f) =  \int_X \, f  h \, d \mu$
 for all $f \in  L^1(\mu)$.  \end{proof}

 We will need the next  results   later.
 The following fairly elementary fact may be found in for example Propositions 1.3.1 \& 1.3.2 of \cite{Sakai}. 
 We recall that a topological space is called {\em extremely disconnected} if the closure of any open set is open,
 and {\em Stonean} if in addition it is compact.
 
\begin{lemma} \label{sa1}\ Let $K$ be a compact Hausdorff space  such that
every  bounded increasing net of positive elements in $C(K)$ has a supremum.  Then $K$ is 
Stonean, and the projections in $C(K)$ are densely spanning.
\end{lemma}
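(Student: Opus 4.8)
The plan is to separate the two conclusions and to obtain the topological one first. Since $K$ is already compact Hausdorff, \emph{Stonean} will follow once I show $K$ is extremally disconnected, i.e.\ that $\overline{U}$ is open for every open $U \subseteq K$. Fix such a $U$ and consider the family $\mathcal{F} = \{ g \in C(K) : 0 \le g \le 1, \ g = 0 \text{ on } K \setminus U \}$. Because the pointwise maximum of two continuous functions is continuous, $\mathcal{F}$ is directed upward and bounded above by the constant function $1$; viewing it as an increasing net indexed by itself, the hypothesis produces an order-supremum $f = \sup \mathcal{F}$ in $C(K)$ with $0 \le f \le 1$. The crux is to identify $f = \chi_{\overline{U}}$: since $f$ is continuous and $\{0,1\}$-valued, the set $\overline{U} = f^{-1}(\{1\})$ is then simultaneously closed and open, which is exactly extremal disconnectedness.

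To establish $f = \chi_{\overline{U}}$ I would treat the two values separately. That $f = 1$ on $U$ (hence on $\overline{U}$ by continuity) is the easy half: for $x_0 \in U$, Urysohn's lemma (normality of $K$) supplies $g \in \mathcal{F}$ with $g(x_0) = 1$, whence $f(x_0) \ge g(x_0) = 1$. That $f = 0$ off $\overline{U}$ is where the real work lies, and I expect this to be the main obstacle, precisely because the order-supremum in $C(K)$ is \emph{not} the pointwise supremum, so its values cannot simply be read off. The device is to exploit minimality: if $f(x_0) > 0$ for some $x_0 \notin \overline{U}$, choose an open $W \ni x_0$ disjoint from $U$ and, by Urysohn, a continuous $\varphi : K \to [0,1]$ with $\varphi(x_0) = 0$ and $\varphi \equiv 1$ on $K \setminus W$. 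Then $f\varphi$ is still an upper bound for $\mathcal{F}$ (every $g \in \mathcal{F}$ vanishes on $W$, while $\varphi = 1$ off $W$), yet $f\varphi \le f$ with $(f\varphi)(x_0) < f(x_0)$, contradicting that $f$ is the \emph{least} upper bound. Hence $f$ vanishes off $\overline{U}$, completing the identification and the proof that $K$ is Stonean.

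For the second assertion I would first record that an extremally disconnected compact Hausdorff space has a base of clopen sets: given open $U \ni x$, regularity yields $W$ with $x \in W \subseteq \overline{W} \subseteq U$, and $\overline{W}$ is clopen by the part just proved. The projections of $C(K)$ are exactly the $\chi_V$ with $V$ clopen, so it remains to approximate an arbitrary $f \in C(K)$ uniformly by finite linear combinations of such. Given $\varepsilon > 0$, each $x$ has a clopen neighbourhood $V_x \subseteq f^{-1}(B(f(x), \varepsilon/2))$ on which $f$ oscillates by less than $\varepsilon$; compactness extracts a finite subcover, which I disjointify into a finite clopen partition $W_1, \dots, W_m$ (finite Boolean combinations of clopen sets are clopen). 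Choosing $c_k \in f(W_k)$, the clopen-simple function $\sum_{k} c_k \chi_{W_k}$ lies within $\varepsilon$ of $f$ in supremum norm, so the projections span a dense subspace. (Complex-valued $f$ are handled by treating real and imaginary parts, or by running the same argument with balls in $\mathbb{C}$.)
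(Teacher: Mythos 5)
Your proof is correct; the paper does not prove this lemma itself but simply cites Propositions 1.3.1--1.3.2 of Sakai, and your argument --- identifying the order-supremum of $\{g\in C(K): 0\le g\le 1,\ g=0 \text{ on } K\setminus U\}$ with $\chi_{\overline{U}}$ via the minimality trick with $f\varphi$, and then using the resulting clopen base to approximate uniformly by clopen-simple functions --- is essentially that standard argument. Both halves are complete as written, so there is nothing to add.
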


\begin{proposition} \label{sa3}  Let $\M$ be an abelian $W^*$-algebra.
Then the spectrum (i.e.\ maximal ideal  space) of $\M$ is Stonean, and the projections in $\M$ are densely spanning.
 \end{proposition}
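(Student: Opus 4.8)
The plan is to obtain this as an essentially immediate consequence of the two preceding lemmas, glued together by the Gelfand representation. First I would invoke the commutative Gelfand--Naimark theorem recalled in the introduction: since $\M$ is an abelian unital $C^*$-algebra, there is a $*$-isomorphism $\M \cong C(K)$, where $K$ is precisely the spectrum (maximal ideal space) of $\M$. As noted in the introduction, an algebraic isomorphism between abelian $C^*$-algebras is automatically an isometric $*$-isomorphism, and hence an order-isomorphism of the selfadjoint parts.

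Next I would transport the order-theoretic completeness across this identification. By Lemma \ref{WStone}, every bounded increasing net in $\M_+$ has a supremum (coinciding with its weak* limit). Since the isomorphism $\M \cong C(K)$ is an order-isomorphism, it preserves least upper bounds of bounded increasing nets wherever they exist; consequently every bounded increasing net of positive elements in $C(K)$ also has a supremum. This is exactly the hypothesis of Lemma \ref{sa1}.

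Applying Lemma \ref{sa1} to $K$ then yields both conclusions at once: $K$ is Stonean, and the projections in $C(K)$ are densely spanning. Transporting the second statement back through the $*$-isomorphism, which carries projections to projections and preserves norm-closures of linear spans, gives that the projections in $\M$ are densely spanning. Since $K$ was identified with the spectrum of $\M$, this completes the argument.

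The one point requiring (minimal) care, rather than being a genuine obstacle, is the verification that the Gelfand $*$-isomorphism really does preserve suprema of bounded increasing nets, so that the hypothesis of Lemma \ref{sa1} is legitimately met. This follows immediately from the order-isomorphism property of $*$-isomorphisms of abelian $C^*$-algebras: order-isomorphisms preserve least upper bounds whenever these exist, so the existence of such suprema in $\M_+$ (guaranteed by Lemma \ref{WStone}) transfers verbatim to $C(K)$.
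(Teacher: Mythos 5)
Your proposal is correct and follows exactly the paper's own route: identify $\M$ with $C(K)$ via the Gelfand transform, transport the existence of suprema of bounded increasing nets from Lemma \ref{WStone} through this order-isomorphism, and then apply Lemma \ref{sa1}. The paper's proof is just a terser statement of the same argument.
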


\begin{proof} The Gelfand transform is a $*$-isomorphism from $\M$ to $C(K)$, where $K$ is the 
 spectrum of $\M$. Using this $*$-isomorphism, it then follows from Lemma \ref{WStone} that the hypothesis of
Lemma \ref{sa1} is satisfied for $C(K)$.
\end{proof}

\section{On Radon measures} \label{Rad}

One clear message from the previous section is that those semifinite measure spaces for which $L^\infty(X,\Sigma,\mu)$ is a $W^*$-algebra are the ones which are either decomposable or localizable. The question we now address is whether there is a large class of
even better understood, or more tractable,
 measure spaces that are
 identified by these criteria. It turns out that Radon measure spaces serve exactly this purpose,
 as we elucidate in
subsections 
 \ref{rmal} and \ref{avna}. Given the potential significance of this class for the objective we have in mind, we proceed to give a brief overview of Radon measure spaces.

Let $X$ be a set with a locally compact Hausdorff topology $\mathfrak{T}$. The Borel $\sigma$-algebra $\mathscr{B}(X)$ on $X$ is  the $\sigma$-algebra generated by $\mathfrak{T}$. A Borel measure $\mu$ on $X$ is said to be
\emph{outer regular on a measurable set} $E$ if 
$$\mu(E)=\sup\{\mu(U)\colon U\in\mathfrak{T}, E\subseteq U\},$$
and \emph{inner regular on} $E$ if on the other hand
$$\mu(E) =\sup\{\mu(C)\colon C\subseteq E, C\mbox{ is compact}\}. $$
We simply say that $\mu$ is  \emph{outer regular} (resp.\ \emph{inner regular}) if it is
outer (resp.\ inner) regular  on every measurable set.

Radon measures on such $X$
are a very special category of Borel measures on 
$X$, constructed from positive linear functionals on $\mathcal{K}(X)$ - the continuous functions of compact support.
 There are however several different definitions of Radon measures in the literature, all justified by some  particular factor
an author may need. Hence we pause to give some background regarding the issues that gave rise to some of these various versions. We shall start with the general process of constructing a Radon measure.

Let $X$ be as before and $I$ a positive linear functional on $\mathcal{K}(X)$. By the Riesz representation theorem, there exists a  Borel measure $\mu$ for which we have that $I(f)=\int f\,d\mu$ for each $f\in \mathcal{K}(X)$.
The measure $\mu$ is usually constructed in a real variables course  by first defining the set function $\mu^*$ on open subsets by means of the prescription $$\mu^*(U)=\sup\{I(f)\colon f\in\mathcal{K}(X), 0\leq f\leq \chi_U, \mathrm{supp}(f)\subseteq U\},$$ and then extending to all subsets of $X$ by the prescription
\begin{equation}\label{mustardef}\mu^*(A)=\inf\{\mu^*(U)\colon U\mbox{ open and }A\subseteq U\}. \end{equation}
This quantity turns out to be an outer measure for which
the $\mu^*$-measurable sets includes the Borel $\sigma$-algebra. Hence $\mu^*$ restricts to a measure $\mu$ on the Borel
$\sigma$-algebra $\mathscr{B}(X)$ which is finite on compact sets, for which we have that $I(f)=\int f\,d\mu$ for all
$f\in\mathcal{K}(X)$, and which is outer regular on all of $\mathscr{B}(X)$, but inner regular on open sets only. For some authors it is this measure $\mu$ which is a Radon measure. However it does have some shortcomings, namely that the
measure may not be inner regular on all Borel sets (see \cite[Exercise 3.23]{Sal}),
and indeed it may not be semifinite.

Recall from for example \cite[Exercise 1.15]{Fol} that an arbitrary measure $\nu$ on a measure space 
$(X,\Sigma)$ may be written as  $\nu=\nu_{sf}+\nu_\infty$, 
where $\nu_{sf}$ is semifinite and $\nu_\infty$ assumes only the values 0 and $\infty$.  In fact
$$\nu_{sf}(E)=\sup\{\nu(F): F \in \Sigma, \nu(F)<\infty, \, F\subset E\} ,$$
and will be referred to
below as the {\em semifinite part} of $\nu$.  All measurable sets with $\nu(E)<\infty$ are trivially semifinite, and so $\nu(E) = \nu_{sf}(E)$ for such sets.

Many authors insist on Radon measures being inner regular on all measurable sets. It is clear that any inner regular 
measure on $(X,\mathscr{B}(X))$ which is finite-valued on compact sets is semifinite. Thus semifiniteness presents itself 
as the possible criterion needed to ensure universal inner regularity. If the measure constructed as described immediately 
after Equation (\ref{mustardef}) is semifinite, then it is indeed universally 
inner regular. If however this measure is not 
semifinite, further modification is necessary. 
In such a case the `semifinite part' of the measure $\mu$ defined
above, which we shall denote by $\mu_0$, is indeed inner regular on all Borel sets, and may alternatively be
realized by the prescription $$\mu_0(E) =\sup\{\mu(C)\colon C\subseteq E, C\mbox{ is compact}\}.$$(See
\cite[Exercise 5, \S 7.2]{Cohn} for this fact.) This measure $\mu_0$ moreover also satisfies $I(f)=\int f\,d\mu_0$ for all $f\in\mathcal{K}(X)$, and hence may also be regarded as a measure fulfilling the objective of the Riesz representation theorem (see for example \cite{Sal} for an outstanding account
of this, and of the issues involved). It is such inner regular Borel measures that many other authors refer to as Radon measures. However the benefit of ensuring inner regularity (via 
 passage to $\mu_0$) is not without cost. Specifically the measure $\mu_0$ can only be outer regular if it agrees with $\mu$ \cite[Exercise 5(b), \S 7.2]{Cohn}. Thus ensuring inner regularity often means sacrificing outer regularity. Using the fact that its semifinite part is inner regular, there is one serendipitous observation we may make regarding $\mu$, and that is that $\mu$ is in fact inner regular on sets of finite $\mu$-measure (and more generally on all sets of $\sigma$-finite $\mu$-measure). This stems from the fact that $\mu$ agrees with its semifinite part on such sets.

Our ultimate interest as far as Radon measures are concerned, is to establish good connections with 
decomposable measure spaces, and with abelian von Neumann algebras. For this further structure is 
needed--the Borel $\sigma$-algebra is not large enough in general. To add such structure, we 
enlarge $\mathscr{B}(X)$ to respectively the $\mu^*$ and $\mu_0^*$-measurable sets. We write $\mu_0^*$ for the outer
 measure induced by $\mu_0$ on the power set $\mathcal{P}(X)$, and $\mathcal{M}_{\mu_0^*}$ for the $\mu_0^*$-measurable sets.
That is, $\mu_0^*(E)$ is the infimum of $\mu_0(B)$ over Borel sets $B$ containing $E$.
We shall retain the notation ${\mu}^*_0$ for the restriction of $\mu_0^*$ to $\mathcal{M}_{\mu_0^*}$.  This is a measure that extends
$\mu_0$ (by Caratheodory's theory), in particular its domain contains all Borel sets.  
The ${\mu}^*_0$-integral
clearly coincides with the $\mu$-integral on $\mathcal{K}(X)$.   Because of this, the correspondence between such inner regular measures and positive functionals on $\mathcal{K}(X)$ is bijective.  
The $\mu^*$-measurable sets will similarly be denoted by $\mathcal{M}_{\mu^*}$, and the restriction of $\mu^*$ to $\mathcal{M}_{\mu^*}$ by $\mu^*$.

Another well known approach to Radon measures is due to Bourbaki, but this turns
out to be very closely connected with $\mu_0^*$ to $\mathcal{M}_{\mu_0^*}$ above.
The Bourbaki approach is summarized for example
 on pages 215 and 216 of \cite{Cohn}.   By Theorem 7.5.5 in  \cite{Cohn}
 the Bourbaki approach gives the same scalar valued measurable functions,
 the same $L^1$ space as $L^1(X, \mathcal{M}_{\mu^*},\mu^*)$, and the same integral there.

In \cite[Proposition 7.5.1]{Cohn}, Cohn proves the variant of the following result for the measure space
$(X, \mathcal{M}_{\mu^*},\mu^*)$.  The proof carries over to the present setting, but some rather non-trivial
modifications need to be made. 
We therefore provide details for the reader's convenience.

\begin{proposition} \label{Cohn7.5.1} Let $X$ be a locally compact Hausdorff space. Then in terms of the notation above, for any subset $E$ of $X$, the following are equivalent:
\begin{itemize}
\item[a)] $E$ belongs to $\mathcal{M}_{\mu_0^*}$;
\item[b)] $E\cap U\in \mathcal{M}_{\mu^*_0}$ whenever $U$ is an open set with $\mu_0(U)<\infty$;
\item[c)] $E\cap K\in \mathcal{M}_{\mu^*_0}$ for any compact set $K$.
\end{itemize}
\end{proposition}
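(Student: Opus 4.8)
The plan is to prove the three equivalences by establishing the cycle $a)\Rightarrow b)\Rightarrow c)\Rightarrow a)$, with the Caratheodory criterion for $\mu_0^*$-measurability as the workhorse throughout. Recall that a set $E$ belongs to $\mathcal{M}_{\mu_0^*}$ exactly when it splits every test set additively, i.e. $\mu_0^*(A)=\mu_0^*(A\cap E)+\mu_0^*(A\setminus E)$ for all $A\subseteq X$. The implications $a)\Rightarrow b)$ and $a)\Rightarrow c)$ are immediate, since $\mathcal{M}_{\mu_0^*}$ is a $\sigma$-algebra containing all Borel sets (in particular all open and all compact sets), so if $E\in\mathcal{M}_{\mu_0^*}$ then $E\cap U$ and $E\cap K$ are intersections of two measurable sets and hence measurable. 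The content of the proposition is therefore entirely in the reverse directions $b)\Rightarrow a)$ and $c)\Rightarrow a)$, where one must promote ``measurable after intersecting with small/compact pieces'' to full measurability.

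First I would handle $c)\Rightarrow b)$ as a convenient intermediate step, or argue $b)\Rightarrow a)$ and $c)\Rightarrow a)$ in parallel. For $b)\Rightarrow a)$, I would fix an arbitrary test set $A$ and reduce to the case where $A$ is contained in an open set $U$ of finite $\mu_0$-measure; this reduction uses the outer-regularity-type definition of the outer measure $\mu_0^*$ via $\mu_0$ on Borel sets, so that it suffices to verify the Caratheodory splitting condition when the test set sits inside a finite-measure open set. Once $A\subseteq U$ with $\mu_0(U)<\infty$, hypothesis $b)$ gives $E\cap U\in\mathcal{M}_{\mu_0^*}$, and since $A\cap E = A\cap(E\cap U)$ and $A\setminus E = A\setminus(E\cap U)$ for such $A$, the measurability of $E\cap U$ yields the required additive split of $A$. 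Because $A$ was an arbitrary subset of a finite-measure open set, and every test set can be approximated from outside by such, the Caratheodory condition holds for all $A$, giving $E\in\mathcal{M}_{\mu_0^*}$.

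The implication $c)\Rightarrow a)$ is where I expect the main obstacle, and this is precisely where the passage from Cohn's $\sigma$-finite-friendly setting to the present inner regular measure $\mu_0$ forces the ``non-trivial modifications'' the authors warn about. The natural route is to leverage the inner regularity of $\mu_0$ established earlier in the section: any open (or Borel) set of finite measure can be exhausted from inside by compact sets. Thus given a finite-measure open set $U$, I would pick an increasing sequence of compact sets $K_n\subseteq U$ with $\mu_0(K_n)\nearrow\mu_0(U)$, so that $\mu_0(U\setminus\bigcup_n K_n)=0$. By hypothesis $c)$ each $E\cap K_n$ is $\mu_0^*$-measurable, hence so is $E\cap\bigcup_n K_n = \bigcup_n (E\cap K_n)$; the leftover $U\setminus\bigcup_n K_n$ is $\mu_0$-null and therefore $\mu_0^*$-measurable by completeness of the Caratheodory extension, so $E\cap U$ differs from a measurable set by a null set and is itself measurable. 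This establishes condition $b)$, and then $b)\Rightarrow a)$ closes the cycle.

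The delicate point to get right, and the reason this does not transcribe verbatim from Cohn, is that here the reduction rests on inner regularity of $\mu_0$ \emph{on sets of finite measure} rather than on global $\sigma$-finiteness; I would cite the earlier established fact in this section that $\mu_0$ (equivalently $\mu$ on finite-measure sets) is inner regular, together with completeness of the outer measure $\mu_0^*$, to carry out the compact exhaustion. One must also take care that the test-set reduction in $b)\Rightarrow a)$ is legitimate for the outer measure defined via infima over Borel supersets, handling the case $\mu_0^*(A)=\infty$ separately where the splitting inequality is automatic. Throughout, the only genuinely measure-theoretic input beyond the Caratheodory machinery is the inner regularity of the semifinite measure $\mu_0$, which is exactly the structural feature that distinguishes this setting from the one treated in \cite[Proposition 7.5.1]{Cohn}.
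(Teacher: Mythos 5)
Your overall architecture (the cycle, the trivial forward implications, and the compact-exhaustion argument for $c)\Rightarrow b)$) matches the paper's, and the $c)\Rightarrow b)$ step is correct as you describe it: inner regularity of $\mu_0$ on the finite-$\mu_0$-measure open set $U$ gives compacts $K_n$ with $\mu_0(U\setminus\bigcup_n K_n)=0$, and completeness of $\mu_0^*$ absorbs the null remainder. But you have misplaced the difficulty: you call $c)\Rightarrow a)$ the main obstacle, when in fact that step reduces painlessly to $b)$, and the genuinely hard step --- the one carrying the ``non-trivial modifications'' the authors warn about --- is $b)\Rightarrow a)$, which your proposal does not actually carry out.

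The gap is in your reduction of the Caratheodory test. You propose to ``reduce to the case where $A$ is contained in an open set $U$ of finite $\mu_0$-measure,'' asserting that ``every test set can be approximated from outside by such.'' That is false in this setting. The outer measure $\mu_0^*$ is defined by infima over \emph{Borel} supersets, and $\mu_0$ is \emph{not} outer regular: as noted earlier in the section, $\mu_0$ is outer regular only when it coincides with $\mu$. So a test set $A$ with $\mu_0^*(A)<\infty$ comes with a Borel superset $B$ of finite $\mu_0$-measure, but there need be no \emph{open} superset of finite $\mu_0$-measure, and hypothesis $b)$ gives you nothing to apply. You flag this issue yourself (``one must also take care that the test-set reduction \ldots is legitimate'') but never resolve it, and resolving it is the whole content of the step. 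The paper's proof closes the gap with a two-stage approximation: from $B\supseteq A$ with $\mu_0(B)\leq\mu_0^*(A)+\epsilon$, first pass \emph{inward} using the supremum definition of $\mu_0$ to a Borel $B_\epsilon\subseteq B$ with $\mu(B_\epsilon)=\mu_0(B_\epsilon)<\infty$ and $\mu_0(B\setminus B_\epsilon)\leq\epsilon$; on $B_\epsilon$ the measures $\mu$ and $\mu_0$ agree, so one may then pass \emph{outward} using the outer regularity of $\mu$ (which does hold) to open sets $U\supseteq B_\epsilon$ with $\mu(U)=\mu_0(U)<\infty$, apply $b)$ to each such $U$, take the infimum to get the splitting inequality for $B_\epsilon$, and finally chase the $\epsilon$'s to transfer it back to $B$ and hence to $A$. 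Without this inner-then-outer detour through a set where $\mu$ and $\mu_0$ agree, the argument does not close, and citing ``inner regularity of $\mu_0$'' alone will not substitute for it.
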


\begin{proof} The implication (a)$\Rightarrow$(c) is clear. Hence we need only show that (c)$\Rightarrow$(b)$\Rightarrow$(a).

We first show that (b)$\Rightarrow$(a).  Suppose that we are given a subset $E\subseteq X$ such that
$E\cap U\in \mathcal{M}_{\mu^*_0}$ whenever $U$ is an open set with $\mu_0(U)<\infty$. Then $E$ will belong to
$\mathcal{M}_{\mu_0^*}$ if we can show that $\mu_0^*(A) \geq \mu_0^*(A\cap E)+\mu_0^*(A\cap E^c)$ for any subset $A$ of $X$
for which we have that $\mu_0^*(A) < \infty$. So let $A$ be such a set. If $\mu_0^*(A)=0$, then by monotonicity of $\mu_0^*$
we also have that $0=\mu_0^*(A\cap E)=\mu_0^*(A\cap E^c)$, in which case the inequality then holds. In the case where
$\infty>\mu_0^*(A)>0$, we can for any given $\epsilon>0$, and select a Borel set $B$ such that $A\subseteq B$ and
$\mu_0^*(A)+\epsilon\geq \mu_0(B)$. By the definition of $\mu_0$ we may now select a Borel set $B_\epsilon \subseteq B$ with
$\mu_0(B)\geq\mu(B_\epsilon)> \mu_0(B)-\epsilon$. (Recall that $\mu$ is the Borel measure defined after
(\ref{mustardef}).) Since $\mu(B_\epsilon)<\infty$, we have $\mu(B_\epsilon)=\mu_0(B_\epsilon)$,
and hence that $\mu_0(B-B_\epsilon)\leq \epsilon$. This agreement of $\mu$ and $\mu_0$ on $B_\epsilon$ enables us to import
some of the properties of $\mu$ into the present proof.

Let $U$ be an open set which includes $B_\epsilon$ and which satisfies
$\mu^*(U) =\mu(U) < \infty$. The agreement of $\mu$ and $\mu_0$ on Borel sets with finite
$\mu$-measure ensures that $\mu_0(U)=\mu(U)<\infty$.  By assumption we then have that
$U \cap E\in \mathcal{M}_{\mu^*_0}$. Invoking the standard measurability criterion therefore shows that
\begin{eqnarray*}
\mu(U)&=&\mu_0(U)\\
&=& \mu_0^*(U\cap E)+\mu_0^*(U\cap(U\cap E)^c)\\
&=& \mu_0^*(U\cap E)+\mu_0^*(U\cap E^c)\\
&\geq& \mu_0^*(B_\epsilon\cap E)+\mu_0^*(B_\epsilon\cap E^c)
\end{eqnarray*}
If we now take the infimum over all such open sets, it then follows from equation (\ref{mustardef}) that
$\mu^*(B_\epsilon)\geq \mu_0^*(B_\epsilon\cap E)+\mu_0^*(B_\epsilon\cap E^c)$. Combining this with our earlier observation,
then shows that $$\mu_0^*(A)+\epsilon\geq\mu_0(B)\geq\mu_0(B_\epsilon)=\mu(B_\epsilon)
\geq \mu_0^*(B_\epsilon\cap E)+\mu_0^*(B_\epsilon\cap E^c).$$
Now observe that
\begin{eqnarray*}
\mu_0^*(B\cap E) &\leq& \mu_0^*((B \setminus B_\epsilon)\cap E)+\mu_0^*(B_\epsilon\cap E)\\
&\leq& \mu_0(B \setminus B_\epsilon)+\mu_0^*(B_\epsilon\cap E)\\
&\leq& \epsilon + \mu_0^*(B_\epsilon\cap E).
\end{eqnarray*}
Similarly $$\mu_0^*(B\cap E^c)\leq \epsilon + \mu_0^*(B_\epsilon\cap E^c).$$It therefore follows that
\begin{eqnarray*}
\mu_0^*(A)+\epsilon&\geq& \mu_0^*(B_\epsilon\cap E)+\mu_0^*(B_\epsilon\cap E^c)\\
&\geq& \mu_0^*(B\cap E)+\mu_0^*(B\cap E^c)-2\epsilon\\
&\geq& \mu_0^*(A\cap E)+\mu_0^*(A\cap E^c)-2\epsilon.
\end{eqnarray*}
Since $\epsilon>0$ was arbitrary, we have that $\mu_0^*(A)\geq \mu_0^*(A\cap E)+\mu_0^*(A\cap E^c)$ as was required.

It remains to prove that (c)$\Rightarrow$(b). Let $U$ be an open set  with $\mu_0(U) < \infty$, and $E$ a subset of $X$ satisfying condition (c). The inner regularity of the measure $\mu_0$  allows us to select a sequence $(K_n)$ of compact subsets of $U$ such that $\mu_0(U) = \sup_n \, \mu_0(K_n)$. Condition (c) then informs us that each $E \cap K_n$, and hence $\cup_n(E \cap K_n)$, belongs to $\mathcal{M}_{\mu^*_0}$. On the other hand since by construction $U - \cup_n K_n$ has measure 0, $E \cap (U - \cup_n K_n)$ will  be a $\mu_0$-null set. Since $E\cap U$ is then the union of the measurable set $\cup_n(E \cap K_n)$ and a $\mu_0$-null set, it must belong to $\mathcal{M}_{\mu^*_0}$. Thus (b) follows.
\end{proof}

We shall not use this but remark that Fremlin establishes the strong result that $E \in \mathcal{M}_{\mu_0^{*}}$ if and only if $E \cap B$ is in the completion of the Borel sets with respect to $\mu_0$, for every $\mu_0$-finite Borel set $B$.
See the last assertion of Proposition 416F in \cite{Fremlin}. 

When the following result is considered alongside part (a) of Theorem \ref{loc}, 
we see that of the two
measure spaces $(X, \mathcal{M}_{\mu_0^*},\mu^*_0)$ and $(X, \mathcal{M}_{\mu^*},\mu^*)$, it is the
framework afforded by $(X, \mathcal{M}_{\mu_0^*},\mu^*_0)$ that is well suited to the description of abelian von
Neumann algebras. In particular it is localizable.

\begin{theorem}\label{radlocal} The measure space $(X, \mathcal{M}_{\mu_0^*},\mu^*_0)$ is decomposable. Specifically there exists a
disjoint family $\mathscr{C}$ of compact sets for which following holds:
\begin{enumerate}
\item[a)] $\mu_0(K)>0$ for each $K\in\mathscr{C}$;
\item[b)] for any open set $U$ and any $K\in\mathscr{C}$, we have that $\mu_0(U\cap K)>0$ whenever $U\cap K\neq \emptyset$;
\item[c)] for any compact set $K_0$ we have that $K_0\cap K\neq\emptyset$ for at most countably many sets $K$ in
$\mathscr{C}$;
\item[d)] for any $E\in \mathcal{M}_{\mu_0^*}$ we have that $\mu^*_0(E)=\sum_{K\in\mathscr{C}}\mu^*_0(E\cap K)$;
\item[e)] a subset $E$ of $X$ belongs to $\mathcal{M}_{\mu_0^*}$ if and only if for each $K\in \mathscr{C}$,
$E\cap K \in \mathcal{M}_{\mu_0^*}$.
\end{enumerate}
\end{theorem}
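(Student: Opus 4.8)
The plan is to construct $\mathscr{C}$ as a maximal disjoint family of \emph{self-supporting} compact sets of positive measure, and then to read off (a)--(e) from maximality together with the inner regularity of $\mu_0$. Call a compact set $K$ self-supporting (with respect to $\mu_0$) if $\mu_0(U\cap K)>0$ for every open $U$ with $U\cap K\neq\emptyset$; this is precisely condition (b). The first point I would establish is that every compact set $C$ with $\mu_0(C)>0$ contains a self-supporting compact subset of the \emph{same} measure. Indeed, let $N$ be the union of all relatively open subsets of $C$ of measure zero; then $N$ is relatively open in $C$, hence a Borel set, and by the inner regularity of $\mu_0$ any compact $L\subseteq N$ is covered by finitely many relatively open null sets and so is null, whence $\mu_0(N)=0$. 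Thus $K:=C\setminus N$ is compact, self-supporting, and $\mu_0(K)=\mu_0(C)>0$. Zorn's lemma applied to the collection of pairwise disjoint families of self-supporting compact sets of positive measure now yields a maximal such family $\mathscr{C}$, which gives (a) and (b) at once.

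For (c), fix a compact $K_0$. By local compactness $K_0$ has an open neighbourhood $U_0$ with compact closure, so $\mu_0(U_0)<\infty$. If $K\in\mathscr{C}$ meets $K_0$ then $U_0\cap K$ is a nonempty relatively open subset of $K$, so $\mu_0(U_0\cap K)>0$ by self-support. As the members of $\mathscr{C}$ are disjoint, the sets $U_0\cap K$ are disjoint measurable subsets of $U_0$, and $\sum_{K}\mu_0(U_0\cap K)\leq\mu_0(U_0)<\infty$; hence only countably many of them are nonzero, giving (c).

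The decisive consequence of maximality is the coverage lemma: for every compact $K_0$ the Borel set $K_0\setminus\bigcup\mathscr{C}$ is $\mu_0$-null. By (c) only countably many members $K_1,K_2,\dots$ of $\mathscr{C}$ meet $K_0$, so $K_0\setminus\bigcup\mathscr{C}=K_0\setminus\bigcup_n K_n$ is Borel with finite measure. Were its measure positive, inner regularity would give a compact subset of positive measure inside it, which by the first paragraph would contain a self-supporting compact set of positive measure; since this set lies in $K_0$ and avoids every $K_n$, it is disjoint from all of $\mathscr{C}$, contradicting maximality. This lemma yields (e) through Proposition \ref{Cohn7.5.1}: the forward implication is trivial, and conversely, if $E\cap K\in\mathcal{M}_{\mu_0^*}$ for all $K\in\mathscr{C}$, then for any compact $K_0$ we may write $E\cap K_0=\bigcup_n(E\cap K_n\cap K_0)\cup\big(E\cap(K_0\setminus\bigcup_n K_n)\big)$, a countable union of measurable sets together with a subset of a null set; hence $E\cap K_0\in\mathcal{M}_{\mu_0^*}$, and Proposition \ref{Cohn7.5.1} gives $E\in\mathcal{M}_{\mu_0^*}$. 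The same reasoning applied to $W:=\bigcup\mathscr{C}$ (whose intersection with each member of $\mathscr{C}$ is that member) shows $W\in\mathcal{M}_{\mu_0^*}$.

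Finally, for (d) I would first note the easy inequality: for any finite $\mathscr{F}\subseteq\mathscr{C}$ the sets $\{E\cap K\}_{K\in\mathscr{F}}$ are disjoint and measurable, so $\sum_{K\in\mathscr{F}}\mu_0^*(E\cap K)\leq\mu_0^*(E)$, and taking the supremum gives $\sum_{K}\mu_0^*(E\cap K)\leq\mu_0^*(E)$. For the reverse inequality the point is to show that the leftover $E\setminus W$ is $\mu_0^*$-null; granting this, only countably many $K$ contribute when $\mu_0^*(E)<\infty$, and countable additivity of $\mu_0^*$ on $E\cap W$ finishes the finite case, the infinite case following by the same bookkeeping. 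The nullity of $E\setminus W$ for finite $\mu_0^*(E)$ is elementary: $E\setminus W$ is measurable, locally null by the coverage lemma, and a locally null measurable set of finite outer measure is null, since a finite-measure Borel hull can be exhausted from within by compact sets on which $E\setminus W$ contributes nothing. \textbf{I expect the genuine obstacle to be the non-$\sigma$-finite case}, where $E\setminus W$ (equivalently $X\setminus W$) may have infinite outer measure: here the compact-exhaustion argument breaks down, and showing $\mu_0^*(X\setminus W)=0$ amounts precisely to the inner regularity of the \emph{extended} measure $\mu_0^*$ on all of $\mathcal{M}_{\mu_0^*}$ (equivalently, that every locally null measurable set is null). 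This is the one step I would handle by invoking the inner regularity of $\mu_0^*$ established for the Radon measure space in the preceding development (the analogue of Cohn's treatment), after which $\mu_0^*(X\setminus W)=\sup\{\mu_0(C):C\subseteq X\setminus W\text{ compact}\}=0$ completes (d) and hence the decomposability of $(X,\mathcal{M}_{\mu_0^*},\mu_0^*)$.
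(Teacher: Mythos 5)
Your proof is correct and follows essentially the same route as the paper's: a Zorn-maximal disjoint family of self-supporting compact sets of positive measure, the same shrinking lemma (the paper's Lemma \ref{lemradlocal}) producing a self-supporting compact subset of full measure, the same finiteness count for (c), and the same maximality-plus-inner-regularity contradiction showing that every compact set is covered by countably many members of $\mathscr{C}$ up to a $\mu_0$-null set. The only divergences are organizational --- you prove (e) before (d) via your coverage lemma while the paper derives (e) from (d), and for the reverse inequality in (d) the paper takes the supremum of $\mu_0(C)=\sum_{K}\mu_0(C\cap K)$ over compact $C\subseteq E$ rather than arguing through $E\setminus W$, which incidentally sidesteps the possibly uncountable union $E\cap W=\bigcup_{K}(E\cap K)$ that your ``countable additivity on $E\cap W$'' quietly glosses over; both versions ultimately lean on the same fact, inner regularity of $\mu_0^*$ on all of $\mathcal{M}_{\mu_0^*}$, which the paper likewise invokes without further elaboration.
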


There is a version of this result which holds for $(X, \mathcal{M}_{\mu^*},\mu^*)$, but with the trade-off that
whilst (c) will then hold for all sets $E\in \mathcal{M}_{\mu^*}$ with $\mu^*(E)<\infty$, (d) will only hold for sets
$E\in\mathcal{M}_{\mu^*}$ for which $\mu^*(E)<\infty$. For details see \cite[Proposition 7.5.3]{Cohn}.

We need a technical lemma to facilitate the promised proof. This lemma parallels \cite[Lemma 7.5.2]{Cohn} where it was proved for the measure space $(X, \mathcal{M}_{\mu^*},\mu^*)$.

\begin{lemma}\label{lemradlocal} Let $X$ be a locally compact Hausdorff space. If $K$ is a compact subset of $X$ such that $\mu_0(K) > 0$, then there is a compact subset $K_0$ of $K$ such that $\mu_0(K_0) = \mu_0(K)$, with in addition $\mu_0(U\cap K_0)>0$ for any open subset $U$ for which $U\cap K_0\neq \emptyset$.
\end{lemma}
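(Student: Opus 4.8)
The plan is to carve out of $K$ the largest open region on which $\mu_0$ (restricted to $K$) vanishes, and take $K_0$ to be what remains. Concretely, let $\mathcal{U}$ denote the family of all open subsets $U \subseteq X$ with $\mu_0(U \cap K) = 0$, put $V = \bigcup_{U \in \mathcal{U}} U$, and set $K_0 = K \setminus V = K \cap V^c$. Since $V$ is open, $K_0$ is a closed subset of the compact set $K$, hence compact, and $K_0 \subseteq K$ as required. Everything in sight ($K$, $K_0$, $V$, and the various intersections) is Borel, so $\mu_0$ and its inner regularity on Borel sets are available throughout.

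The crux is to show that $\mu_0(V \cap K) = 0$; this is the main obstacle, because $\mathcal{U}$ may be an uncountable family while $\mu_0$ is only countably additive, so one cannot simply sum over $\mathcal{U}$. Here I would exploit compactness together with the inner regularity of $\mu_0$ on Borel sets (recall also that $\mu_0$ is finite on compact sets, being dominated by $\mu$). By inner regularity, $\mu_0(V \cap K) = \sup\{\mu_0(C) : C \subseteq V \cap K,\ C \text{ compact}\}$. Given such a compact $C$, the members of $\mathcal{U}$ cover $C$ (as $C \subseteq V$), so by compactness finitely many $U_1, \dots, U_n \in \mathcal{U}$ already cover $C$; then $\mu_0(C) \leq \sum_{i=1}^n \mu_0(U_i \cap K) = 0$. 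Taking the supremum over $C$ yields $\mu_0(V \cap K) = 0$.

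With this in hand the remaining assertions follow quickly. Since $K = K_0 \sqcup (K \cap V)$ is a disjoint union of measurable sets and $\mu_0(K) < \infty$, additivity gives $\mu_0(K_0) = \mu_0(K) - \mu_0(K \cap V) = \mu_0(K)$. For the final ``full support'' property, suppose toward a contradiction that some open $U$ satisfies $U \cap K_0 \neq \emptyset$ yet $\mu_0(U \cap K_0) = 0$. Writing $U \cap K = (U \cap K_0) \cup (U \cap K \cap V)$ and using $\mu_0(U \cap K \cap V) \leq \mu_0(K \cap V) = 0$, we obtain $\mu_0(U \cap K) = 0$, so $U \in \mathcal{U}$ and hence $U \subseteq V$. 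But then $U \cap K_0 = U \cap K \cap V^c \subseteq V \cap V^c = \emptyset$, contradicting $U \cap K_0 \neq \emptyset$. This establishes that $\mu_0(U \cap K_0) > 0$ whenever $U \cap K_0 \neq \emptyset$, completing the proof.
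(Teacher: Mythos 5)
Your proposal is correct and follows essentially the same route as the paper: you form the union $V$ of all open sets meeting $K$ in a $\mu_0$-null set, use inner regularity plus a finite subcover of a compact subset of $V\cap K$ to conclude $\mu_0(V\cap K)=0$, and then verify the two required properties of $K_0=K\setminus V$ exactly as the paper does. No gaps.
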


\begin{proof}
Let $V_K$ be the union of all the open sets $V$ such that $\mu_0(V \cap K) = 0$. Any compact subset $\widetilde{K}$ of $V_K\cap K$ will therefore be covered by the union of these $V$'s and be covered by finitely many of these $V$'s. When combined with the subadditivity of the measure $\mu_0$, the fact that each $V\cap K$ has measure zero, then ensures that  $\widetilde{K}$ also does. The inner regularity of $\mu_0$ then leads to the conclusion that $\mu_0(V_K\cap K)=0$. The set $K_0 = K \cap (X-V_K)$ will then fulfill the criteria of the lemma.
Indeed 
if $\mu_0(U\cap K_0) = 0$ then $$0 = \mu_0(U\cap (K \setminus V_K)) =  \mu_0(U\cap K)$$
since $\mu_0(V_K\cap K)=0$.    So $U \subset V_K$ and hence $U \cap K_0$ is empty.
\end{proof}

\begin{proof}[Proof of Theorem \ref{radlocal}]
Let $\mathfrak{C}$ be the collection of all families $\mathscr{C}$ of compact subsets of $X$ satisfying the criteria that \begin{itemize}
\item the sets in $\mathscr{C}$ are mutually disjoint,
\item $\mu_0(K)>0$ for any $K \in \mathscr{C}$, and
\item for any open set $U$ and any $K\in \mathscr{C}$, we have $\mu_0(U\cap K)>0$ whenever
$K\cap U\neq \emptyset$.
\end{itemize}
The collection $\mathfrak{C}$ is clearly non-empty since it contains at least the empty set. On partially ordering this collection by inclusion, it is then not difficult to verify that for any chain in $\mathfrak{C}$, the union of that chain will again belong to $\mathfrak{C}$, and will be an upper bound for that chain. This then brings Zorn's lemma into play, which we may use to conclude that $\mathfrak{C}$ admits a maximal element. Let $\mathscr{C}_0$ be that maximal element. We shall ultimately see that this family satisfies the criteria of the theorem.

Properties (a) and (b) are or course immediate.

We turn to property (c). Given a compact set $K_0$, the local compactness ensures that $K_0$ is contained in an open set $U$
for which $\mu_0(U)<\infty$. (To see this notice that each $x\in K_0$ lives inside an open neighborhood with compact
closure, and that because of its compactness, $K_0$ may be covered by finitely many such neighborhoods.) Then each set $K$
in $\mathscr{C}_0$ which meets $K_0$, must also meet $U$, whence $\mu_0(U\cap K)<\infty$. Since $\mu_0(U)<\infty$ and the
elements of $\mathscr{C}_0$ are disjoint from each other, there can only be finitely many elements $K$ for which
$\mu_0(K\cap U)>\frac{1}{n}$. (If not the union of countably many such sets will be a measurable subset of $U$ with infinite
measure - a clear contradiction.) Thus there are at most countably many $K$'s for which $\mu_0(K\cap U)> 0$, and hence countably many for which $K\cap U\neq \emptyset$. Since
$K_0\subset U$, this establishes the validity of (c).

We pass to investigating (d). We first consider the case where $\sum_{K\in\mathscr{C}_0} \mu^*_0(K\cap E)=\infty$. If
this situation pertains, there must exist a countable subcollection ${K_n}$ with $\sum \mu^*_0(K_n\cap E)=\infty$.
The truth of this fact is obvious if only a countable number of the $\mu^*_0(K\cap E)$'s are non-zero. If on the
other hand there are uncountably many non-zero $\mu^*_0(K\cap E)$'s, then for some $k\in\mathbb{N}$ there must be
infinitely many $K$'s for which $\mu^*_0(K\cap E)>\frac{1}{k}$. Hence again the claim follows. Since
$\sum_{K\in\mathscr{C}_0} \mu^*_0(K_n \cap E)=\mu^*(E\cap(\cup_{n\geq 1} K_n))\leq \mu^*_0(E)$,
we will then clearly have that $\mu^*_0(E)=\infty$ and hence that $\sum \mu^*_0(K\cap E)= \mu^*_0(E)$.

It therefore remains to consider the case where $\sum_{K\in\mathscr{C}_0} \mu^*_0(K\cap E)<\infty$. In this case
there can clearly only be countably many $K$'s in $\mathscr{C}_0$ for which $\mu^*_0(K\cap E)\neq 0$. Let $\{C_n\}$
be those sets. It then follows that $$\sum_{K\in\mathscr{C}_0} \mu^*_0(K\cap E)=\sum_{n\geq 1} \mu^*_0(C_n\cap E) =\mu^*_0((\cup_{n\geq 1}C_n)\cap E)\leq \mu^*_0(E).$$ We proceed with showing
that the converse inequality also holds.

Let $C$ be a compact subset of $E$. By part (c), the set $C$ meets at most countably many of the $K$'s in $\mathscr{C}_0$. Let ${K_n}$ be that collection of compact sets. We claim that $\mu_0(C-\cup_{n\geq 1}(C\cap K_n))=0$. If this was not the case, then by the inner regularity of $\mu_0$, there exists a compact subset $L$ of $C
\setminus \cup_{n\geq 1}(C\cap K_n)$ with $\mu_0(C-(C\cap(\cup_{n\geq 1} K_n)))\geq\mu_0(L)>0$. By the lemma this compact set admits a compact subset $L_0$ for which $\mu_0(L_0)=\mu(L)$ with $\mu_0(L_0\cap U)>0$ for each open set $U$ which meets $L_0$. But the containment of this set in $C
\setminus \cup_{n\geq 1} (C\cap K_n)$ when combined with the fact that $\cup_{n\geq 1} (C\cap K_n)=\cup_{K\in \mathscr{C}_0} (C\cap K)$, ensures that $L_0$ does not meet any of the $K$'s in $\mathscr{C}_0$. But this would contradict the maximality of $\mathscr{C}_0$. We therefore have that $\mu_0(C
\setminus \cup_{n\geq 1}(C\cap K_n))=0$, or equivalently that $$\mu_0(C)=\mu_0(\cup_{n\geq 1}(C\cap K_n))=\sum_{n\geq 1}\mu_0(C\cap K_n)=\sum_{K\in \mathscr{C}_0}\mu_0(C\cap K).$$
 Since $\sum_{K\in \mathscr{C}_0}\mu_0(C\cap K)\leq \sum_{K\in \mathscr{C}_0}\mu^*_0(E\cap K)$, the inner regularity of $\mu^*_0$ therefore ensures that $\mu^*_0(E)\leq \sum_{K\in \mathscr{C}_0}\mu^*_0(E\cap K)$. Thus (d) holds.

As far as (e) is concerned the ``only if'' part follows from Proposition \ref{Cohn7.5.1}. For the converse, suppose we have a set $E$ for which $E\cap K \in \mathcal{M}_{\mu_0^*}$ for each $K\in \mathscr{C}_0$. By Proposition \ref{Cohn7.5.1} it will be enough to show that $E\cap L\in\mathcal{M}_{\mu_0^*}$ for an arbitrary compact subset $L$ of $X$. If we are given such a compact set $L$, we know from part (c) that $L$ then meets at most countably many of the members of $\mathscr{C}_0$. If $\{K_n\}$ is the collection of elements of $\mathscr{C}_0$ which meet $L$, it then follows from (d) that $\mu_0(L) =\sum_{n\geq 1}\mu_0(K_n\cap L)$, and hence that $\mu_0(L
\setminus \cup_{n\geq 1}K_n)=0$. The set $E\cap L$ is then the union of the countable collection of sets $\{E\cap L\cap K_n\}$ and a subset of the $\mu_0$-null set $L
\setminus \cup_{n\geq 1}K_n$. Since all $\mu_0$-null sets belong to $\mathcal{M}_{\mu_0^*}$ and each $E\cap L\cap K_n$ will by hypothesis also belong to $\mathcal{M}_{\mu_0^*}$, it follows that then $E\cap L\in \mathcal{M}_{\mu_0^*}$ as was required.

Finally, to see that $(X, \mathcal{M}_{\mu_0^*},\mu^*_0)$ is decomposable, we set
$N = X \setminus (\cup_{K\in \mathscr{C}_0} \, K)$, and ${\mathcal D} = \mathscr{C}_0
\cup \{  N \}$.   Note that every subset of $N$ is $\mu^*_0$-null.   It then easily follows from
d) and e)  that ${\mathcal D}$  is the desired  partition of $X$ into $\mu^*_0$-finite sets satisfying
the definition of decomposability.
\end{proof}

With Proposition \ref{Cohn7.5.1} as background, the framework below now emerges as a formalisation of the structure of
$(X, \mathcal{M}_{\mu_0^*},\mu^*_0)$. This formalised framework is what Fremlin calls a Radon measure space, and we
shall follow him in embracing this definition.

\begin{definition}\label{def-Radon} A \emph{Radon measure space} is a quadruple $(X,\mathfrak{T}, \mathscr{B},\mu)$ for which we have that
\begin{enumerate}
\item[(i)] $(X,\mathscr{B},\mu)$ is a complete measure space;
\item[(ii)] if $E\subseteq X$ and $E\cap F\in \mathscr{B}$
for all $F\in\mathscr{B}$
with  $\mu(F)<\infty$, then $E\in\mathscr{B}$;
\item[(iii)] $\mathfrak{T}$ is a Hausdorff topology on $X$;
\item[(iv)] $\mathfrak{T}\subset \mathscr{B}$;
\item[(v)] for every $E\in\mathscr{B}$ we have $\mu(E)=\sup\{\mu(C)\colon C\subseteq E, C\mbox{ is compact}\}$;
\item[(vi)] for every $x$ there is an open neighborhood $U$ of $x$ with $\mu(U)<\infty$.
\end{enumerate}
\end{definition}

We do not need this but we mention in passing the interesting fact (416F in 
\cite{Fremlin}) that a measure $\nu$ on the Borel sets of a Hausdorff space 
extends to a Radon measure in the sense above if and only if $\nu$ is inner 
regular and satisfies (vi) above.  In this case the extension is unique.  (A 
nice formula for the domain of this extension was mentioned below Proposition 
6.1 above.  Indeed this is precisely the `c.l.d.\ version' construction from 
213D,E in \cite{Fremlin}, applied to $\nu$.)

It follows from (vi) in the definition that compact sets are $\mu$-finite.  Conversely, the latter implies (vi) if $X$ is locally compact (because every $x$ has an open neighborhood with compact closure).  
%ADD If $X = \Rdb$ with the topology generated by the usual open sets and the cocountable sets, and with counting
%measure on $\mathscr{B} = \mathscr{P}(X)$.  Then (i)--(iv) certainly hold, as does (v) since finite sets are compact. 
%However every nonempty open set is infinite, so that (vi) fails, whilst  compact sets are $\mu$-finite. 
%Indeed the set of intersections of a finite number of such sets is a basis.  However the intersection of an open interval $I$ and 
%a cocountable set $D$ is infinite.   Indeed if $F = I \cap D$ is finite  then $I \cap D^c$ is uncountable, a contradiction.
Thus we obtain the Riesz representation theorem: A Radon measure $\mu$ induces a  positive functional on the continuous functions of compact support.
Conversely any such functional comes from integration
on  the Radon measure space $(X, \mathcal{M}_{\mu_0^{*}},\mu^{*}_0)$ above, if $X$ is locally compact.
See Volume 4 of~\cite{Fremlin} for more on Radon measure spaces in this sense. 
In particular \textsection 436 of that text treats the Riesz representation theorem, while 416E or 415I there give other ways to easily verify that the above correspondence 
between Radon measures in this sense, and positive functionals on $\mathcal{K}(X)$, is bijective.   

A minor modification of the earlier proof for the measure space $(X, \mathcal{M}_{\mu_0^*},\mu^*_0)$, now yields the following fact:

\begin{theorem}[Theorem 1.10 in \cite{FremlinMA}]\label{Fremlin 1.10} Any Radon measure space $(X,\mathfrak{T}, \mathscr{B},\mu)$ is decomposable.
\end{theorem}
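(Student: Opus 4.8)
The plan is to recycle the proof of Theorem \ref{radlocal} almost verbatim, replacing the concrete space $(X, \mathcal{M}_{\mu_0^*},\mu^*_0)$ by an abstract Radon measure space as in Definition \ref{def-Radon}, and to check at each step that the tools used there are available from the axioms (i)--(vi). Most of those tools translate directly: open sets and hence compact (= closed, by Hausdorffness) sets are measurable by (iv) together with the $\sigma$-algebra property; countable additivity and subadditivity are automatic; inner regularity (v) stands in for the inner regularity of $\mu_0$; and completeness (i) lets us absorb null sets. The one structural hypothesis that is \emph{not} assumed is local compactness, so the two places where the earlier argument invoked it must be rerouted through the axioms.

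First I would re-prove the analogue of Lemma \ref{lemradlocal}: given a compact $K$ with $\mu(K)>0$, the set $K_0=K\setminus V_K$, where $V_K$ is the union of all open $V$ with $\mu(V\cap K)=0$, is compact, satisfies $\mu(K_0)=\mu(K)$, and meets every open set in positive measure. Its proof uses only measurability of open sets, subadditivity, and inner regularity, all of which hold here, so it carries over with $\mu_0$ replaced by $\mu$ and with no appeal to local compactness. With this lemma in hand, a Zorn's lemma argument identical to the one in Theorem \ref{radlocal} produces a maximal disjoint family $\mathscr{C}_0$ of compact sets, each of positive measure and each meeting every open set in positive measure; properties (a) and (b) are then immediate. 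For property (c) -- that any compact $K_0$ meets at most countably many members of $\mathscr{C}_0$ -- the earlier proof used local compactness to place $K_0$ inside an open set of finite measure. Here I would instead invoke axiom (vi): each point of $K_0$ has an open neighbourhood of finite measure, and compactness reduces this to a finite subcover whose union $U$ is open with $\mu(U)<\infty$. The counting argument (only finitely many $K$ satisfy $\mu(U\cap K)>1/n$, since the $U\cap K$ are disjoint subsets of the finite-measure set $U$) then goes through unchanged. Property (d), the additivity $\mu(E)=\sum_{K\in\mathscr{C}_0}\mu(E\cap K)$, is proved exactly as before: the easy inequality is countable additivity, and the reverse is obtained by approximating $E$ from inside by compacts (inner regularity) and using the Lemma together with the maximality of $\mathscr{C}_0$ to show that each such compact lies, up to a null set, inside $\bigcup_K K$.

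The step I expect to be the main obstacle is the converse part of property (e): that $E\cap K\in\mathscr{B}$ for all $K\in\mathscr{C}_0$ forces $E\in\mathscr{B}$. In Theorem \ref{radlocal} this leaned on Proposition \ref{Cohn7.5.1}, which characterises measurability through intersections with compact sets and is special to the construction $\mathcal{M}_{\mu_0^*}$. In the abstract setting its role is played by the local-determinedness axiom (ii), but that axiom speaks about intersections with $\mu$-\emph{finite measurable} sets rather than with \emph{compact} sets, so a bridge is required. I would supply it thus: given a $\mu$-finite $F\in\mathscr{B}$, inner regularity (v) furnishes compacts $C_n\subseteq F$ with $\mu(F\setminus\bigcup_n C_n)=0$; for each $C_n$, property (c) shows it meets only countably many $K\in\mathscr{C}_0$ and (d) shows the remainder of $C_n$ is null, so $E\cap C_n$ is a countable union of the measurable sets $E\cap C_n\cap K$ together with a null set, hence measurable by completeness (i). Taking the union over $n$ and absorbing the null set $F\setminus\bigcup_n C_n$ gives $E\cap F\in\mathscr{B}$, and then (ii) yields $E\in\mathscr{B}$.

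Finally I would assemble decomposability by setting $N=X\setminus\bigcup_{K\in\mathscr{C}_0}K$ and $\mathcal{D}=\mathscr{C}_0\cup\{N\}$. Since the members of $\mathscr{C}_0$ are disjoint, $N\cap K=\emptyset$ for each $K\in\mathscr{C}_0$, so property (e) gives $N\in\mathscr{B}$; the Lemma and maximality force every compact subset of $N$ to be null, whence $\mu(N)=0$ by (v) and every subset of $N$ is measurable by (i). Each member of $\mathcal{D}$ is $\mu$-finite (the $K$ by (vi), and $N$ since $\mu(N)=0$), and (c), (d), (e) then translate directly into the partition, the additivity $\mu(E)=\sum_{D\in\mathcal{D}}\mu(E\cap D)$, and the measurability criterion $\mathscr{B}=\{E\subseteq X:E\cap D\in\mathscr{B}\text{ for all }D\in\mathcal{D}\}$ appearing in the definition of decomposability recalled before Theorem \ref{lsigf1}.
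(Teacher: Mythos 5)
Your proposal is correct and follows essentially the same route as the paper: the paper's own proof likewise reruns the argument of Theorem \ref{radlocal}, substituting axiom (vi) for local compactness in step (c) and axiom (ii) (via intersections with finite-measure sets approximated from inside by compacts) for Proposition \ref{Cohn7.5.1} in step (e). The only difference is that you explicitly carry out the bridge from compact sets to $\mu$-finite measurable sets in (e), which the paper leaves as an exercise.
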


\begin{proof}
The properties encoded in Fremlin's definition are precisely those we need for the
proof of Theorem \ref{radlocal} to work in our new setting. To see this, first note that the
 ideas in the proof of Lemma \ref{lemradlocal} go through verbatim for the measure spaces
described in Definition \ref{def-Radon}. Having noted this we may essentially clone Theorem \ref{radlocal} for
this class of measure spaces. The construction of the collection $\mathscr{C}$ of compact sets proceeds exactly as in Theorem
\ref{radlocal}. For proving (c)  notice that property (vi) of Definition \ref{def-Radon} ensures that
in our context each compact set lives inside an open set with finite measure. This fact then ensures that the proof of
(c) goes through. The assumed inner regularity of all measures described by Definition \ref{def-Radon} ensures that the proof
of part (d) also works in the present context.  That then leaves (e) to consider.
In proving the ``only if'' part there, we have (ii) of Definition \ref{def-Radon} to work with rather than part (c) of
Proposition \ref{Cohn7.5.1}.  In the argument used to prove the implication (c)$\Rightarrow$(b) in Proposition \ref
{Cohn7.5.1} we simply replace the open set $U$ with a measurable set $F$ of finite measure.  It is then an exercise to see that that
argument suffices to show that if
$E\cap K$ is measurable for any compact set, we will then also
have that $E\cap F$ is measurable for any measurable set $F$ of finite measure. Condition (ii) in Definition \ref{def-Radon}
then ensures that this is enough to complete the proof.
\end{proof}

In the unpublished manuscript \cite{Pedeu}, Pedersen uses the Radon-Nikodym theorem to give a very short proof (which he attributes to a paper of
von Neumann cited there) of the uniqueness of Haar measure for $\sigma$-compact locally compact groups (this is  the last 
proof there).  He is only able to treat $\sigma$-compact groups 
since he is using the usual $\sigma$-finite version of the Radon-Nikodym theorem. 
This raises the question of whether one can 
make this proof work for general locally compact groups $G$. We do this below in the setting of Fremlin's Radon measures,
thereby presenting an excellent application of both our general Radon-Nikodym theorem \ref{rn} and of 
the topics of the present section. See 442B in \cite{Fremlin} for Fremlin's proof of the uniqueness. Fremlin defines 
Haar measure to be a left invariant Radon measure.  He shows in  \cite[443J(b-i)]{Fremlin} that the domain $\sigma$-algebra
of a Haar measure is the completion of the Borel $\sigma$-algebra $\B(G)$, and we will include this in the 
definition of Haar measure.

\begin{theorem} \label{Haar}     Haar measure on a locally compact topological group $G$ is unique up to a positive scalar multiple.  \end{theorem}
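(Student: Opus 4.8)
The plan is to reduce the uniqueness to showing that a single Radon--Nikodym derivative is almost everywhere constant, and to push through the Fubini step that Pedersen could only handle under $\sigma$-compactness by invoking the Fubini theorem for Radon product measures. Fix two Haar measures $\mu$ and $\nu$ on $G$. Their sum $\rho=\mu+\nu$, defined on the Borel sets, is again a left invariant measure which is finite on compact sets, strictly positive on nonempty open sets, and inner regular (given $\epsilon>0$ and a set $E$, choose compact $C_1,C_2\subseteq E$ with $\mu(C_1)$ and $\nu(C_2)$ nearly optimal and put $C=C_1\cup C_2$); by local compactness it also has $\rho$-finite open neighbourhoods of each point. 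Hence, by the remark following Definition \ref{def-Radon}, $\rho$ extends to a Radon measure, which by Theorem \ref{Fremlin 1.10} is decomposable and so by Theorem \ref{loc} is localizable. Since $\mu\ll\rho$ trivially, and $\mu$ is both semifinite and $\rho$-semifinite (if $\mu(E)>0$ then inner regularity supplies a compact $C\subseteq E$ with $0<\mu(C)$ and $\rho(C)<\infty$), Theorem \ref{rn} applies with base measure $\rho$ and yields a measurable $p\colon G\to[0,\infty)$, unique up to $\rho$-a.e.\ equality, with $\mu(E)=\int_E p\,d\rho$ for all measurable $E$; as $\mu\le\rho$ we may take $0\le p\le 1$.

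First I would show that $p$ is left invariant in the almost-everywhere sense. For fixed $s\in G$, the change of variables $x\mapsto s^{-1}x$ together with left invariance of $\rho$ gives $\mu(sE)=\int_{sE}p\,d\rho=\int_E p(sx)\,d\rho(x)$, while left invariance of $\mu$ gives $\mu(sE)=\mu(E)=\int_E p\,d\rho$. Thus $\int_E p(s\,\cdot)\,d\rho=\int_E p\,d\rho$ for every measurable $E$, and the uniqueness clause of Theorem \ref{rn} forces $p(sx)=p(x)$ for $\rho$-a.e.\ $x$, for each fixed $s$.

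The crux is to upgrade this separate-variable invariance to genuine constancy, and this is exactly where the $\sigma$-finiteness barrier in Pedersen's argument is removed. Choosing a Borel representative of $p$, the function $(s,x)\mapsto p(sx)-p(x)$ is Borel on $G\times G$ and hence measurable for the Radon product measure $\rho\times\rho$; by the previous paragraph it vanishes for $\rho$-a.e.\ $x$, for each $s$. Applying the Fubini theorem for Radon product measures (see Volume 4 of \cite{Fremlin}; this is the step that genuinely needs the Radon framework rather than mere $\sigma$-finiteness) we conclude it vanishes $\rho\times\rho$-a.e., and then, sectioning in the other variable, that for $\rho$-a.e.\ $x$ one has $p(sx)=p(x)$ for $\rho$-a.e.\ $s$. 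Fix one such $x_0$ and put $a=p(x_0)$. The substitution $g=sx_0$ is a right translation, which maps $\rho$-null sets to $\rho$-null sets since it only scales $\rho$ by the (positive, finite) modular function, so the set $\{g:p(g)\neq a\}$ is the right translate by $x_0$ of a $\rho$-null set and is therefore $\rho$-null. Hence $p=a$ $\rho$-a.e., giving $\mu=a\rho$; here $a>0$ since $\mu\neq 0$, and $a<1$ since $\nu\neq 0$. Consequently $\nu=\rho-\mu=(1-a)\rho=\tfrac{1-a}{a}\,\mu$, which is the asserted uniqueness up to the positive scalar $(1-a)/a$. The main obstacle, as flagged, is the justification of the Fubini interchange for the non-$\sigma$-finite product $\rho\times\rho$; once the Radon product Fubini theorem is granted, the remainder is routine bookkeeping with translation invariance and the uniqueness in Theorem \ref{rn}.
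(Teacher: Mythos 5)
Your overall strategy is the paper's: form $\rho=\mu+\nu$, extend it to a Radon measure, apply the general Radon--Nikodym theorem \ref{rn} to get a density $p$ with $0\le p\le1$, prove $p(s\,\cdot)=p$ $\rho$-a.e.\ for each fixed $s$, and then upgrade a.e.-invariance to a.e.-constancy. The first three stages are essentially correct (and match the paper, which tests against $\mathcal{K}(G)$ rather than against measurable sets). The gap is exactly at the step you flag as the crux. What you need is: from ``for each $s$, the section $N_s=\{x:p(sx)\neq p(x)\}$ is $\rho$-null'' deduce that for some $x_0$ the \emph{other} section $N^{x_0}=\{s:p(sx_0)\neq p(x_0)\}$ is $\rho$-null. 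The first half of your interchange (all $s$-sections null $\Rightarrow$ $N$ is null for the Radon product) is fine, because the Radon product is inner regular with respect to compact sets and any compact subset of $N$ sits inside a product of compacts where classical finite-measure Fubini applies. But the second half --- from $(\rho\times\rho)(N)=0$ to ``$\rho$-a.e.\ $x$-section is $\rho$-null'' --- is a Tonelli-type statement that is false for general non-$\sigma$-finite Radon products, and Haar measure on a non-$\sigma$-compact group is not $\sigma$-finite. For example, let $X=Y$ be the disjoint union of continuum many copies $[0,1]_t$ ($t\in[0,1]$) of the unit interval, each carrying Lebesgue measure: this is a Radon measure on a locally compact space, and the \emph{closed} set $N=\bigcup_{t\in[0,1]}\{t\}\times[0,1]_t$ (first coordinate taken in one fixed copy) is null for the Radon product and has every section in one direction a singleton, yet every $x$ in a non-null set has a section of measure $1$ in the other direction. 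So the two iterated integrals of a non-negative Borel function can genuinely differ; Fremlin's Fubini theorems for $\tau$-additive and Radon products carry integrability or $\sigma$-finiteness hypotheses precisely to exclude this, and do not deliver the interchange in the generality you invoke. This is the very obstruction that confines Pedersen's argument to $\sigma$-compact groups, so citing a ``Fubini theorem for Radon product measures'' does not close it.

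The paper circumvents the problem without any infinite product-measure machinery: it fixes arbitrary compact sets $K_0,K_1$, applies classical Fubini to the \emph{finite} measure spaces obtained by restricting $\omega$ to $K_1^{-1}$ and $K_0$, extracts a single point $x_0\in K_0$ for which $h=h(x_0)$ $\omega$-a.e.\ on a translate of $K_1^{-1}$, and then, since $K_1$ was arbitrary and $\omega$ is inner regular with respect to compact sets, concludes that $h$ is a.e.\ constant globally. To repair your write-up you should either localize the Fubini interchange to products of compact sets in this way, or give a genuinely group-theoretic reason (not a general product-measure theorem) why the particular set $N$ here admits a null section. The remaining ingredients of your argument --- the invariance of $p$ via the uniqueness clause of Theorem \ref{rn}, the observation that right translation by $x_0$ preserves null sets via the modular function, and the final identity $\nu=\frac{1-a}{a}\,\mu$ --- are sound.
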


\begin{proof}  Let $\mu, \nu$ be Haar measures on $G$, 
with corresponding functionals $\varphi_\mu, \varphi_\nu$ on $\mathcal{K}(G)$.   Then $\varphi_\mu + \varphi_\nu$ is an invariant functional on $\mathcal{K}(G),$ so corresponds by the Riesz representation theorem
(see the lines below Definition \ref{def-Radon}) to a unique Radon measure $\omega$ on $G$.   
Then $\omega$ is left invariant (e.g.\ by 
\cite[441L]{Fremlin}), i.e.\ is a Haar measure. 
 Let $\mathcal{A}$ be the domain $\sigma$-algebra of  $\omega$, which is the completion of $\B(G)$
 with respect to $\omega$.
By the formulae above relating functionals on $\mathcal{K}(G)$ and  Radon measures, or by 
416E in  \cite{Fremlin}, 
 it is easy to see that  $\mu \leq \omega$ on open and compact 
sets, and hence also on $\B(G)$. 
It follows that the domain of $\mu$ includes $\mathcal{A}$.   Let 
$\mu' = \mu_{|\mathcal{A}}$.  
If $\mu'(E) > 0$ then there is a compact subset $K \subset E$ with $\mu(K) > 0$.   
Since $\mu(K)$ and $\omega(K)$ are finite, 
$\mu'$ is strongly $\omega$-semifinite.  Thus we may apply the Radon-Nikodym theorem \ref{rn} to obtain an $\mathcal{A}$-measurable $h : G \to [0,\infty)$ with $d \mu' = h \, d \omega$ on $\mathcal{A}$. 
We have $h \leq 1$ a.e.,  this may be seen 
 e.g.\ as  in the proof that (vi) implies (i) of Theorem \ref{frec}.    Then 
$\int \, f \, h \, d \omega = \int \, f \, d \mu'$ for $f \in \mathcal{K}(G)$.  For each $f\in  \mathcal{K}(G)$ and each $z\in G$ we have that 
$$\int_G f(x) \, h(x)\,d\omega(x) =\int_G f(y)\,d\mu'(y) =\int_G f(zy)\,d\mu'(y)$$ $$=\int_G f(zx) \, h(x)\,d\omega(x) =\int_G f(x) \, h(z^{-1}x)\,d\omega(x).$$ So
$\int_G f(x)(h(x)-h(z^{-1}x))\,d\omega(x)=0$ for each $f\in \mathcal{K}(G)$, which ensures that
 $h = h_z$ $\omega$-a.e., for all $z \in G$.

We show that $h$ is constant $\omega$-a.e.  To this end, assume first that $h$ is Borel measurable.
Let $K_0$ and $K_1$ be arbitrary compact sets in $G$, then $0 = (h_z(x)-h(x)) \, \chi_{K_0}(x) \, \chi_{K_1}(z^{-1})$ 
 $\omega$-a.e.\ for all $z \in G$.  Now $h(z^{-1} x)$ is Borel measurable  on $G \times G$. 
   Hence $|h_z(x)-h(x)| \, \chi_{K_0}(x) \, \chi_{K_1}(z^{-1})$ is a non-negative measurable function.   
 By Fubini's theorem for finite measures, 
$$\int_{K_1^{-1}} \, \int_{K_0} \, |h_z(x)-h(x)|\ d \omega(x) \, d \omega(z) = 
 \int_{K_0} \, \int_{K_1^{-1}} \,  |h_z(x)-h(x)|\ d \omega(z) \, d \omega(x)$$ is zero.
Thus for a.e.\ $x \in K_0$ we have 
$$0 = \int_{G} \,  |h_z(x)-h(x)| \, \chi_{K_1}(z^{-1})  \,d \omega(z) = \int_G \, |h(z^{-1} ) - h(x)|\, \chi_{K_1 x}(z^{-1}) \, d \omega(z) .$$
We may therefore  select $x_0 \in K_0$ so that $h = h(x_0)$ $\omega$-a.e.\ on $K$, where $K = x_0^{-1} K_1^{-1}$.
Since $K_1$ was an  arbitrary  compact set, $h$ is constant $\omega$-a.e.\ on every compact subset $K$ of $G$.  
Thus $h$ is constant $\omega$-a.e.\ on $G$.   

In the general case 
there is a Borel measurable function $k$ on $G$ that is $\omega$-a.e.\ equal to $h$ (see e.g.\ \cite[Proposition 2.12]{Fol}).  
 By the above $k$, and hence $h$,  is constant $\omega$-a.e.. 
So $\mu' = c \omega$ for a constant $c \in [0,1]$, and thus $\mu = c \omega$ on bounded Borel functions.  Hence  $\varphi_\mu = c \varphi_\omega = c   \varphi_\mu + c \varphi_\nu$.      
It follows that $\mu$  is a constant times $\nu$ on $\mathcal{K}(G)$, hence on Borel sets
by the formulae above relating functionals and measures. 
Taking completions, $\mu$  is a constant times $\nu$.  
\end{proof}

\section{Characterizing measure algebras and abelian von Neumann algebras}
\label{chmav}

\subsection{Measure algebras and localizability} \label{rmal}

In this subsection we formally introduce the notion of a localizable measure algebra and show that it is once again Radon and
decomposable measure spaces that in some sense
 characterize this class. To fully comprehend the subtleties of this section,
we shall need a bit of background on Boolean algebras and their Stone
spaces. We proceed to summarise the essentials. In so doing we shall closely follow the exposition of \cite{Fremlin}. Fuller details may be found in 311A, 311E, 311F, 311H, 311I, 312M, 321J, 314P and 314S of \cite{Fremlin}.

\begin{definition} We define a Boolean ring $\Balg$, to be a ring $(\Balg,\boxplus,\boxdot)$ for which we have that
$a\boxdot a=a$ for every $a\in\Balg$. If
 the ring admits a multiplicative identity, we shall refer to $\Balg$ as a
Boolean algebra. Elements $a, b \in \Balg$ are called disjoint if $a \boxdot b = 0$, and a disjoint family of subsets has
pairwise disjoint members.

For Boolean algebras we define a Boolean homomorphism to be a ring homomorphism which preserves the multiplicative identity.
In fact the natural definitions of a homomorphism, an isomorphism, an ideal and a principal ideal all come from treating
a Boolean algebra as a ring with unit.
\end{definition}

It is not difficult to see that by definition all Boolean algebras are automatically commutative, and to conclude that $a\boxplus a=0$ for all $a\in \Balg$.

\begin{remark} Readers mystified by the fact that an object with so simple a definition carries the title of ``algebra'', may wish to verify that for any Boolean algebra $\Balg$, the operations $\vee$, $\wedge$ and $x\to x'$ defined by $$x\vee y= (x\boxplus y)\boxplus \left(x\boxdot y\right),\quad x\wedge y=x\boxdot y \mbox{ and }x'=1\boxminus x,$$ satisfy the conditions that
\begin{itemize}
\item $\vee$ and $\wedge$ are associative and commutative,
\item $x\wedge x=x$ and $x\vee x=x$,
\item $x\vee(x\wedge y)= x\wedge(x\vee y)$,
\item $(x\vee y)'=x'\wedge y'$ and $(x\wedge y)'=x'\vee y'$,
\item $x\wedge(y\vee z)=(x\wedge y)\vee(y\wedge z)$ and $x\vee(y\wedge z)=(x\vee y)\wedge(y\vee z)$,
\item $x\vee 0=x$ and $x\wedge 1=x$,
\item $x\wedge y=1$ and $x\vee y=0$ if and only if $y=x'$.
\end{itemize}
Conversely any object $\Balg$ equipped with operations $\vee$, $\wedge$ and $x\to x'$ which satisfy the above criteria,
becomes a Boolean algebra when the operations ``$+$'' and ``$\boxdot $'' are defined by $x\boxplus y=(x\vee y)\wedge(\neg (x\wedge y))$ and $x\boxdot y=x\wedge y$.
\end{remark}

\begin{remark} Each Boolean algebra $\Balg$ admits a natural ordering where we say that $a\leq b$ precisely when
$a\boxdot b=a$. This can easily be seen to be a partial ordering with least element 0 and greatest element 1. In fact under
this ordering $\Balg$ turns out to be a lattice with $a\vee b=\sup\{a,b\}$ and $a\wedge b =\inf\{a,b\}$ for all $a$,
$b\in\Balg$. It also clearly follows from the definition of this ordering that any Boolean homomorphism preserves this order.
Given $a\in \Balg$, one may then further show that the principal ideal $\Balg_a$ generated by $a$ can be shown to be
precisely $\Balg_a=\{b\in \B\colon b\leq a\}$.
\end{remark}

We pause to introduce the concepts that will form the foundation of our further analysis.

\begin{definition}
\label{mud}
\begin{itemize}
\item We say that a Boolean algebra $\Balg$ is \emph{Dedekind complete}, if with respect to the above ordering
every increasing net has a least upper bound. (By the argument in the third paragraph
of Section \ref{LinftyW1}), it follows that every subset will then have a least upper bound.) If by contrast we only have that every countable increasing net has an upper bound, then $\Balg$ is said to be Dedekind $\sigma$-complete (or just $\sigma$-complete).
\item A subset $C$ of a Boolean algebra $\Balg$ is called sequentially
order-closed (resp.\ order-closed) if any increasing sequence in $C$ (resp.\ an upward-directed net in $C$) has a least upper bound in $\Balg$ that belongs to $C$. By the same token a Boolean homomorphism which preserves suprema of increasing nets is said to be order continuous. If it only preserves suprema of countable increasing nets we say it is $\sigma$-order continuous.
\item A subset $\mathcal{I}$ of a Boolean algebra $\Balg$ is said to be an ideal of $\Balg$ if for all $x, y\in \mathcal{I}$ and $a\in\Balg$ we have that $x\vee y\in \mathcal{I}$ and $x\wedge a\in\mathcal{I}$. If an ideal is sequentially order-closed, we say it is a $\sigma$-ideal.
\end{itemize}
\end{definition}

It is an exercise to see that if $\mathcal{I}$ is an ideal of some Boolean algebra $\Balg$,
then $\Balg/\mathcal{I}$ will then again be a Boolean algebra. Indeed if $\Balg$ is $\sigma$-complete and $\mathcal{I}$ a $\sigma$-ideal, then $\B/\mathcal{I}$ is also $\sigma$-complete.

We are particularly interested in a specific subcategory of Boolean algebras, the so-called measure algebras.

\begin{definition}
\label{mud2}
\begin{itemize}
\item A Dedekind $\sigma$-complete Boolean algebra $\Malg$ is said to be a \emph{measure algebra} if it admits a function $\overline{\mu}:\Malg\to[0,\infty]$ with the properties that $\overline{\mu}(p)=0$ precisely when $p=0$ and that for any countable set $\{p_n\}$ of mutually disjoint elements it holds that $\overline{\mu}(\sup_n \, p_n)= \sum_{n=1}^\infty \overline{\mu}(p_n)$. The quantity $\overline{\mu}$ is referred to as a measure on $\Malg$, and is said to be 
 \emph{semifinite} if for every nonzero $p\in\Malg$, we can find a nonzero element $q$ with $q\leq p$ such that $\overline{\mu}(q)<\infty$.
\item A measure algebra $(\Malg, \overline{\mu})$ is said to be \emph{finite} if $\overline{\mu}(\I)<\infty$, and a
\emph{probability} algebra if $\overline{\mu}(\I)=1$.
\item Two measure algebras $(\Malg_0, \overline{\mu}_0), (\Malg_1, \overline{\mu}_1)$ are said to be isomorphic if there exists a Boolean isomorphism $\pi$ from $\A_0$ to $\Malg_1$ such that $\overline{\mu}_1\circ\pi = \overline{\mu}_0$.
\item A measure algebra $(\Malg, \overline{\mu})$ is said to be \emph{localizable} if 
$\Malg$ is Dedekind complete and $\overline{\mu}$ is semifinite.
\end{itemize}
\end{definition}

 \begin{example} We present an important example to illustrate the above concepts. Any measure space $(X,\Sigma, \mu)$ is a
Boolean algebra with the algebra operations given by $$E\wedge F=E\cap F,\quad E\vee F=E\cup F,\quad E'=X\setminus E,\quad \I=X,\quad 0=\emptyset$$for all $E,F\in \Sigma$. The collection $\mathcal{Z}(\Sigma)$ of sets of measure 0, can be shown to
be a $\sigma$-ideal of this Boolean algebra, which then ensures that the quotient $\Sigma/\mathcal{Z}(\Sigma)$ is again a
Boolean algebra with algebra operations given by
$$[E]\vee[F]=[E\cup F] , \; \; [E]\wedge[F]= [E\cap F], \; \; \text{and} \; \; [E]'=[X\setminus E] .$$
The action of $\mu$ canonically extends to $\Sigma/\mathcal{Z}(\Sigma)$, by means of the prescription
$\overline{\mu}([E])=\mu(E)$. The pair $(\Sigma/\mathcal{Z}(\Sigma), \overline{\mu})$ is then a measure algebra. From the
foregoing discussion, it is clear that $\Sigma/\mathcal{Z}(\Sigma)$ will be $\sigma$-complete whenever $(X,\Sigma, \mu)$ is,
and that $\overline{\mu}$ will be semifinite precisely when $\mu$ is. It follows that $(\Sigma/\mathcal{Z}(\Sigma), \overline{\mu})$ is a localizable measure algebra precisely when $(X,\Sigma, \mu)$ is a localizable measure space. For 
simplicity we will in future denote the quotient map $E\to [E]$ by $E\to \overline{E}$.
\end{example}

In view of the importance of the above, we formalise it as a definition.

\begin{definition}\label{madef}
The measure algebra $(\Malg, \overline{\mu})$ of a measure space $(X,\Sigma,\mu)$ is defined to be the Boolean algebra
$\Malg=\Sigma/\mathcal{Z}(\Sigma)$ where $\mathcal{Z}(\Sigma)$ are the sets of measure 0, equipped with the measure $\overline{\mu}$ where $\overline{\mu}$ is defined by $\overline{\mu}([E])=\mu(E)$ for some representative $E$ from $[E]\in \Sigma/\mathcal{Z}(\Sigma)$. We shall refer to such measure algebras as concrete measure algebras.
\end{definition}

\begin{example}\label{vnama}
A second example of a localizable measure algebra is given by $(\Pdb(\M), \tau_{|\Pdb(\M)})$ where $\M$ is an abelian von Neumann algebra, and $\tau$ a faithful normal semifinite trace on $\M$. The lattice of projections $\Pdb(\M)$ is easily seen to be a Boolean algebra when equipped with the operations
$p \boxplus q := p + q - 2pq$ and $p \boxminus q := pq$. Note that then $p\vee q = p + q - pq$ and $p\wedge q = pq$.
This Boolean algebra is Dedekind complete (see for example \cite[Proposition 5.1.8]{KR},
or alternatively Corollary \ref{awvn} and the measure space example above)
 and $\tau_{|\Pdb(\M)}$ is semifinite (see for example \cite[Definition 2.14 \& Proposition 2.13]{GL}).
More generally, a set of commuting projections
in a von Neumann algebra is a  Boolean algebra if it contains $0$, and is closed under $\perp$ (i.e.\ $p \mapsto p^\perp = I - p$), and infima (i.e.\ products $pq$), and it is a complete Boolean algebra if in addition it is closed under suprema of
increasing nets.
\end{example}

The importance of the two examples of localizable algebras presented above, stems from the functorial correspondence between the important objects we now define,
and localizable measure algebras. Establishing this correspondence is one of the main goals of the section.

\begin{definition}\label{defvnma} We call a couple consisting of a von Neumann algebra $\M$ endowed with a faithful normal semifinite weight
$\varphi$ a \emph{von Neumann measure algebra}. If indeed $\varphi$ is a (faithful normal) state, we shall refer to the pair $(\M,\varphi)$ as a \emph{von Neumann probability algebra}. \end{definition}

The choice of calling pairs $(\M,\varphi)$ of the above form von Neumann measure algebras rather than something like
\emph{noncommutative measure spaces} is deliberate for the sake of emphasising the correspondence between localizable measure algebras and abelian von Neumann measure algebras.

\begin{definition}\label{defvnmamor}
We call von Neumann measure algebras $(\M_1,\vf_{\M_1})$ and $(\M_2,\vf_{\M_2})$  \emph{isomorphic} if there exists a 
 $*$-isomorphism $\vniso\colon \M_1 \to \M_2$ such that $\vf_{\M_1}=\vf_{\M_2}\circ\vniso$ on $ (\M_1)_+$. 
A \emph{direct sum} of von Neumann measure algebras $(\M_i,\vf_i)$ is the von Neumann measure algebra $(\M,\vf_\M)$ such that $\M$ is the direct sum 
$\oplus_i \,  \M_i$ of $\M_i$ and $\vf_\M$ is given by $ \vf_\M (
(a_i))=\sum_i \,  \vf_i(a_i)$.
\end{definition}

One may use $W^*$-algebras in the last two definitions and
for some purposes this might be preferable (if one wishes to include the $L^\infty$ spaces of localizable measures).    However for operator algebraists,   the distinction between abelian $W^*$-algebras and 
 abelian von Neumann algebras is essentially cosmetic (see  Corollary \ref{awvn}).

To set the scene for what is to come, we note the following.
If $(X,\Sigma,\mu)$ is a measure space, then as a $C^*$-algebra, the projection lattice $\Pdb(\M)$ of
$\M=L^\infty(X,\Sigma,\mu)$ can be shown to be isomorphic to
$\Sigma/\mathcal{Z}(\Sigma)$. So if $\bar{\mu}$ is defined as
in Definition \ref{madef}, then $\bar{\mu}$ is a measure on $\Pdb(\M)$ with respect to which $(\Pdb(\M), \bar{\mu})$ is
clearly a measure algebra. We leave it as an exercise to check that the measure algebra of  $(X,\Sigma,\mu)$ is isomorphic in the
above sense to $(\Pdb(\M), \bar{\mu})$.   This makes it clear that $(X,\Sigma,\mu)$ is localizable  (resp.\ Dedekind)
in the sense of Section \ref{Ws2} 
if and only if its  measure algebra  is localizable in the sense of Definition \ref{mud2} (resp.\ Dedekind complete). Indeed
 some of these concepts from the early sections of our paper may well be deemed to be somewhat artificial from the viewpoint of someone well
versed in Boolean algebras, as they are  statements about the associated measure algebra. Moreover several results in the first
sections of our paper may be rephrased in measure algebra terms.  For example if one also uses Proposition \ref{Wextn} then one can see that Lemma
\ref{isdual} may be restated as Proposition \ref{prophomordcont} below.  Similarly, Theorem \ref{loc} (iv)  may be recast as
the statement that dualizability/localizability of a measure space may be characterized by its measure algebra being
isomorphic to the measure algebra of a decomposable measure space (see Theorem \ref{Radmeasalg} below).   The `essential suprema' in
Lemma \ref{mu} ff.\ correspond to suprema in the measure algebra.

\begin{remark}\label{homordcont} It is useful to note that a Boolean isomorphism $\pi$ between Dedekind complete Boolean
algebras $\Balg_0$ and $\Balg_1$, is automatically order
bicontinuous. To see this notice that the fact that both $\pi$ and
$\pi^{-1}$ preserves order, ensures that $(a_\alpha)\subset \Balg_0$ is an increasing net if and only if $(\pi(a_\alpha))$ is.
Since  $a_\alpha\leq \sup_\beta a_\beta$ for each $\alpha$,
 we have $\pi(a_\alpha)\leq \pi(\sup_\beta a_\beta)$, and hence that $\sup_\alpha \, \pi(a_\alpha) \leq \pi(\sup_\beta a_\beta)$. Repeating this argument for $\pi^{-1}$
instead of $\pi$ shows that $\sup_\beta a_\beta \leq \pi^{-1}(\sup_\alpha \, \pi(a_\alpha))$, and hence that
$\pi(\sup_\beta a_\beta) \leq \sup_\alpha \, \pi(a_\alpha)$. We therefore have $\sup_\alpha \, \pi(a_\alpha)= \pi(\sup_\beta a_\beta)$ as required.
\end{remark}

Any measure preserving Boolean isomorphism between measure algebras $(\Malg_0, \overline{\mu}_0)$ and
$(\Malg_1, \overline{\mu}_1)$, clearly preserves semifiniteness. It is not difficult to modify the argument at the end of the preceding remark to show that
measure preserving Boolean isomorphisms preserve Dedekind completeness.
 We record this observation as a proposition.

 \begin{proposition}\label{prophomordcont}
Let $(\Malg_0, \overline{\mu}_0)$ and $(\Malg_1, \overline{\mu}_1)$ be isomorphic measure algebras. Then $(\Malg_0, \overline{\mu}_0)$ is localizable if and only if $(\Malg_1, \overline{\mu}_1)$ is.
\end{proposition}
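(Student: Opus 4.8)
The plan is to work with the Boolean isomorphism $\pi \colon \Malg_0 \to \Malg_1$ furnished by the definition of isomorphic measure algebras (Definition \ref{mud2}), which satisfies $\overline{\mu}_1 \circ \pi = \overline{\mu}_0$. Since a Boolean isomorphism and its inverse both preserve the natural order (as recorded in the remarks preceding Definition \ref{mud}), and since $\pi^{-1}$ is itself a measure preserving isomorphism (because $\overline{\mu}_0 \circ \pi^{-1} = \overline{\mu}_1$), the situation is entirely symmetric in the two algebras. Hence it suffices to prove that if $(\Malg_0, \overline{\mu}_0)$ is localizable then so is $(\Malg_1, \overline{\mu}_1)$; that is, I must show that $\pi$ carries semifiniteness and Dedekind completeness from $\Malg_0$ to $\Malg_1$.

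For semifiniteness, I would take a nonzero $q \in \Malg_1$ and write $q = \pi(p)$ with $p = \pi^{-1}(q) \neq 0$. Semifiniteness of $\overline{\mu}_0$ yields a nonzero $p' \leq p$ with $\overline{\mu}_0(p') < \infty$; then $\pi(p')$ is a nonzero element below $q$ (by order preservation and injectivity of $\pi$) with $\overline{\mu}_1(\pi(p')) = \overline{\mu}_0(p') < \infty$. This is exactly the assertion, already flagged as clear in the text, that measure preserving Boolean isomorphisms preserve semifiniteness.

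For Dedekind completeness, I would transport suprema along $\pi$, adapting the argument of Remark \ref{homordcont}. Given an increasing net $(b_\alpha)$ in $\Malg_1$, the net $a_\alpha = \pi^{-1}(b_\alpha)$ is increasing in $\Malg_0$ (order preservation of $\pi^{-1}$), so $a = \sup_\alpha a_\alpha$ exists by Dedekind completeness of $\Malg_0$. I then claim $\pi(a) = \sup_\alpha b_\alpha$: it is an upper bound since $a_\alpha \leq a$ gives $b_\alpha = \pi(a_\alpha) \leq \pi(a)$, and if $c$ is any upper bound for $(b_\alpha)$ then $\pi^{-1}(c)$ dominates every $a_\alpha$, hence $a \leq \pi^{-1}(c)$ and so $\pi(a) \leq c$. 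Thus every increasing net in $\Malg_1$ has a supremum, and $\Malg_1$ is Dedekind complete.

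There is no real obstacle here; the whole content is that an order isomorphism transports least upper bounds and that a measure preserving surjection transports the semifiniteness witness. The only point requiring the slightest care --- and the one place where Remark \ref{homordcont} must be genuinely strengthened rather than quoted verbatim --- is that the remark as stated presupposes both algebras are Dedekind complete, whereas here I must produce the supremum in $\Malg_1$ from scratch using only the completeness of $\Malg_0$; the computation above shows this causes no difficulty.
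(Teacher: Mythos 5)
Your proposal is correct and follows the same route the paper takes: the paper simply asserts that semifiniteness is clearly preserved and that Dedekind completeness follows by modifying the argument of Remark \ref{homordcont}, and your write-up supplies exactly those details (including the correct observation that the remark must be adapted, since it assumes both algebras are already complete). Nothing is missing.
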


For any point set $X$, the power set 
$\mathcal{P} X$ may be realized as a Boolean algebra by the prescriptions $X=1$,
$\emptyset =0$, $E\boxplus F=E\Delta F$ (where $E\Delta F$ denotes the symmetric difference of $E$ and $F$), and
$E\boxdot F= E\cap F$. A deep fact regarding Boolean algebras, verified by Marshall Stone, is that every Boolean algebra admits
a representation as a subalgebra of the power set of some point set $Z$. We pause to record this fact.

\begin{theorem}[Stone's theorem]\label{StoneBoole}  Let $\Balg$ be a Boolean algebra, and let $Z$ be the set of ring
homomorphisms from $\Balg$ onto the two point Boolean algebra $\mathbb{Z}_2\equiv\{0,1\}$. Then the map
$\Balg\to\mathcal{P}Z:a\mapsto\widehat{a}$ given by $\widehat{a}=\{z:z\in Z,\,z(a)=1\}$, is an injective Boolean homomorphism
with $\widehat{1}_{\Balg}=Z$.
\end{theorem}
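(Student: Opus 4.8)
The plan is to establish the three asserted properties in turn, with injectivity being the substantive part. First I would record that each $z\in Z$, being a surjective ring homomorphism onto $\mathbb{Z}_2$, necessarily satisfies $z(1_\Balg)=1$: the element $z(1_\Balg)$ is an idempotent of $\mathbb{Z}_2$, and were it $0$ we would have $z(a)=z(a\boxdot 1_\Balg)=z(a)z(1_\Balg)=0$ for every $a$, contradicting surjectivity. Hence $\widehat{1_\Balg}=\{z: z(1_\Balg)=1\}=Z$, which settles the last claim.

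Next, the homomorphism property is a direct translation of the fact that $\mathbb{Z}_2$, viewed as a Boolean ring, carries the addition $\boxplus$ (mod $2$) and multiplication $\boxdot$. Since each $z$ preserves these operations, I would simply compute $\widehat{a\boxdot b}=\{z: z(a)z(b)=1\}=\widehat a\cap\widehat b$ and $\widehat{a\boxplus b}=\{z: z(a)+z(b)=1\}=\{z: z(a)\neq z(b)\}=\widehat a\,\Delta\,\widehat b$, and these are exactly the meet and the ring addition in the Boolean algebra $\mathcal{P}Z$. Thus $a\mapsto\widehat a$ is a Boolean homomorphism.

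For injectivity, since the map is a homomorphism it suffices to show that $\widehat c=\emptyset$ forces $c=0$; equivalently, for every nonzero $c\in\Balg$ I must produce some $z\in Z$ with $z(c)=1$. The engine here is Zorn's lemma together with the correspondence between such $z$ and prime (equivalently maximal) ideals. Given $c\neq 0$, the collection of ideals of $\Balg$ that do not contain $c$ is nonempty (it contains the principal ideal $\Balg_{c'}$, which omits $c$ since $c\leq c'$ would give $c=c\wedge c'=0$) and is closed under unions of chains, so Zorn yields an ideal $I$ maximal with respect to omitting $c$.

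The crux is to verify that $I$ is prime, and I would argue by contradiction: if some $a$ had $a\notin I$ and $a'\notin I$, then by maximality both $\langle I,a\rangle$ and $\langle I,a'\rangle$ contain $c$, so $c\leq i_1\vee a$ and $c\leq i_2\vee a'$ for some $i_1,i_2\in I$. Expanding $(i_1\vee a)\wedge(i_2\vee a')$ and using $a\wedge a'=0$ gives a bound $\leq i_1\vee i_2\in I$, whence $c\in I$, a contradiction. Therefore $I$ is prime, so $\Balg/I$ is a Boolean integral domain; since every element of a Boolean ring is idempotent, this forces $\Balg/I\cong\mathbb{Z}_2$, and the quotient map is the required $z\in Z$ with $z(c)=1$. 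The main obstacle is precisely this last construction: one cannot avoid an appeal to Zorn's lemma (the Boolean prime ideal theorem), and the prime-ideal computation is where the Boolean structure is genuinely used.
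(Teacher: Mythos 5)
Your proof is correct. Note that the paper itself does not prove Theorem \ref{StoneBoole}: it defers to \cite[311]{Fremlin}, where the key step is exactly the one you supply (311D there: for any ideal $I$ and $a\notin I$ there is a surjective ring homomorphism onto $\mathbb{Z}_2$ killing $I$ and sending $a$ to $1$, obtained via Zorn's lemma). Your argument is that standard route, and each step checks out: $z(1)=1$ from surjectivity and idempotence; the identities $\widehat{a\boxdot b}=\widehat a\cap\widehat b$ and $\widehat{a\boxplus b}=\widehat a\,\Delta\,\widehat b$ match the Boolean ring structure the paper puts on $\mathcal{P}Z$; the ideal $\langle I,a\rangle=\{b\colon b\leq i\vee a,\ i\in I\}$ is correctly identified; and the dichotomy ``$a\in I$ or $a'\in I$'' for the Zorn-maximal $I$ omitting $c$ does force $\Balg/I\cong\mathbb{Z}_2$ since every idempotent in an integral domain is $0$ or $1$. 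The only cosmetic remark is that you could have seeded Zorn's lemma with the zero ideal rather than $\Balg_{c'}$, and that the appeal to choice can be weakened from Zorn to the Boolean prime ideal theorem, as you note; neither affects correctness.
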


On the basis of the above theorem, the {\em Stone space} of a Boolean algebra $\Balg$ is defined to be the set $Z(\Balg)=Z$
of non-zero ring homomorphisms from $\Balg$ to 
$\mathbb{Z}_2$, with the map described in the above theorem referred to as the
\emph{Stone representation} of $\Balg$. The Stone space admits a canonical topology, which is elucidated by the following
theorem.

\begin{theorem}[Stone space topology] Let $Z$ be the Stone space of a Boolean algebra $\Balg$, and let
$$\mathfrak{T}=\{G:G\subseteq Z, \mbox{ for every }z\in G\mbox{ there exists }a\in\Balg \mbox{ such that } z\in\widehat{a}\subseteq G\}.$$ Then $\mathfrak{T}$ is a topology on $Z$, under which $Z$ is a compact 
Hausdorff space 
which is zero-dimensional, that is, the collection $\mathcal{O}$ of clopen subsets of $Z$ are a basis.
Moreover,  $\mathcal{O}=\{\widehat{a}:a\in\Balg\}$.
\end{theorem}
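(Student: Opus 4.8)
The plan is to extract everything from the fact, recorded in Theorem~\ref{StoneBoole}, that $a \mapsto \widehat{a}$ is an injective Boolean homomorphism; since Boolean homomorphisms preserve $\vee$, $\wedge$ and complementation, this yields the ``field of sets'' identities $\widehat{a \vee b} = \widehat a \cup \widehat b$, $\widehat{a \wedge b} = \widehat a \cap \widehat b$, $\widehat{a'} = Z \setminus \widehat a$, together with $\widehat 0 = \emptyset$ and $\widehat 1 = Z$. Write $\mathcal{O}_0 = \{\widehat a : a \in \Balg\}$. First I would check that $\mathcal{O}_0$ satisfies the basis axioms: it covers $Z$ because $\widehat 1 = Z$, and the intersection of two of its members is again a member, since $\widehat a \cap \widehat b = \widehat{a \wedge b}$. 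The collection $\mathfrak{T}$ defined in the statement is exactly the family of unions of members of $\mathcal{O}_0$, so it is a topology; the only nonobvious axiom, closure under finite intersections, follows from the computation just made (if $z \in G \cap H$ with $\widehat a \subseteq G$ and $\widehat b \subseteq H$ both containing $z$, then $z \in \widehat{a \wedge b} \subseteq G \cap H$). Each $\widehat a$ is open by the very definition of $\mathfrak{T}$, and closed because its complement $\widehat{a'}$ is open; hence $\mathcal{O}_0$ consists of clopen sets and is a basis for $\mathfrak{T}$.

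For the Hausdorff property I would use injectivity: distinct points $z_1 \neq z_2$ of $Z$ are distinct homomorphisms, so some $a$ has $z_1(a) \neq z_2(a)$, say $z_1(a) = 1$ and $z_2(a) = 0$; then $\widehat a$ and $\widehat{a'}$ are disjoint clopen neighbourhoods separating them.

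The main work, and the main obstacle, is compactness. Since $\mathcal{O}_0$ is a basis it suffices to show that every cover of $Z$ by basic sets $\{\widehat{a_i}\}_{i \in I}$ admits a finite subcover. Arguing by contradiction, if no finite subfamily covers, then for every finite $F \subseteq I$ we have $\bigcup_{i \in F} \widehat{a_i} = \widehat{\bigvee_{i \in F} a_i} \neq Z = \widehat 1$, so by injectivity $\bigvee_{i \in F} a_i \neq 1$, equivalently $\bigwedge_{i \in F} a_i' \neq 0$. Thus the family $\{a_i'\}$ has all finite infima nonzero and so generates a proper filter; here is where the essential set-theoretic content enters, namely the Boolean prime ideal theorem (equivalently the ultrafilter lemma, a consequence of Zorn's lemma), which I would invoke to extend this filter to an ultrafilter $\mathcal{U}$. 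The indicator $z$ of $\mathcal{U}$ is a nonzero homomorphism $\Balg \to \mathbb{Z}_2$, i.e.\ a point of $Z$, with $z(a_i') = 1$, hence $z(a_i) = 0$, for every $i$; this contradicts the assumption that the $\widehat{a_i}$ cover $Z$. (Equivalently, one extends the proper ideal generated by $\{a_i\}$ to a maximal ideal and takes the quotient homomorphism onto $\mathbb{Z}_2$.) This establishes compactness.

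Finally, zero-dimensionality and the identity $\mathcal{O} = \mathcal{O}_0$ fall out together. I have already shown $\mathcal{O}_0 \subseteq \mathcal{O}$ and that $\mathcal{O}_0$ is a basis, so it remains to see that every clopen $C$ lies in $\mathcal{O}_0$. Being open, $C = \bigcup_i \widehat{a_i}$; being closed in the now-compact $Z$, the set $C$ is compact, so finitely many of the $\widehat{a_i}$ cover it, giving $C = \widehat{a_{i_1}} \cup \cdots \cup \widehat{a_{i_n}} = \widehat{a_{i_1} \vee \cdots \vee a_{i_n}} \in \mathcal{O}_0$. Hence $\mathcal{O} = \mathcal{O}_0 = \{\widehat a : a \in \Balg\}$ is a basis of clopen sets and $Z$ is zero-dimensional. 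The one genuinely non-formal step is the appeal to the prime ideal theorem in the compactness argument; all remaining steps are direct translations of the Boolean-homomorphism identities into set operations.
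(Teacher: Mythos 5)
Your proof is correct. The paper itself does not prove this theorem --- it defers to \cite[311]{Fremlin} --- so there is no in-text argument to compare against; but your route is precisely the standard one given there: translate the Boolean-homomorphism identities into the field-of-sets identities $\widehat{a\vee b}=\widehat{a}\cup\widehat{b}$, $\widehat{a'}=Z\setminus\widehat{a}$ to get a clopen basis and the Hausdorff property, prove compactness by extending the filter generated by $\{a_i'\}$ to an ultrafilter (the Boolean prime ideal theorem, correctly flagged as the one non-formal ingredient) whose indicator is a point of $Z$ omitted by the purported cover, and then use compactness to write an arbitrary clopen set as a finite union $\widehat{a_{i_1}\vee\cdots\vee a_{i_n}}$, giving $\mathcal{O}=\{\widehat{a}:a\in\Balg\}$. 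All steps check out, including the verification that the indicator of an ultrafilter is a ring homomorphism onto $\mathbb{Z}_2$.
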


Proofs of the last results 
may be found
in \cite[311]{Fremlin}.   
Putting them  together, we see that every Boolean algebra is representable as  the Boolean algebra
of all clopen subsets of a zero-dimensional compact space.

 We may exploit the structure offered by the Stone space to establish the following very important result. We shall content ourselves with merely outlining the proof. Details may be found in \cite[314M, 321J]{Fremlin}.

\begin{theorem}[Loomis-Sikorski]\label{LSthm}
Every $\sigma$-complete measure algebra is isomorphic to a concrete measure algebra.
\end{theorem}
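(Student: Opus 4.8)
The plan is to realize $\Malg$ concretely inside the Baire $\sigma$-algebra of its Stone space, modulo the meager ideal. First I would invoke Stone's theorem (Theorem \ref{StoneBoole}) to identify $\Malg$ with the Boolean algebra $\mathcal{O} = \{\widehat{a} : a \in \Malg\}$ of clopen subsets of the Stone space $Z$, which (as recorded in the Stone space topology theorem above) is a zero-dimensional compact Hausdorff space in which $\mathcal{O}$ is a basis. Let $\Sigma$ be the $\sigma$-algebra of subsets of $Z$ generated by $\mathcal{O}$, and let $\mathcal{I}$ be the $\sigma$-ideal consisting of those members of $\Sigma$ that are meager in $Z$.

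The heart of the argument is the following lemma: if $a = \sup_n a_n$ is a countable supremum in $\Malg$ (which exists by $\sigma$-completeness), then $\widehat{a} \setminus \bigcup_n \widehat{a_n}$ is nowhere dense. Since $\bigcup_n \widehat{a_n}$ is open and sits inside the clopen set $\widehat{a}$, this difference is closed, so it is enough to check it has empty interior. If not, as the clopen sets form a basis it would contain a nonempty clopen $\widehat{b}$; then $\widehat{b} \cap \widehat{a_n} = \emptyset$, i.e. $b \wedge a_n = 0$, for every $n$, forcing $a = \sup_n a_n \le b'$ and hence $\widehat{b} \cap \widehat{a} = \emptyset$, which contradicts $\emptyset \ne \widehat{b} \subseteq \widehat{a}$. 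I expect this lemma to be the main obstacle; the remaining steps are essentially bookkeeping built on top of it.

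Using the lemma I would show that the collection $\{\widehat{a} \,\Delta\, M : a \in \Malg,\ M \in \mathcal{I}\}$ is already a $\sigma$-algebra: closure under complements is immediate from $(\widehat{a}\,\Delta\,M)^c = \widehat{a'} \,\Delta\, M$, while for a countable union one writes $\bigcup_n (\widehat{a_n} \,\Delta\, M_n) = \widehat{\sup_n a_n} \,\Delta\, M'$ with $M'$ meager, using the lemma together with the fact that a countable union of meager sets is meager. This collection therefore equals $\Sigma$. Consequently the map $\Phi : a \mapsto [\,\widehat{a}\,] \in \Sigma/\mathcal{I}$ is onto; it is injective because $\widehat{a}\,\Delta\,\widehat{b} = \widehat{a \boxplus b}$ is clopen and a meager clopen subset of the compact (hence Baire) space $Z$ must be empty, so $a \boxplus b = 0$; and it takes countable suprema to countable suprema since $\sup_n [\,\widehat{a_n}\,] = [\,\bigcup_n \widehat{a_n}\,] = [\,\widehat{\sup_n a_n}\,]$ in the quotient. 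Thus $\Phi$ is an isomorphism of $\sigma$-complete Boolean algebras.

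Finally I would transport the measure. Define $\mu$ on $\Sigma$ by $\mu(\widehat{a} \,\Delta\, M) = \overline{\mu}(a)$; this is well defined because $a$ is uniquely determined modulo $\mathcal{I}$, and it is countably additive because if the sets $\widehat{a_n}\,\Delta\,M_n$ are pairwise disjoint then $a_n \wedge a_m = 0$ in $\Malg$ for $n \ne m$, so additivity follows from the measure-algebra axiom for $\overline{\mu}$. Then $(Z, \Sigma, \mu)$ is a measure space whose null ideal coincides with $\mathcal{I}$, since $\overline{\mu}(a) = 0$ exactly when $a = 0$. Hence the concrete measure algebra $\Sigma/\mathcal{Z}(\Sigma) = \Sigma/\mathcal{I}$ is isomorphic to $(\Malg, \overline{\mu})$ via the measure-preserving Boolean isomorphism $\Phi$, which is precisely the assertion.
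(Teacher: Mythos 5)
Your proposal is correct and follows essentially the same route as the paper's outline (which itself defers to Fremlin 321J): pass to the Stone space, quotient the $\sigma$-algebra generated by the clopen sets by the meager ideal, and transport $\overline{\mu}$. The only difference is one of completeness of exposition --- you supply the key ``nowhere dense'' lemma for $\widehat{\sup_n a_n}\setminus\bigcup_n\widehat{a_n}$ and the uniqueness of the clopen representative, which the paper leaves to the reference.
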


\begin{proof}[Proof outline--see
for example 321J in \cite{Fremlin} for more details if needed]
Given a $\sigma$-complete measure algebra $(\Malg,\overline{\mu})$, we pass to the Stone space $Z$, and let $\mathcal{O}$ be the algebra of clopen subsets of $Z$. This algebra is of course nothing but $\{\widehat{a}:a\in\Malg \}$. The collection
$\mathcal{I}$ of meager subsets of $Z$ turns out to be a $\sigma$-ideal of the Boolean algebra $\mathcal{P}Z$. Now let $\Sigma$ be the algebra $$\Sigma=\{C \in \mathcal{O}\colon\mbox{there exists }a\in\Malg\mbox{ such that } C \, \Delta \,
\widehat{a} \in \mathcal{I}\}.$$
It is an exercise to see that $\Sigma$ is a $\sigma$-algebra of subsets of $Z$ which contains the $\sigma$-ideal $\mathcal{I}$. The fact that $\Sigma$ is $\sigma$-complete and $\mathcal{I}$ a $\sigma$-ideal, ensures that $\Sigma/\mathcal{I}$ is $\sigma$-complete. We know that $\Malg$ and $\mathcal{O}$ are isomorphic as Boolean algebras. The next step is to show that $\mathcal{O}$ is isomorphic to the Boolean algebra $\Sigma/\mathcal{I}$. This is achieved by showing that for every $E\in \Sigma$ there is exactly one $F\in\mathcal{O}$ for which
$E \Delta F\in \mathcal{I}$. The action of this isomorphism then identifies $F$ with $[E]$. It follows that $\Malg$ and $\Sigma/\mathcal{I}$ are isomorphic as Boolean algebras.  What we still need is a measure on $(Z,\Sigma)$ for which $\mathcal{I}$ are the sets of measure zero. With $\theta$ denoting the isomorphism from $\Sigma/\mathcal{I}$ onto $\Malg$, the quantity $\nu(E)=\overline{\mu}(\theta([E]))$, where $E\in \Sigma$, is the measure we seek.

Consider the map $\pi:C\to\Malg$ where we say that
$\pi(C)=a$ precisely when $C \, \Delta \,
\widehat{a} \in \mathcal{I}$ for some $a\in \Malg$. Checking reveals that $\pi$ is a well-defined surjective homomorphism for which the kernel is precisely $\mathcal{I}$. The $\sigma$-completeness of $\mathcal{I}$ then ensures that $\pi$ is $\sigma$-order continuous. This fact in turn guarantees that the action of $\nu=\overline{\mu}\circ\pi$ on $C$ yields a well-defined measure on $(Z,C)$. Hence $(Z,C,\nu)$ is a measure space for which $\pi$ identifies $(\Malg,\overline{\mu})$ with the measure algebra of $(Z,C,\nu)$.
\end{proof}

\begin{proposition}\label{Wextn} Let $\M_1$ and $\M_2$ be  abelian $W^*$-algebras equipped with faithful normal semifinite traces $\tau_1$ and $\tau_2$.   Every Boolean homomorphism $\theta$ from $\Pdb(\M_1)$ to $\Pdb(\M_2)$ extends to a $*$-homomorphism $\mathcal{I} : \M_1 \to \M_2$. Moreover
$\theta$ is an isomorphism if and only if $\mathcal{I}$ is an isometric  isomorphism.
\end{proposition}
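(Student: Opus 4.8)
The plan is to build $\mathcal{I}$ first on the norm-dense $*$-subalgebra of \emph{simple elements} of $\M_1$ (finite linear combinations of projections), which is dense by Proposition \ref{sa3}, and then extend by continuity. The only delicate point at this stage is well-definedness, which I would handle through the atoms of finitely generated Boolean subalgebras. Given a simple element $s = \sum_{k=1}^n c_k p_k$ with $p_k \in \Pdb(\M_1)$, the finite Boolean subalgebra generated by $p_1, \dots, p_n$ has finitely many atoms $q_1, \dots, q_m$, which are mutually orthogonal with $\sum_j q_j = 1$, and each $p_k = \sum_{j \in S_k} q_j$ for a suitable index set $S_k$; hence $s = \sum_j d_j q_j$ where $d_j = \sum_{k : j \in S_k} c_k$ is determined by $s$ alone (indeed $s q_j = d_j q_j$). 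Since $\theta$ is a Boolean homomorphism it sends orthogonal projections to orthogonal projections (as $\theta(p)\theta(q) = \theta(pq) = \theta(0) = 0$ when $pq = 0$, recalling $p \wedge q = pq$) and is additive on orthogonal sums, so $\theta(p_k) = \sum_{j \in S_k} \theta(q_j)$. Consequently the natural definition $\mathcal{I}(s) := \sum_k c_k \theta(p_k) = \sum_j d_j \theta(q_j)$ depends only on $s$: any two representations have a common refinement into the atoms of the Boolean algebra they jointly generate, and both yield $\sum_j d_j \theta(q_j)$.

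With well-definedness in hand, I would verify that $\mathcal{I}$ is a unital $*$-homomorphism on simple elements. Linearity is immediate from the atom picture. For two simple elements refined over a common orthogonal family $\{q_j\}$, writing $s = \sum_j a_j q_j$ and $t = \sum_j b_j q_j$ gives $st = \sum_j a_j b_j q_j$, and since $\theta(q_i)\theta(q_j) = \delta_{ij}\theta(q_i)$ one gets $\mathcal{I}(st) = \sum_j a_j b_j \theta(q_j) = \mathcal{I}(s)\mathcal{I}(t)$; similarly $\mathcal{I}(s^*) = \sum_j \overline{a_j}\,\theta(q_j) = \mathcal{I}(s)^*$, and $\mathcal{I}(1) = \theta(1) = 1$. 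Moreover, because the $\theta(q_j)$ are mutually orthogonal projections in the abelian $\M_2$, one has $\|\mathcal{I}(s)\| = \max\{|a_j| : \theta(q_j) \neq 0\} \leq \max\{|a_j| : q_j \neq 0\} = \|s\|$, so $\mathcal{I}$ is contractive. As the simple elements form a dense $*$-subalgebra of $\M_1$ and $\M_2$ is complete, $\mathcal{I}$ extends uniquely to a bounded linear map $\mathcal{I} : \M_1 \to \M_2$, which remains a unital $*$-homomorphism by joint norm-continuity of multiplication and continuity of the involution.

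For the ``moreover'', suppose $\theta$ is a Boolean isomorphism. Injectivity gives $\theta(q_j) \neq 0$ whenever $q_j \neq 0$, so the displayed norm computation upgrades to $\|\mathcal{I}(s)\| = \|s\|$ on simple elements, whence $\mathcal{I}$ is isometric on all of $\M_1$. Surjectivity of $\theta$ forces the range of $\mathcal{I}$ to contain every projection of $\M_2$, hence every simple element of $\M_2$, which by Proposition \ref{sa3} is dense; since the isometric image of the complete space $\M_1$ is closed in $\M_2$, this dense closed range is all of $\M_2$, so $\mathcal{I}$ is an isometric $*$-isomorphism. Conversely, if $\mathcal{I}$ is an isometric $*$-isomorphism then both $\mathcal{I}$ and $\mathcal{I}^{-1}$ carry projections to projections (preserving the relations $p = p^* = p^2$), so the restriction of $\mathcal{I}$ to $\Pdb(\M_1)$, namely $\theta$, is a bijection of projection lattices and therefore a Boolean isomorphism.

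I expect the main obstacle to be precisely the well-definedness of $\mathcal{I}$ on simple elements, i.e.\ the independence of $\sum_k c_k \theta(p_k)$ from the chosen representation; the atom decomposition of a finitely generated Boolean subalgebra, together with the fact that a Boolean homomorphism preserves orthogonality and finite refinements, is exactly what makes this go through, and it is worth noting that only \emph{finite} additivity of $\theta$ is used (no order continuity is required). In particular the faithful normal semifinite traces play no essential role in the construction beyond identifying, via Example \ref{vnama}, the measure algebras $(\Pdb(\M_i), \tau_i)$ that motivate the statement.
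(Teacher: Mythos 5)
Your proof is correct and follows essentially the same route as the paper: define $\mathcal{I}$ on linear combinations of projections, establish well-definedness by passing to a common refinement (the paper refines the ``sets'' $e_i$ and $f_j$; your atom decomposition of the finitely generated Boolean subalgebra is just a more explicit version of this), verify the $*$-homomorphism and contractivity properties via orthogonality of the refined projections, and extend by the norm density from Proposition \ref{sa3}. The only difference is that you also spell out the easy converse of the ``moreover'' (isometric $*$-isomorphism implies $\theta$ is a Boolean isomorphism), which the paper leaves implicit, and you correctly observe that the traces play no role in the construction.
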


\begin{proof}
Define a mapping $\mathscr{I}$ on the span of  $\Pdb(\M_1)$ by setting $\mathscr{I}(s)=\sum_{i=1}^n \lambda_i \theta(e_i)$
for any ``simple function'' $s=\sum_{i=1}^n \lambda_i e_i$. (Note that when we use the term ``simple
function'' we understand $e_1,\dots,e_n$ to be mutually orthogonal projections.) Given another representation $\sum_{j=1}^m \alpha_j f_j$ of $s$ we may pass to a common refinement of the `sets' $e_1,\dots,e_n$ and $f_1,\dots,f_m$ to see that
$\mathscr{I}(\sum_{i=1}^n \lambda_i e_i)= \mathscr{I}(\sum_{j=1}^m \alpha_j f_j)$. So $\mathscr{I}$ is a well-defined bijective linear map from the simple functions of $\M_1$ to those of $\M_2$. The map
$\mathscr{I}$ clearly preserves adjoints, and squares of self-adjoint simple functions. Hence it is a
$*$-isomorphism. For any positive simple function $s=\sum_{i=1}^n \lambda_i e_i$, we have that
$\|s\|_\infty=\max(\lambda_1,\dots,\lambda_n)$. This ensures that then $$\|\mathscr{I}(s)\|_\infty=\|\sum_{i=1}^n \lambda_i \theta(e_i)\|_\infty\leq \|s\|_\infty
\, \|\sum_{i=1}^n\theta(e_i)\|_\infty =\|s\|_\infty.$$ By
Proposition \ref{sa3} 
 $\mathrm{span}(\Pdb(\M_1))$ is norm-dense in $\M_1$. Thus  $\mathscr{I}$ extends to a *-homomorphism from $\M_1$ into $\M_2$.

If $\theta$ is an isomorphism, then for the induced map on the simple functions we would similarly have that
$\|s\|_\infty\leq\|\mathscr{I}(s)\|_\infty$. Hence $\mathscr{I}$ is an isometric isomorphism between
the simple functions. In this case the simple functions of $\M_1$ of course map onto those of $\M_2$, and so the norm density of $\mathrm{span}(\Pdb(\M_i))$ in $\M_i$ ($i=1, 2$), ensures that here $\mathscr{I}$ extends to an isometric $*$-isomorphism from $\M_1$ into $\M_2$. \end{proof}

We are now ready to establish the main theorem of this subsection. This
will affirm the status of Radon measure
spaces as a class of well-described measure spaces unifying the concepts of `localizable' and `decomposable'.

\begin{theorem}\label{Radmeasalg} For a measure algebra $(\Malg,\overline{\mu})$, the following are equivalent:
\begin{enumerate}
\item $(\Malg,\overline{\mu})$ is localizable;
\item $(\Malg,\overline{\mu})$ is isomorphic to the measure algebra of a localizable measure space;
\item $(\Malg,\overline{\mu})$ is isomorphic to the measure algebra of a decomposable measure space;
\item $(\Malg,\overline{\mu})$ is isomorphic to the measure algebra of a Radon measure space $(X,\mathfrak{T}, \mathscr{B},\mu)$,
\end{enumerate}
\end{theorem}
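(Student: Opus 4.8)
The plan is to close the cycle $(1)\Rightarrow(4)\Rightarrow(3)\Rightarrow(2)\Rightarrow(1)$, in which only $(1)\Rightarrow(4)$ carries real content. The arc $(4)\Rightarrow(3)\Rightarrow(2)\Rightarrow(1)$ is bookkeeping built on results already in hand: $(4)\Rightarrow(3)$ is Theorem \ref{Fremlin 1.10}, since every Radon measure space is decomposable; $(3)\Rightarrow(2)$ holds because a decomposable space is itself localizable --- feeding a decomposable $(X,\mu)$ into part (iv) of Theorem \ref{loc} (via the identity map) yields part (ii), localizability --- so its measure algebra is already the measure algebra of a localizable space; and $(2)\Rightarrow(1)$ follows since the measure algebra of a localizable measure space is localizable (the equivalence recorded just before Remark \ref{homordcont}) and localizability is an isomorphism invariant of measure algebras by Proposition \ref{prophomordcont}.

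For $(1)\Rightarrow(4)$ I would first reduce to finite pieces, exactly as in the proof of $(ii)\Rightarrow(iii)$ of Theorem \ref{loc}. Using Zorn's lemma, choose a maximal disjoint family $(p_i)$ of nonzero elements of $\Malg$ with $\overline{\mu}(p_i)<\infty$; semifiniteness forces $\sup_i p_i=1$, and Dedekind completeness lets one recover each $a\in\Malg$ as $\sup_i(a\wedge p_i)$, identifying $\Malg$ measure-preservingly with the product of the principal ideals $\Malg_{p_i}$. Each $\Malg_{p_i}$, with the restricted measure, is a finite measure algebra, and being a principal ideal of a Dedekind complete algebra it is again Dedekind complete, hence localizable.

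The heart of the argument is to represent each finite piece as the measure algebra of a finite Radon measure space. For this I would pass to the Stone space $Z_i$ of $\Malg_{p_i}$ (via Stone's Theorem \ref{StoneBoole} and the ensuing Stone-space topology), a compact Hausdorff zero-dimensional space whose clopen sets are precisely $\{\widehat{a}:a\in\Malg_{p_i}\}$. Since $Z_i$ is compact, $C(Z_i)=\mathcal{K}(Z_i)$, and the prescription $\chi_{\widehat{a}}\mapsto\overline{\mu}(a)$ extends --- by a common-refinement argument as in Proposition \ref{Wextn}, together with the uniform density of clopen simple functions in $C(Z_i)$ --- to a bounded positive linear functional $I_i$ on $C(Z_i)$. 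The Riesz representation correspondence set up after Definition \ref{def-Radon} then yields a finite Radon measure $\rho_i$ on $Z_i$ with $\rho_i(\widehat{a})=\overline{\mu}(a)$ for every clopen $\widehat{a}$. Assembling the disjoint union $(Z,\rho)=\coprod_i(Z_i,\rho_i)$ gives a locally compact Hausdorff space, and one checks directly that it meets Fremlin's Definition \ref{def-Radon}: local finiteness holds because each point lies in the clopen, compact, finite-measure piece $Z_i$, while completeness, local determination and inner regularity pass to the disjoint sum from the pieces (inner regularity because any compact subset meets only finitely many $Z_i$). Its measure algebra is the product of those of the $(Z_i,\rho_i)$, isomorphic to $\prod_i\Malg_{p_i}\cong\Malg$, giving (4).

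The main obstacle is the one point suppressed above: that the canonical measure-preserving Boolean homomorphism $a\mapsto[\widehat{a}]$ from $\Malg_{p_i}$ into the measure algebra of $(Z_i,\rho_i)$ is \emph{surjective}, i.e. that the clopen sets are $\rho_i$-dense in that measure algebra. Injectivity and measure preservation are formal (faithfulness of $\overline{\mu}$, together with $\rho_i(\widehat{a})=\overline{\mu}(a)$), but surjectivity is the genuine Loomis--Sikorski-type content: one must approximate an arbitrary $\rho_i$-measurable set, modulo a null set, by a clopen set, using the inner regularity of $\rho_i$, the density of clopen simple functions in $C(Z_i)$, and the order-continuity of $a\mapsto[\widehat{a}]$ (automatic for a measure-preserving homomorphism of finite measure algebras). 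This is the step where, rather than reprove the classical fact in full, I would be inclined to invoke Fremlin's treatment of the Stone space of a measure algebra (\S 321 and \S 416 of \cite{Fremlin}).
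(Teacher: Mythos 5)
Your proposal is correct in its architecture and closes the same cycle of implications, but the substantive arc is organized differently from the paper's. The paper proves $(1)\Rightarrow(2)$ by applying the Loomis--Sikorski theorem (Theorem \ref{LSthm}) to the whole algebra, and then proves $(2)\Rightarrow(4)$ by decomposing $L^\infty(X,\nu)$ into finite pieces via Theorem \ref{loc}(iii) and Proposition \ref{Wextn}, invoking Loomis--Sikorski \emph{again} on each finite piece to realize it on a Stone space with $\sigma$-algebra $\{C\,\Delta\,F: C \text{ clopen}, F \text{ meager}\}$ and null ideal the meager sets, and then checking directly that this space satisfies items (i)--(vi) of Definition \ref{def-Radon}; the disjoint-sum assembly at the end is the same as yours. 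You instead go from $(1)$ to $(4)$ directly, decomposing the abstract Boolean algebra (rather than an $L^\infty$ realization) into finite principal ideals, and you manufacture the measure on each Stone space via the Riesz representation theorem rather than via the meager-ideal construction. The trade-off is exactly the one you identify: the paper's construction makes the measure-algebra identification tautological (that is what Loomis--Sikorski delivers) at the cost of having to verify the Radon axioms by hand, whereas your construction is Radon by fiat but leaves you owing the surjectivity of $a\mapsto[\widehat{a}]$, i.e.\ that every $\rho_i$-measurable set is clopen modulo a null set. That step is genuine (for the Radon completion it needs inner regularity plus the $\tau$-additivity of Radon measures, or a category argument), but deferring it to Fremlin is no worse than what the paper itself does with Loomis--Sikorski; alternatively, you could close it with the paper's own resources by noting that your $\rho_i$ and the completed Loomis--Sikorski measure are both Radon extensions of the same inner regular Borel measure, hence coincide by the uniqueness statement of 416F quoted after Definition \ref{def-Radon}, after which Theorem \ref{LSthm} hands you the measure-algebra isomorphism. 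One small economy in your favor: by working with the abstract algebra you avoid any appeal to the $L^\infty$ representation theory of Sections 2 and 5 in the hard implication.
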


\begin{proof} Any Radon measure space is decomposable by Theorem \ref{Fremlin 1.10}, and 
any decomposable measure space is localizable by
Theorem \ref{loc} (iv). 
 The implications (4)$\Rightarrow$(3)$\Rightarrow$(2) are therefore immediate, with the verification of (2)$\Rightarrow$(1) is an almost trivial exercise.   Then (1) $\Rightarrow$(2) by Theorem \ref{LSthm} and Proposition \ref{prophomordcont}.

It therefore remains to prove the implication (2)$\Rightarrow$(4).
The measure algebra of a finite measure space $(\Omega,\Sigma,\mu)$ is isomorphic  to the measure algebra
of a compact Radon measure space in the sense of
\ref{def-Radon}.   Indeed by the Loomis-Sikorski theorem \ref{LSthm}
 and its proof, up to  measure algebra isomorphism
 we may take $\Omega$ to be a Stone space,
and $\Sigma$ to
consist of $C \Delta F$ for $C$ clopen and $F$ meager.  Also such $F$ are  $\mu$-null.
Since $\mu$ is finite, items (ii) and (vi) in Definition \ref{def-Radon}
are trivial, as is (iii).  Since any open set $U$ equals $\bar{U} \Delta F$ for  the
meager set $F = \bar{U} \setminus U$, and since $\mu(C \Delta F) = \mu(C)$,
we also have items (iv) and (v) in that definition.   Items (ii)--(vi) are easily seen to remain
true for the completion of $\mu$, and hence we may assume that (i) also holds.
Thus we have a compact (finite) Radon measure space in the sense of
\ref{def-Radon}.

Let $(X,\Sigma,\nu)$ be a localizable measure space.
As in the proof of Theorem \ref{loc} (iii) we have $\M = L^\infty(X,\nu) \cong
\oplus_i \, \M p_i$ isometrically
via a measure preserving $*$-isomorphism,  with $\M p_i$
isometrically $*$-isomorphic to the $L^\infty$ algebra of a
finite measure space $(\Omega_i,\Sigma_i,\mu_i)$.   By the last paragraph
and Proposition \ref{Wextn} we may assume that
the latter is a compact (finite) Radon measure space.

Let $(\Omega, \Sigma_0, \mu)$ be the  `disjoint sum' of the compact Radon measure spaces
$(\Omega_i, \Sigma_i,\mu_i)$, as in the proof of Theorem \ref{loc} (iv).    This is clearly a complete measure space: for every  subset $E$ of a $\nu$-null set in $\Sigma_0$,
each $E\cap \Omega_i$ will be  measurable and null since $\mu_i$ is complete.
So $E \in  \Sigma_0$.    We give $\Omega$ the
disjoint sum topology (where a set is open if and only if
its intersection with every $\Omega_{i}$ is open).  This topology is locally compact,
and  is easily seen to be contained in $\Sigma_0$.
Items (ii) and (vi)   in Definition \ref{def-Radon} are clear if we take
the $F = \Omega_i$ there.   In (vi) we assume that $x \in \Omega_i$.
For  (ii), if $E \cap \Omega_i \in \Sigma$ for all $i$ it follows by
the decomposable definition that $E \in \Sigma$.
The validity of (iii) and (iv)  in
 \ref{def-Radon} is clear.  It is an
exercise to see that (v) holds too, using the fact that the subsets
$ \Omega_i$ form an open covering of any compact set, hence have a finite subcover.
Thus  $(\Omega, \Sigma_0, \mu)$ is a Radon measure space.
As in the proof of Theorem \ref{loc} (iv)
we have a measure preserving $*$-isomorphism $\M \cong \N$, for $\N = L^\infty(\Omega,\Sigma_0,\mu)$.
This induces measure algebra isomorphisms
between $(\Pdb(\M),\bar{\nu})$ and $(\Pdb(\N),\bar{\mu})$,
and hence between  the measure algebra of $(X,\Sigma,\nu)$
and the measure algebra
of the Radon measure space.
\end{proof}

Note that the Radon measure space constructed in the last proof is decomposable by construction.

\subsection{Abelian von Neumann algebras} \label{avna}

\begin{lemma} \label{sa2} If $K$ is a compact Stonean  space with a Radon probability measure $\mu$ then every nowhere dense set is $\mu$-null.  \end{lemma}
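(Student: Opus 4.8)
The plan is to reduce to a closed nowhere dense set and then exploit the clopen basis of the Stonean space $K$ together with the normality of $\mu$. Since every nowhere dense set $N$ is contained in the closed nowhere dense set $\overline{N}$, and since $\mu$ is complete (being Radon), it suffices to prove $\mu(F)=0$ for every closed $F$ with empty interior. So I fix such an $F$ and put $U=K\setminus F$, an open set which is dense precisely because $F$ has empty interior.

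Because $K$ is Stonean it is zero-dimensional, so the clopen sets form a basis and $U$ is the union of the upward-directed family of all clopen subsets $V_\alpha\subseteq U$. I will use extremal disconnectedness to identify the supremum of the corresponding increasing net of projections $\chi_{V_\alpha}\in C(K)$: the supremum of a family of clopen sets is the projection $\chi_W$, where $W=\overline{\bigcup_\alpha V_\alpha}$ is clopen (the closure of an open set is open here), and $W=\overline{U}=K$ by density. Hence $\chi_{V_\alpha}\nearrow \chi_K=1$ in $\M_{\mathrm{sa}}$, these suprema existing by Lemma \ref{WStone} applied to the abelian $W^*$-algebra $\M\cong C(K)$, which moreover tells us that the supremum coincides with the weak* limit.

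The crux is to push $\mu$ through this supremum. Here I invoke normality of $\mu$: the functional $\varphi\colon f\mapsto \int_K f\,d\mu$ on $\M=C(K)$ is a normal state, as $\mu$ is the Radon measure attached to a faithful normal state on the abelian von Neumann algebra $\M$, hence is weak* continuous by (the predual description preceding) Lemma \ref{WStone}. Consequently $\mu(V_\alpha)=\varphi(\chi_{V_\alpha})\to \varphi(1)=1$. On the other hand each $V_\alpha\subseteq U$ yields $\mu(V_\alpha)\le \mu(U)=1-\mu(F)$, and passing to the limit in $\alpha$ gives $1\le 1-\mu(F)$, forcing $\mu(F)=0$ and therefore $\mu(N)\le\mu(\overline{N})=0$.

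I expect the normality step to be the only genuine obstacle: inner and outer regularity of a Radon probability measure do not by themselves suffice. Indeed, without normality the conclusion is false — the point mass $\delta_p$ at a non-isolated point $p$ of the Stonean space $\beta\Ndb$ assigns measure one to the nowhere dense singleton $\{p\}$ — so it is exactly the normality of $\mu$ (and its failure for such singular states) that is the essential ingredient. Everything else is the routine interplay between the clopen basis of $K$ and the order/weak* structure of $C(K)$ already recorded in Lemmas \ref{sa1} and \ref{WStone}.
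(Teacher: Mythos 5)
Your proof follows essentially the same route as the paper's: reduce to a closed nowhere dense set, exhaust the dense open complement by clopen sets, observe that the corresponding increasing net of projections has supremum $1$ in $C(K)$, and pass the supremum through the integral. Your observation that this last step requires normality of $\mu$ --- and that the statement is false for a general Radon probability measure, as the point mass at a non-isolated point of the Stonean space $\beta\Ndb$ shows --- is correct and well taken: the paper's own proof silently uses normality at the step ``$1 = \sup_t \mu(U_t)$'', and the lemma is only ever invoked (in the proof of Theorem \ref{Dix}) for the Radon measure attached to a \emph{normal} state, so the intended hypothesis is that $\mu$ is normal. One small caution about your write-up: you route the normality step through Lemma \ref{WStone} and weak* continuity, which presupposes that $C(K)$ is a $W^*$-algebra; in the application this is precisely what Theorem \ref{Dix} is in the process of proving, so that detour risks circularity. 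It is also unnecessary --- in that context ``normal'' means by definition that $\varphi = \int \cdot \, d\mu$ preserves suprema of bounded increasing nets (which exist because $K$ is Stonean), and that is all your argument actually uses.
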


\begin{proof}  Since the closure of a nowhere dense set is nowhere dense we may assume that our nowhere dense set $E$ is
closed. Since the clopen sets are easily seen to form a basis, we can find an increasing net of clopen sets $U_t$ with union
$E^c$.  Let $f_t = \chi_{U_t}$ then $f_t \nearrow \chi_{E^c}$ pointwise.   Any $f \in {\rm Ball}(C(K))_+$ with $f \geq f_t$
for all $t$, equals $1$ on dense $E^c$, so is $1$.   So $1 = \sup_t \, \mu(U_t) \leq  \mu(E^c) \leq 1$, and hence  $\mu(E) = 0$.
\end{proof}

We apply the above technology to obtain Dixmier's
characterization of commutative von Neumann algebras.

\begin{theorem}[Dixmier] \label{Dix} If $C(K)$ has sufficiently many normal states and any bounded increasing net of positive elements
in $C(K)$ has a supremum, (or equivalently and by definition, $K$ is hyperstonean), then $C(K)$ is a $W^*$-algebra. Indeed
$C(K) \cong L^\infty(X, \mu)$ isometrically $*$-isomorphically for a localizable (even Radon in the  sense
of {\rm \ref{def-Radon}}) measure space
$(X,\mu)$.
\end{theorem}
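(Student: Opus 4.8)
The plan is to manufacture a \emph{localizable measure algebra} structure on the projection lattice $\Pdb(C(K))$ directly out of the normal states, and then transport everything to an $L^\infty$-space via the measure-algebra machinery of Subsection \ref{rmal}. First I would apply Lemma \ref{sa1}: since every bounded increasing net in $C(K)_+$ has a supremum, $K$ is Stonean and the projections densely span $C(K)$; in particular the clopen algebra $\Pdb(C(K))$ is Dedekind complete. Next, by a Zorn's lemma argument, I would choose a maximal family $\{\varphi_i\}_{i\in I}$ of normal states whose support projections $e_i$ (which exist because normality lets one set $e_i = 1 - \sup\{p\in\Pdb(C(K)):\varphi_i(p)=0\}$, with $\varphi_i(e_i^\perp)=0$ by normality applied to this directed family) are mutually orthogonal. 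Writing $e=\sup_i e_i$, if $e\neq 1$ then the hypothesis that $C(K)$ has sufficiently many normal states yields a normal state $\psi$ with $\psi(1-e)>0$; compressing $\psi$ by $1-e$ produces a normal state supported under $1-e$, contradicting maximality. Hence $\sup_i e_i = 1$.

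I would then define $\overline{\mu}\colon \Pdb(C(K))\to[0,\infty]$ by $\overline{\mu}(p)=\sum_i \varphi_i(p)$ and verify that $(\Pdb(C(K)),\overline{\mu})$ is a localizable measure algebra in the sense of Definition \ref{mud2}. Dedekind completeness was noted above. For countable additivity over a disjoint family $\{p_n\}$ one uses that each $\varphi_i$ is normal, hence completely additive on projections, so $\varphi_i(\sup_n p_n)=\sum_n\varphi_i(p_n)$, and then interchanges the nonnegative double sum. Faithfulness ($\overline{\mu}(p)=0\Rightarrow p=0$) and semifiniteness both follow from $\sup_i e_i=1$ together with the faithfulness of each $\varphi_i$ on the corner $C(K)e_i$: for $0\neq p$ some $pe_i\neq 0$, whence $0<\overline{\mu}(pe_i)\leq\varphi_i(1)=1<\infty$ with $pe_i\leq p$. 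This is precisely the step where the hypothesis of \emph{normal} states is indispensable.

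With $(\Pdb(C(K)),\overline{\mu})$ localizable, Theorem \ref{Radmeasalg} furnishes a Radon measure space $(X,\mathfrak{T},\mathscr{B},\mu)$ in the sense of Definition \ref{def-Radon}, necessarily decomposable and localizable, together with a measure-preserving Boolean isomorphism $\theta$ from $\Pdb(C(K))$ onto the measure algebra of $(X,\mu)$, which is exactly the projection lattice $\Pdb(L^\infty(X,\mu))$. Now $L^\infty(X,\mu)$ is an abelian $W^*$-algebra by Theorem \ref{frec}, so its projections span densely by Proposition \ref{sa3}, while those of $C(K)$ span densely by Lemma \ref{sa1}; therefore the proof of Proposition \ref{Wextn}, which uses only this dense-spanning property of the projections, applies verbatim to extend $\theta$ to an isometric $*$-isomorphism $C(K)\cong L^\infty(X,\mu)$. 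Since $L^\infty(X,\mu)$ has a predual, so does $C(K)$, i.e.\ $C(K)$ is a $W^*$-algebra, completing the proof.

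The main obstacle is the second paragraph: extracting from the purely order-theoretic hypotheses a genuinely semifinite, faithful, completely additive measure on $\Pdb(C(K))$, for which the normality of the states and the resulting control of support projections are essential. A more hands-on alternative would bypass Theorem \ref{Radmeasalg} and instead show directly, on each Stonean corner $C(K)e_i\cong C(U_i)$ carrying the faithful normal Radon probability measure $\mu_i$ induced by $\varphi_i$, that the natural map $C(U_i)\to L^\infty(U_i,\mu_i)$ is an isometric isomorphism. There the crux is surjectivity, namely that every $\mu_i$-measurable set agrees with a clopen set up to a null set; one proves this by approximating in measure using inner and outer regularity together with zero-dimensionality, and then upgrading approximation to exact equality via a $\limsup$ taken in the Dedekind complete clopen algebra, the gap between the Boolean and the set-theoretic unions being nowhere dense and hence null by Lemma \ref{sa2}. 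One would then assemble the pieces into a disjoint sum exactly as in the proofs of Theorem \ref{loc} and Theorem \ref{Radmeasalg}.
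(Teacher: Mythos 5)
Your proof is correct, and its overall architecture is the same as the paper's: produce a maximal family of normal states with mutually orthogonal supports, sum them to get a faithful, completely additive, semifinite measure on the Dedekind complete Boolean algebra $\Pdb(C(K))$, pass to a (Radon) localizable measure space via the Loomis--Sikorski machinery of Theorem \ref{Radmeasalg}, and extend the resulting Boolean isomorphism to an isometric $*$-isomorphism by the proof technique of Proposition \ref{Wextn}. The one place where you genuinely diverge is the step establishing that the supports sum to $\I$. The paper realizes each normal state as a Radon probability measure $\mu_\varphi$ on $K$ via the Riesz representation theorem, shows its \emph{topological} support is clopen using Lemma \ref{sa2} (nowhere dense sets are null), takes a maximal disjoint family of such clopen supports, and proves the union is dense with Urysohn's lemma. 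You instead define the support projection purely order-theoretically, as the complement of the supremum of the kernel projections (legitimate, since that family is upward directed and normality passes to its supremum), and rule out $e\neq\I$ by compressing a normal state by $1-e$ — for which you do need the small observation, easily checked in the abelian setting, that $(1-e)x = \sup_t (1-e)x_t$ when $x_t\nearrow x$, so that the compression is again normal. Your route buys independence from the Riesz representation theorem, Lemma \ref{sa2}, and Urysohn, at the cost of a little more care with order-theoretic normality; the paper's route makes the clopen structure of $K$ visible, which it reuses elsewhere. Both are sound.
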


\begin{proof} Let $\M = C(K)$ (or indeed a commutative unital $C^*$-algebra). For any normal state $\varphi$ on $\M$,
there exists a normal Radon probability measure $\mu = \mu_\varphi$ on $K$ with $\varphi(f) = \int_K \, f \, d \mu$. From a
graduate real variables course we know that every finite Radon measure has a topological support (see for example
\cite[p.\ 215]{Fol}). In our case this is a closed set $C$ with $\mu(K \setminus C) = 0$.  We claim that $C$ is clopen.
Clearly $C^{\circ}$ is clopen, since the closure of the open set $K \setminus C$, which  is clopen
by Lemma \ref{sa1}, is the complement of
$C^{\circ}$. If $x \in E = C \setminus C^{\circ}$ then $\mu(E) = \mu(K  \setminus C^{\circ}) > 0$.  However there are no open
sets within $E$, so that $E^c$ is open and dense, hence $\mu(E) = 0$ by Lemma \ref{sa2}. This contradiction shows that
$C = C^{\circ}$,  and this is clopen.

Take a maximal collection $U_i$ of such mutually disjoint clopen sets in $K$ which support normal measures, and let
$U = \cup_i \, U_i$. We claim that $U$ is open and dense in $K$. If $x \in K \setminus \bar{U}$, then by the Urysohn
lemma  we may choose a function $f \in C(K)_+$ vanishing on $\bar{U}$ and with $f(x) > 0$.  We may then further choose a normal measure $\nu$ with
$\psi_\nu(f) > 0$. If $C$  is the support of $\nu$, then $D = C \setminus \bar{U}$ is clopen, and $\nu_{|D}$ is a nonzero
(since $\int_{D} \, f \, d \nu = \psi_\nu(f) > 0$) is easily seen to be a normal  measure with support disjoint from the
$U_i$, contradicting the maximality. So $U$ is dense.

By Lemma \ref{sa1} the projections are norm dense in $\M$.
One may now check that the sum $\omega = \sum_i \, \omega_i$
of the  normal states above corresponding to the $U_i$, is a faithful normal semifinite trace  on $\M_+$.  For example,
if $f  \in \M_+$ with $\omega(f) = 0$, then $f = 0$ on every $U_i$, so that $f = 0$.
As in the last paragraph of the proof of Lemma
  \ref{WStone}, $\Pdb(\M)$ is Dedekind complete. Thus
$(\Pdb(\M),\omega)$ is a localizable measure algebra.
By Theorem \ref{LSthm} $(\Pdb(\M),\omega)$ is  isomorphic to the concrete measure algebra
of a localizable measure space $(X,\mu)$.
By Theorem \ref{Radmeasalg}  $(X,\mu)$ may be taken to be Radon if desired.
The proof technique  in Proposition
\ref{Wextn} then yields  $\M \cong L^\infty(X,\mu)$ isometrically $*$-isomorphically.
\end{proof}

\begin{corollary} \label{awvn} Every abelian $W^*$-algebra is isometrically $*$-isomorphic to an abelian von Neumann algebra, and also  to $L^\infty(X,\mu)$  for a localizable measure space $(X,\mu)$.
It  also possesses a faithful normal semifinite trace.
\end{corollary}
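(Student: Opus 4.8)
The plan is to verify the hypotheses of Dixmier's Theorem \ref{Dix} for $\M$ and then read off all three assertions from the earlier theory. Let $\M$ be an abelian $W^*$-algebra. As a commutative unital $C^*$-algebra it is, via the Gelfand transform, isometrically $*$-isomorphic to $C(K)$ for $K$ its spectrum. It thus suffices to check that $C(K)$ satisfies the two conditions of Theorem \ref{Dix}: that bounded increasing nets of positive elements have suprema, and that there are sufficiently many normal states.

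First I would invoke Lemma \ref{WStone}. Since $\M$ is an abelian $W^*$-algebra, that lemma gives at once that every bounded increasing net in $\M_+$ has a supremum coinciding with its weak* limit; this is the first condition. For the second, the proof of Lemma \ref{WStone} shows that the weak* continuous states determine the order, so that for each $0 \neq x \in \M_+$ there is a weak* continuous positive functional $\varphi$ with $\varphi(x) > 0$. Moreover each such $\varphi$ is normal in the order sense required by Theorem \ref{Dix}: if $x_t \nearrow x$ in $\M_+$ then $x_t \to x$ weak* (again by Lemma \ref{WStone}), whence $\sup_t \varphi(x_t) = \lim_t \varphi(x_t) = \varphi(x)$. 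Hence $C(K)$ has sufficiently many normal states, so (equivalently) $K$ is hyperstonean.

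By Theorem \ref{Dix} we then obtain an isometric $*$-isomorphism $\M \cong L^\infty(X,\mu)$ for a localizable (indeed Radon) measure space $(X,\mu)$, which is the second assertion of the corollary. The first assertion follows immediately: a localizable space is dualizable (Theorem \ref{loc}, or Theorem \ref{frec}), so by Theorem \ref{lsigf1} the space $L^\infty(X,\mu)$ is isometrically $*$-isomorphic and weak* homeomorphic to a commutative von Neumann algebra of multiplication operators on $L^2(X,\mu)$; composing identifies $\M$ with an abelian von Neumann algebra.

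For the final assertion I would exhibit the trace as integration against $\mu$. On $L^\infty(X,\mu)_+$ the weight $\psi_\mu(f) = \int_X f \, d\mu$ is normal by Corollary \ref{intno}, is semifinite because $\mu$ is semifinite (by the correspondence between semifiniteness of $\mu$ and of $\psi_\mu$ recorded in Section \ref{wol}), and is faithful since $\int_X f \, d\mu = 0$ with $f \geq 0$ forces $f = 0$ $\mu$-a.e.; the trace identity is automatic since $\M$ is abelian. Transporting $\psi_\mu$ back through the isomorphism yields a faithful normal semifinite trace on $\M$ (alternatively one may quote the trace $\omega = \sum_i \omega_i$ built inside the proof of Theorem \ref{Dix}). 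I expect no serious obstacle here; the one point needing care is the identification of order-normal with weak* continuous states, which is exactly the substance of Lemma \ref{WStone}, with everything else being assembly of results already in hand.
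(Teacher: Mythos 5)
Your proposal is correct and follows essentially the same route as the paper: write $\M = C(K)$ via the Gelfand transform, verify the hypotheses of Dixmier's Theorem \ref{Dix} using Lemma \ref{WStone}, and extract the trace either as $\psi_\mu$ or as the trace $\omega$ constructed inside the proof of Theorem \ref{Dix} (the paper quotes the latter). The extra detail you supply in checking the ``sufficiently many normal states'' hypothesis is exactly what the paper leaves implicit in its citation of Lemma \ref{WStone}.
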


\begin{proof}   Writing the abelian $W^*$-algebra $\M = C(K)$
 as in Proposition \ref{sa3}, by Lemma \ref{WStone} this satisfies the hypotheses
of the last theorem.  So $\M \cong L^\infty(X,\mu)$ isometrically $*$-isomorphically for a localizable 
(even Radon) measure space $(X,\mu)$.   Also in the last proof  we constructed a faithful normal semifinite trace
$\omega$.
\end{proof}

Operator algebraists usually prove the last two results without 
using the Loomis-Sikorski theorem, by applying the method
in the proof of Theorem \ref{Dix} to obtain the states $\omega_i$ and their mutually
orthogonal support projections,
and then  decomposing $K$ and
$\M$ as in the proof of Theorem \ref{loc} (iv).  In Corollary \ref{awvn}
one begins with the weak* continuous states
(see for example \cite[Proposition 1.18.1]{Sakai} for more details).

\begin{theorem}\label{abelianvNA} For any $W^*$-algebra $\M$ equipped with a faithful normal semifinite trace $\tau$, the following are equivalent:
\begin{enumerate}
\item $\M$ is abelian.
\item $\M$ is $*$-isomorphic to $L^\infty(X,\Sigma,\mu)$ where $(X,\Sigma,\mu)$ is a localizable measure space.
\item $\M$ is $*$-isomorphic to $L^\infty(X,\Sigma,\mu)$ where $(X,\Sigma,\mu)$ is a decomposable measure space.
\item $\M$ is $*$-isomorphic to $L^\infty(X,\mathscr{B},\mu)$ where $(X,\mathfrak{T}, \mathscr{B},\mu)$ is a Radon measure space.
\end{enumerate}
In each of the cases {\rm (2)-(4)}, the $*$-isomorphism $\mathcal{I}$ can be chosen so as to satisfy $\tau=\int_X\mathcal{I}(\cdot)\, d\mu$.
\end{theorem}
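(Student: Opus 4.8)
The plan is to reduce everything to the measure algebra $(\Pdb(\M), \tau_{|\Pdb(\M)})$ and then invoke the machinery already assembled in this subsection. The three implications $(2),(3),(4) \Rightarrow (1)$ are essentially free: in each case $\M$ is $*$-isomorphic to an $L^\infty$-space, which is a commutative $C^*$-algebra, so $\M$ is abelian. All the real content lies in showing that $(1)$ implies each of $(2),(3),(4)$ together with the additional trace-preserving property, and this is where I would spend my effort.

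So assume $\M$ is abelian. First I would observe that $(\Pdb(\M), \tau_{|\Pdb(\M)})$ is a localizable measure algebra: Dedekind completeness of $\Pdb(\M)$ comes from Lemma \ref{WStone} (the supremum of an increasing net of projections is again a projection), while semifiniteness of $\tau_{|\Pdb(\M)}$ follows from semifiniteness of $\tau$; this is exactly the content of Example \ref{vnama} (passing through Corollary \ref{awvn} if one wishes to reduce the $W^*$-algebra $\M$ to an abelian von Neumann algebra). Next I would feed this localizable measure algebra into Theorem \ref{Radmeasalg}, which delivers a measure preserving Boolean isomorphism onto the measure algebra of a localizable (resp.\ decomposable, resp.\ Radon) measure space. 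Composing with the identification of that measure algebra with $(\Pdb(L^\infty(X,\mu)), \bar{\mu})$ (the exercise recorded in the discussion preceding Remark \ref{homordcont}) yields a measure preserving Boolean isomorphism $\theta : \Pdb(\M) \to \Pdb(L^\infty(X,\mu))$, that is, $\bar{\mu}(\theta(p)) = \tau(p)$ for all $p \in \Pdb(\M)$.

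The key structural step is then Proposition \ref{Wextn}: since $L^\infty(X,\mu)$ is an abelian $W^*$-algebra (Theorem \ref{frec}) carrying the faithful normal semifinite trace given by integration against $\mu$ (faithful and semifinite because $\mu$ is semifinite, normal by Corollary \ref{intno}), the Boolean isomorphism $\theta$ extends to an isometric $*$-isomorphism $\mathcal{I} : \M \to L^\infty(X,\mu)$. This simultaneously establishes $(2)$, $(3)$ and $(4)$, according to which instance of Theorem \ref{Radmeasalg} was used.

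The step I expect to require the most care is verifying the final identity $\tau = \int_X \mathcal{I}(\cdot)\, d\mu$. The plan here is a standard normality argument. On a positive simple function $s = \sum_k c_k p_k$ with the $p_k$ mutually orthogonal projections, the construction in Proposition \ref{Wextn} gives $\mathcal{I}(s) = \sum_k c_k \theta(p_k)$, so that $\int_X \mathcal{I}(s)\, d\mu = \sum_k c_k \bar{\mu}(\theta(p_k)) = \sum_k c_k \tau(p_k) = \tau(s)$, using that $\theta$ is measure preserving. Thus $\tau$ and $\psi := \int_X \mathcal{I}(\cdot)\, d\mu$ agree on positive simple functions. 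Both are normal weights on $\M_+$ ($\psi$ because $\mathcal{I}$ is an order isomorphism and integration against $\mu$ is normal by Corollary \ref{intno}), so approximating an arbitrary $f \in \M_+$ from below by an increasing sequence of positive simple functions $s_j \nearrow f$ (spectral theory, using that the projections span densely by Proposition \ref{sa3}) and applying normality gives $\tau(f) = \sup_j \tau(s_j) = \sup_j \psi(s_j) = \psi(f)$. The subtlety worth double-checking is that one genuinely has two \emph{normal} weights agreeing on all projections --- everything then collapses to the monotone convergence just described.
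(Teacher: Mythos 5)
Your proposal is correct and follows essentially the same route as the paper: reduce to the localizable measure algebra $(\Pdb(\M),\tau_{|\Pdb(\M)})$, apply Theorem \ref{Radmeasalg} to realize it as the measure algebra of a Radon (hence decomposable, hence localizable) measure space, extend the measure preserving Boolean isomorphism to a $*$-isomorphism via Proposition \ref{Wextn}, and verify $\tau=\int_X\mathcal{I}(\cdot)\,d\mu$ by agreement on simple functions plus normality of both weights. The only (immaterial) difference is organizational: the paper proves just $(1)\Rightarrow(4)$ and closes the loop via $(4)\Rightarrow(3)\Rightarrow(2)\Rightarrow(1)$, whereas you establish each of $(1)\Rightarrow(2),(3),(4)$ directly.
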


\begin{proof} Any Radon measure space is decomposable  and
any decomposable measure space is localizable (see Theorem \ref{Radmeasalg}).
Thus (4)$\Rightarrow$(3)$\Rightarrow$(2)$\Rightarrow$(1).
We show  (1)$\Rightarrow$(4).
Any abelian $W^*$-algebra satisfies the hypotheses of Theorem \ref{Dix},
and by the last two  proofs
 $(\Pdb(\M),\tau)$ is  isomorphic via a measure preserving map
 $\theta$ to the concrete measure algebra
of a localizable measure space $(X,\mu)$.  We may take the latter to be Radon by  Theorem
\ref{Radmeasalg}.  Proposition
\ref{Wextn} gives $\M_1 = L^\infty(X,\mu) \cong \M$ isometrically
via a $*$-isomorphism $\mathcal{I}$.     Note that $*$-isomorphisms are
 automatically normal.  Let $\tau_2 = \psi_\mu$.
For any non-negative simple function
$s= \sum_{i=1}^n \lambda_i e_i$ of $\M_1$ we have
 $$\tau(\mathscr{I}(s))=\sum_{i=1}^n \lambda_i \tau_2(\theta(e_i))=\sum_{i=1}^n \lambda_i \tau_1(e_i) =\tau_1(s).$$ Given any positive $a\in \M_1$ we may select a sequence $(s_n)$ of non-negative simple functions increasing to $a$. The normality of the traces then ensures that $\tau_1(a)=\tau_2(\mathscr{I}(a))$.
  \end{proof}

\subsection{Comparing measure algebras and abelian von Neumann measure algebras}\label{ssmavn}

\begin{theorem}\label{abW}
If $\M, \N$ are abelian $W^*$-algebras and $\omega \in \M^*_+$
then \begin{enumerate}
\item [(i)] $\M$ has a unique predual.
\item [(ii)]
The functional $\omega$ is normal  if and only if  $\omega$ is
weak* continuous.
\item [(iii)]  $\omega$ is completely additive on projections if and only if $\omega$ is normal.
\item [(iv)] An (isometric) $*$-isomorphism $\M \to \N$ is a weak* homeomorphism. \end{enumerate}
\end{theorem}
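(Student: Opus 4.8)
The plan is to first reduce to a concrete model. By Corollary \ref{awvn} we may assume $\M = L^\infty(X,\mu)$ for a localizable measure space $(X,\mu)$, which by Theorem \ref{frec} is dualizable, so that $L^\infty(X,\mu) = L^1(X,\mu)^*$ and an explicit weak* topology is at hand. Since a $*$-isomorphism is an order isomorphism preserving suprema of bounded increasing nets (Lemma \ref{WStone}), both ``normal'' and ``completely additive on projections'' are intrinsic order-theoretic notions that transfer across any $*$-isomorphism, so it is harmless to work in this model. I would prove (iii) first, deduce (ii) together with (i), and finally obtain (iv) formally from (i) and (ii).

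For (iii), the implication normal $\Rightarrow$ completely additive on projections is immediate on applying normality to the increasing net of finite partial sums, as recorded in Section \ref{wol}. For the converse, set $\nu(E) = \omega(\chi_E)$; this is a finite measure with $\nu \ll \mu$ (if $\mu(E)=0$ then $\chi_E=0$ in $\M$). The key step is to produce a $\mu$-support for $\nu$: take a maximal orthogonal family $\{p_i\}$ of nonzero projections with $\omega(p_i)=0$ and put $p_0 = \sup_i p_i$. Complete additivity on projections gives $\omega(p_0)=\sum_i \omega(p_i)=0$, while maximality forces $\omega(r)>0$ for every nonzero projection $r \le 1-p_0$; writing $1-p_0 = \chi_{E_0}$ exhibits $E_0$ as a $\mu$-support of $\nu$. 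Then Lemma \ref{mus}(1) shows $\psi_\nu$ is normal on $\M_+$. Finally $\omega = \psi_\nu$ on projections, hence on simple functions, and hence on all of $\M_+$, since both are norm-continuous and the span of $\Pdb(\M)$ is norm-dense (Proposition \ref{sa3}); so $\omega$ is normal.

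For (ii), the direction weak* continuous $\Rightarrow$ normal is Lemma \ref{WStone}: a bounded increasing net converges weak* to its supremum, so weak* continuity yields $\omega(\sup_t x_t)=\lim_t \omega(x_t)=\sup_t \omega(x_t)$. For normal $\Rightarrow$ weak* continuous, combine (iii) with the Radon--Nikodym theorem: the finite measure $\nu$ above has $\mu$-support, so by Corollary \ref{rn2} (or Theorem \ref{rn}) $\nu = \mu_h$ for some $h \in L^1(\mu)_+$, whence $\omega(f)=\int_X f h\, d\mu = \langle f, h\rangle$ is weak* continuous. Uniqueness of the predual (i) then follows: for an arbitrary predual $\mca_*$, its positive part consists of weak* continuous positive functionals, which are normal by the first direction and hence lie in $L^1(\mu)_+$ by the computation just made; since $\mca_*$ is spanned by its positive cone, this gives $\mca_* \subseteq L^1(\mu)$ inside $\M^*$. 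A Hahn--Banach argument closes the gap: were the inclusion proper, some $g \in L^1(\mu)\setminus \mca_*$ could be separated by an $x \in \M = L^1(\mu)^*$ vanishing on $\mca_*$, forcing $x=0$ in $(\mca_*)^*=\M$ and contradicting $\langle x, g\rangle \ne 0$. Hence $\mca_* = L^1(\mu)$, the unique predual, and weak* continuous $=$ normal. I expect the identification of the normal functionals with $L^1$, and the ensuing pinning down of the predual, to be the main obstacle.

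For (iv), let $\Phi: \M \to \N$ be an isometric $*$-isomorphism. Being an order isomorphism, $\Phi$ preserves suprema of bounded increasing nets (Lemma \ref{WStone}), so for any normal functional $\omega$ on $\N$ the composite $\omega \circ \Phi$ is normal on $\M$; by (ii) and (i) this means $\Phi^*$ carries $\N_*$ into $\M_*$, and symmetrically for $\Phi^{-1}$. The standard duality criterion---a bounded map between dual spaces is weak*-to-weak* continuous precisely when its adjoint maps the predual into the predual---then shows that $\Phi$ and $\Phi^{-1}$ are both weak* continuous, i.e.\ that $\Phi$ is a weak* homeomorphism.
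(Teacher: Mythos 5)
Your proposal is correct, and its core is the same as the paper's: both hinge on the Radon--Nikodym theorem \ref{rn}, on manufacturing a $\mu$-support for $\nu(E)=\omega(\chi_E)$ by a Zorn's lemma maximality argument (you use a maximal orthogonal family in $\ker\omega$, the paper a maximal kernel projection --- the same device), on Lemma \ref{mus}, and on the norm density of the span of $\Pdb(\M)$ to upgrade agreement on projections to agreement on $\M$. The one genuine divergence is how part (ii) is globalized from the concrete model $L^\infty(X,\mu)$ to an abstract abelian $W^*$-algebra: the paper proves directly that the $*$-isomorphism $\mathcal{I}$ of Theorem \ref{abelianvNA} satisfying $\tau=\int_X\mathcal{I}(\cdot)\,d\mu$ is weak* continuous (via weak* continuity of $\tau_p$ for $\tau$-finite $p$ and density of simple functions in $L^1$), whereas you route the transfer through uniqueness of the predual. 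Your route works and is arguably cleaner, but be aware that your opening justification for ``working in the model'' --- that the relevant notions are order-theoretic --- only covers (iii): weak* continuity is not a priori order-theoretic, and it is precisely your proof of (i) (which correctly applies Lemma \ref{WStone} with respect to an \emph{arbitrary} predual $\mca_*$) that supplies the missing transfer, so the logical order (iii), then (ii) for the canonical predual, then (i), then (ii) and (iv) in general must be made explicit to avoid circularity with (iv). Finally, the step ``$\mca_*$ is spanned by its positive cone'' is not free (one needs that $\mca_*$ is self-adjoint and closed under the Jordan decomposition); the paper dodges the same point by declaring (i) ``an exercise,'' so you are at parity with it, but a complete write-up should acknowledge this.
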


\begin{proof}  These are well known
to operator algebraists, so we will just
give a good sketch of new proofs appropriate to the commutative case.
The interested reader will be able to fill in all details.
We will twice use our Radon-Nikodym theorem \ref{rn} as a main ingredient.
Indeed by the characterization of normal weights in that result,
(ii) holds if $\M$ is $L^\infty$ of a localizable space.
Thus (ii) holds in general 
by Theorem \ref{abelianvNA} if we can show
that the condition $\tau=\int_X\mathcal{I}(\cdot)\, d\mu$ there implies
that $\mathcal{I}$ is weak* continuous.   To do this we recall from
Theorem \ref{Dix} that $\tau$ is a sum $\sum_i \, \omega_i$ of weak* continuous functionals.
Let $p$ be a $\tau$-finite projection.
It is easy to show that multiplication by $p$ is weak* continuous on $\M$
(for example see bottom p.\ 16 in \cite{Sakai}),
and that therefore  $\tau_p = \sum_i \, \omega_i(p \,
\cdot)$ is a weak* continuous functional.   Using this,
if  $x_t \to x$ weak*  then from the relation $\tau=\int_X\mathcal{I}(\cdot)\, d\mu$ one sees
 $$\langle  \mathcal{I}(x_t) , \mathcal{I}(p) \rangle  = \int_X\mathcal{I}(x_t p) \, d\mu = \tau_p(x_t)
\to \tau_p(x) = \langle  \mathcal{I}(x_t) , \mathcal{I}(p) \rangle.$$ Hence $\mathcal{I}(x_t) \to \mathcal{I}(x)$ weak* (since the simple functions in $L^1(X,\mu)$ are dense).
So $\mathcal{I}$ is weak* continuous as desired.

For (i), it is an exercise that (ii) forces  the predual to be
 the (closed) span of the normal states.
Similarly  (ii) and the fact that $*$-isomorphisms
preserve suprema immediately imply (iv) (e.g.\ see \cite[Proposition 1.57]{GL}).

For (iii), we may again
assume that the $W^*$-algebra is $\M = L^\infty(X , \Sigma, \mu)$ for a localizable measure.
In view of Lemma \ref{lsigf0}, we only need to prove the ``only if'' part.
Define a measure $\nu$
by $\nu(E) = \omega(\chi_E)$.   Then $\nu \ll \mu$ as in the
proof of Lemma \ref{tol}.
By the proof of Lemma \ref{lsigf0}, $\omega$ preserves sup's of increasing nets of projections.
By Zorn's lemma there is a maximal projection $q$ in Ker$(\omega)$.
`Maximal' implies `maximum' here:
 if $r$ is a projection  in Ker$(\omega)$ then $q \leq r + q -rq \in {\rm Ker}(\omega)$ so $r \leq q$.
Let $s(\omega) = 1-q$.
If $E \in \Sigma$ with $0 \neq [\chi_E] \leq s(\omega)$ then $\nu(E)
= \omega(\chi_E) > 0$.  So
 $\nu$ has $\mu$-support.   By the Radon-Nikodym theorem
\ref{rn}, $\psi_\nu$ is normal.  However $\psi_\nu$  equals $\omega$ on   simple measurable functions, and both are continuous, so $\omega = \psi_\nu$ is normal.
\end{proof}

Armed with the above, we are now able to sharpen Proposition \ref{Wextn}.

\begin{proposition}\label{Wextn2} Let $\M_1$ and $\M_2$ be two abelian $W^*$-algebras equipped with faithful normal semifinite traces $\tau_1$ and $\tau_2$.  Let $\theta : \Pdb(\M_1) \to \Pdb(\M_2)$ be a Boolean homomorphism, and  $\mathcal{I} : \M_1 \to \M_2$ its  $*$-homomorphism
extension from Proposition {\rm \ref{Wextn}}.  Then
$\theta$ is order continuous if and only if $\mathcal{I}$ is normal; and in this case
 $\theta$ is measure preserving if and only if $\tau_1=\tau_2\circ\mathscr{I}$.
\end{proposition}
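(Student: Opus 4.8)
The plan is to treat the two biconditionals separately, using the explicit description of $\mathcal{I}$ from Proposition \ref{Wextn} (namely $\mathcal{I}(\sum_i \lambda_i e_i) = \sum_i \lambda_i \theta(e_i)$ on simple functions, extended by the norm-density of $\mathrm{span}\,\Pdb(\M_1)$) together with Theorem \ref{abW} and Lemma \ref{WStone}.

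For the first equivalence, the easy direction is that $\mathcal{I}$ normal implies $\theta$ order continuous. If $(p_\alpha)$ is an increasing net in $\Pdb(\M_1)$, its supremum $p$ exists and is a projection by Lemma \ref{WStone}; since $\mathcal{I}$ extends $\theta$ and (being normal) preserves this supremum, and since $\sup_\alpha \theta(p_\alpha)$, a supremum of projections, is again a projection, we obtain $\theta(p) = \mathcal{I}(p) = \sup_\alpha \mathcal{I}(p_\alpha) = \sup_\alpha \theta(p_\alpha)$, so $\theta$ is order continuous.

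The converse, that $\theta$ order continuous implies $\mathcal{I}$ normal, is where I expect the main difficulty. I would reduce normality of $\mathcal{I}$ to weak* continuity: by Lemma \ref{WStone} a bounded increasing net converges weak* to its supremum and $\mathcal{I}$ is positive, so once $\mathcal{I}$ is shown weak* continuous, normality follows by matching weak* limits with suprema. To prove weak* continuity it suffices, by Banach space duality and the fact (Theorem \ref{abW}(i),(ii), as in the sketch of the uniqueness of the predual) that the predual of $\M_2$ is the norm-closed span of its normal states, to check that $\omega \circ \mathcal{I}$ lies in the predual of $\M_1$ for every normal state $\omega$ on $\M_2$. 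Here is the crux: I would show $\omega \circ \mathcal{I}$ is completely additive on projections, whence normal and weak* continuous by Theorem \ref{abW}(iii),(ii). Given a mutually orthogonal family $\{p_i\}$ in $\Pdb(\M_1)$ with supremum $p$, the images $\{\theta(p_i)\}$ are again mutually orthogonal (as $\theta$ preserves products), so the finite partial sums $\sum_{i \in J}\theta(p_i) = \bigvee_{i\in J}\theta(p_i)$ form an increasing net of projections; order continuity of $\theta$ gives $\theta(p) = \sup_J \sum_{i \in J}\theta(p_i)$, and normality of $\omega$ then yields $\omega(\theta(p)) = \sum_i \omega(\theta(p_i))$, that is, $(\omega\circ\mathcal{I})(p) = \sum_i (\omega\circ\mathcal{I})(p_i)$.

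For the second equivalence (assuming the first, so that $\mathcal{I}$ is normal), the direction $\tau_1 = \tau_2 \circ \mathcal{I} \Rightarrow \theta$ measure preserving is immediate upon restricting to projections, since then $\tau_1(p) = \tau_2(\mathcal{I}(p)) = \tau_2(\theta(p))$. Conversely, if $\theta$ is measure preserving then $\tau_1$ and $\tau_2 \circ \mathcal{I}$ are both normal weights on $(\M_1)_+$: indeed $\tau_2 \circ \mathcal{I}$ is normal because $\mathcal{I}$ is normal and $\tau_2$ is a normal trace, so that $x_t \nearrow x$ gives $\mathcal{I}(x_t)\nearrow\mathcal{I}(x)$ and hence $\tau_2(\mathcal{I}(x_t))\nearrow\tau_2(\mathcal{I}(x))$. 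These two weights agree on every projection by hypothesis, hence on all positive simple functions by linearity of the construction of $\mathcal{I}$. Approximating an arbitrary $a \in (\M_1)_+$ from below by an increasing sequence of positive simple functions and invoking normality of both weights, exactly as in the closing lines of the proof of Theorem \ref{abelianvNA}, then gives $\tau_1(a) = \tau_2(\mathcal{I}(a))$ for all such $a$, as required.
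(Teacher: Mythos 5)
Your proof is correct and follows essentially the same route as the paper's: establish complete additivity of $\omega\circ\mathcal{I}$ on projections using order continuity of $\theta$ and normality of $\omega$, invoke Theorem \ref{abW} to upgrade this to normality of $\mathcal{I}$, and use the simple-function approximation from the end of the proof of Theorem \ref{abelianvNA} for the trace identity. You merely spell out the steps the paper leaves as exercises (the reduction of normality to weak* continuity via the predual being the closed span of normal states, and the easy converse that normality of $\mathcal{I}$ forces order continuity of $\theta$).
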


\begin{proof}
Given a set $\{e_i\}_{i\in I}$ of mutually orthogonal projections,
$\sum_{i\in I} \, e_i$ equals $\sup_F \, \sum_{i\in F} \, e_i$ where the supremum is taken over finite subsets $F$ of $I$.
If $\theta$ is normal,  then $$\mathcal{I}(\sum_{i\in I} \, e_i)=\theta(\sum_{i\in I} \, e_i)=\sup_F \, \theta(\sum_{i\in F} \, e_i) ,$$
which equals $\sup_F \sum_{i\in F}\theta(e_i)= \sum_{i\in I}\theta(e_i)=\sum_{i\in F}\mathcal{I}(e_i)$. For a normal state $\omega$ on $\M_2$ we  have  $\omega\circ\mathcal{I}(\sum_{i\in I}e_i)=\omega(\sum_{i\in I}\mathcal{I}(e_i))=\sum_{i\in I}\omega\circ\mathcal{I}(e_i)$.
So $\omega\circ\mathcal{I}$ is  a normal functional by
Theorem \ref{abW} (iii). Since the dual of $\mathcal{I}$ maps normal states to normal functionals,
 it is an exercise that $\mathcal{I}$ is normal.
The second assertion follows as in the proof of  Theorem \ref{abelianvNA} (4).   \end{proof}

Using the insights gleaned from Propositions \ref{Wextn} and \ref{Wextn2}, we arrive at the following remarkable
conclusion:

\begin{theorem}\label{vna=ma} The categories $\mathbf{AvNMA}$ of abelian von Neumann measure algebras and $\mathbf{LMA}$ of localizable measure algebras, are equivalent.
\end{theorem}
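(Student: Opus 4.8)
The plan is to exhibit the functor $F \colon \mathbf{AvNMA} \to \mathbf{LMA}$ that sends a von Neumann measure algebra $(\M,\tau)$ to its projection lattice $(\Pdb(\M), \tau_{|\Pdb(\M)})$ and a morphism to its restriction to projections, and then to verify that $F$ is full, faithful and essentially surjective; by the standard categorical criterion this makes $F$ an equivalence. Here the morphisms of $\mathbf{AvNMA}$ are the normal unital $*$-homomorphisms $\Phi$ with $\tau_2 \circ \Phi = \tau_1$, and those of $\mathbf{LMA}$ the order continuous measure preserving Boolean homomorphisms; these are the natural choices whose isomorphisms recover Definition \ref{defvnmamor} and Definition \ref{mud2}. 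I would first check that $F$ is well defined. On objects, $(\Pdb(\M), \tau_{|\Pdb(\M)})$ is a localizable measure algebra by Example \ref{vnama}, where Dedekind completeness of $\Pdb(\M)$ comes from Lemma \ref{WStone}. On morphisms, a normal unital $*$-homomorphism $\Phi$ carries projections to projections and preserves $\vee$, $\wedge$, $p \mapsto p^\perp$ and the unit, so $\Phi_{|\Pdb(\M_1)}$ is a Boolean homomorphism; it is order continuous because $\Phi$ is normal, and measure preserving because $\tau_2 \circ \Phi = \tau_1$.

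Faithfulness is the easiest step. By Proposition \ref{sa3} the span of $\Pdb(\M_1)$ is norm dense in $\M_1$, so two ($*$-homomorphisms being automatically bounded) $*$-homomorphisms that agree on $\Pdb(\M_1)$ agree by linearity on this dense span, hence everywhere. For fullness, let $\theta \colon \Pdb(\M_1) \to \Pdb(\M_2)$ be an order continuous measure preserving Boolean homomorphism. Proposition \ref{Wextn} extends $\theta$ to a $*$-homomorphism $\mathcal{I} \colon \M_1 \to \M_2$, and Proposition \ref{Wextn2} then shows that $\mathcal{I}$ is normal (since $\theta$ is order continuous) and that $\tau_1 = \tau_2 \circ \mathcal{I}$ (since $\theta$ is measure preserving). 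Thus $\mathcal{I}$ is a morphism of $\mathbf{AvNMA}$ whose restriction to $\Pdb(\M_1)$ is $\theta$, proving surjectivity on hom-sets.

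For essential surjectivity, let $(\Malg, \overline{\mu})$ be a localizable measure algebra. By the Loomis--Sikorski theorem \ref{LSthm} it is isomorphic to the concrete measure algebra of a measure space $(X,\Sigma,\mu)$, which is localizable because localizability of a measure algebra matches localizability of the measure space (see the discussion following Definition \ref{defvnmamor}, or Theorem \ref{Radmeasalg}). Then $\M = L^\infty(X,\Sigma,\mu)$ is an abelian von Neumann algebra by Corollary \ref{awvn}, and $\tau = \psi_\mu$ is a faithful normal semifinite trace on it (normal by Corollary \ref{intno}, and semifinite since $\mu$ is), so $(\M,\tau) \in \mathbf{AvNMA}$. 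Since $\Pdb(L^\infty(X,\mu))$ is exactly the concrete measure algebra $\Sigma/\mathcal{Z}(\Sigma)$, we get $F(\M,\tau) = (\Pdb(\M), \overline{\mu}) \cong (\Malg, \overline{\mu})$ in $\mathbf{LMA}$.

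I expect the main obstacle to be fullness, since it is precisely there that the whole apparatus is needed: one must manufacture an honest normal $*$-homomorphism out of purely Boolean and order-theoretic data, which is the combined content of Propositions \ref{Wextn} and \ref{Wextn2} (themselves resting on the density of $\mathrm{span}\,\Pdb(\M)$ from Proposition \ref{sa3} and the normal/weak*-continuity equivalence of Theorem \ref{abW}). One could alternatively package the argument by writing an explicit quasi-inverse $G$ via Loomis--Sikorski and checking that $F \circ G$ and $G \circ F$ are naturally isomorphic to the identity functors, but the full--faithful--essentially-surjective route keeps the natural-isomorphism bookkeeping implicit and is cleaner.
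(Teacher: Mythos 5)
Your proposal is correct and follows essentially the same route as the paper: the same functor $(\M,\tau)\mapsto(\Pdb(\M),\tau_{|\Pdb(\M)})$, with fullness/faithfulness from Propositions \ref{Wextn} and \ref{Wextn2} and essential surjectivity via Loomis--Sikorski and the $L^\infty$ realization (the paper routes the latter through Theorem \ref{Radmeasalg}, which is itself just Loomis--Sikorski plus Proposition \ref{prophomordcont}). Your explicit faithfulness argument via norm density of $\mathrm{span}\,\Pdb(\M_1)$ is a welcome spelling-out of what the paper leaves as ``clear.''
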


\begin{proof} Given two  objects $(\M_1,\tau_1)$ and $(\M_2,\tau_2)$ from $\mathbf{AvNMA}$, the natural morphisms for this
category are normal *-homomorphisms $\mathcal{I}:\M_1\to\M_2$ for which we have that $\tau_2\circ\mathcal{I}=\tau_1$ (see Definition \ref{defvnmamor}). Next
let $\mathbf{LMA}$ be the category of localizable measure algebras. The natural morphisms for this category are the measure
preserving order continuous Boolean homomorphisms. Now let $F$ be the mapping which sends each object $(\M,\tau)$ in
$\mathbf{AvNMA}$ to $(\Pdb(\M),\tau_{|\Pdb(\M)})$ in $\mathbf{LMA}$ and which sends a morphism $\mathcal{I}:(\M_1,\tau_1)\to(\M_2,\tau_2)$ to $F(\mathcal{I})=\mathcal{I}_{|\Pdb(\M_1)}$.
It is now not difficult to conclude from Propositions \ref{Wextn} and
\ref{Wextn2}, that $F$ sends $\mathcal{I}:(\M_1,\tau_1)\to(\M_2,\tau_2)$ to $F(\mathcal{I}):F(\M_1,\tau_1)\to F(\M_2,\tau_2)$
in such a way that
\begin{itemize}
\item $F(\mathrm{id}_{(\M_1,\tau_1)}) =\mathrm{id}_{F(\M,\tau)}$,
\item and $F(\mathcal{I}\circ\mathcal{J})= F(\mathcal{I})\circ F(\mathcal{J})$ for morphisms
$\mathcal{J}:(\M_1,\tau_1)\to(\M_2,\tau_2)$ and $\mathcal{I}:(\M_2,\tau_2)\to(\M_3,\tau_3)$.
\end{itemize}
In other words $F$ is a functor.

It is clear from Propositions \ref{Wextn} and \ref{Wextn2} that $F$ in fact induces a bijection from $\mathrm{Hom}((\M_1,\tau_1),(\M_2,\tau_2))$ to $\mathrm{Hom}((\Pdb(\M_1),\tau_1),(\Pdb(\M_2),\tau_2))$. Hence $F$ is both full and faithful.

In closing we note two facts. Firstly that by Theorem \ref{Radmeasalg}, any localizable measure algebra is isomorphic to $(\Sigma/\mathcal{Z}(\Sigma),\overline{\mu})$ for some localizable measure space $(X,\Sigma,\mu)$, and secondly that for such a measure space the pair $(L^\infty(X,\Sigma,\mu),\int_X
 \, \cdot \, d\mu)$ will by Theorem \ref{frec} and Corollary \ref{intno} then be an abelian von Neumann measure algebra. Since the projection lattice of $L^\infty(X,\Sigma,\mu)$ is a copy of $\Sigma/\mathcal{Z}(\Sigma)$, it follows that $F$ is essentially surjective.

This then suffices to ensure that the categories $\mathbf{AvNMA}$ and $\mathbf{LMA}$ are functorially equivalent.
\end{proof}

There is a version of the last result that does not require weights or measures on the algebras.
Note that the semifinite measures on a Dedekind complete Boolean algebra ${\mathcal B}$
are all `equivalent' in some sense, and constitute a group.
To see this fix one such semifinite
measure $\bar{\mu}$. Then  $({\mathcal B},\bar{\mu})$ is isomorphic to
a concrete measure algebra of a localizable measure space $(X,\Sigma,\mu)$,
with $\bar{\mu}$ corresponding to the restriction of the weight $\omega(f) =
\int_X \, f \, d \mu$,  by  Theorem \ref{LSthm}.   Let $\M$ be the associated
$W^*$-algebra, i.e.\ $L^\infty(X,\Sigma,\mu)$.
Semifinite measures on ${\mathcal B}$ correspond bijectively to
faithful normal semifinite weights on $\M_+$, as may be seen again by  Theorem \ref{LSthm}, by thinking of $\M$ as $C(Z)$ where $Z$ is the Stone space.
By the Radon-Nikodym theorem \ref{rn} such weights on $\M_+$
are all equivalent to $\omega$, and equal  $\omega_h$ for ($\mu$-a.e.\ unique)
strictly positive measurable functions
$h : X \to 
(0,\infty)$.  The ($\mu$-a.e.\ equivalence classes of the)
latter functions on $X$ clearly form a group.

It is therefore natural to define a
category {\bf LBA} consisting of the Dedekind complete Boolean algebras
that admit some (unspecified) localizable measure, together with the order continuous morphisms
of Boolean algebras.    Let {\bf AvNA} be the category of abelian $W^*$-algebras and normal $*$-homomorphisms.

\begin{theorem}  \label{eqca}  {\bf LBA} and {\bf AvNA} are equivalent  categories.    \end{theorem}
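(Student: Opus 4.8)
The plan is to exhibit an explicit functor $F \colon \textbf{AvNA} \to \textbf{LBA}$ sending an abelian $W^*$-algebra to its projection lattice, and to show it is full, faithful, and essentially surjective, mirroring the proof of Theorem \ref{vna=ma} but now discarding all reference to the traces. On objects I would set $F(\M) = \Pdb(\M)$. This is a legitimate object of $\textbf{LBA}$: by Lemma \ref{WStone} (or the third paragraph of Section \ref{LinftyW1}) $\Pdb(\M)$ is Dedekind complete, and by Corollary \ref{awvn} the algebra $\M$ carries a faithful normal semifinite trace $\tau$, whose restriction makes $(\Pdb(\M), \tau_{|\Pdb(\M)})$ a localizable measure algebra as in Example \ref{vnama}; thus $\Pdb(\M)$ admits a localizable measure. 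On a (unital) normal $*$-homomorphism $\mathcal{I} \colon \M_1 \to \M_2$ I would set $F(\mathcal{I}) = \mathcal{I}_{|\Pdb(\M_1)}$. Since $\mathcal{I}$ is a unital $*$-homomorphism it maps projections to projections and preserves the ring operations $p \boxdot q = pq$ and $p \boxplus q = p + q - 2pq$ together with the unit, so $F(\mathcal{I})$ is a Boolean homomorphism; and since $\mathcal{I}$ is normal while suprema of increasing nets of projections are again projections (Lemma \ref{WStone}), $F(\mathcal{I})$ is order continuous. Functoriality (preservation of identities and composition) is then immediate.

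For faithfulness, suppose $\mathcal{I}, \mathcal{J} \colon \M_1 \to \M_2$ are normal $*$-homomorphisms with $F(\mathcal{I}) = F(\mathcal{J})$. They agree on $\mathrm{span}(\Pdb(\M_1))$, which is norm dense in $\M_1$ by Proposition \ref{sa3}, and both are norm continuous, so $\mathcal{I} = \mathcal{J}$. For fullness, given an order continuous Boolean homomorphism $\theta \colon \Pdb(\M_1) \to \Pdb(\M_2)$, I would fix (by Corollary \ref{awvn}) faithful normal semifinite traces on $\M_1$ and $\M_2$ purely in order to invoke the two key propositions: Proposition \ref{Wextn} produces a $*$-homomorphism $\mathcal{I} \colon \M_1 \to \M_2$ extending $\theta$, and Proposition \ref{Wextn2} guarantees that, because $\theta$ is order continuous, $\mathcal{I}$ is normal. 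By construction $F(\mathcal{I}) = \theta$, so $F$ is full. The point worth stressing here is that both the extension $\mathcal{I}$ of Proposition \ref{Wextn} and the normality criterion of Proposition \ref{Wextn2} depend only on $\theta$ and not on the auxiliary traces; the traces are a scaffold that can be chosen freely and then forgotten.

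For essential surjectivity, let $\mathcal{B}$ be any object of $\textbf{LBA}$, so $\mathcal{B}$ is Dedekind complete and carries some localizable measure $\overline{\mu}$. Then $(\mathcal{B}, \overline{\mu})$ is a localizable measure algebra, and by Theorem \ref{Radmeasalg} (via the Loomis--Sikorski Theorem \ref{LSthm}) it is isomorphic to the concrete measure algebra $\Sigma / \mathcal{Z}(\Sigma)$ of a localizable measure space $(X, \Sigma, \mu)$. Setting $\M = L^\infty(X, \Sigma, \mu)$, which is an abelian $W^*$-algebra by Theorem \ref{frec}, the projection lattice $\Pdb(\M)$ is Boolean-isomorphic to $\Sigma / \mathcal{Z}(\Sigma) \cong \mathcal{B}$. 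By Remark \ref{homordcont} any Boolean isomorphism between Dedekind complete Boolean algebras is order bicontinuous, hence is an isomorphism in $\textbf{LBA}$ (no measure preservation being required of the objects or morphisms there); so $F(\M) \cong \mathcal{B}$, and $F$ is essentially surjective. Combining the three properties shows $F$ is an equivalence.

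I expect the main obstacle to be bookkeeping rather than a deep difficulty: the theorem is designed to strip the measures out of Theorem \ref{vna=ma}, and the genuine content has been pre-packaged into Propositions \ref{Wextn} and \ref{Wextn2}. The one genuinely delicate point is making precise that the functor and the verification of fullness are independent of the choice of traces used to apply those propositions --- that is, that an order continuous Boolean homomorphism has a \emph{unique} normal $*$-homomorphic extension regardless of which faithful normal semifinite traces one temporarily installs. A secondary matter requiring care is the convention that the morphisms in $\textbf{AvNA}$ are unital, so that their restrictions preserve the top element of the Boolean algebra and genuinely are Boolean homomorphisms.
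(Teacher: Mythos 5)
Your proposal is correct and takes essentially the same route as the paper, whose entire proof of Theorem \ref{eqca} is the remark that it follows from the proof of Theorem \ref{vna=ma}; you have simply unpacked that remark, using the same functor $\M \mapsto \Pdb(\M)$, the same appeal to Propositions \ref{Wextn} and \ref{Wextn2} with the traces serving only as removable scaffolding, and the same essential surjectivity argument via Theorems \ref{LSthm}, \ref{Radmeasalg} and \ref{frec}. Your two flagged caveats (unitality of morphisms, independence from the auxiliary traces) are sensible and are resolved exactly as you indicate.
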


This follows from the proof of Theorem \ref{vna=ma}.  See also \cite{DP}; the above shows that
{\bf LBA} is equivalent to the
categories mentioned in the main theorem of that paper.

\begin{remark}
The projection lattice of a direct sum $(\oplus_i\M_i,\oplus_i\tau_i)$ of abelian von Neumann 
measure algebras is clearly of the form $(\prod_i\Pdb(\M_i),\tau)$, where here $\tau$ has the action $\tau((a_i)_{i\in I})=\sum_i \, \tau(a_i)$ (see \cite{Fremlin}, 322L). Moreover if each $(\M_i,\tau_i)$ is isomorphic to say $(L^\infty(X_i,\Sigma_i,\mu_i),\int\cdot \, d\mu_i)$, then $(\oplus_i\M_i,\oplus_i\tau_i)$ will be isomorphic to 
$$(L^\infty(\oplus_i(X_i,\Sigma_i,\mu_i)),\int\cdot  \, d(\oplus_i\mu_i))= (\oplus_i L^\infty(X_i,\Sigma_i,\mu_i),\oplus_i\int\cdot \, d\mu_i).$$ The action of the functor will then associate each $(\Pdb(\M_i),\tau_i)$ with $(\Sigma_i/\mathcal{Z}(\Sigma_i),\overline{\mu_i})$. Thus the functor takes direct sums of von Neumann measure algebras to 
{\em  simple products} of localizable measure algebras, which correspond to disjoint sums of measure spaces.
\end{remark}

\subsection{Commutative versions of theorems of Kadison and Pedersen} \label{kpb}

We  prove here  commutative versions of two deep theorems of Kadison and Pedersen respectively (Theorems 2.4.4 and 2.8.4 in Pedersen's text \cite{Ped}), and some results of Bade.
These may be viewed as  characterizations of `concrete' commutative von Neumann algebras, just as for example Kadison's theorem (see \cite{Kad2} and in particular reference [7] there) 
is usually viewed as a characterization of `concrete'
general von Neumann algebras. It is good to keep in mind that the results become much easier if the measure 
is finite (see the closing remark in this section).

\begin{theorem} \label{subloc} Suppose that we have a unital $C^*$-subalgebra $\N$ of $\M = L^\infty(X,\Sigma,\mu)$ for localizable
$\mu$.  The following are equivalent:
\begin{itemize}
\item [(i)]  $\N$ is norm-densely spanned by its projections,
and $\Pdb(\N)$ is order-closed in  $\Pdb(\M)$.
\item [(ii)]      $\N_{sa}$ is monotone closed in $\M_{sa}$ (that is,  $\N_{sa}$ is closed
under suprema of bounded sets in $\M_{sa}$).
\item [(iii)]  $\N$ is a weak* closed subalgebra of $\M$.
 \item [(iv)]  $\N = L^\infty(X,\Sigma_0,\mu_{|\Sigma_0})$
for a $\sigma$-subalgebra 
$\Sigma_0$ of $\Sigma$ which is essentially order-closed in $\Sigma$
(i.e.\  $\{ [\chi_E ] \in \Pdb(\M) : E \in \Sigma_0 \}$  is closed under suprema in $\Pdb(\M)$).
 \end{itemize}
 \end{theorem}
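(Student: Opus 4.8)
The plan is to prove the cyclic chain of implications (iii) $\Rightarrow$ (ii) $\Rightarrow$ (i) $\Rightarrow$ (iv) $\Rightarrow$ (iii), exploiting throughout that $\M$ is a commutative von Neumann algebra in which bounded increasing nets have suprema equal to their weak* limits (Theorem \ref{lsigf1} and Lemma \ref{WStone}). For (iii) $\Rightarrow$ (ii): a weak* closed unital $*$-subalgebra $\N$ is itself an abelian von Neumann algebra, and for a bounded set $S \subseteq \N_{sa}$ I would take the increasing net of suprema of its finite subsets (these exist in $\N_{sa}$ since $\N \cong C(L)$ is closed under finite maxima); by Lemma \ref{WStone} the supremum of this net in $\M_{sa}$ is its weak* limit, which lies in the weak* closed set $\N$. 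Hence $\N_{sa}$ is monotone closed. For (ii) $\Rightarrow$ (i): monotone closedness together with bounded completeness of $\M_{sa}$ (localizability) makes $\N$ monotone complete, so Lemma \ref{sa1} forces the spectrum of $\N$ to be Stonean and the projections to be densely spanning; and if $(q_t)$ is an increasing net in $\Pdb(\N)$ its supremum in $\Pdb(\M)$ is a projection (Lemma \ref{WStone}) lying in $\N_{sa}$ by (ii), hence in $\Pdb(\N)$, so $\Pdb(\N)$ is order-closed.

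For (i) $\Rightarrow$ (iv) I would set $\Sigma_0 = \{E \in \Sigma : [\chi_E] \in \Pdb(\N)\}$. This is a $\sigma$-subalgebra: closure under complements is clear, and for a sequence $(E_n)$ in $\Sigma_0$ the classes $[\chi_{E_1 \cup \cdots \cup E_n}]$ form an increasing sequence in $\Pdb(\N)$ whose supremum in $\Pdb(\M)$ is $[\chi_{\cup_n E_n}]$ (pointwise monotone limits give suprema in $\M_{sa}$), and this lies in $\Pdb(\N)$ by order-closedness. By construction $\{[\chi_E] : E \in \Sigma_0\} = \Pdb(\N)$ is order-closed in $\Pdb(\M)$, so $\Sigma_0$ is essentially order-closed. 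Finally the canonical map $L^\infty(X,\Sigma_0,\mu_{|\Sigma_0}) \to \M$ is an isometric $*$-monomorphism (the $\mu$- and $\mu_{|\Sigma_0}$-null sets agree inside $\Sigma_0$), and its range is the norm closure of the span of $\{[\chi_E] : E \in \Sigma_0\} = \Pdb(\N)$, which is $\N$ by (i). This identifies $\N$ with $L^\infty(X,\Sigma_0,\mu_{|\Sigma_0})$.

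The substantial step is (iv) $\Rightarrow$ (iii). Writing $\N = L^\infty(X,\Sigma_0,\mu_{|\Sigma_0})$, I would first recover monotone closedness concretely: for a bounded increasing net $a_t \nearrow a$ in $\N_{sa}$ with $a = \sup_\M a_t$, a short spectral computation (using that multiplication by a projection is order continuous) shows each spectral projection satisfies $\chi_{\{a > \lambda\}} = \sup_t \chi_{\{a_t > \lambda\}}$; since $\{a_t > \lambda\} \in \Sigma_0$ and $\Pdb(\N)$ is order-closed, every spectral projection of $a$ lies in $\Pdb(\N)$, whence $a \in \N$ by the spectral theorem and norm-closedness. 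Consequently suprema of increasing nets in $\N$ agree with those in $\M$, so every normal state of $\M$ restricts to a normal state of $\N$; as $\M$ has a separating family of normal states (it is an abelian $W^*$-algebra), $\N$ has sufficiently many normal states and is monotone complete. Dixmier's Theorem \ref{Dix} then upgrades $\N$ to a genuine $W^*$-algebra. This is the crux, and the main obstacle: a monotone complete abelian $C^*$-algebra need \emph{not} be a $W^*$-algebra (the Dixmier algebra), so one must genuinely harvest enough normal states, which is exactly what order-closedness of $\Pdb(\N)$ provides.

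To conclude (iii), I would note that the inclusion $\iota : \N \to \M$ is weak* continuous: its adjoint carries the normal states of $\M$, and hence (by Theorem \ref{abW}(ii) applied to the now-established $W^*$-algebra $\N$) all of $\M_*$, into $\N_*$. Since $\iota$ is an isometric $*$-homomorphism, $\iota(\mathrm{Ball}(\N)) = \N \cap \mathrm{Ball}(\M)$ is the weak* continuous image of the weak* compact ball of $\N$, hence weak* compact; the Krein--Smulian theorem then gives that $\N$ is weak* closed in $\M$.
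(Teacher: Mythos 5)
Your proof is correct and uses essentially the same ingredients as the paper's (Lemma \ref{WStone}, Lemma \ref{sa1}, the spectral-projection computation $\llbracket a > \lambda \rrbracket = \vee_t \llbracket a_t > \lambda \rrbracket$, Dixmier's Theorem \ref{Dix} fed by restricting the normal states of $\M$, and normality of the inclusion plus Krein--Smulian), merely traversing the cycle as (iii)$\Rightarrow$(ii)$\Rightarrow$(i)$\Rightarrow$(iv)$\Rightarrow$(iii) instead of the paper's (iii)$\Rightarrow$(i)$\Rightarrow$(iv)$\Rightarrow$(ii)$\Rightarrow$(iii). Your (iv)$\Rightarrow$(iii) is precisely the paper's (iv)$\Rightarrow$(ii) concatenated with its (ii)$\Rightarrow$(iii), and your direct (iii)$\Rightarrow$(ii) via finite lattice operations followed by weak* limits is a routine substitute for the paper's appeal to Proposition \ref{sa3}.
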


 \begin{proof}  (iii) $\Rightarrow$ (i) \
 These are well known for
 von Neumann algebras.   The first assertion  follows from for example Proposition \ref{sa3}.
  The second
 assertion follows since the supremum of an increasing net of projections is its weak* limit as we said
 in Lemma \ref{WStone}.

  (i) $\Rightarrow$ (iv) \  Let $\Sigma_0 = \{ B \in \Sigma : \chi_B \in \N \}$.  Such $\chi_B$ correspond
  to the
  projections in $\N$.  One easily checks that
   $\Sigma_0$ is a $\sigma$-subalgebra of $\Sigma$.
  We have $\N = L^\infty(X,\Sigma_0,\mu_{|\Sigma_0})$,
since these two $C^*$-subalgebras of $\M$
are the closed span of their projections.
(It is an exercise that for a $\sigma$-subalgebra $\Sigma_0$ of $\Sigma$, $L^\infty(X,\Sigma_0,\mu_{| \Sigma_0}) \subset L^\infty(X,\Sigma,\mu)$
isometrically and as Banach or $C^*$-algebras.)
Clearly $\{ [\chi_E ] \in \Pdb(\M) : E \in \Sigma_0 \}$  is closed under suprema.

(iv) $\Rightarrow$ (ii) \
Suppose that  $( f_t )$ is a net in $\N$ with $f_t  \nearrow f$ in $\M_+$.
If $s \in \Rdb_+$ write $\llbracket  f > s \rrbracket$ for the
 projection in $\M$ corresponding to $f^{-1}((s, \infty])$.  This may be viewed as the obvious spectral projection for $f$ in $\M$.
We claim that $\llbracket  f > s \rrbracket \in \N$.
 It is an exercise that $\llbracket  f > s \rrbracket = \vee_t \,  \llbracket  f_t  > s \rrbracket$
 (for example it follows from the fact that for any projection $r$, if $f_t \, r \leq s 1$ for all $t$ then  $f r \leq s 1$).
However $\vee_t \,   \llbracket  f_t  > s \rrbracket$ is a projection in $\N$.   This proves the claim.
By measure theory ($f$ is norm approximable from below by certain simple functions), $f \in \N$.

(ii) $\Rightarrow$ (iii) \
Given (ii), every normal state of $\M$
restricts to a normal state of $\N$.  Hence $\N$ is a  $W^*$-algebra by Theorem \ref{Dix}.
 The inclusion
map $i : \N \to \M$ is normal since given an increasing net $(x_t )$ in $\N$, with $x_t \nearrow x$ in $\M$,
then $x$ must be the supremum in $\N$.    Thus $i$ is weak* continuous
(this may be seen
using
for example
Theorem \ref{abW} (ii)), and $\N$ a  weak* closed
(von Neumann) subalgebra.
\end{proof}

\begin{corollary} \label{gg}  {\rm (Bade)} \ A concrete (commutative) $C^*$-algebra generated by an 
order-closed (hence Dedekind  complete) Boolean algebra $P$ of projections
on a Hilbert space is a von Neumann algebra whose projection lattice is $P$. \end{corollary}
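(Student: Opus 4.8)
The plan is to realize $\N$ as a unital $C^*$-subalgebra of an ambient abelian von Neumann algebra that can be presented as an $L^\infty$ of a localizable space, and then to apply the implication (i)$\Rightarrow$(iii) of Theorem \ref{subloc}. First I would dispose of preliminaries. Since (as in the last lines of Example \ref{vnama}) $P$ is a Boolean algebra of \emph{commuting} projections, the concrete $C^*$-algebra $\N$ it generates is abelian and equals the norm-closed linear span of $P$; in particular it is norm-densely spanned by its projections. Writing $e=1_P$ for the top element of $P$ (the unit of $\N$) and replacing $H$ by $eH$, I may assume $1_P=I_H$, a purely cosmetic reduction. Let $\M=P''$ be the von Neumann algebra generated by $P$ in $B(H)$. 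Commutativity of $P$ gives $P\subseteq P'$, whence $\M=P''\subseteq P'$ is abelian, and clearly $\N\subseteq\M$ as unital subalgebras.

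Next I would pin down the projection lattice, proving $\Pdb(\N)=P$ (which settles the final assertion of the corollary). The inclusion $P\subseteq\Pdb(\N)$ is clear. For the reverse, recall that in a commutative $C^*$-algebra, viewed as a $C(K)$, two distinct projections are at distance exactly $1$. Given $r\in\Pdb(\N)$, pick a self-adjoint $s\in\mathrm{span}(P)$ with $\|r-s\|<\tfrac12$ and write $s=\sum_j c_j q_j$ over a finite orthogonal family $q_j\in P$ with $\sum_j q_j=1$. Setting $p=\sum_{c_j>1/2}q_j\in P$, a short estimate gives $\|s-p\|<\tfrac12$, hence $\|r-p\|<1$, forcing $r=p\in P$. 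Note this step uses only that $P$ is a Boolean algebra, not its order-closedness.

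To prove that $\N$ is a von Neumann algebra I would bring in the structure theory. By Corollary \ref{awvn} there is an isometric $*$-isomorphism $\M\cong L^\infty(X,\Sigma,\mu)$ for a localizable measure space, and by Theorem \ref{abW}(iv) it is a weak* homeomorphism. Under it $\N$ becomes a unital $C^*$-subalgebra of $L^\infty(X,\Sigma,\mu)$ that is the closed span of its projections, and $\Pdb(\N)$ becomes the image of $P$. The remaining hypothesis of Theorem \ref{subloc}(i) to verify is that $\Pdb(\N)=P$ is order-closed in $\Pdb(\M)$, and this is the one point requiring genuine care: I must confirm that the three notions of ``supremum of an increasing net from $P$'' — computed on $H$ in $\Pdb(B(H))$, in $\Pdb(\M)$, and after transport in $\Pdb(L^\infty(X,\mu))$ — coincide. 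The first two agree because, by Lemma \ref{WStone} and the remarks in the introduction that convergence of bounded increasing nets is the same in the weak*, weak-operator and strong-operator topologies, suprema of increasing nets of projections in the von Neumann algebra $\M$ are their strong-operator limits and so equal the corresponding suprema in $B(H)$; the transport to $L^\infty(X,\mu)$ preserves these because the $*$-isomorphism is an order isomorphism preserving suprema. As $P$ is order-closed by hypothesis, every such supremum lands in $P=\Pdb(\N)$, establishing condition (i).

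With (i) in hand, Theorem \ref{subloc} (i)$\Rightarrow$(iii) yields that $\N$ is weak* closed in $\M$; since $\M$ is in turn weak* closed in $B(H)$, the subalgebra $\N$ is weak* closed in $B(H)$ and is therefore a von Neumann algebra, whose projection lattice is $P$ by the second paragraph. The main obstacle is precisely the bookkeeping of the third paragraph — transferring the order-closedness of $P$ as an abstract Boolean algebra of Hilbert-space projections into order-closedness inside $\Pdb(\M)$ and then inside $\Pdb(L^\infty(X,\mu))$ — everything else being the elementary identification $\Pdb(\N)=P$ and the cosmetic reduction to $1_P=I_H$.
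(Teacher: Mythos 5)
Your proof is correct and follows essentially the same route as the paper: both pass to the generated abelian von Neumann algebra, identify it with $L^\infty(X,\Sigma,\mu)$ for a localizable $\mu$ via Corollary \ref{awvn}, and then invoke Theorem \ref{subloc} to conclude that $\N$ is weak* closed. The only (harmless) difference is that you verify condition (i) of Theorem \ref{subloc} and use (i)$\Rightarrow$(iii), spelling out the identification $\Pdb(\N)=P$ and the agreement of suprema across the embeddings, whereas the paper constructs the $\sigma$-subalgebra $\Sigma_0=\{B\in\Sigma:\chi_B\in P\}$ and uses (iv)$\Rightarrow$(iii).
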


\begin{proof}   Let $\N = C^*(P)$.  Since $\N$ is commutative, its
strong closure, or weak* closure, are easily seen to be  a commutative von Neumann algebra.
Hence by
Corollary \ref{awvn} we may assume  that $\N$ is a subalgebra of $\M = L^\infty(X,\Sigma,\mu)$ for localizable $\mu$.  Let
 $\Sigma_0 = \{ B \in \Sigma : \chi_B \in P \}$.   As in the proof of (iv) in the theorem, this is easily checked to be a $\sigma$-subalgebra of $\Sigma$.
  Clearly $P = \Pdb(L^\infty(X,\Sigma_0,\mu_{|\Sigma_0}))$, and so $\N = L^\infty(X,\Sigma_0,\mu_{|\Sigma_0})$.
Now apply that (iv) implies (iii) in the 
theorem.  \end{proof}

\begin{corollary} \label{gg2}
  Let $\M$ be an abelian von Neumann algebra
  and let  $P\subseteq \Pdb (\M)$. Then $P$ generates $\M$ as a von Neumann algebra if and only if $P$ generates   $\Pdb (\M)$  as an order-closed
  Boolean subalgebra  (of $\Pdb(\M)$).
\end{corollary}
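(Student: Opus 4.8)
The plan is to reduce everything to a single identity between projection lattices: I would show that the projection lattice of the von Neumann algebra generated by $P$ equals the order-closed Boolean subalgebra of $\Pdb(\M)$ generated by $P$. Write $\N$ for the von Neumann algebra generated by $P$ inside $\M$ (equivalently inside $B(H)$, where $\M \subseteq B(H)$), and write $Q$ for the smallest order-closed Boolean subalgebra of $\Pdb(\M)$ containing $P$; this exists as the intersection of all such subalgebras, $\Pdb(\M)$ itself being one by Dedekind completeness. Since all elements of $P$ commute, $\N$ is abelian. The heart of the argument is the equality $\Pdb(\N) = Q$, after which the stated equivalence falls out by comparing lattices.

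First I would prove $Q \subseteq \Pdb(\N)$. The set $\Pdb(\N)$ is a Boolean subalgebra of $\Pdb(\M)$, being closed under $p \wedge q = pq$, under $p^\perp = 1 - p$, and hence under $p \vee q = p + q - pq$, and containing $0$ and $1$. It is moreover order-closed in $\Pdb(\M)$: by Lemma \ref{WStone} the supremum of an increasing net of projections in the abelian von Neumann algebra $\N$ is again a projection and coincides with the weak* limit, which lies in $\N$ as $\N$ is weak* closed. Since $\Pdb(\N)$ is thus an order-closed Boolean subalgebra of $\Pdb(\M)$ containing $P$, minimality of $Q$ gives $Q \subseteq \Pdb(\N)$.

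The reverse inclusion is where the real content sits, and it is driven by Bade's Corollary \ref{gg}. I would apply that corollary to the order-closed (hence Dedekind complete) Boolean algebra $Q$ of projections on $H$: the concrete commutative $C^*$-algebra $C^*(Q)$ is then a von Neumann algebra whose projection lattice is exactly $Q$. Now $C^*(Q)$ is a von Neumann algebra containing $P$, so by minimality of $\N$ we have $\N \subseteq C^*(Q)$, whence $\Pdb(\N) \subseteq \Pdb(C^*(Q)) = Q$. Together with the previous paragraph this yields $\Pdb(\N) = Q$.

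Finally I would assemble the equivalence. By definition $P$ generates $\M$ as a von Neumann algebra exactly when $\N = \M$, while $P$ generates $\Pdb(\M)$ as an order-closed Boolean subalgebra exactly when $Q = \Pdb(\M)$. Since $\N \subseteq \M$ are abelian $W^*$-algebras, each is the norm-closed linear span of its own projections by Proposition \ref{sa3}; hence $\N = \M$ if and only if $\Pdb(\N) = \Pdb(\M)$. Combining this with $\Pdb(\N) = Q$ gives $\N = \M \iff Q = \Pdb(\M)$, which is precisely the asserted equivalence. The main obstacle is the inclusion $\Pdb(\N) \subseteq Q$: it is not a priori clear that forming the weak* closure of the $*$-algebra generated by $P$ introduces no projections beyond those already in $Q$, and it is exactly Corollary \ref{gg} — guaranteeing that $C^*(Q)$ is already weak* closed with projection lattice $Q$ — that excludes this.
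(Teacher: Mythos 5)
Your proof is correct and follows essentially the same route as the paper: the inclusion $Q\subseteq \Pdb(\N)$ is the paper's ($\Leftarrow$) direction (the projections of the generated von Neumann algebra form an order-closed Boolean algebra containing $P$), and the inclusion $\Pdb(\N)\subseteq Q$ is the paper's ($\Rightarrow$) direction, resting on exactly the same key ingredient, Bade's Corollary \ref{gg} applied to $C^*(Q)$. Your packaging of both directions into the single identity $\Pdb(\N)=Q$ is a tidy reorganization but not a different argument.
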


\begin{proof} ($\Leftarrow$)  \ This direction is obvious.
Indeed the projections in the smallest von Neumann subalgebra $\N$ containing $P$ are in an order-closed Boolean algebra of projections  containing $P$, so containing $\Pdb (\M)$.
So $\N = \M$.

($\Rightarrow$)  \
Write $\M = L^\infty(X,\Sigma,\mu)$.
The intersection $Q$ of the order-closed Boolean algebras of projections
in $\M$ containing $P$, is the smallest order-closed Boolean algebra of projections
containing $P$.    By Corollary \ref{gg},
if $\N = C^*(Q)$ then $Q = \Pdb(\N)$ and
$\N$ is a von Neumann  subalgebra of $\M$.
Hence $\N = \M$ since $P$, and
$\N$, generate $\M$ as a von Neumann algebra.
\end{proof}

\begin{remark}  The results in this subsection become considerably easier for finite measures--in this case there is no need to use Dixmier's theorem \ref{Dix}.
 Indeed in this case (ii) implies (i) in
Theorem \ref{subloc} similarly to how
(iii) implied (i).   Hence (ii) also implies (iv), so that $\N$ there is a $W^*$-algebra because 
$\mu_{\vert \Sigma_0}$
is finite, and one may now finish (iii) as before.

Also regarding Theorem \ref{subloc}, it is interesting that the spectral condition in the noncommutative version of this result, namely in (iv) in
Theorem 2.8.4 in \cite{Ped},
 is automatic in the $\sigma$-subalgebra setting—one knows exactly what the spectral projections are in this case! 
 We also remark that  the statement
 in  332T  in \cite{Fremlin} is closely related to  part of Theorem \ref{subloc}.

Bade's result  \ref{gg} may be found in \cite [p.\ 405 and Corollary 4.2]{bade}
(see also Corollary 17 and Lemma 6 of
\cite[Chapter 17.3]{DS}).
\end{remark}

\section{$L^\infty$-spaces as $W^*$-algebras - 3}
\label{cmsvn}

Theorem \ref{frec} characterize  those measure spaces $(X,\Sigma,\mu)$ for which the space $L^\infty(X,\mu)$ is a $W^*$-algebra with
predual $L^1(X,\mu)$.
With some more refined technology and a deeper understanding of
abelian von Neumann algebras at our disposal, we are now ready to consider the related problem of characterizing measure
spaces $(X,\Sigma,\mu)$ for which $L^\infty(X,\mu)$ is a $W^*$-algebra with no assumption on the identity of a predual.
Recalling that dualizable measure spaces are
semifinite, there is a sense in which we are here trying
to see what parts of Theorem \ref{frec} survive when we dispense with semifiniteness.

\begin{theorem} \label{ded}  Let $\mu$ be a measure on a measurable space $(X,\Sigma)$, and set $\M = L^\infty(X,\Sigma,\mu)$.
The  following are equivalent:
\begin{itemize}
\item [(i)]    $\M$ is a $W^*$-algebra.
\item [(ii)]   The measure space $(X,\Sigma,\mu)$ is
Dedekind (that is, suprema in $\Pdb(\M)$ always exist), and integration considered as a weight
$\varphi$ on $\M_+$
is a sum (or is a supremum) of normal positive functionals on $\M$.
  \item [(iii)] The measure algebra of $(X,\Sigma,\mu)$ is Dedekind complete,
  and the `antisemifinite part' of $\mu$
    (defined after Lemma {\rm \ref{mu}}) is
the `trivialization' of a localizable measure. (`Trivialization' means that  the measure is redefined to be $\infty$ on any set that
had strictly positive measure).
 \item [(iv)]  $\mu \cong \nu$ for a localizable measure $\nu$ on $(X,\Sigma)$.
\end{itemize}
The localizable measure in {\rm (iv)} may be chosen to agree with $\mu$ on the `semifinite part' of $\mu$.
\end{theorem}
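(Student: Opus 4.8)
The plan is to prove the four conditions equivalent by establishing the cycle $(iv)\Rightarrow(i)\Rightarrow(ii)\Rightarrow(iii)\Rightarrow(iv)$, and to read off the final add-on clause directly from the construction carried out in $(iii)\Rightarrow(iv)$. Throughout, the organizing device is the decomposition of a Dedekind measure space into its semifinite part $(S,\mu_{|S})$ and its anti-semifinite part $(S^{c},\mu_{|S^{c}})$ furnished by Lemma \ref{mu}; writing $p=\chi_{S}$, this is a central splitting $\M=\M p\oplus\M(1-p)$ in which $\mu_{|S}$ is semifinite with Dedekind complete projection lattice (hence localizable), while $\mu$ is $\infty$ on every non-null subset of $S^{c}$, so that integration restricted to $\M(1-p)_{+}$ is exactly the trivial weight that is $0$ at $0$ and $\infty$ on every nonzero positive element. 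Each of (i)--(iv) forces $(X,\Sigma,\mu)$ to be Dedekind, so this splitting is available in every implication.

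For $(iv)\Rightarrow(i)$: since $\mu\cong\nu$ means $\mu$ and $\nu$ have the same null sets, the essential-supremum norm and the a.e.\ equivalence classes coincide, so $\M=L^{\infty}(X,\mu)=L^{\infty}(X,\nu)$ as $C^{*}$-algebras; as $\nu$ is localizable, Theorem \ref{frec} makes this a $W^{*}$-algebra. For $(i)\Rightarrow(ii)$: Lemma \ref{WStone} gives that suprema of projections exist, so $(X,\Sigma,\mu)$ is Dedekind. On the semifinite (localizable) part I decompose $S$ into finite pieces $S_{i}$ via Theorem \ref{loc}(iii), so that $\varphi|_{\M p}=\sum_{i}\varphi_{i}$ with each $\varphi_{i}(f)=\int_{S_{i}}f\,d\mu$ a finite, hence normal, positive functional (Corollary \ref{intno}). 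On the corner $\M(1-p)$, which is again a $W^{*}$-algebra, the construction in the proof of Theorem \ref{Dix} produces a faithful family $\{\psi_{\alpha}\}$ of normal states with mutually orthogonal support projections $s_{\alpha}$ whose supremum is $1-p$; then $\sum_{\alpha}\sum_{n\geq 1}n\,\psi_{\alpha}$ is a sum of normal positive functionals which evaluates to $\infty$ on every nonzero $q\le 1-p$ (choose $\alpha$ with $q\wedge s_{\alpha}\neq 0$), so it equals the trivial infinite weight, which is $\varphi|_{\M(1-p)}$. Summing the two families (each functional extended to $\M$ through the relevant central projection) exhibits $\varphi$ as a sum of normal positive functionals.

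The implication $(ii)\Rightarrow(iii)$ is the crux, and I expect the extraction of a localizable measure on the anti-semifinite part to be the main obstacle. Dedekindness immediately gives that the measure algebra $\Pdb(\M)$ is Dedekind complete. Writing the hypothesis as $\varphi=\sum_{i}\omega_{i}$ with $\omega_{i}$ normal positive functionals and restricting to $\M(1-p)$, each $\omega_{i}|_{\M(1-p)}$ is $\sigma$-normal, hence by Lemma \ref{tol} corresponds to a finite measure $\nu_{i}\ll\mu$ on $S^{c}$; using Dedekind completeness and normality one obtains a support projection $s_{i}$ for $\omega_{i}$ on which $\nu_{i}$ is faithful. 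Since $\varphi(q)=\infty>0$ for every nonzero $q\le 1-p$, some $\omega_{i}(q)>0$, forcing $\sup_{i}s_{i}=1-p$; orthogonalizing by a well-ordering, I set $e_{i}=s_{i}\wedge(\sup_{j<i}s_{j})^{\perp}$ and define $\nu_{0}$ on $\Sigma_{|S^{c}}$ by $\nu_{0}(E)=\sum_{i}\nu_{i}(E\cap E_{i})$, where $E_{i}$ represents $e_{i}$. Countable additivity follows from Tonelli, while faithfulness and semifiniteness follow from the trivial lattice fact that $r\wedge e_{i}=0$ for all $i$ forces $r\wedge\sup_{i}e_{i}=r=0$ (least-upper-bound property), together with $\nu_{i}(r\wedge e_{i})$ being finite and, when $r\wedge e_{i}\neq 0$, strictly positive. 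Thus $\nu_{0}$ is a faithful semifinite measure on the Dedekind complete algebra $\Pdb(\M(1-p))$, i.e.\ localizable, and its trivialization is $\mu_{|S^{c}}$, giving (iii).

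Finally, for $(iii)\Rightarrow(iv)$ and the closing clause, I assemble the global measure $\nu$ by setting $\nu(E)=\mu(E\cap S)+\nu_{0}(E\cap S^{c})$; this is a measure agreeing with $\mu$ on the semifinite part by construction. Since $\nu_{0}$ has the same null sets as $\mu_{|S^{c}}$ (trivialization) and $\nu=\mu$ on $S$, we get $\mu\cong\nu$, while $\nu$ is semifinite on both parts and has the same (Dedekind complete) measure algebra as $\mu$, hence is localizable. This yields (iv) with $\nu$ agreeing with $\mu$ on the semifinite part, which is exactly the final assertion of the theorem. The two points needing care are the normality-to-measure passage (Lemma \ref{tol}) on a corner not yet known to be a $W^{*}$-algebra, which I handle by working purely with the weight/measure correspondence rather than presupposing a predual, and the extraction of the support projections $s_{i}$, which uses only Dedekind completeness of $\Pdb(\M)$ and normality of each $\omega_{i}$.
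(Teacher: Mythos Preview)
Your proof is correct. The implications $(iv)\Rightarrow(i)$, $(i)\Rightarrow(ii)$, and $(iii)\Rightarrow(iv)$ match the paper's arguments in substance (your family $\{\psi_\alpha\}$ with orthogonal supports is a tidier variant of the paper's rather profligate choice of one normal state $\varphi_F$ for \emph{every} non-null $F\subseteq S^c$, but the idea is identical).

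The genuine difference is in how (iii) is reached. The paper does not attempt $(ii)\Rightarrow(iii)$ directly; instead it proves $(ii)\Rightarrow(i)$ in one line via Theorem~\ref{Dix}, and then $(i)\Rightarrow(iii)$ by invoking Corollary~\ref{awvn}: once $\M(1-p)$ is known to be an abelian $W^*$-algebra, the abelian Sakai theorem gives a $*$-isomorphism $\pi:\M(1-p)\to L^\infty(\Omega,\nu')$ for some localizable $(\Omega,\nu')$, and the required measure on $S^c$ is simply pulled back as $\nu(F)=\int_\Omega\pi(\chi_F)\,d\nu'$. Your route bypasses Corollary~\ref{awvn} entirely, manufacturing the localizable measure on $S^c$ by hand from the normal functionals $\omega_i$ via Lemma~\ref{tol}, support projections, and a transfinite orthogonalization. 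This is more elementary---it uses only Dedekind completeness and the weight/measure correspondence, never the representation of $\M(1-p)$ as an $L^\infty$ space---at the price of a longer explicit construction. The paper's route is shorter but leans on heavier machinery. One small remark: your write-up tacitly treats the ``sum'' case of (ii); the ``supremum'' case needs no change, since the only place the decomposition is used is to guarantee that for each nonzero $q\le 1-p$ some $\omega_i(q)>0$, which holds equally well for a supremum.
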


\begin{proof} (i)  $\Rightarrow$ (ii) \  Note that $\psi_\mu$ is a completely additive weight on $\M_+$ by
the last assertion of Lemma \ref{tol}, and indeed is normal by Theorem \ref{rnd2}.
Suprema in $\Pdb(\M)$ always exist
by Lemma \ref{WStone}.

Let $(S , \mu_{|S})$ be the semifinite part of $\mu$ as in Lemma \ref{mu} and the lines after it.
Let $e \in \Pdb(\M)$ correspond to $S$, and set $\N = \M e$.  Then $\N \cong L^\infty(S , \mu_{|S})$.  Let $\varphi_0 = \varphi_{| \N_+} = \int_{S} \,  \cdot \,
d \mu_{|S}$.
Then $(S , \mu_{|S})$ is localizable by Lemma \ref{mu}  (1).   Write  $\N \cong \oplus_i \, \M p_i$ as in  Theorem \ref{loc} (iii),
and define  $\varphi_i (x) = \varphi_{p_i}(x)$ for $x \in \M$.  These have mutually orthogonal support
projections.
Then  $\varphi_0$ and $\sum_i \, \varphi_i$ are normal weights on $\M_+$ which agree
on $\M(1-e)$ and on each $\M p_i$.  Hence $\varphi_0 = \sum_i \, \varphi_i$ on $\M_+$.

For any measurable subset $F$ of $X \setminus S$ with $\mu(F) > 0$, by the last assertion
of Lemma \ref{WStone}
there is a  normal state $\varphi_F$ on $\M$ that is strictly positive on $\chi_F$.
By considering $\chi_{X \setminus S} \cdot \varphi_F$ we may assume that $\varphi_F$ is supported on $e^\perp$ if desired.  We claim that on $\M_+$, $\varphi$ is the sum of the $\varphi_i$, plus the sum of countably many identical
 copies of each $\varphi_F$, for all $F$ as above.
 Since normal weights are determined by their action on projections,
 and since any sum of positive normal functions on $\M_+$ is a sup of positive normal functions,
 it suffices to check the claim on a projection $p \in \M$.   However $\varphi(p) = \varphi(p e) + \varphi(p e^\perp)$, so
 it suffices to check the claim on a nonzero projection dominated either by $e$ or $e^\perp$.    However both are now  easy exercises using the facts  above.

(ii) $\Rightarrow$ (i) \
This is  a simple rephrasing of Theorem \ref{Dix}.
Suppose that integration  is a sum or supremum of normal positive  functionals $\varphi_i$. If $f \in \M_+$ satisfies $\varphi_i(f) = 0$ for every $i$ then $\int_X \, f \, d \mu = 0$, so that $f =  0$ in $\M$. By the Theorem \ref{Dix}, $\M$ is a $W^*$-algebra.

(i) $\Rightarrow$ (iii) \  If $\M$ is a $W^*$-algebra then we saw above that the `semifinite part' $(S , \mu_{|S})$ of $\mu$ is localizable.
The `antisemifinite part' $\M(1-e)$ is a commutative $W^*$-algebra, so  by Corollary \ref{awvn}
 there exists
 a $*$-isomorphism $\pi : \M(1-e) \to L^\infty(\Omega,\Sigma_0,\nu')$, for a localizable measure space
 $(\Omega,\Sigma_0,\nu')$.
 The projections in $\M(1-e)$, which
correspond to the $\Sigma$-measurable subsets of $X \setminus S$, are taken by $\pi$ to
the (equivalence class of) characteristic functions of sets in $\Sigma_0$.
Define a measure $\nu$ on $X \setminus S$ by $\nu(F) = \int_\Omega \, \pi(\chi_F) \, d \nu'$ for $F \in \Sigma, F \subset
X \setminus S$.
Since $\nu'$ is semifinite, so is $\nu$.   Hence
 $\nu$ is  a localizable measure, and has trivialization  $\mu_{|X \setminus S}$.

(iii) $\Rightarrow$ (iv) \ Suppose that the semifinite part  $(S , \mu_{|S})$
is localizable, and the `antisemifinite part'
is the `trivialization' of a localizable measure  $\sigma$ on $X \setminus S$.
Define $\nu(E) = \mu(E \cap S) + \sigma(E \setminus S)$ for $E \in \Sigma$.
It is clear that $\nu$ is semifinite, and that $\mu$ and $\nu$ have the same null sets, so that $\mu \cong \nu$
and $\M =  L^\infty(X,\Sigma,\nu)$.   So $\nu$ is localizable.

(iv) $\Rightarrow$ (i) \ We have $\M =  L^\infty(X,\Sigma,\nu)$, and this is a $W^*$-algebra
(for example by Theorem \ref{frec} (ii)).
\end{proof}

We recall some interesting facts, a couple  of which will be used in the background in the remainder of this
section. A measure space $(X,\Sigma,\mu)$
is Dedekind  if and only if  $L^\infty_\Rdb(X,\Sigma,\mu)$ is boundedly complete
 (i.e.\ every bounded subset  has a supremum).
A proof of one direction of this may be found in the last paragraph of the proof of 
Lemma \ref{WStone}. For the other direction see 
for example 363M in  \cite{Fremlin}. 
As we said in an earlier section,
$L^\infty_\Rdb(X,\Sigma,\mu) = C(\Omega,\Rdb)$ for a compact space $\Omega$,
 the spectrum, or maximal ideal space of $L^\infty(X,\mu)$, which in this case is called the {\em Stone space}.
It is known for a compact  space $\Omega$  that
$C(\Omega,\Rdb)$ is boundedly complete 
 if and only if  $\Omega$ is Stonean (that is, compact and extremely disconnected).
  One direction of this is in
 Lemma \ref{sa1}, for the other direction see for example \cite[Theorem 2.3.3]{DalesDLS}.
 For interest's sake we remark that this happens  if and only if  $C(\Omega,\Rdb)$ is an injective Banach space (see for example 363R in \cite{Fremlin}),
and it is not hard to show that the latter is equivalent to $C(\Omega)$ being injective as a complex Banach space.
In this case $C(\Omega)$ is called a (commutative) $AW^*$-{\em algebra}.

Conversely if $\Omega$ is any Stonean space
 then it follows from Theorem \ref{LSthm} that there is a Dedekind measure space $(X,\Sigma,\mu)$
with $L^\infty_\Rdb(X,\Sigma,\mu) \cong C(\Omega,\Rdb)$.
Indeed as we just said, $C(\Omega,\Rdb)$ is boundedly complete,
so by the last paragraph of the proof of
Lemma \ref{WStone} the projections in $C(\Omega)$ form a Dedekind complete Boolean algebra.
Define $\bar{\mu}(p) = \infty$ for any nonzero projection $p$.
Theorem \ref{LSthm} then gives a Dedekind measure space $(X,\Sigma,\mu)$.   Using the last assertion of
Lemma \ref{sa1}, the
proof of Proposition \ref{Wextn}  shows that $L^\infty(X,\Sigma,\mu) \cong C(\Omega)$
 $*$-isomorphically.

Putting some of the above facts together, a
measure space is Dedekind if and only if the spectrum (or Stone space) $\Omega$ above is
Stonean, or equivalently
$C(\Omega)$ or its isomorphic copy $L^\infty(X,\mu)$ has the properties above.

\begin{example} It is evident from the equivalence of (i) and (ii) in Theorem \ref{loc} (or  \ref{frec}) that $\M = L^\infty(X,\mu)$ is a $W^*$-algebra  if 
$(X,\mu)$ is Dedekind and integration $\psi_\mu$ with respect to $\mu$ is a semifinite normal weight on $\M$.    However if $\mu$  is not semifinite, so that $\psi_\mu$  is
allowed to be infinite on `large pieces',  then $\M$ need not be a $W^*$-algebra.
 Indeed we give a counterexample to the question of whether  $L^\infty(X,\mu)$ is a $W^*$-algebra   if and only if we have that both the measure space $(X,\mu)$ is Dedekind  and integration $\psi_\mu$ is a normal weight.
 To do this one may take the canonical example of a commutative $AW^*$-algebra which is not a $W^*$-algebra,
 and assign it the trivial `always infinite' weight.

  More particularly,
consider the {\em Dixmier algebra} $D$, the quotient of the bounded Borel functions on $[0,1]$ by the closed ideal of bounded Borel functions vanishing on the complement of a meager Borel set \cite{DixS}.  Being a commutative $C^*$-algebra,
 $D = C(Y)$ for a compact space $Y$.   It can be shown that $D_{\rm sa}$ is boundedly complete
 (see for example \cite{DixS,Kad2}).  Hence by the facts summarized
 below  Theorem \ref{ded},
 $Y$ is Stonean, and there is a   Dedekind measure space $(X,\Sigma,\mu)$
with $L^\infty(X,\Sigma,\mu) \cong C(Y)$ $*$-isomorphically.
 However  $L^\infty(Y,\Sigma,\mu)$ is not a von Neumann algebra, since it is well known that
 $C(Y)$ is not a von Neumann algebra (see for example \cite{Kad2}).
 To see that the integral $\varphi$ with respect to $\mu$ is normal, note that by the lines just
 below Theorem \ref{ded},  $\mu$ only takes on values $0$ and $\infty$.
  For any nonnegative measurable simple function $s = \sum_{k=1}^n \, t_k \chi_{E_k}$ with $t_k > 0$, we see that $\int \, s \, d \mu$ is either $0$ or $\infty$. 
   So the integral of any nonnegative  measurable function is either $0$ or $\infty$.
  Suppose that $(f_t)$ is an increasing bounded nonnegative net in $L^\infty(Y,\Sigma,\mu)$.
Thus $(\varphi(f_t))$ is an increasing net in $\{ 0,\infty \}$.   If $\varphi(f_t) = 0$ for all $t$ then $f_t = 0$ $\mu$-a.e., and so $\sup_t \, f_t$ is $0$.
If $\varphi(f_t) = \infty$ for some $t$ then $\sup_t \, \varphi(f_t) = \varphi(\sup_t \, f_t)$. It is now clear that $\varphi$ is normal.
Clearly $\varphi$ is also completely additive in the sense defined at the beginning of Section
\ref{wol}.
\end{example}

This example also shows that for a commutative $AW^*$-algebra $\M$,  normality (or complete additivity) of a weight is not necessarily equivalent to being a supremum of normal positive functionals. (If it were then
Theorem \ref{ded} (ii) would imply that $D$ in the example above is a $W^*$-algebra.   Also,
$D$ is known to have no nontrivial normal positive functionals.)

\begin{remark} 
One may ask if 
the normal functionals  in  (ii) of Theorem \ref{ded} may be chosen to be of the form $\int_E f \, d \mu$ for measurable sets $E$ in $X$ of finite measure.    This is false in general, indeed the latter forces
semifiniteness 
(this is related to for example 322Ea in \cite{Fremlin}).
\end{remark}

\section{Classification of abelian von Neumann algebras} \label{MahS}

This last section differs from the earlier ones in its approach. Its aim  is to demonstrate the
possibility of performing a complete 
structural classification of abelian von Neumann algebras using purely operator-algebraic means. 
We will not  use results from earlier sections on the $L^\infty$ representation of abelian von Neumann algebras.  Indeed an `improved' such $L^\infty$ representation will be obtained
 as a consequence of the general classification theorem.

The presentation is based on Fremlin's exposition of Maharam's theorem (see \cite[Chapter 33]{Fremlin}). Nevertheless, only one key result is used explicitly without being proved (see the remark before
Theorem \ref{Mah-iso} below). For the rest, we either present operator-algebraic versions of the proofs, or, in a few cases, give independent proofs of the needed facts.

In this section  $\M$ stands for a von Neumann algebra, and $\A$ for an abelian von Neumann algebra.
   
It is `folklore' that the Maharam theorem, which classifies localizable measure algebras, can be seen as a theorem on the classification of abelian von Neumann algebras. But, as seen from  Theorem \ref{vna=ma}, the proper operator-algebraic counterpart of the notion of \emph{measure algebra} is that of \emph{von Neumann measure algebra} (see Definitions \ref{defvnma} and \ref{defvnmamor}). Hence, our aim will be to classify up to isomorphism all von Neumann measure algebras $(\M,\vf)$ for \emph{abelian} $\M.$

A subset $G$ of a Boolean algebra (resp.\ complete Boolean algebra) $\Balg$  \emph{generates} (resp.\ \emph{completely generates}) $\Balg$ if $\Balg$ is the smallest  Boolean subalgebra (resp.\ complete Boolean subalgebra) of $\Balg$ containing $G.$ We say that a complete Boolean algebra $\Balg$ is of \emph{Maharam type $\kappa$} if $\kappa$ is the smallest cardinal of any subset of $G$ completely generating $\Balg$. We denote the cardinal by $\kappa(\Balg).$ A complete Boolean algebra $\Balg$ is \emph{Maharam-type-homogeneous} if $\kappa(\Balg_a)=\kappa(\Balg)$ for any $a\in \Balg$ (with $\Balg_a$ the principal ideal generated by $a$).

We shall need the following simple lemma:

\begin{lemma}\label{Mah-gen}
  If $\Balg$ is a Boolean algebra completely generated by a finite set of generators, then it is finite. If, on the other hand,  $\Balg$ is a complete Boolean algebra of infinite Maharam type, and $G$  completely generates $\Balg$, then the Boolean subalgebra $\Balg_G$ of $\Balg$ generated by $G$ completely generates $\Balg$ and $\# \Balg_G=\# G$.
\end{lemma}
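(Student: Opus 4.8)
The statement splits into two logically independent assertions, which I would treat in turn.

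For the first, let $G=\{g_1,\dots,g_n\}$ be finite and let $\Balg_G$ denote the \emph{ordinary} Boolean subalgebra it generates. The plan is to show that $\Balg_G$ is already finite and already complete as a subalgebra, so that no genuine ``completion'' step can enlarge it. The finiteness is the standard minterm count: the nonzero elements among the $2^n$ meets $g_1^{\varepsilon_1}\wedge\cdots\wedge g_n^{\varepsilon_n}$ (with $g_i^{1}=g_i$, $g_i^{0}=g_i'$) are the atoms of $\Balg_G$, and every element of $\Balg_G$ is the join of the atoms below it, so $\#\Balg_G\le 2^{2^n}$. The key observation is then that a \emph{finite} Boolean subalgebra is automatically a complete Boolean subalgebra of the ambient $\Balg$: any subset $S\subseteq\Balg_G$ is finite, its supremum in $\Balg$ is just the finite join of its members, and this finite join is computed identically in $\Balg_G$ and in $\Balg$. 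Hence $\Balg_G$ is itself the smallest complete Boolean subalgebra containing $G$; since $G$ completely generates $\Balg$, we conclude $\Balg=\Balg_G$, which is finite.

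For the second assertion, the claim that $\Balg_G$ completely generates $\Balg$ is almost formal: because $G\subseteq\Balg_G\subseteq\Balg$, the smallest complete Boolean subalgebra containing $\Balg_G$ lies between the one containing $G$ (namely $\Balg$, by hypothesis) and $\Balg$ itself, so it equals $\Balg$. For the cardinality equality I would first note that since $G$ completely generates $\Balg$ we have $\#G\ge\kappa(\Balg)\ge\aleph_0$, so $G$ is infinite. Then I would invoke the standard description of the generated subalgebra: every element of $\Balg_G$ is a finite Boolean combination of finitely many members of $G$ (e.g.\ in disjunctive normal form), so $\Balg_G$ is the image of the set of such finite expressions. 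Since $G$ is infinite, there are only $\#G$ such finite expressions, giving $\#\Balg_G\le\#G$; the reverse inequality is immediate from $G\subseteq\Balg_G$.

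I do not anticipate a serious obstacle here, as both parts rest on elementary Boolean-algebra and cardinal bookkeeping. The only points needing a line of care are the observation that a finite Boolean subalgebra is closed under the arbitrary suprema of $\Balg$ (which is what lets the ``complete'' and ``ordinary'' generated subalgebras coincide in the finite case), and the cardinal identity $\kappa^{<\omega}=\kappa$ for infinite $\kappa$ used to count the finite expressions, which I would simply cite as standard. Everything else is routine.
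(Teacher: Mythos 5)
Your proof is correct and follows essentially the same route as the paper's: the finitely generated subalgebra is finite, hence order-closed, so complete generation collapses to ordinary generation; for infinite $G$ the generated subalgebra $\Balg_G$ is counted by finite combinations of generators (the paper writes it as the union of the finite subalgebras $\Balg_F$ over finite $F\subseteq G$, which is the same $\kappa^{<\omega}=\kappa$ bookkeeping); and the sandwich $G\subseteq\Balg_G\subseteq\Balg$ gives complete generation. The only cosmetic difference is that you establish finiteness via the minterm/atom count, whereas the paper adds generators one at a time using Fremlin's 312N construction.
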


\begin{proof}
 First note that a finite set $G\subseteq \Balg$ generates a finite Boolean algebra (which is then also Dedekind complete). It is an exercise to check that if $C_0$ is a Boolean subalgebra of $\Balg$ and $c\in \Balg,$ then
 $C:=\{(a\cap c)\cup (b\setminus c)\colon a,b\in C_0\}$ is a Boolean subalgebra of $\Balg$ generated by $C_0$ and $c$ 
(see 312N in \cite{Fremlin}). Starting from $D_0:=\{0,1\}$ and adding elements one by one we can show that the Boolean subalgebra generated by a finite $G$ is also finite.

 If $G$ is arbitrary let $\Balg_G$  be the Boolean subalgebra of $\Balg$ generated by $G.$
 For all finite subsets $F\subseteq G,$ let $\Balg_F$  be the finite Boolean subalgebra of $\Balg$ generated by $F.$  For finite $F_1,F_2 \subseteq G$ we have $\Balg_{F_1}\cup \Balg_{F_2}\subseteq \Balg_{F_1\cup F_2},$ which implies $\Balg_G=\bigcup \{ \Balg_F\colon F\subseteq G \text{ finite} \}$. If $G$ is of infinite cardinality $\kappa,$ then the number of its finite subsets is also $\kappa,$ and the cardinality of $\Balg_G,$ being a union of $\kappa$ finite sets, is at most $\kappa.$ Since $G\subseteq \Balg_G,$ we have $\# \Balg_G=\kappa.$ If $G$ completely generates $\Balg,$ then  $G\subseteq \Balg_G\subseteq \Balg$ implies that $\Balg_G$ completely generates $\Balg,$ which ends the proof (cf.\  the proof of 331G in \cite{Fremlin}).
\end{proof}

The following is the most difficult
step in the proof of the Maharam theorem, and the only one which we shall
not prove (see 331I in \cite{Fremlin}). The proof relies on a rather tricky application of transfinite induction.

\begin{theorem}\label{Mah-iso}
  Let $(\Malg_1,\mu_1)$ and $(\Malg_2,\mu_2)$
  be Maharam-type-homogeneous measure algebras of the same Maharam type and with $\mu_1(1)=\mu_2(1)<\infty.$ Then they are isomorphic as measure algebras.
\end{theorem}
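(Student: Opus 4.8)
The plan is to produce a measure-preserving isomorphism by a back-and-forth transfinite recursion, with homogeneity supplying, at each step, the ``room'' needed to extend a partial isomorphism by one generator. First I would normalise: rescaling $\mu_1$ and $\mu_2$ by the common value $\mu_1(1)=\mu_2(1)$, we may assume both are probability algebras. If the common Maharam type $\kappa$ is finite, then by Lemma~\ref{Mah-gen} both $\Malg_1$ and $\Malg_2$ are finite Boolean algebras, hence purely atomic; but for any atom $a$ the ideal $(\Malg_i)_a$ is the two-element algebra, of Maharam type $0$, so if some $\Malg_i$ had two or more atoms its type would exceed $0$, contradicting $\kappa((\Malg_i)_a)=\kappa(\Malg_i)$. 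Thus each $\Malg_i$ has a single atom and equals $\{0,1\}$, and the statement is immediate. Henceforth assume $\kappa$ infinite.

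Fix families $\langle a_\xi\rangle_{\xi<\kappa}$ and $\langle b_\xi\rangle_{\xi<\kappa}$ that completely generate $\Malg_1$ and $\Malg_2$; these exist with cardinality exactly $\kappa$ by the definition of Maharam type together with Lemma~\ref{Mah-gen}. I would then build by recursion on $\xi<\kappa$ an increasing chain of measure-preserving isomorphisms $\pi_\xi\colon\C_\xi\to\D_\xi$ between subalgebras closed in the measure metric $\rho(x,y)=\mu(x\,\Delta\,y)$, arranging at stage $\xi$ that $a_\xi\in\C_{\xi+1}$ (a ``forth'' move) and $b_\xi\in\D_{\xi+1}$ (a ``back'' move), and taking closures of unions at limit stages. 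Since at each stage only fewer than $\kappa$ generators have been adjoined, $\C_\xi$ and $\D_\xi$ have type $<\kappa$: a finite generating set yields a finite algebra, and a generating set of infinite cardinality $\lambda<\kappa$ yields a closed subalgebra of type $\le\lambda$, by the counting argument of Lemma~\ref{Mah-gen}. This bound is exactly what keeps room for the next step. After $\kappa$ stages the map $\pi:=\bigcup_\xi\pi_\xi$ has domain containing every $a_\xi$ and range containing every $b_\xi$, so its domain and range are dense (completely generating) in $\Malg_1$ and $\Malg_2$; as a $\rho$-isometry $\pi$ therefore extends uniquely to an isomorphism of $\Malg_1$ onto $\Malg_2$.

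Everything reduces to the one-step extension lemma, which I expect to be the genuine obstacle: given a measure-preserving isomorphism $\pi\colon\C\to\D$ between closed subalgebras of type $<\kappa$ and a target $a\in\Malg_1$, find $b\in\Malg_2$ so that $\pi$ extends to an isomorphism of the subalgebra generated by $\C\cup\{a\}$ onto that generated by $\D\cup\{b\}$ with $a\mapsto b$ (and symmetrically on the other side). The subtlety is that $b$ must match $a$ not merely in measure but in its entire \emph{conditional distribution over} $\C$, i.e.\ the data $c\mapsto\mu_1(a\cap c)$ for $c\in\C$ must be reproduced, after transport by $\pi$, by the data $d\mapsto\mu_2(b\cap d)$ for $d\in\D$. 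This is precisely where homogeneity is indispensable. Because $\kappa((\Malg_2)_d)=\kappa>\kappa(\D)$ for every nonzero $d$, the algebra $\Malg_2$ is \emph{relatively atomless} over $\D$: above every piece of any finite $\D$-partition one may split off a subelement of any prescribed relative proportion, otherwise independent of $\D$. Approximating the conditional ``density'' of $a$ over $\C$ by $\D$-simple data, carrying out these relative splittings, and passing to the $\rho$-limit then produces the required $b$; uniqueness of the extension to the generated subalgebra is routine. I would isolate this relative splitting lemma --- that homogeneity of type $\kappa$ propagates to relative atomlessness over every subalgebra of strictly smaller type --- as the delicate heart of the argument, and it is essentially this that forces the transfinite bookkeeping above.
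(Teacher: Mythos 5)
The paper deliberately gives no proof of Theorem \ref{Mah-iso}: it is flagged as the single step imported wholesale from Fremlin (331I in \cite{Fremlin}), described only as ``a rather tricky application of transfinite induction.'' Your sketch is a faithful outline of exactly that argument --- normalisation to probability algebras, disposal of the finite-type case via homogeneity, the back-and-forth transfinite recursion through closed subalgebras of Maharam type $<\kappa$, and the reduction to a one-step extension lemma powered by relative atomlessness over small subalgebras --- and each of the reductions you make (well-definedness of the extension from matching the data $c\mapsto\mu_1(a\wedge c)$, the cardinality bookkeeping via Lemma \ref{Mah-gen}, passage to closures at limit stages) is sound. What remains unproved is precisely the part you flag as the ``delicate heart'': the extension lemma itself (that homogeneity yields relative atomlessness over any closed subalgebra of strictly smaller type, and that one can then realise any prescribed countably additive $\nu\le\mu_2\circ\pi$ on $\D$ as $d\mapsto\mu_2(b\wedge d)$ --- Fremlin's 331B--331C). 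So this is an accurate and well-organised roadmap of the cited proof rather than a self-contained one, which puts it on essentially the same footing as the paper's own treatment.
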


We call von Neumann algebras without minimal projections
\emph{non-atomic}, and those isomorphic to a direct sum of some (cardinal) number of copies of $\Cdb$ \emph{purely atomic}.
We will write the latter direct sum as $\Cdb^\kappa$ where $\kappa$ is the cardinal (usually
this is written as $\ell^\infty(\kappa)$).
We start the effort of classifying
abelian von Neumann measure algebras $(\A,\tau)$
 by reducing it to the case  of non-atomic abelian  von Neumann
 probability spaces. This step is not strictly necessary, and we could jump directly to $\kappa$-homogeneous von Neumann
  probability spaces. Nevertheless, it makes the proof slightly easier to follow and seems a natural step for an operator algebraist.

\begin{lemma}\label{atoms}
  An abelian von Neumann algebra $\A$  is a direct sum  of a non-atomic algebra  and a purely atomic algebra.
\end{lemma}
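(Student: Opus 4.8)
The plan is to gather all minimal projections of $\A$ into a single central projection and split $\A$ across it. First I would record that in the abelian algebra $\A$ any two distinct atoms (minimal projections) $p,q$ are orthogonal: $pq$ is a projection with $pq \le p$ and $pq \le q$, so by minimality $pq \in \{0,p\}$ and $pq \in \{0,q\}$, and since $pq = p = q$ would force $p=q$, we must have $pq=0$. Hence the family $\{p_i\}_{i\in I}$ of all atoms of $\A$ is orthogonal, and applying Lemma \ref{WStone} to the increasing net of finite partial suprema $\sum_{i\in F} p_i$ shows that $z = \sup_i p_i$ exists in $\Pdb(\A)$. As an element of the abelian algebra $\A$ it is central, so $\A = \A z \oplus \A(1-z)$, and it remains to identify the two summands.

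Next I would check that $\A(1-z)$ is non-atomic. If $p \le 1-z$ were a minimal projection of $\A(1-z)$, then every projection $q \le p$ of $\A$ already lies in $\A(1-z)$, so minimality of $p$ inside $\A(1-z)$ gives $q \in \{0,p\}$; thus $p$ is an atom of $\A$ and therefore $p \le z$. But then $p \le z \wedge (1-z) = 0$, a contradiction. So $\A(1-z)$ has no minimal projections.

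The substantive step is to identify $\A z$ with the purely atomic algebra $\oplus_i \, \A p_i$, where each $\A p_i = \Cdb p_i \cong \Cdb$ by minimality of $p_i$ (an abelian von Neumann algebra whose only projections are $0$ and its unit is $\Cdb$, the projections being densely spanning by Proposition \ref{sa3}); so $\oplus_i \, \A p_i = \Cdb^{\# I}$. Here I would run the argument from the proof of Theorem \ref{loc} (ii)$\Rightarrow$(iii) with the contractive $*$-homomorphism $\theta(x) = (x p_i)_i$. The one fact needed is that each projection $q \le z$ is the supremum of the atoms it dominates: for each $i$, $q p_i \le p_i$ gives $q p_i \in \{0,p_i\}$, and with $J = \{i : p_i \le q\}$ the projection $q - \sup_{i\in J} p_i$ is orthogonal to every $p_i$, hence $\le z^\perp$, while also $\le z$, so it vanishes and $q = \sup_i q p_i$. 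Given this, isometry of $\theta$ follows exactly as in Theorem \ref{loc}: for $x \in (\A z)_+$ and $t < \|x\|$ choose a nonzero $q \le z$ with $tq \le x$, write $q = \sup_i q p_i$, and deduce $\sup_i \|x p_i\| \ge t$. Surjectivity follows since each projection of $\oplus_i \, \A p_i$ is the image under $\theta$ of the supremum of the corresponding atoms, projections densely span the $W^*$-algebra $\oplus_i \, \A p_i$ by Proposition \ref{sa3}, and $\theta$ has closed (isometric) range. Thus $\theta$ is an isometric $*$-isomorphism onto $\Cdb^{\# I}$, so $\A z$ is purely atomic and the decomposition is complete.

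The main obstacle I anticipate is the bookkeeping in the purely atomic summand — in particular the reduction of projections below $z$ to suprema of atoms and the surjectivity of $\theta$ — but this is precisely the isometry/surjectivity mechanism already carried out in Theorem \ref{loc}, so it should transfer with only minor changes.
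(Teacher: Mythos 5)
Your proof is correct and follows essentially the same route as the paper: orthogonality of the atoms, the split across $z=\sup_i p_i$, the observation that $\A p_i=\Cdb p_i$, and the identification $\A z\cong \Cdb^{\#I}$ via the map $\theta(x)=(xp_i)$ borrowed from Theorem \ref{loc}(iii). The paper's proof is just a terser version of this, and your extra verification that every projection below $z$ is the supremum of the atoms it dominates is a welcome filling-in of the detail the paper leaves implicit.
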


\begin{proof}
  If $p\in\p{\A}$ is minimal, then $\A p\simeq \Cdb.$
  Indeed the only projections in $\A p$ are $0$ and $p$, and a von Neumann algebra is generated by its projections (see Lemma \ref{sa1}), so that $\A p=\Cdb p.$ Let $\{p_i\}_{i\in I}$ be the family of all minimal projections in $\A$.  These  are mutually orthogonal (since $p_i p_j$ is a projection). Let $p=\sum_{i\in I}p_i.$ Then $\A p\simeq \Cdb^{\#(I)}$
  as for example in the proof of
  Theorem \ref{loc} (iii),
  and there are no minimal projections in $\A p^\perp.$
\end{proof}

\begin{proposition}\label{prob}
  An abelian non-atomic von Neumann measure algebra $(\A, \tau_\A)$ with
   $\tau_\A(\I_\A)=\infty$ is
  isomorphic to
  a direct sum of (abelian non-atomic)
  von Neumann  probability  algebras $(\A_i,\tau_i)$.
\end{proposition}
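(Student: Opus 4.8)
The plan is to reduce everything to a single combinatorial fact about the trace, namely that the identity $\I_\A$ can be partitioned into projections of trace exactly $1$. Recall that, $\A$ being abelian, the faithful normal semifinite weight $\tau_\A$ is automatically tracial and completely additive. If $\I_\A = \sum_i p_i$ with the $p_i$ mutually orthogonal and $\tau_\A(p_i)=1$ for each $i$, then setting $\A_i = \A p_i$ and $\tau_i = \tau_\A|_{(\A_i)_+}$ yields von Neumann probability algebras: faithfulness and normality pass to the corner and $\tau_i(p_i)=\tau_\A(p_i)=1$, while each $\A_i$ is again non-atomic since a minimal projection of $\A p_i$ would be minimal in $\A$. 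As in the proof of Theorem \ref{loc}(iii) (compare its use in Lemma \ref{atoms}), the map $a\mapsto (ap_i)_i$ is an isomorphism $\A\cong\oplus_i \A p_i$, and complete additivity of $\tau_\A$ gives $\tau_\A((a_i))=\tau_\A(\sum_i a_i)=\sum_i \tau_\A(a_i)=\sum_i \tau_i(a_i)$, so $(\A,\tau_\A)\cong\oplus_i(\A_i,\tau_i)$ in the sense of Definition \ref{defvnmamor}. Thus it suffices to produce the unit partition.

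To build it I would first establish an \emph{intermediate value property}: if $\tau_\A(p)<\infty$ then for every $t\in[0,\tau_\A(p)]$ there is $q\le p$ with $\tau_\A(q)=t$. I would prove this by applying Zorn's lemma to $\{q\le p:\tau_\A(q)\le t\}$ (suprema of chains remain in the set by normality) to get a maximal $q_0$; non-atomicity together with faithfulness lets one split off subprojections of arbitrarily small positive trace, so $\tau_\A(q_0)<t$ would contradict maximality, forcing $\tau_\A(q_0)=t$. Combining this with semifiniteness, I would deduce that \emph{every} projection of trace $\ge 1$ (including infinite trace) dominates a projection of trace exactly $1$: for infinite trace, a maximal orthogonal family of finite-trace subprojections has supremum $p$ by semifiniteness, so its traces sum to $\tau_\A(p)=\infty$, whence finitely many of them have traces summing to at least $1$, and the intermediate value property finishes the extraction.

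Next I would take, by Zorn's lemma, a maximal orthogonal family $\{p_i\}_{i\in I}$ of trace-$1$ subprojections of $\I_\A$, put $q=\sum_i p_i$ and $f=\I_\A-q$. The previous step forces $\tau_\A(f)<1$, since otherwise $f$ would dominate a trace-$1$ projection, contradicting maximality; if $f=0$ we are done. \textbf{The main obstacle is exactly the case $0<\tau_\A(f)<1$}, where the maximal family leaves a genuine sub-unit remainder that cannot simply be appended. Here I would use that $\tau_\A(q)=\tau_\A(\I_\A)-\tau_\A(f)=\infty$, so $I$ is infinite, and then \emph{absorb} $f$ by rotating it through countably many existing pieces: choose distinct $i_0,i_1,\dots\in I$, set $b_0=f$, and recursively carve $d_n\le p_{i_n}$ with $\tau_\A(d_n)=1-\tau_\A(b_n)$ (intermediate value property), putting $\pi_n=b_n+d_n$ (trace $1$) and $b_{n+1}=p_{i_n}-d_n$ (whose trace is again $\tau_\A(f)$). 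A short telescoping computation gives $\sum_n \pi_n = f+\sum_n p_{i_n}$; the $\pi_n$ are mutually orthogonal, because within each $p_{i_n}$ the complementary pieces $d_n$ and $b_{n+1}$ are consumed by $\pi_n$ and $\pi_{n+1}$ respectively. Replacing $\{p_{i_n}\}_n$ by $\{\pi_n\}_n$ in the family then yields mutually orthogonal trace-$1$ projections summing to $f+q=\I_\A$, the desired exact unit partition, which by the first paragraph completes the proof.
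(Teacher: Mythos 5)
Your proposal is correct and follows essentially the same route as the paper: the same halving argument giving arbitrarily small positive traces, the same intermediate-value extraction of subprojections of prescribed trace, a maximal orthogonal family of trace-$1$ projections, and the same ``rotation'' through a countable subfamily to absorb the sub-unit remainder (your $\pi_n=b_n+d_n$ with $b_{n+1}=p_{i_n}-d_n$ is exactly the paper's family $g+(h_1-g_1),\,g_1+(h_2-g_2),\dots$). The only differences are organizational -- you isolate the intermediate value property as a lemma proved via Zorn's lemma and state the telescoping recursively -- and these do not change the substance.
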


\begin{proof} Since $\A$ is abelian, any orthogonal family
 $(p_i)$ of projections in $\A$ summing to $\I_\A$ yields a decomposition of $\A$ into 
a direct sum $\oplus_i \, \A_i$ of von Neumann subalgebras
 (see for example the proof of
  Theorem \ref{loc} (iii)).
  We shall  find such a 
   family $(p_i)$ with $\tau_\A(p_i) = 1$ for every $i$. 
   Since 
   $\tau = \tau_\A$ is semifinite, there is a non-zero projection $p$ in $\A$ such that $\tau(p)<\infty.$ Since $p$ is not minimal, let us write it as $q+r$ with $q,r\in\p{\A}\setminus\{0\}$, and denote by $p_1$ one of $q$ and $r$ chosen so that $\tau(p_1)\leq 2^{-1}\tau(p).$ Further divisions lead to $p_k\in \p{\A}$ with $\tau(p_k)\leq 2^{-k}\tau(p).$ Hence, there are non-zero projections in $\A$ with arbitrarily small trace. Consequently, the set $\{e\in\p{\A}\colon \tau(e)\leq 1\}$ is non-empty, and its maximal element $f\in\p{\A}$ satisfies $\tau(f)=1.$  Indeed
   by semifiniteness
   $1-f$ dominates a $\tau$-finite projection, hence (by the above argument)
    a non-zero projection $q$ in $\A$ with trace $<
   1- \tau(f)$, if the latter is $> 0$.   Thus $\tau(f + q) < 1$, contradicting  the maximality of $f$.

   A small modification of the last argument shows that for any real  $t  > 0$ and
   projection $p$ in $\A$ with $\tau(p) \geq t$, there exists a
   projection $f \leq p$ with $\tau(f) = t$.

   Let $F$ be a maximal orthogonal family of non-zero projections of trace $1$ in $\M$, necessarily infinite. Let $g=\I_\A-\sum_{f\in F}{f}.$ 
   Obviously $\tau(g)<1$ by the maximality.
   Choose a countable subfamily $H=\{h_k\}$ from $F$. 
   By the last paragraph, we may for all $k$ find a projection $g_k\leq h_k$ such that $\tau(g_k)=\tau(g).$ Now 
   $$G=\{g+(h_1-g_1), g_1+(h_2-g_2), g_2+(h_3-g_3),\dots\}$$ is an orthogonal family of projections of trace $1$, and its members are orthogonal to all projections from $F \setminus H.$
 Moreover, their sum equals $\I_\A-\sum_{f\in F\setminus H} \, f.$ Hence, the orthogonal family $J:=G\cup (F\setminus H)$ consists of projections of trace $1$ summing to $\I_\A.$ Put $\tau_p:=\tau|\A p$ for all $p\in J.$ 
 The direct sum of the 
 von Neumann probability spaces $(\A_p,\tau_p)_{p\in J}$ is exactly $(\A,\tau).$
\end{proof}

There are several notions which we will need 
in order to formulate the theorem on isomorphism of noncommutative measure algebras.

\begin{definition}
  The \emph{decomposability number} $\dec{\M}$ of a von Neumann algebra $\M$ is the maximal cardinality of an orthogonal family of non-zero projections in $\M.$
  The \emph{decomposability number} $\decp{\M}{p}$ of a projection $p$ 
  in $\M$ is the decomposability number $\dec{\M_p}$ of the reduced von Neumann algebra $\M_p.$
  We say that $\M$ (resp.\ $p\in\p{\M}$) is \emph{countably decomposable} or \emph{$\sigma$-finite} if
  $\dec{\M}=\aleph_0$ (resp.\ $\decp{\M}{p}=\aleph_0$).
  If $(\M,\tau_{\M})$ is a von Neumann measure algebra, 
  with $\tau_\M$ a faithful normal semifinite trace on $\M$,  then 
  for any $p\in\p{\M}$  define the \emph{magnitude} of $p$ by
  $$\magn{\M}{p}=
  \begin{cases}
    \tau_{\M}(p)& \mbox{if }\tau_{\M}(p)<\infty,\\
    \decp{M}{p}& \text{otherwise}.
  \end{cases}$$
 \end{definition}

 Note that although the terms \emph{decomposability number} and \emph{magnitude}  are defined for 
von Neumann algebras, in 
the case that the algebra $\A$ considered is abelian, they have the same meaning (as defined in \cite{Fremlin}) in the Boolean algebra $\p{\A}.$

It is well known that $\M$ is $\sigma$-finite if and only if it admits a faithful normal state.
The one direction of this is obvious (by applying the state to the sum of the projections).  The other
direction  follows from a routine Zorn's lemma argument of the type we have seen before, on sets of mutually orthogonal projections that
are the supports of  normal states (or see for example \cite[Proposition 3.19]{tak1}).

\begin{lemma}\label{sfinvN}
  Let $\kappa$ be an infinite cardinal. If a von Neumann algebra $\M$ can be written as a direct sum of $\kappa$ $\sigma$-finite
  algebras, then
  $\dec{\M} = \kappa$.
\end{lemma}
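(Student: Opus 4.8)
The plan is to prove the two inequalities $\dec{\M} \geq \kappa$ and $\dec{\M} \leq \kappa$ separately. Write $\M = \oplus_{j \in J} \M_j$ with $\#J = \kappa$ and each $\M_j$ $\sigma$-finite, i.e.\ $\dec{\M_j} = \aleph_0$. The key structural fact I would use throughout is that a projection in the $\ell^\infty$-direct sum $\oplus_{j} \M_j$ is precisely a tuple $(q^j)_j$ with each $q^j \in \p{\M_j}$, and that two such projections $(q^j)_j$ and $(r^j)_j$ are orthogonal if and only if $q^j r^j = 0$ for every $j$ (products and orthogonality in a direct sum are computed coordinatewise).

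For the lower bound, first I would note that each $\M_j$ is nonzero, since $\dec{\M_j} = \aleph_0 > 0$ forces the existence of a nonzero projection. Choosing one nonzero projection in each summand (for instance the identity $1_{\M_j}$, viewed as an element of $\M$ supported on the $j$-th coordinate) produces a family of $\kappa$ mutually orthogonal nonzero projections in $\M$. Hence $\dec{\M} \geq \kappa$ immediately.

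The upper bound is the substantive step, though it is still fairly routine. Let $(p_i)_{i \in I}$ be any orthogonal family of nonzero projections in $\M$, and write $p_i = (p_i^j)_{j}$ in coordinates. Since $p_i \neq 0$, the set $J_i = \{ j : p_i^j \neq 0 \}$ is nonempty, so $I = \bigcup_{j \in J} I_j$ where $I_j = \{ i \in I : p_i^j \neq 0 \}$. For each fixed $j$, the orthogonality of the $p_i$ gives $p_i^j p_{i'}^j = 0$ for $i \neq i'$, so $\{ p_i^j : i \in I_j \}$ is an orthogonal family of nonzero projections in $\M_j$; by $\sigma$-finiteness of $\M_j$ this family, and hence $I_j$, is countable. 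Cardinal arithmetic then yields $\# I \leq \sum_{j \in J} \# I_j \leq \kappa \cdot \aleph_0 = \kappa$, using that $\kappa$ is infinite. Taking the supremum over all orthogonal families gives $\dec{\M} \leq \kappa$, and combining with the lower bound finishes the proof.

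The only place demanding any care is the coordinatewise reasoning in the upper bound: one must observe that orthogonality of the $p_i$ descends to each coordinate so that $\sigma$-finiteness of the individual summands can be invoked, and that every $p_i$ contributes to at least one $I_j$ so that the $I_j$ genuinely cover $I$. Neither point is hard, so I do not expect a real obstacle; the proof is essentially a bookkeeping argument combining coordinatewise orthogonality with the cardinal identity $\kappa \cdot \aleph_0 = \kappa$ for infinite $\kappa$.
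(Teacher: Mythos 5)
Your proof is correct and is essentially the same argument as the paper's: the paper realizes the direct sum via an orthogonal family of $\sigma$-finite central projections $z_j$ summing to $\I$, uses that family itself for the lower bound, and for the upper bound cuts an arbitrary orthogonal family $\{p_i\}$ down to the nonzero pieces $p_i z_j$ (your coordinates $p_i^j$), invoking $\sigma$-finiteness of each summand and $\kappa\cdot\aleph_0=\kappa$ exactly as you do. No gaps.
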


\begin{proof}
  By assumption, there is an orthogonal family $\{z_j\}_{j<\kappa}$ of $\sigma$-finite central projections in $\M$ satisfying $\sum_{j<\kappa}z_j=\I.$
  By definition, $\kappa \leq \dec{\M}.$
   Let $P:=\{p_i\}_{i\in I}$ be any orthogonal family of projections in $\M.$ Then $Q:=\{p_iz_j\colon p_iz_j\neq0\}$ forms an orthogonal family.  For each $j<\kappa$ we have $1  \leq\#\{i\in I\colon p_iz_j\neq 0\}\leq \aleph_0$ by the $\sigma$-finiteness of $z_j$'s.  Consequently
  $\# P\leq \# Q \leq \kappa\cdot \aleph_0=\kappa,$
  so that $\dec{\M}\leq \kappa.$
\end{proof}
 
 \begin{definition}
Let $\A$ be an  abelian von Neumann algebra. We denote by $\kk{\A}$ the smallest cardinality of a set $P$ of projections in $\A$ such that $P$ \emph{generates} $\A$ as a von Neumann algebra.
 Since  any von Neumann subalgebra contains $\I$,  we may as well say that
 $P$ and $\I$ generate $\A$  as a von Neumann algebra.
 For this reason if $\A$ is 1 dimensional then we take $\kk{\A}=0.$
\end{definition}

When we  speak about generators, we 
will always mean projections. There is a good reason for this --- by the theorem below, a set of projections $P$ generates an abelian von Neumann algebra $\A$  if and only if it \emph{completely generates} the Boolean algebra $\p{\A}$.
That is, the smallest 
order-closed (hence Dedekind complete)
Boolean subalgebra of $\p{\A}$ containing $P$ equals $\p{\A}$. This coincides with  Fremlin's notion of \emph{complete} or $\tau$-\emph{generation} of Boolean algebras
 (see also \cite{Fremlin}, 331E). Additionally, we will
 avoid problems connected with generation of a von Neumann algebra
 by a finite number of operators (see \cite{sher}).

The general (not necessarily $\sigma$-finite) case of much of the following theorem has already been proved as 
Corollary \ref{gg2}. It is also contained in Bade's result (\cite[Theorem 3.4]{bade}, see also \cite[Lemma XVII.3.6]{DS}).
The following theorem can be also deduced from \ref{gg2}
   and a result one can find, for example, in \cite[Proposition VI.2a]{rick} or 316F (b) in \cite{Fremlin}; or from 331G (d) and (e) in \cite{Fremlin}.
(Item (b) below follows from the fact that the strong closure of a Boolean algebra of Hilbert space projections is a Boolean algebra, and is Dedekind complete.   This is an exercise.)
We decided to give it an independent proof
 that we have not seen in the literature, avoiding both Bade's proof and the proof of \ref{gg} (which uses an $L^\infty$ representation of $\M$), and helping keep this section independent of the earlier material.

\begin{theorem}\label{equiv}
  Let $\A$ be an  abelian non-atomic von Neumann algebra acting (non-degenerately) in a Hilbert space $H$ and let  $P\subseteq \p{\A}$. Then:
   \begin{enumerate}
     \item[(a)] If $\A$ is $\sigma$-finite and $P\subseteq \p{\A}$ generates $\A$ as a von Neumann algebra, then for any $p\in \p{\A}$ there is countable set $Q\subseteq P$ such that $p$ is in the complete Boolean subalgebra generated by $Q.$
       \item[(b)]  The Boolean subalgebra of  $\p{\A}$ generated by $P$ is
     strongly dense in $\p{\A}$.
     \item[(c)]  We have $\kk{\A}=\kappa(\p{\A}).$
   \end{enumerate}
\end{theorem}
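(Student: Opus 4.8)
The plan is to establish (b) first, then use it together with a metrizability argument to obtain (a) in the $\sigma$-finite case, and finally assemble (c) from the two. Throughout write $\mathcal{B}_P$ for the Boolean subalgebra of $\p{\A}$ generated by $P$ (finite meets, joins and complements), so that $\operatorname{span}\mathcal{B}_P$ is the $*$-algebra generated by $P$, and write $\mathcal{C}_P$ for the smallest order-closed (equivalently, by Lemma \ref{WStone}, Dedekind complete) Boolean subalgebra of $\p{\A}$ containing $P$; by definition $P$ completely generates $\p{\A}$ precisely when $\mathcal{C}_P=\p{\A}$. The heart of the matter is the equivalence \emph{``$P$ generates $\A$ as a von Neumann algebra $\iff$ $\mathcal{C}_P=\p{\A}$''}; granting it, (c) is immediate, since the two minimisation problems defining $\kk{\A}$ and $\kappa(\p{\A})$ then range over the same families of subsets of $\p{\A}$.

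For (b), fix $p\in\p{\A}$, vectors $\zeta_1,\dots,\zeta_n\in H$ and $\varepsilon>0$. Since $P$ generates $\A$, the self-adjoint part of $\operatorname{span}\mathcal{B}_P$ is strongly dense in $\A_{\mathrm{sa}}$, so I may pick self-adjoint $a\in\operatorname{span}\mathcal{B}_P$ with $\|(a-p)\zeta_i\|<\varepsilon$ for all $i$. The crucial point is that $\A$ is \emph{abelian}: $a$, $p$ and the spectral projection $q:=\llbracket a>\tfrac12\rrbracket$ all commute, and $q\in\mathcal{B}_P$ because $q$ is a join of the finitely many atoms of the finite Boolean algebra underlying $a$. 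Working in the commutative algebra they generate one checks $(q-p)^2=\chi_{\{q\neq p\}}\leq\chi_{\{|a-p|\geq 1/2\}}\leq 4(a-p)^2$, whence $\|(q-p)\zeta_i\|\leq 2\|(a-p)\zeta_i\|<2\varepsilon$. Thus $p$ lies in the strong closure $\overline{\mathcal{B}_P}^{\,\mathrm{s}}$, giving $\overline{\mathcal{B}_P}^{\,\mathrm{s}}=\p{\A}$. Using the cited exercise that such a strong closure is itself a Dedekind complete Boolean algebra, this already yields the easy half of (c): if $G$ completely generates $\p{\A}$, then $\p{W^*(G)}=\overline{\mathcal{B}_G}^{\,\mathrm{s}}$ is an order-closed Boolean subalgebra of $\p{\A}$ containing $G$, hence contains $\mathcal{C}_G=\p{\A}$; so $W^*(G)=\A$ and $\kk{\A}\leq\kappa(\p{\A})$.

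For (a), assume $\A$ is $\sigma$-finite and fix a faithful normal state $\omega$, so that on bounded sets the strong topology is induced by $\|x\|_2=\omega(x^*x)^{1/2}$. By (b) there is then a \emph{sequence} $(b_m)\subseteq\mathcal{B}_P$ with $\|b_m-p\|_2\to 0$; collecting the finitely many members of $P$ occurring in each $b_m$ produces a countable $Q\subseteq P$ with $(b_m)\subseteq\mathcal{B}_Q$. Passing to a subsequence along which $b_m\to p$ in the order sense (a routine argument using the faithful normal state to promote $\|\cdot\|_2$-convergence to order convergence), one identifies $p=\limsup_m b_m=\bigwedge_m\bigvee_{k\geq m}b_k$, a countable order-combination of elements of $Q$; hence $p\in\mathcal{C}_Q$, which is exactly (a).

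It remains to prove $\kappa(\p{\A})\leq\kk{\A}$, i.e.\ that a minimal von Neumann generating set $P$ satisfies $\mathcal{C}_P=\p{\A}$; equivalently, that the strong closure $\overline{\mathcal{B}_P}^{\,\mathrm{s}}=\p{\A}$ is contained in the order closure $\mathcal{C}_P$. When $\A$ is $\sigma$-finite this is precisely (a): every $p$ lies in $\mathcal{C}_Q\subseteq\mathcal{C}_P$. The general case is the \textbf{main obstacle}: strong convergence of \emph{nets} need not imply order convergence, so one cannot directly push a strong limit into an order-closed algebra, and this step is exactly the content of Bade's theorem (Corollary \ref{gg}), which this section is committed to avoiding. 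I would handle it by fixing a faithful normal semifinite trace $\tau$ on $\A$ and reducing to $\sigma$-finite corners: for a finite-trace (hence $\sigma$-finite) projection $f$ the compression $Pf$ generates $\A f$, and applying (a) inside $\A f$ to the sequence obtained from (b) yields, for each such $f$, an element $D_f\in\mathcal{C}_P$ with $D_f\wedge f=p\wedge f$. The delicate part — and the real obstacle — is to \emph{locate} these corners \emph{inside} $\mathcal{C}_P$ rather than merely below a larger $\mathcal{C}_P$-element: one must produce a partition of unity by finite-trace projections drawn from $\mathcal{C}_P$ itself (showing that the semifiniteness of $\tau$ descends to $\mathcal{C}_P$), after which the local agreements $D_f\wedge f=p\wedge f$ glue to give $p\in\mathcal{C}_P$. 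Combining this with the first two paragraphs gives $\mathcal{C}_P=\p{\A}$, and hence (c).
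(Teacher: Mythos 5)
Your proofs of (a) and (b) are correct, and for those parts you are walking essentially the same road as the paper: approximate $p$ strongly by elements of the $*$-algebra spanned by $P$, replace the approximants by spectral projections lying in the Boolean algebra $\mathcal{B}_P$ generated by $P$, and exhibit $p$ as a countable $\inf$--$\sup$ combination of such projections. The paper does this in a single pass: after using Lemma \ref{Mah-gen} to replace $P$ by the Boolean algebra it generates, it invokes the Kaplansky density theorem to get positive contractions $x_d=\sum_i\gamma_i^{(d)}p_i^{(d)}$ with the $p_i^{(d)}$ orthogonal projections in $P$, fixes a separating unit vector $\xi$, extracts a sequence with $\|x_{d_n}\xi-p\xi\|\le 1/(n2^n)$, sets $q_n=\chi_{[1/n,1]}(x_{d_n})\in P$, and shows by direct norm estimates against $\xi$ that $p=\inf_j\sup_{n\ge j}q_n$; this yields (a) and (b) simultaneously. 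Your two-step variant --- the pointwise inequality $\|(q-p)\zeta\|\le 2\|(a-p)\zeta\|$ for $q=\llbracket a>\tfrac12\rrbracket$, followed by a Borel--Cantelli extraction in the measure algebra of a faithful normal state --- is sound and, if anything, easier to verify; your identification $p=\bigwedge_m\bigvee_{k\ge m}b_k$ is exactly the paper's $\inf_j\sup_{n\ge j}q_n$ in different clothing.

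The genuine gap is in (c), and you have correctly diagnosed it yourself. The inequality $\kappa(\p{\A})\le\kk{\A}$ requires that \emph{every} von Neumann generating set $P$ satisfy $\mathcal{C}_P=\p{\A}$, for arbitrary non-atomic abelian $\A$, not just $\sigma$-finite ones; your argument establishes this only in the $\sigma$-finite case, and the proposed reduction to finite-trace corners stalls precisely where you say it does: the corners $\A f$ are cut by projections $f$ that you cannot place inside $\mathcal{C}_P$, so the locally constructed elements $D_f$ cannot be glued within $\mathcal{C}_P$. This is not a removable technicality by soft means: what is needed is that the strong closure of $\mathcal{B}_P$ (which is all of $\p{\A}$ by (b)) be contained in its order closure, and without a countability hypothesis that implication is essentially the content of Bade's theorem, i.e.\ Corollaries \ref{gg} and \ref{gg2}, which this section is deliberately avoiding. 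Two mitigating remarks. First, the paper's own direct argument does not escape the issue either: its separating vector exists only when $\A$ is $\sigma$-finite, and the paper explicitly remarks just before the theorem that the general case ``has already been proved as Corollary \ref{gg2}.'' Second, every application of (b) and (c) in this section (notably Proposition \ref{akhom}) concerns $\sigma$-finite algebras, so the gap does not propagate. But as a self-contained proof of the theorem as stated, your argument for (c) is incomplete: you must either carry out the gluing (i.e.\ show semifiniteness of the trace descends to $\mathcal{C}_P$ and that the local agreements patch), or invoke Corollary \ref{gg2} for the non-$\sigma$-finite case.
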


 \begin{proof} Assume that  $P$ is a subset of $\p{\A}$ generating $\A$ as a von Neumann algebra. The first half of Lemma \ref{Mah-gen} implies that $P$  is infinite. Otherwise $\p{\A}$ would be finite, which is impossible for non-atomic $\A$. By the second half of the same lemma, we may assume that $P$ is a Boolean algebra.

Let $A$ be the set of all 
linear combinations of elements of $P.$ This is a $*$-algebra generating $\A,$ hence by von Neumann's double commutant theorem  $A$ is strongly dense in $\A$ (see, for example, \cite{Black}, I.9.1.1). Take an arbitrary $p\in\p{\A}.$  We will show that $p$ is in the smallest complete Boolean algebra
$\Balg$ containing $P$.  This implies that $\Balg=\p{\A}$ and   $\kk{\A}\geq \kappa(\p{\A}).$

By the Kaplansky density Theorem (see, for example, Theorem 5.3.5 and Corollary 5.3.6 in \cite{KR}), the positive part of the unit ball of $A$ is strongly dense in the positive part of the unit ball of $\A$. There is a directed set $D$ and a net $(x_d)_{d\in D}$ in $\A_+$ with $\|x_d\|\leq 1$ such that $x_d\to p$ strongly. We can assume that for all $d,$  $x_d=\sum_{i=1}^{m_d} \gamma_i^{(d)}p_i^{(d)},$ where $m_d\in\Ndb,\,0\leq  \gamma_i^{(d)}\leq1$ for  $i=1,\dots,m_d,$ with orthogonal families $\{p_i^{(d)}\}_{i=1}^{m_d}\subset P$.

Choose a vector $\xi \in H,  \|\xi\|=1$ that is separating for $\A$ (see, for example, 5.5.17 in \cite{KR}). Let, for each $n\in \Ndb,$ $d_n\in D$ be such that $\|x_d\xi-p\xi\|\leq \frac{1}{n2^n}$ for all $d\geq d_n.$ We can also assume that the sequence $d_n$ is increasing.
Put $q_n:=\chi_{[1/n,1]}(x_{d_n}).$ Note that $q_n=\sum\{p_i^{(d_n)}\colon \gamma_i^{(d_n)}\geq1/n\}\in P$ for all $n.$   Let $r_j :=\sup_{n\geq j} q_n$ for all $j\in\Ndb.$ Then the family $\{r_j\}$ is decreasing and $p\leq r_j$ for all $j$. In fact, otherwise for some $j_0\in \Ndb$ we would have $pr_{j_0}^\perp\xi\neq0,$ and for all $j\geq j_0,$ 
 \begin{equation*}
  \|x_{d_j}(p r_{j_0}^\perp)\xi\|=\|x_{d_j}r_{j}^\perp(p r_{j_0}^\perp)\xi\|
  \leq \|x_{d_j} q_{j}^{\perp}(p r_{j_0}^\perp)\xi\|
  \leq \frac{1}{j}\|q_j^\perp
  p r_{j_0}^\perp\xi\|
  \leq\frac{1}{j}.
  \end{equation*}
  On the other hand, 
  \begin{equation*}
  x_{d_j}(p r_{j_0}^\perp)\xi=(p r_{j_0}^\perp)x_{d_j}\xi\to (p r_{j_0}^\perp)p\xi\neq0
  \end{equation*}
   (as $\xi$ is separating for $\A$), which yields a contradiction.

Let $r:=\inf_{j\in \Ndb} r_j.$ Then $p\leq r.$ For all $j$ we have 
 \begin{equation*}
  \begin{aligned}
  \|(r-p)\xi\|^2&\leq \|r_j(r-p)\xi\|^2\leq \sum_{n\geq j}\|q_n(r-p)\xi\|^2 \\
  &\leq \sum_{n\geq j}n^2\|x_{d_n}q_n(r-p)\xi\|^2\leq \sum_{n\geq j} n^2\|x_{d_n}(r-p)\xi\|^2\\
  &\leq \sum_{n\geq j} n^2\|x_{d_n}(r-p)\xi-p(r-p)\xi\|^2=\sum_{n\geq j} n^2\|(r-p)(x_{d_n}-p)\xi\|^2\\
  &\leq  \sum_{n\geq j}\frac{n^2}{n^24^n}=\frac{1}{3\cdot 4^{j-1}}.
  \end{aligned}
 \end{equation*}
  Since $j$ was arbitrary, we get $(r-p)\xi=0,$ and as $\xi$ was separating for $\A$ we have $r=p.$ Thus $p$ is in the smallest complete Boolean algebra $\Balg_Q$ containing the countable set $Q:=\{q_n\}_{n\in\Ndb}\subseteq P.$   This ends the proof of (a).
  Moreover, $p$ is a strong limit of the decreasing sequence $r_j,$ and $r_j$ is a strong limit of
   the increasing sequence  $\sup_{j\leq n\leq  j+m} q_n$ as $m\to\infty.$
   This shows the density of the Boolean algebra generated by $P$ in $\p{\A},$ and also gives (b). 

Item (c) follows easily from (a).
\end{proof}

\begin{definition}\label{hom} Let $\kappa$ be a cardinal.
  We say that an abelian von Neumann algebra $\A$ is \emph{$\kappa$-homogeneous} or \emph{homogeneous of type $\kappa$} if for any non-zero projection $p$ in $\A$, we have $\kappa=\kk{\A}=\kk{\A_ p}.$  We say that a non-zero projection $p$ from $\A$ is $\kappa$-homogeneous if $\A p$ is $\kappa$-homogeneous. An abelian von Neumann algebra or a projection in such an algebra is called homogeneous if is $\kappa$-homogeneous for some cardinal $\kappa.$
\end{definition}

Note that since $\A$ is abelian, we can simply use $\A p$ instead of $\A_p,$ treating it as a von Neumann algebra with unit $p.$

The following facts will be useful:
\begin{lemma}\label{aleph0} 
  If  a von Neumann algebra $\A$ has no minimal projections, then $\kk{\A}\geq \aleph_0.$
  Indeed if $\A$ is homogeneous 
  then   $\kk{\A}$ is either $0$
  (and $\A=\Cdb\I$) or an infinite cardinal.
\end{lemma}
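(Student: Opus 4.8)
The plan is to establish the first assertion by contradiction and then read off the second from it together with Definition \ref{hom}. For the first assertion, suppose $\A$ is abelian with no minimal projection, yet $\kk{\A} < \aleph_0$. Then $\A$ is generated as a von Neumann algebra by a finite set $P = \{p_1, \dots, p_n\}$ of projections. I would argue directly: since the $p_i$ commute and are idempotent, the $*$-algebra they (together with $\I$) generate is the linear span of the finitely many mutually orthogonal products $q_\varepsilon = \prod_{i=1}^{n} p_i^{\,\varepsilon_i}$, where $\varepsilon$ ranges over $\{0,1\}^n$ and $p_i^{\,1} := p_i$, $p_i^{\,0} := \I - p_i$. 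This span is finite-dimensional, hence norm- and weak*-closed, so it equals $\A$; but then each nonzero $q_\varepsilon$ is a minimal projection, contradicting our hypothesis. Alternatively, one may invoke Theorem \ref{equiv}(c), which gives $\kk{\A} = \kappa(\p{\A})$, and then the first half of Lemma \ref{Mah-gen}: a Boolean algebra completely generated by finitely many elements is finite, so $\p{\A}$ would be finite and its atoms would be minimal projections of $\A$. Either way we reach a contradiction, so $\kk{\A} \geq \aleph_0$.

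For the second assertion, let $\A$ be homogeneous, say $\kk{\A} = \kk{\A p} = \kappa$ for every nonzero $p \in \p{\A}$. I would split into two cases. If $\A$ has no minimal projection, the first assertion immediately gives $\kappa = \kk{\A} \geq \aleph_0$, so $\kappa$ is infinite. If instead $\A$ has a minimal projection $p_0$, then exactly as in the proof of Lemma \ref{atoms} one has $\A p_0 = \Cdb p_0$, a one-dimensional algebra, so $\kk{\A p_0} = 0$ by the convention fixing $\kk{\,\cdot\,} = 0$ for one-dimensional algebras; homogeneity then forces $\kappa = \kk{\A} = \kk{\A p_0} = 0$, and $\kk{\A} = 0$ holds precisely when $\A = \Cdb\I$. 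Hence $\kk{\A}$ is either $0$ (with $\A = \Cdb\I$) or an infinite cardinal.

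The argument is essentially routine once the finite-generation facts are available, so I do not anticipate a genuine obstacle. The only point needing care is the reduction in the first assertion — that finitely many commuting projections generate a finite-dimensional (equivalently finite Boolean) algebra — which I would prefer to cite from Lemma \ref{Mah-gen} rather than re-derive, keeping the write-up short.
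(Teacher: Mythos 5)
Your proposal is correct and, in the route you say you would actually use (Theorem \ref{equiv}(c) plus the first half of Lemma \ref{Mah-gen} to conclude $\p{\A}$ is finite and hence has an atom), it coincides with the paper's proof; the handling of the homogeneous case via $\kk{\A p_0}=0$ for a minimal $p_0$ is likewise the paper's argument in slightly expanded form. Your alternative direct computation with the $2^n$ orthogonal products $q_\varepsilon$ is a valid elementary substitute for the citation, but it adds nothing beyond what Lemma \ref{Mah-gen} already provides.
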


\begin{proof} 
  If $\kk{\A}< \aleph_0$ 
  then by Lemma \ref{Mah-gen} and Theorem \ref{equiv}(c), $\p{\A}$ is finite. Hence, $\A$ has a minimal projection.   If $\A$ is homogeneous then the minimal projection must be $\I$ and $\A=\Cdb\I.$
  \end{proof}
 
\begin{lemma}\label{hom-maj}
For any non-zero projection $p\in\A$ there is a non-zero projection $q\leq p$ such that $\A q$ is homogeneous.
\end{lemma}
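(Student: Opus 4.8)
The plan is to isolate a subprojection on which the generation number $\kk{\cdot}$ is as small as possible, and then to show that minimality forces homogeneity. Since the cardinals form a well-ordered class, the collection $\{ \kk{\A r} : r \in \p{\A}, \ 0 \neq r \leq p \}$ of cardinals has a least element $\kappa$; first I would fix a non-zero $q \leq p$ realizing this minimum, so that $\kk{\A q} = \kappa$. I claim that $\A q$ is $\kappa$-homogeneous, which is precisely the conclusion of the lemma. Unwinding Definition \ref{hom} for the ambient algebra $\A q$, and using that $(\A q)r = \A r$ whenever $r \leq q$, this amounts to showing $\kk{\A r} = \kappa$ for every non-zero $r \leq q$.

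The inequality $\kk{\A r} \geq \kappa$ is immediate from the minimality of $\kappa$, since every such $r$ also satisfies $0 \neq r \leq p$. The reverse inequality $\kk{\A r} \leq \kk{\A q}$ is a monotonicity statement for the generation number under reduction, and this is where the real work lies. Since $\A$ is abelian, the projection $r \leq q$ is central, so multiplication by $r$ defines a normal (hence weak* continuous) surjective $*$-homomorphism $\Phi : \A q \to \A r$, $\Phi(x) = xr$, with $\Phi(q) = r$, and $\Phi$ carries projections of $\A q$ to projections of $\A r$. Given a set $P \subseteq \p{\A q}$ that generates $\A q$ as a von Neumann algebra with $\# P = \kk{\A q}$, I would argue that $\Phi(P)$ generates $\A r$: the $*$-algebra generated by $P$ is weak* dense in $\A q$ by the double commutant theorem, and a weak* continuous surjection sends a weak* dense set to a weak* dense subset of its range $\A r$ (for if $D$ is weak* dense then $\A r = \Phi(\A q) \subseteq \overline{\Phi(D)}^{\,w*}$). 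Hence the von Neumann algebra generated by $\Phi(P) \subseteq \p{\A r}$ is all of $\A r$, giving $\kk{\A r} \leq \# \Phi(P) \leq \# P = \kk{\A q} = \kappa$. Combining the two inequalities yields $\kk{\A r} = \kappa$ for all non-zero $r \leq q$, so $\A q$ is $\kappa$-homogeneous and $q$ is the required projection.

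I expect the only genuine subtlety to be this monotonicity step, namely the verification that the reduction homomorphism $\Phi$ sends a von Neumann generating family to a generating family of the corner $\A r$; everything else is formal, relying on the well-ordering of cardinals for the existence of the minimizing $\kappa$ and on minimality for the lower bound. One should also note that the degenerate case $\kappa = 0$, where $q$ is a minimal projection with $\A q = \Cdb q$, is covered harmlessly, being trivially homogeneous.
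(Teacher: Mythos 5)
Your proposal is correct and follows essentially the same route as the paper: minimize $\kk{\A r}$ over non-zero $r\leq p$ using the well-ordering of cardinals, and then observe that cutting a generating set of $\A q$ by a subprojection $r$ yields a generating set of $\A r$, so that minimality forces equality. The paper states the monotonicity step $\kk{\A r}\leq\kk{\A q}$ in one line via the same "multiply the generators by $r$" device; your more detailed justification through weak* continuity of the reduction map is a sound elaboration of that step.
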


\begin{proof}
  Let $K=\{\kk{\A r}\colon r\in\p{\A}, 0\neq r\leq p\}.$ Let $\kappa$ be the least member of $K.$ Let $q\in\p{\A}$ be such that $\kk{\A q}=\kappa.$ If $r\in\p{\A}, 0\neq r\leq q,$ then $\kk{\A r}\leq \kk{\A q}.$ In fact, if $G\subseteq \p{\A q}$ generates $\p{\A q},$ then the set
  $\{gr\colon g\in G\}$ generates $\A r.$ Hence $\kk{\A r}= \kk{\A q},$   and $\A q$ is homogeneous.
\end{proof}

\begin{lemma} \label{clas1}
  A ($\sigma$-finite, non-atomic) abelian von Neumann algebra  is a direct sum of ($\sigma$-finite, non-atomic) homogeneous von Neumann algebras.
\end{lemma}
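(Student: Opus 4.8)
The plan is to build the decomposition from a maximal orthogonal family of homogeneous projections, exactly in the spirit of the proofs of Lemma \ref{atoms} and Proposition \ref{prob}. First I would apply Zorn's lemma to the collection of all orthogonal families of non-zero homogeneous projections in $\A$, partially ordered by inclusion. This collection is nonempty, since applying Lemma \ref{hom-maj} to $\I$ produces at least one non-zero homogeneous projection. The union of any chain of such families is again an orthogonal family whose members are homogeneous, and hence lies in the collection and is an upper bound for the chain; so Zorn's lemma yields a maximal such family $(p_i)_{i\in I}$.

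Next I would show that $\sum_{i\in I} p_i = \I$. Put $p=\sup_i p_i \in \p{\A}$. If $p\neq \I$, then $\I-p$ is a non-zero projection, so by Lemma \ref{hom-maj} there is a non-zero projection $q\leq \I-p$ with $\A q$ homogeneous. Since $q$ is orthogonal to every $p_i$, the family $(p_i)_{i\in I}\cup\{q\}$ is a strictly larger orthogonal family of homogeneous projections, contradicting maximality. Hence $p=\I$. Since $\A$ is abelian, an orthogonal family of projections summing to $\I$ induces a direct sum decomposition $\A\cong\oplus_{i\in I}\A p_i$ into von Neumann subalgebras (as in the proof of Theorem \ref{loc}(iii); cf.\ the opening lines of the proof of Proposition \ref{prob}), and each $\A p_i$ is homogeneous by construction. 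This already establishes the statement without the parenthetical hypotheses.

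Finally I would verify that the parenthetical properties pass to the summands. Each reduced algebra satisfies $\decp{\A}{p_i}=\dec{\A p_i}\leq\dec{\A}$, because any orthogonal family in $\A p_i\subseteq\A$ is an orthogonal family in $\A$. If $\A$ is non-atomic, then so is each $\A p_i$, since a minimal projection of $\A p_i$ would be a minimal projection of $\A$; in particular each $\A p_i$ is then infinite-dimensional and hence admits an infinite orthogonal family, so $\dec{\A p_i}\geq\aleph_0$. Combined with $\dec{\A p_i}\leq\dec{\A}=\aleph_0$ when $\A$ is $\sigma$-finite, this forces $\dec{\A p_i}=\aleph_0$, i.e.\ each summand is $\sigma$-finite as well.

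The argument is essentially a routine exhaustion, so I do not expect a genuine obstacle here; the one point that needs care is the passage from an orthogonal family summing to $\I$ to an honest $\ell^\infty$-direct sum decomposition of $\A$, which is exactly the abelian fact already used earlier and which I would simply cite rather than reprove.
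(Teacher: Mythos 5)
Your proof is correct and follows essentially the same route as the paper: take a maximal orthogonal family of homogeneous projections (Zorn), use Lemma \ref{hom-maj} to see it sums to $\I$, and decompose $\A$ accordingly. The paper's proof is just a terser version of this; your extra checks that the parenthetical properties pass to the summands are fine and left implicit in the paper.
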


\begin{proof}
  Take a maximal orthogonal family $\{q_i\}$ of projections from $\A$ such that $\A q_i$ is homogeneous. By the previous lemma, $\sum_i q_i=\I.$
\end{proof}

\begin{definition}
  The  \emph{homogeneous component of type $\kappa$} of an abelian von Neumann algebra $\A$ is the  ideal $\A \, e_\kappa$, where $e_\kappa$ is
the supremum   of
all non-zero projections $e\in\A$ such that   $\A e$ is homogeneous of type $\kappa.$
\end{definition}

Often we will also call $e_\kappa$  the homogeneous component of type $\kappa$.  We disregard the trivial components of $\A$, that is those $\kappa$ for which there are no non-zero $\kappa$-homogeneous projections in $\A.$

\begin{lemma}\label{hom-orth}
For a cardinal $\kappa,$ let $e_\kappa$ denote the homogeneous component of $\A$ of type $\kappa.$ If $\kappa\neq \kappa',$ then $e_\kappa\perp e_{\kappa'}.$ Moreover, $\sum_{\kappa}{e_\kappa}=\I$, when the sum is over cardinals.
\end{lemma}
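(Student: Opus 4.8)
The plan is to reduce everything to two elementary facts about homogeneity together with Lemma \ref{hom-maj}. First I would record the key observation that homogeneity passes to nonzero subprojections: if $\A p$ is $\kappa$-homogeneous and $0 \neq q \leq p$, then $\A q$ is again $\kappa$-homogeneous. This is immediate from Definition \ref{hom}, since $\kk{\A r} = \kappa$ for every nonzero $r \leq p$, hence in particular for every nonzero $r \leq q \leq p$, so $\kk{\A q} = \kappa$ and $\A q$ is $\kappa$-homogeneous. As a corollary, the homogeneity type of a homogeneous projection is unique, because if $\A r$ is both $\kappa$- and $\kappa'$-homogeneous then $\kappa = \kk{\A r} = \kappa'$.

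Next I would prove disjointness. Write $e_\kappa = \sup_i e_i$, where $\{e_i\}$ ranges over the nonzero $\kappa$-homogeneous projections of $\A$, and similarly $e_{\kappa'} = \sup_j f_j$. Suppose for contradiction that $g := e_\kappa e_{\kappa'} \neq 0$. By Lemma \ref{hom-maj} choose a nonzero $q \leq g$ with $\A q$ homogeneous, say of type $\lambda$. Since $0 \neq q \leq e_\kappa = \sup_i e_i$, there is some $i$ with $q e_i \neq 0$; otherwise $q \leq e_i^\perp$ for all $i$, whence $q \leq (\sup_i e_i)^\perp = e_\kappa^\perp$, contradicting $q \leq e_\kappa$. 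Now $q e_i$ is a nonzero subprojection of the $\lambda$-homogeneous $q$ and of the $\kappa$-homogeneous $e_i$, so by the first step $\A(q e_i)$ is simultaneously $\lambda$- and $\kappa$-homogeneous; uniqueness of type forces $\lambda = \kappa$. Running the same argument with $e_{\kappa'} = \sup_j f_j$ gives $\lambda = \kappa'$, so $\kappa = \kappa'$, contradicting $\kappa \neq \kappa'$. Hence $e_\kappa e_{\kappa'} = 0$, i.e. $e_\kappa \perp e_{\kappa'}$.

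Finally I would prove the exhaustion $\sum_\kappa e_\kappa = \I$. Since the $e_\kappa$ are now known to be mutually orthogonal, the sum equals the supremum $e := \sup_\kappa e_\kappa$. Suppose $e \neq \I$, so $e^\perp \neq 0$. By Lemma \ref{hom-maj} there is a nonzero $q \leq e^\perp$ with $\A q$ homogeneous, of some type $\lambda$. Then $q$ is a nonzero $\lambda$-homogeneous projection, so by the very definition of $e_\lambda$ as the supremum of all such projections we have $q \leq e_\lambda \leq e$. But $q \leq e^\perp$, so $q \leq e \wedge e^\perp = 0$, a contradiction. Therefore $e = \I$ and $\sum_\kappa e_\kappa = \I$.

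The argument is essentially routine once these lemmas are in hand; the only point requiring care---the main (mild) obstacle---is that $e_\kappa$ is defined as a \emph{supremum} and is not a priori itself $\kappa$-homogeneous, so one must avoid arguing directly with $e_\kappa$ and instead descend via Lemma \ref{hom-maj} to a genuinely homogeneous subprojection $q$, then transfer its type through the ``meeting'' step $q e_i \neq 0$. Everything else reduces to the lattice identity $(\sup_i e_i)^\perp = \inf_i e_i^\perp$ and the elementary behaviour of homogeneity under passage to subprojections.
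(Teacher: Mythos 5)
Your proof is correct and follows essentially the same route as the paper's: both arguments rest on the hereditary nature of homogeneity (so a nonzero projection cannot be homogeneous of two distinct types) for disjointness, and on Lemma \ref{hom-maj} (equivalently Lemma \ref{clas1}) for the exhaustion. Your "meeting" step via a homogeneous subprojection of $e_\kappa e_{\kappa'}$ is a slightly more explicit substitute for the paper's use of the distributive identity $e_\kappa e_{\kappa'}=\sup\{ee'\}$, but the substance is identical.
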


\begin{proof}
 In fact,
 $$e_\kappa e_{\kappa'}=\sup\{ee'\colon e \ (\mbox{resp. }e') \mbox{ is homogeneous of type } \kappa \ (\mbox{resp. }\kappa')\}
= 0. $$
Indeed  if $e$ is homogeneous of type $\kappa$ and $e'$ is homogeneous of type $\kappa',$ then $e\perp e',$ otherwise $ee'$ would be both $\kappa$- and $\kappa'$-homogeneous, which is impossible.  Hence $\sum_{\kappa}{e_\kappa}=\I$,
taking into account also Lemma \ref{clas1}.
\end{proof}

Note that $e_\kappa$ can be non-zero only if $\kappa$ is infinite or zero,
 by Lemma \ref{aleph0}. Moreover, $e_\kappa$ is not necessarily $\kappa$-homogeneous (for example, $e_0$ is not $0$-homogeneous if $\A$ has at least two atoms). Nevertheless, we have:

\begin{lemma}\label{homcomp}
  If the homogeneous component of type $\kappa$ of an abelian $\sigma$-finite von Neumann algebra $\A$ is $\A$ itself, then $\A$ is $\kappa$-homogeneous.
\end{lemma}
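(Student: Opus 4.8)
The plan is to realize $e_\kappa$ as the supremum of a \emph{countable} orthogonal family of $\kappa$-homogeneous projections, and then to push the homogeneity of the summands up to all of $\A$ by a cardinal-arithmetic argument that hinges on $\sigma$-finiteness. Throughout I would assume $\kappa$ is infinite: by Lemma \ref{aleph0} together with the remark preceding this lemma, $e_\kappa$ can be non-zero only when $\kappa$ is zero or infinite, and when $\kappa=0$ a $0$-homogeneous projection is an atom, so $e_\kappa=\I$ forces $\A=\Cdb\I$ (a single atom), which is trivially $0$-homogeneous.

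First I would establish that $e_\kappa=\sup_i e_i$ for a maximal orthogonal family $\{e_i\}$ of $\kappa$-homogeneous projections. The inequality $\sup_i e_i\le e_\kappa$ is immediate. For the reverse, given any $\kappa$-homogeneous $e$, the projection $q:=e(\sup_i e_i)^\perp$, if non-zero, is a non-zero subprojection of $e$; since $\A e$ is $\kappa$-homogeneous every non-zero subprojection of $e$ is again $\kappa$-homogeneous, so $\A q$ is $\kappa$-homogeneous while $q$ is orthogonal to every $e_i$, contradicting maximality. Hence $e\le\sup_i e_i$, so $e_\kappa\le\sup_i e_i$. Because $\A$ is $\sigma$-finite the family $\{e_i\}$ is countable. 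Assuming now $e_\kappa=\I$, I obtain a countable orthogonal family of central projections summing to $\I$, hence a direct sum decomposition $\A=\oplus_i \A e_i$ with each $\A e_i$ $\kappa$-homogeneous, exactly as in the proof of Theorem \ref{loc} (iii).

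It then remains to show $\kk{\A p}=\kappa$ for every non-zero projection $p$; taking $p=\I$ gives $\kk{\A}=\kappa$ and hence $\kappa$-homogeneity. For the lower bound, since $p=\sum_i pe_i\neq 0$ some $pe_i\neq 0$, and $pe_i\le e_i$ makes $\A(pe_i)$ $\kappa$-homogeneous, so $\kk{\A(pe_i)}=\kappa$; applying the monotonicity $\kk{\A r}\le\kk{\A q}$ for $r\le q$ (the corner argument in the proof of Lemma \ref{hom-maj}) to $pe_i\le p$ yields $\kappa=\kk{\A(pe_i)}\le\kk{\A p}$. For the upper bound, for each $i$ with $pe_i\neq 0$ I choose $G_i\subseteq\p{\A(pe_i)}$ generating $\A(pe_i)$ with $\#G_i=\kappa$, and set $G=\bigcup_i G_i\cup\{pe_i\}_i$. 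The von Neumann algebra generated by $G$ contains each central $pe_i$ together with $G_i$, hence contains each corner $\A(pe_i)$, and being weak* closed it contains $\oplus_i \A(pe_i)=\A p$; thus $G$ generates $\A p$. Since there are only countably many summands and $\kappa$ is infinite, $\#G\le\kappa\cdot\aleph_0+\aleph_0=\kappa$, so $\kk{\A p}\le\kappa$.

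The main obstacle, and exactly the place where $\sigma$-finiteness is essential, is the upper bound: one must assemble generators across all the homogeneous summands and know their union still has cardinality $\kappa$. Countability of the index set, coming from $\sigma$-finiteness, is precisely what makes $\kappa\cdot\aleph_0=\kappa$ hold; without it the assembled generating set could exceed $\kappa$. The only other point requiring care is checking that $G$ genuinely generates $\A p$ as a von Neumann algebra, which uses that the $pe_i$ are central and that a von Neumann algebra is weak* closed, so that containing every corner forces containing their direct sum.
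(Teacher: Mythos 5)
Your main argument is correct and is essentially the paper's proof: a maximal orthogonal family of $\kappa$-homogeneous projections whose supremum is forced to be $\I$ by maximality and the hereditary nature of $\kappa$-homogeneity, countability of that family from $\sigma$-finiteness, the upper bound $\kk{\A}\leq\kappa$ from assembling $\kappa\cdot\aleph_0=\kappa$ generators, and the lower bound from $rp\neq 0$ for some $\kappa$-homogeneous $p$ together with the monotonicity $\kk{\A r}\geq\kk{\A(rp)}$. The one flaw is your $\kappa=0$ aside: $e_0=\I$ does \emph{not} force $\A=\Cdb\I$, only that $\A$ is purely atomic (e.g.\ $\A=\Cdb^2$ has $e_0=\I$ but $\kk{\A}=1\neq 0$), which is precisely the counterexample the paper records immediately before the lemma; the statement should be read as restricted to infinite $\kappa$, which is the only case in which it is subsequently applied.
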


\begin{proof}
  Denote by $P$ the set of $\kappa$-homogeneous projections in $\A$. By definition, $\sup P=\I.$ Note that if  $p\in P$ and $0\neq q\leq p,$ then $q\in P.$ By Zorn's lemma, there exists a maximal orthogonal family of $\kappa$-homogeneous projections $Q\subset P.$ If $e_0:=\sup Q\neq \I,$ then $q_0:=pe_0^\perp\neq 0$ for some $p\in P.$ Thus $q_0\in P$ and $q_0$ is orthogonal to all projections from $Q.$ This contradicts the maximality of $Q$. Hence $e_0=\I.$ Let $G_q$ denote, for each $q\in Q,$ a set of cardinality $\kappa$ generating $\A q.$ Since $Q$ is at most countable, and $\bigcup_{q\in Q}G_q$ generates $\A$ as a von Neumann algebra, we have $\kk{\A}\leq \kappa.$ If $0\neq r\in \p{\A},$ then $rp\neq 0$ for some $p\in P,$
  hence $\kappa= \kk{\A (rp)}\leq \kk{\A r}\leq \kk{A}\leq \kappa,$ and $\A$ is indeed $\kappa$-homogeneous.
\end{proof}

The most important part of Maharam's theorem is contained in the following result (formulated here in the language of von Neumann algebras):

\begin{theorem} \label{homog}
  Let $(\A,\tau_\A)$ and $(\B,\tau_B)$ be such that $\tau_\A(\I)=\tau_\B(\I)$ and, for some infinite cardinal $\kappa,$ both $\A$ and $\B$ are $\kappa$-homogeneous. Then $(\A,\tau_\A)$ and $(\B,\tau_B)$ are isomorphic as von Neumann measure algebras.
\end{theorem}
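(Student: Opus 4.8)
The plan is to prove this by transporting the measure-algebra form of Maharam's theorem (Theorem~\ref{Mah-iso}) across the functorial equivalence between abelian von Neumann measure algebras and localizable measure algebras set up in Theorem~\ref{vna=ma}, concretely via Propositions~\ref{Wextn} and~\ref{Wextn2}. Two preliminary observations organize everything. First, since $\kappa$ is infinite both $\A$ and $\B$ are non-atomic: a minimal $p\in\p{\A}$ would give $\A p=\Cdb p$, so $\kk{\A p}=0\neq\kappa$, contradicting $\kappa$-homogeneity (cf.\ Lemma~\ref{aleph0}). Second, applying Theorem~\ref{equiv}(c) to each reduced algebra $\A p$ shows that ``$\A$ is $\kappa$-homogeneous'' is exactly the statement that the measure algebra $(\p{\A},\tau_\A|_{\p{\A}})$ is Maharam-type-homogeneous of type $\kappa$ (in the sense of the Boolean-algebraic definition), and similarly for $\B$.

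The first case is $\tau_\A(\I)=\tau_\B(\I)<\infty$, which is the heart of the argument. Here $(\p{\A},\tau_\A|)$ and $(\p{\B},\tau_\B|)$ are Maharam-type-homogeneous measure algebras of the same type $\kappa$ and the same finite total mass, so Theorem~\ref{Mah-iso} furnishes a measure-preserving Boolean isomorphism $\theta:\p{\A}\to\p{\B}$. Because the projection lattice of a von Neumann algebra is Dedekind complete, $\theta$ is automatically order bicontinuous by Remark~\ref{homordcont}. By Proposition~\ref{Wextn} $\theta$ extends to an isometric $*$-isomorphism $\vniso:\A\to\B$, and Proposition~\ref{Wextn2} upgrades this: order continuity of $\theta$ makes $\vniso$ normal, while measure-preservation of $\theta$ yields $\tau_\B\circ\vniso=\tau_\A$. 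Thus $\vniso$ is an isomorphism of von Neumann measure algebras in the sense of Definition~\ref{defvnmamor}.

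For the case $\tau_\A(\I)=\tau_\B(\I)=\infty$, I would use Proposition~\ref{prob} to write $\A\cong\oplus_{i\in I}(\A_i,\tau_i)$ and $\B\cong\oplus_{j\in J}(\B_j,\sigma_j)$ as direct sums of von Neumann probability algebras. Each summand $\A_i=\A p_i$ (with $p_i$ of trace one) is again $\kappa$-homogeneous, since every nonzero reduction of a $\kappa$-homogeneous algebra is $\kappa$-homogeneous, and likewise for the $\B_j$. Each probability summand is $\sigma$-finite, so Lemma~\ref{sfinvN} identifies the index cardinalities as $\#I=\dec{\A}$ and $\#J=\dec{\B}$. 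Granting a bijection $I\to J$, the finite case above produces, for each matched pair, a trace-preserving normal $*$-isomorphism $\A_i\cong\B_{\sigma(i)}$; assembling these across the direct sum gives the desired isomorphism $(\A,\tau_\A)\cong(\B,\tau_\B)$, a direct sum of measure-preserving normal $*$-isomorphisms being again one.

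The main obstacle is precisely the existence of that bijection in the infinite regime, i.e.\ the requirement $\#I=\#J$, equivalently $\dec{\A}=\dec{\B}$. The invariant that carries the weight of the hypothesis is the magnitude $\magn{\cdot}{\I}$: in the finite case $\magn{\A}{\I}=\tau_\A(\I)$ supplies the matching total mass fed into Theorem~\ref{Mah-iso}, whereas when $\tau_\A(\I)=\tau_\B(\I)=\infty$ the operative datum is $\magn{\A}{\I}=\dec{\A}$, so it is equality of magnitudes that secures $\dec{\A}=\dec{\B}$ and hence $\#I=\#J$ (note that $\kappa$-homogeneity alone does not pin down $\dec{\A}$, which may range over all cardinals between $\aleph_0$ and $2^\kappa$). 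Once the index sets are matched, the residual checks — that each summand is $\kappa$-homogeneous, and that the assembled map is normal and trace-preserving — are routine, and the finite case does all the genuine work.
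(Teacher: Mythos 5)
Your finite-trace argument is precisely the paper's proof: the paper disposes of Theorem \ref{homog} in one line, citing exactly the chain you describe (Theorem \ref{equiv}(c) to translate $\kappa$-homogeneity of $\A$ into Maharam-type-homogeneity of the measure algebra $(\p{\A},\tau_\A|_{\p{\A}})$, Theorem \ref{Mah-iso} to produce a measure-preserving Boolean isomorphism $\p{\A}\to\p{\B}$, and Propositions \ref{Wextn} and \ref{Wextn2} together with Remark \ref{homordcont} to promote it to a normal, trace-preserving $*$-isomorphism). On that case there is nothing to add.

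Your unease about the case $\tau_\A(\I)=\tau_\B(\I)=\infty$ is well founded, and you should not try to close that gap: the statement is simply false there as literally written, and the paper's one-line proof does not cover it either, since Theorem \ref{Mah-iso} requires finite total measure. Concretely, for $\kappa\geq\aleph_1$ take $\A=\oplus_{n<\omega}(\A_\kappa,\tau_\kappa)$ and $\B=\oplus_{\alpha<\omega_1}(\A_\kappa,\tau_\kappa)$: both carry infinite faithful normal semifinite traces and both are $\kappa$-homogeneous (for $\A$ this follows from the $*$-isomorphism $\oplus_{n<\omega}\A_\kappa\cong\A_\kappa$ noted after Corollary \ref{vncase}; for $\B$ one checks $\kk{\B p}=\aleph_1\cdot\kappa=\kappa$ for every nonzero $p$), yet $\dec{\A}=\aleph_0\neq\aleph_1=\dec{\B}$ by Lemma \ref{sfinvN}, so they are not even $*$-isomorphic. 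As you correctly diagnose, equality of infinite traces does not control the decomposability number; the missing datum is exactly equality of magnitudes. This is consistent with how the theorem is actually deployed: in Theorem \ref{bigf} the hypothesis $\magn{\A}{e_\kappa}=\magn{\B}{f_\kappa}$ is used to match the index sets of the probability-algebra decompositions \emph{before} Theorem \ref{homog} is invoked, and \ref{homog} is only ever applied to summands of trace $1$. So the statement should be read with the implicit hypothesis $\tau_\A(\I)<\infty$ (or with ``trace'' replaced by ``magnitude''), and under that reading your proof and the paper's coincide.
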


\begin{proof}
  This follows directly from Proposition \ref{Wextn} (see also
  Theorem \ref{vna=ma})
  and Theorem \ref{Mah-iso}.
\end{proof}

The  measure algebra version of the following result is due to Fremlin (see 332J
in \cite{Fremlin}).

\begin{theorem} \label{bigf}
Let $e_\kappa$ and $f_\kappa$ denote the homogeneous components of type $\kappa$ of algebras $\A$ and $\B$, respectively.
Von Neumann measure algebras $(\A,\tau_\A)$ and $(\B,\tau_B)$ are isomorphic if and only if, for every infinite cardinal $\kappa$, $\magn{\A}{e_\kappa}=\magn{\B}{f_\kappa}$ and, for each $0<\gamma < \infty,$
$$\#\{\text{minimal }p\in\p{\A}\colon \tau_\A(p)=\gamma\}=\#\{\text{minimal }q\in\p{\B}\colon \tau_\B(q)=\gamma\}.$$
\end{theorem}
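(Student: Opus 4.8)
The plan is to route both directions through the canonical decomposition of an abelian von Neumann measure algebra into its atomic part and its homogeneous components (Lemma \ref{hom-orth}), treating the hard structural input, Theorem \ref{homog} (hence ultimately Theorem \ref{Mah-iso}), as a black box. The forward implication amounts to checking that every quantity in the statement is an isomorphism invariant, while the reverse implication reconstructs an isomorphism component by component and then assembles the pieces into a direct sum.

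For the forward direction, suppose $\vniso\colon(\A,\tau_\A)\to(\B,\tau_B)$ is an isomorphism of von Neumann measure algebras, so $\vniso$ is a normal $*$-isomorphism carrying $\p{\A}$ onto $\p{\B}$ and preserving order, suprema and the trace. Then $\vniso$ preserves $\kk{\cdot}$ (a generating set of projections maps to a generating set) and $\dec{\cdot}$ (an orthogonal family maps to an orthogonal family), hence also the magnitude $\magn{\cdot}{\cdot}$. Since $\kappa$-homogeneity of a projection $e$ is expressed purely through the invariants $\kk{\A(ep)}$, the projection $e$ is $\kappa$-homogeneous if and only if $\vniso(e)$ is; as $\vniso$ preserves suprema this forces $\vniso(e_\kappa)=f_\kappa$, whence $\magn{\A}{e_\kappa}=\magn{\B}{f_\kappa}$. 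Finally $\vniso$ restricts to a trace-preserving bijection of the minimal projections of $\A$ with those of $\B$, which gives the equality of atom counts. (By semifiniteness and faithfulness every minimal projection has trace in $(0,\infty)$, so condition (2) accounts for all of them.)

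For the reverse direction I would first write $\A = \A e_0 \oplus (\oplus_{\kappa\geq\aleph_0}\A e_\kappa)$ using Lemma \ref{hom-orth}, noting that $e_\kappa=0$ for $1\leq\kappa<\aleph_0$ by Lemma \ref{aleph0} and that $e_0$ is exactly the supremum of the atoms, with the analogous splitting for $\B$. The atomic summand $\A e_0$ is purely atomic, a direct sum of copies of $\Cdb$ indexed by the atoms, and its measure-algebra isomorphism class is determined solely by the multiset of traces of those atoms, so condition (2) yields a trace-preserving bijection of atoms and an isomorphism $\A e_0\simeq \B f_0$. For an infinite cardinal $\kappa$ I would treat $\A e_\kappa$ in two cases. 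If $\magn{\A}{e_\kappa}=\tau_\A(e_\kappa)<\infty$, then $\A e_\kappa$ is $\sigma$-finite and its type-$\kappa$ component is all of $\A e_\kappa$ (every $\kappa$-homogeneous projection of $\A$ below $e_\kappa$ is still $\kappa$-homogeneous in $\A e_\kappa$, and these have supremum $e_\kappa$), so $\A e_\kappa$ is $\kappa$-homogeneous by Lemma \ref{homcomp}; as the finite total traces agree, Theorem \ref{homog} gives $\A e_\kappa\simeq \B f_\kappa$. If instead $\magn{\A}{e_\kappa}=\decp{\A}{e_\kappa}=\theta$ is an infinite cardinal, I would pick a maximal orthogonal family $\{p_j\}_{j\in I}$ of $\kappa$-homogeneous projections of trace $1$ below $e_\kappa$; maximality forces $\sum_j p_j=e_\kappa$, since any nonzero remainder dominates a $\kappa$-homogeneous projection of finite trace (combine semifiniteness, Lemma \ref{hom-maj}, and $e_\kappa\perp e_{\kappa'}$) which may be shrunk to trace $1$ by the scaling argument of Proposition \ref{prob}. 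This realizes $\A e_\kappa\simeq\oplus_{j\in I}\A p_j$ as a direct sum of $\kappa$-homogeneous probability algebras; since $\tau_\A(e_\kappa)=\infty$ forces $I$ infinite, Lemma \ref{sfinvN} gives $|I|=\dec{\A e_\kappa}=\theta$. Carrying out the same construction for $\B f_\kappa$ produces an index set of the same cardinality $\theta$, so a bijection of indices together with the piecewise isomorphisms of Theorem \ref{homog} gives a trace-preserving $\A e_\kappa\simeq\B f_\kappa$. Taking the direct sum of the isomorphisms on $\A e_0$ and on each $\A e_\kappa$ yields $(\A,\tau_\A)\simeq(\B,\tau_B)$.

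The only genuinely deep ingredient, Theorem \ref{homog}, is already in hand, so the main obstacle internal to this argument is the infinite-magnitude case: one must verify that $\A e_\kappa$ really splits into $\kappa$-homogeneous probability blocks indexed by a set of cardinality exactly $\dec{\A e_\kappa}$. This hinges on (i) showing every nonzero projection below $e_\kappa$ dominates a $\kappa$-homogeneous projection of finite trace, and (ii) the identification via Lemma \ref{sfinvN} of the number of blocks with the decomposability number, so that the invariant $\magn{\A}{e_\kappa}=\magn{\B}{f_\kappa}$ translates into equinumerous index sets and the matching goes through.
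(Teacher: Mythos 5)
Your overall architecture matches the paper's proof: the forward direction checks that magnitude, homogeneity type, and the trace-spectrum of atoms are isomorphism invariants (you supply slightly more detail than the paper, e.g.\ in justifying $\vniso(e_\kappa)=f_\kappa$ via preservation of suprema, which is fine), and the reverse direction handles the atomic part, the finite-magnitude homogeneous components via Lemma \ref{homcomp} and Theorem \ref{homog}, and the infinite-magnitude components by splitting into trace-one $\kappa$-homogeneous blocks counted by Lemma \ref{sfinvN}. All of that is sound and is essentially the paper's argument.

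There is, however, one step in your infinite-magnitude case that fails as written. You take a maximal orthogonal family $\{p_j\}_{j\in I}$ of $\kappa$-homogeneous projections of trace $1$ below $e_\kappa$ and assert that maximality forces $\sum_j p_j=e_\kappa$, because any nonzero remainder would dominate a $\kappa$-homogeneous projection of finite trace ``which may be shrunk to trace $1$.'' Shrinking only goes downward: a projection of trace $t$ admits subprojections of any trace in $(0,t]$, not of trace $1$ when $t<1$. So if the remainder $r=e_\kappa-\sum_j p_j$ is nonzero but has $\tau_\A(r)<1$, it dominates no trace-one projection at all, the family is genuinely maximal, and yet its sum is not $e_\kappa$. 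This is exactly the difficulty that the redistribution trick in the proof of Proposition \ref{prob} is designed to absorb (choosing a countable subfamily $\{h_k\}$ and subprojections $g_k\leq h_k$ with $\tau(g_k)=\tau(r)$, then re-partitioning as $r+(h_1-g_1),\ g_1+(h_2-g_2),\dots$). The clean fix is the paper's route: apply Proposition \ref{prob} to the non-atomic algebra $\A e_\kappa$ to get a partition of $e_\kappa$ into trace-one projections, and then argue \emph{afterwards} that each block is $\kappa$-homogeneous (any block must be its own homogeneous component of type $\kappa$, since by Lemma \ref{hom-maj} it would otherwise contain a $\kappa'$-homogeneous subprojection with $\kappa'\neq\kappa$, contradicting Lemma \ref{hom-orth}; then Lemma \ref{homcomp} applies). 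With that substitution your identification of the index sets via Lemma \ref{sfinvN} and the final assembly of the direct-sum isomorphism go through unchanged.
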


\begin{proof}
($\Rightarrow$) \
  Let $\vniso$ denote the isomorphism. It is clear that $\Phi(e_\kappa)=f_\kappa,$ which implies $\decp{\A}{e_\kappa}=\decp{\B}{f_\kappa}.$
  Since  $\tau_A(p) =\tau_B(\Phi(p))$ for all $p\in\p{\A}$, we have
   $\tau_\A(e_\kappa)=\tau_B(f_\kappa)$ for all $\kappa.$ Hence $\magn{\A}{e_\kappa}=\magn{\B}{f_\kappa}.$ The isomorphism $\vniso$ takes minimal projections in $\p{\A}$ (atoms of $\p{\A}$) into minimal projections in $\p{\B}$ (atoms of $\p{\B}$), and if $\tau_A(p)=\gamma,$ then $\tau_B(\Phi(p))=\gamma,$ which shows the second statement.

      ($\Leftarrow$) \ Assume now that for all infinite cardinals $\kappa,$ $\magn{\A}{e_\kappa}=\magn{\B}{f_\kappa}$ and, for each $0<\gamma < \infty,$ $\#\{\text{minimal }p\in\p{\A}\colon \tau_\A(p)=\gamma\}=\#\{\text{minimal }q\in\p{\B}\colon \tau_\B(q)=\gamma\}$.
      It is clear that the assumptions guarantee that the noncommutative measure algebras $\A$ and $\B$ have the same number $\lambda$ of minimal projections.
      Hence the purely atomic parts of $\A$ and $\B$, together with the restrictions of the trace,
      are 
isomorphic as von Neumann measure algebras.

If for some infinite $\kappa$, $\magn{\A}{e_\kappa}=\magn{\B}{f_\kappa}<\infty,$ then the von Neumann algebras $\A e_\kappa,\B f_\kappa$ are $\sigma$-finite and constitute their own $\kappa$-homogeneous components. By Lemma \ref{homcomp}, they are both $\kappa$-homogeneous as von Neumann algebras, hence $(\A e_\kappa, \tau_\A|\A e_\kappa)$ and $(\B f_\kappa, \tau_\B|\B f_\kappa)$ are isomorphic as von Neumann measure algebras by Theorem \ref{homog}.

On the other hand if $\magn{\A}{e_\kappa}=\magn{\B}{f_\kappa}=\lambda$ for some infinite cardinal $\lambda,$ then
       by Proposition \ref{prob} and \ref{sfinvN}, 
$\A e_\kappa$ and $\B f_\kappa$ are direct sums of $\lambda=\dec{\A e_\kappa}=\dec{\B f_\kappa}$ noncommutative measure algebras $(\A p_i,\tau_\A |\A p_i)$ and $(\B q_i,\tau_\B |\B q_i),$ respectively,  with $\tau_\A(p_i)=\tau_\B (q_i)=1$ for all $i$. Note that $\A p_i$ and $\B q_i$ are $\kappa$-homogeneous components of themselves. Indeed, if, say, $p_i$ is not a $\kappa$-component of itself, then by Lemma \ref{hom-maj}, there would be a $\kappa'$-homogeneous projection $p\leq p_i$ with $\kappa'\neq\kappa.$ But $p\leq e_\kappa,$ so Lemma \ref{hom-orth} yields a contradiction. Again by Lemma \ref{homcomp}, $\A p_i$ and $\B q_i$ are $\kappa$-homogeneous. 
      By Theorem \ref{homog}, the von Neumann measure algebras $(\A p_i, \tau_\A|\A p_i)$ and $(\B q_i,\tau_\B|\B q_i)$ are isomorphic. The direct sums of all the isomorphisms produce an isomorphism of $(\A,\tau_\A)$ and $(\B,\tau_\B)$.
\end{proof}

\subsection{Infinite tensor products} \label{itp}
To properly demonstrate the von Neumann algebra version  of
the probabilistic case of Maharam's theorem we 
will use various notions of infinite tensor products: of Hilbert spaces, of $C^*$-algebras and of von Neumann algebras. 
To avoid misunderstanding, we take some time
to carefully  describe the tensor products.   For further background or  definitions if needed, we direct the reader to for example \cite[III.3.1]{Black} and 
\cite[Chapter 11]{KR}.
 In the definitions, the index set is arbitrary. For our purposes, it is most convenient to have an infinite cardinal $\kappa$ as the set of 
 indices. 
We freely use the notation introduced here in the proofs that follow. 
We also use
the notation `$\Subset$' to denote a finite subset.

We start with a family $\{(H_i,\xi_i)\}_{i\in K}$ of Hilbert spaces $H_i$ with a distinguished unit vector $\xi_i.$
For  $F\Subset K,$ we put $H_F=\bigotimes_{i\in F}H_i$  and $\xi_F=\bigotimes_{i\in F}\xi_i.$ For $F,G\Subset K$ with $F\subseteq G,$  we have a unique isometric linear mapping  $h_{GF}: H_F\to H_G$ such that $\bigotimes_{i\in F}\eta_i\mapsto \bigotimes_{i\in G}\zeta_i,$ where $\zeta_i=\eta_i$ for $i\in F$ and $\zeta_i=\xi_i$ for $i\in G\setminus F.$  Then $(\{H_F\}_{F\Subset K}, \{h_{GF}\}_{F\subseteq G\Subset K})$ form an inductive system of Hilbert spaces.
Note that $\bigcup h_F(H_F)$ is dense in
the Hilbert space inductive limit $H_K$ and there exists a unique unit vector $\xi_K\in H_K$ such that $h_F\xi_F=\xi_K$ for all $F\Subset K.$
We say that the pair $(H_K,\xi_K)$ is the infinite tensor product of $\{(H_i,\xi_i)\}_{i\in K}$.

We now define infinite tensor products of $C^*$-algebras \cite[\S 11.4]{KR}. 
Consider  a family
$\{A_i\}_{i\in K}$ of unital $C^*$-algebras. For  $F\Subset K,$ we put $A_F=\bigotimes_{i\in F}A_i,$ where the tensor product used is the minimal one. For $F,G\Subset K$ with $F\subseteq G,$  we have a unique $*$-isomorphism $\theta_{GF}: A_F\to A_G$   such that $\theta_{GF}(a_F)=a_F\otimes \I_{G\setminus F}.$ Then $(\{A_F\}_{F\Subset K}, \{\theta_{GF}\}_{F\subseteq G\Subset K})$ forms an inductive system of $C^*$-algebras. We denote by $(A_K,\{\theta_F\}_{F\Subset K})$ the inductive limit of the system. Note that $\bigcup_{F\Subset K} \theta_F(A_F)$ is uniformly dense in $A_K.$  We say that the pair $A_K$ is the infinite tensor product of $\{A_i\}_{i\in K}$ and write $A_K=\bigotimes_{i\in K} A_i.$

Finally, we will consider infinite tensor products of von Neumann algebras, or, more precisely, of von Neumann probability algebras. We start with a family $(\M_i,\vf_i)_{i\in K},$ where $\M_i$ are (necessarily $\sigma$-finite) von Neumann algebras and $\vf_i$ are faithful normal states on $\M_i.$ First, we treat $\M_i$ as unital $C^*$algebras, and  define their $C^*$-algebraic infinite tensor product $M_K$
as we did above.  There is a
unique state $\vf_K$ on $M_K$ such that $\vf_F(\bigotimes_{i\in F}a_i)=\prod_{i\in F}\vf_i(a_i)$ for any $F\Subset K$ and $a_i\in \M_i$ for $i\in F$ (see \cite[Proposition 11.4.6]{KR}). We call the state $\vf_K$ the \emph{product state} and denote it by
 $\bigotimes_{i\in K}\vf_i.$ Having the algebra $M_K$ and the state $\vf_K$ on it, we can build the GNS-representation $(\pi_{\vf_K}, H_{\vf_K}, \xi_{\vf_K})$ of $M_K$
  (see \cite[Section 4.5]{KR}), and define $\M_K:=\pi_{\vf_K}(M_K)''$ acting on $H_{\vf_K}$ with a cyclic and separating vector $\xi_{\vf_K}.$ Let $\overline{\vf}_K$ be the faithful normal  state on $\M_K$ given by the vector state $\xi_{\vf_K}.$ Then we call the pair $(\M_K,\overline{\vf}_K)$ the \emph{infinite tensor product of von Neumann probability algebras} $(\M_i,\vf_i)$ and denote it by $\overline{\bigotimes}_{i\in K}(\M_i,\vf_i).$

 Assume now that  each $\M_i$ acts standardly in a Hilbert space $H_i$.
 That is,  there exists a cyclic and separating vector $\xi_i$.   For each $i\in K,$ let $\varphi_i$ be
 the vector state corresponding to $\xi_i.$ Let $\pi_i$ denote the (identity) representation of $\M_i$ on $H_i.$ For all $F\Subset K$ we form the tensor product $\pi_F:=\bigotimes_{i\in F}\pi_i$ in an obvious way. We easily check (see \cite[11.5.30]{KR}) that there is a
 unique representation $\pi_K$ (denoted by $\bigotimes_{i\in K}\pi_i$) of $M_K$ on $H_K$ such that $\pi_K(\theta_F(a_F))h_F=h_F\pi_F(a_F)$ for all $a_F\in A_F,$ for all $F\Subset K.$ Moreover, the vector $\xi_K$ is cyclic for $\pi_K$ and the vector state $\omega_{\xi_K}$ is, in fact, the product state $\bigotimes_{i\in K}\omega_{\xi_i}.$ Hence $\vf_K=\omega_{\xi_K}$ and, by \cite[Proposition 4.5.3]{KR}, 
 $\pi_K$ is unitarily equivalent to the GNS representation $\pi_{\vf_K}.$ That means that $\M_K$ is generated by $\pi_K(M_K)$ and that $\overline{\vf}_K$ is the (extension of the) vector state $\xi_K.$ The vector $\xi_K$ is not only cyclic, but also separating. This follows, for example, from the fact that the vectors $\xi_i$ are all separating, hence cyclic for the algebras $\M_i'.$ Hence $\overline{\bigotimes}_{i\in K}(\M_i',\omega_{\xi_i}),$ when represented in $H_K$, has a cyclic vector $\xi_K.$ But $\overline{\bigotimes}_{i\in K}(\M_i',\omega_{\xi_i})=\M_K'$ (see \cite{tak1} and III.3.1.2 in \cite{Black}), hence
 $\xi_K$ is separating for $\M_K.$ This means that the representation $\pi_K$ is in fact faithful, so that we can identify $M_K$ as a subset of $\M_K.$

We use the definitions above in the following special case.
In the sequel, we will use the space $\Cdb^2$  in three different guises: as a Hilbert space, as a $C^*$-algebra, and as a von Neumann algebra.  We fix an infinite cardinal $\kappa,$ take $K:=\kappa,$ and then for all $i< \kappa,$ put $H_i:=H$  and $\xi_i:=\xi,$ where $H=\Cdb^2$  and $\xi=(\frac{1}{\sqrt{2}},\frac{1}{\sqrt{2}}).$ We write $(H_\kappa,\xi_\kappa)=(H,\xi)^{\otimes\kappa}.$ Next,
$A_i:=A$ for all $i<\kappa,$ where $A=\Cdb^2.$ We denote by $A_\kappa$ the $C^*$-algebraic tensor product of $\kappa$ copies of $A$. Finally, $\A:=\Cdb^2, \, \tau$ is the normalized trace on $\A,$ and $\M_i:=A,\, \tau_i:=\tau$ for all $i<\kappa,$ with $(\A_\kappa,\tau_\kappa)$ being the infinite tensor product of $\kappa$ copies of the von Neumann measure algebra $(\A,\tau).$
It follows from the above that $\A$ acts standardly in $H$,
that is $\xi$ is both cyclic and separating for the action.
 The latter implies, in particular,
 that $\xi_\kappa$ is separating and cyclic for
 $\A_\kappa$, and $\tau_\kappa$ is a faithful normal tracial state on $\A_\kappa$.

The following proposition gives much important information about
 our `building blocks' for the classification of abelian von Neumann algebras. The proof of its last part follows closely the corresponding part in \cite[331K]{Fremlin}.

\begin{proposition}\label{akhom}
  For an infinite cardinal $\kappa,$ the von Neumann algebra $\A_\kappa$ is abelian, $\sigma$-finite, non-atomic and $\kappa$-homogeneous.
\end{proposition}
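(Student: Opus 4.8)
The plan is to verify the four properties in increasing order of difficulty, saving $\kappa$-homogeneity for last since it is the only substantial point.

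First I would dispose of \emph{abelian} and \emph{$\sigma$-finite}. Each factor $\A_i=\Cdb^2$ is abelian, hence so is every finite tensor product $A_F$ and the $C^*$-algebra $A_\kappa=\overline{\bigcup_F\theta_F(A_F)}$; since $\A_\kappa=\pi_{\vf_\kappa}(A_\kappa)''$ and the bicommutant of an abelian self-adjoint set $S$ satisfies $S''\subseteq S'$, the algebra $\A_\kappa$ is abelian. For $\sigma$-finiteness I would simply invoke that $\tau_\kappa$ is a faithful normal (tracial) state — established in the construction, where $\xi_\kappa$ is cyclic and separating — together with the fact recorded just before Lemma~\ref{sfinvN} that a von Neumann algebra is $\sigma$-finite precisely when it carries a faithful normal state.

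Next, non-atomicity. For a subset $L\subseteq\kappa$ write $\A_L$ for the von Neumann subalgebra generated by $\{e_i:i\in L\}$. Because $\tau_\kappa$ is the product state, $\A_\kappa=\A_L\,\bar\otimes\,\A_{\kappa\setminus L}$ with $\tau_\kappa=\tau_L\otimes\tau_{\kappa\setminus L}$, so $\tau_\kappa(ab)=\tau_\kappa(a)\tau_\kappa(b)$ whenever $a\in\A_L$ and $b\in\A_{\{j\}}$ with $j\notin L$; moreover there is a $\tau_\kappa$-preserving conditional expectation $E_L:\A_\kappa\to\A_L$ fixing $\A_L$ pointwise and satisfying $E_L(e_j)=\tau(e_j)\I=\tfrac12\I$ for $j\notin L$. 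Given a nonzero projection $p$, Theorem~\ref{equiv}(a) (applicable since $\A_\kappa$ is $\sigma$-finite and $\{e_i\}$ generates it) provides a countable $L_p\subseteq\kappa$ with $p\in\A_{L_p}$. Choosing any $j\notin L_p$, independence gives $\tau_\kappa(pe_j)=\tfrac12\tau_\kappa(p)=\tau_\kappa(pe_j^\perp)$, both strictly positive; as $\tau_\kappa$ is faithful, $pe_j$ and $pe_j^\perp$ are nonzero proper subprojections of $p$, so $p$ is not minimal.

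Finally, $\kappa$-homogeneity, the crux. The upper bound $\kk{\A_\kappa p}\le\kappa$ is routine: the $\kappa$ projections $\{e_i\}$ generate $\A_\kappa$ (each $A_F$ is generated by them), hence their images $\{pe_i\}$ generate $\A_\kappa p$ under the normal $*$-homomorphism $x\mapsto xp$. For the lower bound, suppose $G$ generates $\A_\kappa p$ with $|G|=\lambda=\kk{\A_\kappa p}$. By the previous paragraph $\A_\kappa p$ is again non-atomic, so $\lambda$ is infinite (Lemma~\ref{aleph0}). Applying Theorem~\ref{equiv}(a) to $p$ and to each $g\in G$ yields countable coordinate sets whose union $L$ has $|L|\le\aleph_0\cdot\lambda=\lambda$, with $p,G\subseteq\A_L$ and therefore $\A_\kappa p=\A_L p\subseteq\A_L$. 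If $\lambda<\kappa$ then $|L|<\kappa$, so some coordinate $j\notin L$ exists; then $pe_j\in\A_\kappa p\subseteq\A_L$ is fixed by $E_L$, yet $E_L(pe_j)=p\,E_L(e_j)=\tfrac12 p$, forcing the projection $pe_j$ to equal $\tfrac12 p$ — impossible for $p\neq0$. Hence $\lambda\ge\kappa$ and $\kk{\A_\kappa p}=\kappa$ for every nonzero $p$, i.e.\ $\A_\kappa$ is $\kappa$-homogeneous. I expect this lower bound to be the main obstacle: its whole content is the independence of an ``unused'' coordinate $e_j$ from the countably-many-coordinate subalgebra in which any generating family of size $<\kappa$ must live, and it is precisely this that rules out generation by fewer than $\kappa$ projections.
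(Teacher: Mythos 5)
Your proposal is correct in substance but follows a genuinely different route from the paper on both of the nontrivial points. For non-atomicity the paper argues directly: a minimal projection $p$ would satisfy $p\leq q_j$ for every $j<\kappa$ with $q_j\in\{p_j,p_j^\perp\}$, whence $\tau_\kappa(p)\leq\tau_\kappa(\prod_{j\in F}q_j)=2^{-\#F}$ for every finite $F$, a contradiction; you instead localize $p$ to countably many coordinates and split it by an independent coordinate. For the key lower bound the paper proves $\kk{\A_\kappa}\geq\kappa$ by a metric covering argument: for each nonzero projection $p$ only finitely many $p_i$ lie in a ball $U(p,\delta)$ for the $\|(\cdot)\xi_\kappa\|$-metric, so the balls centered at a generating family of cardinality $\kk{\A_\kappa}$ that cover $\{p_i\}_{i<\kappa}$ must number at least $\kappa$; the paper then needs a separate reduction (via $\A_\kappa p=\A_Jq\,\overline{\otimes}\,\A_{\kappa\setminus J}$ and $\A_\kappa\cong\A_{\kappa\setminus J}$, split into the cases $\kappa=\aleph_0$ and $\kappa>\aleph_0$) to pass from $\I$ to a general $p$. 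Your conditional-expectation argument is cleaner on this last point: by treating an arbitrary nonzero $p$ from the start and deriving the absurdity $pe_j=\tfrac12 p$ from $E_L$-invariance of an algebra generated by fewer than $\kappa$ projections, you obtain $\kk{\A_\kappa p}\geq\kappa$ for all $p$ in one stroke and avoid the reduction step entirely. The price is that you must grant yourself the factorization $\A_\kappa=\A_L\,\overline{\otimes}\,\A_{\kappa\setminus L}$ with product trace and the $\tau_\kappa$-preserving slice-map expectation $E_L$; these are standard and no heavier than the tensor-product facts the paper already invokes, so this is a fair trade.

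One point needs repair. Theorem~\ref{equiv} is stated for \emph{non-atomic} algebras, so your appeal to part (a) of that theorem inside the proof of non-atomicity is formally circular. The fix is easy: the only role of non-atomicity in the proof of Theorem~\ref{equiv} is to guarantee, via Lemma~\ref{Mah-gen}, that the generating set $P$ is infinite, and here $P=\{p_i\}_{i<\kappa}$ is infinite by fiat since $\kappa$ is an infinite cardinal; alternatively, replace your non-atomicity step with the paper's direct trace estimate, which uses nothing beyond normality of $\tau_\kappa$. Either way the circularity disappears, and the remainder of your proof (which invokes Theorem~\ref{equiv}(a) only after non-atomicity is available) stands.
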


\begin{proof}
It is obvious that the $C^*$-algebra $A_\kappa$ is abelian, and so the continuity of multiplication on bounded subsets of $\A_\kappa$  together with the Kaplansky density theorem  (see, for example, \cite[Proposition 1.36 (2) and Theorem 1.60]{GL})  implies that $\A_\kappa$ is abelian. It is $\sigma$-finite,
indeed it admits a faithful normal tracial state $\tau_\kappa$, and a
cyclic and separating vector $\xi_\kappa$.

Let $p_j$, with $j<\kappa$, be projections of the form
$((1,0)) \otimes \I_{\kappa\setminus \{j\}}$ (that is the `elementary tensor' which  is
$\I_2$ in each component except the $j$th, which is $(1,0) \in \Cdb^2$).
For all $F\Subset \kappa$ the set
$P:=\{p_j\}_{j\in F}$ generates $\A_F\b{\otimes}\Cdb\I_{\kappa\setminus F}$. Since $\bigcup_{F\Subset \kappa}\theta_F(A_F)$
is uniformly dense
in $A_\kappa,$ it is strongly dense in $\A_\kappa$.
So the family $\{p_j\}_{j< \kappa}$ generates $\A_\kappa.$ Hence $\kk{\A_\kappa}\leq\kappa.$ If  $p\in\p{\A_\kappa}$ is  non-zero, then by Theorem \ref{equiv}(b), with $P$ as above, there is a countable subset $J$ of $\kappa$ such that for some non-zero $q \in \p{\A_J}$ we have $p=q \b{\otimes} \I_{\kappa\setminus J}.$

Suppose now that $p\in \p{\A_\kappa}$ is minimal. For each $j<\kappa$ we have $0\neq p=pp_j+pp_j^\perp,$ hence (at least) one of $pp_j$ and $pp_j^\perp$ is non-zero. Let $q_j$ be either $p_j$ or $p_j^\perp,$ with the provision that $pq_j\neq 0.$ Since $p$ is minimal and  $pq_j\leq p,$ we have $p\leq q_j$ for all $j<\kappa.$ Let $F\Subset \kappa$ be such that $\tau_\kappa(p)>1/2^{\# F},$ and let $q_F=\prod_{j\in F} q_j.$ Then
$$\tau_\kappa(q_F)= 1/2^{\# F} < \tau_\kappa(p)\leq \tau_\kappa(q_F),$$ a contradiction. Hence there are no minimal projections in $\A_\kappa.$

We check the equality $\kk{\A_\kappa}=\kk{\A_\kappa} p$ for any non-zero $p\in\p{\A_\kappa}.$  Indeed, if $G$ is a set of generators for $\A_\kappa,$  then $\{gp\colon g\in G\}$ is a set of generators for $\A_\kappa p,$ and hence the inequality $\kk{\A_\kappa}\geq \kk{\A_\kappa} p$ holds. In the other direction, note that no minimal projections occur in neither $\A_\kappa,$ nor $\A_\kappa p.$ By Lemma \ref{aleph0}, we have $\aleph_0\leq \kk{\A_\kappa p}\leq \kk{\A_\kappa}\leq \kappa.$ Consequently, if $\kappa=\aleph_0,$ then $\kk{\A_\kappa p}= \kk{\A_\kappa}.$ Otherwise, by Theorem \ref{equiv}(b), $p=q\b{\otimes} \I_{\kappa\setminus J}$ for some countable set $J$ and $q\in \A_J,$  so that $\A_\kappa p=\A_J q\overline{\otimes}\A_{\kappa\setminus J}.$ If $G$ generates $\A_\kappa p,$ then $\{g\I_{\kappa\setminus J}\colon g\in G\}$ generates $\A_{\kappa\setminus J},$ so that the isomorphism of $\A_\kappa$
and $\A_{\kappa\setminus J}$ implies $\kk{\A_\kappa}=\kk{\A_{\kappa\setminus J}}\leq \kk{\A_\kappa p},$ and again $\kk{\A_\kappa p}= \kk{\A_\kappa}.$ Hence once we prove that $\kk{\A_\kappa}\geq \kappa,$
the proof of the proposition will be finished.

Observe first that for any sequence $(i_k)_{k\in\Ndb}$ of (pairwise) distinct
indices from $\kappa$ we have $\bigwedge_{k\in \Ndb} p_{i_k}=0$ and $\bigvee_{k\in \Ndb} p_{i_k}=\I.$ In fact, the
normality  of $\tau_\kappa$ on the unit ball implies that
$$\tau_\kappa(\bigwedge_{k\in \Ndb} p_{i_k})=\lim_{n\to\infty}\tau_\kappa(\bigwedge_{k\leq n} p_{i_k})=\lim_{n\to\infty}2^{-n}=0,$$
and we get the same result if we replace
each $p_{i_k}$ with $p_{i_k}^\perp.$
Now, for a  non-zero $q_0\in \p{\A_\kappa}$  and $\ve>0,$ put
$$U(p,\ve):=\{q\in \p{\A_\kappa}\colon \|(q-p)\xi_\kappa\|\leq \ve\}.$$
There is $\delta>0$ such that
$$
\#\{i<\kappa\colon p_i\in U(p,\delta)\}<\infty.
$$
Otherwise, there would be an infinite  sequence of distinct indices $i_k$ such that  $p_{i_k}\in
U(p,2^{-k})$, and we would obtain
\begin{align*}
  1 &=\tau_\kappa(\I)=\tau_\kappa(p)+\tau_\kappa(p^\perp) \\
   & =\tau_\kappa(p(\bigwedge_{k\in \Ndb} p_{i_k})^\perp)+\tau_\kappa((\bigvee_{k\in \Ndb} p_{i_k})p^\perp) \\
  &\leq \sum_{k\in \Ndb} \tau_\kappa(pp_{i_k}^\perp)+\sum_{k\in \Ndb}\tau_\kappa(p_{i_k}p^\perp)\\
 &= \sum_{k\in \Ndb} \tau_\kappa(p(p-p_{i_k}))+\sum_{k\in \Ndb}\tau_\kappa(p_{i_k}(p_{i_k}-p))\\
  &= \sum_{k\in \Ndb} \|p(p-p_{i_k})\xi_\kappa\|^2+\sum_{k\in \Ndb}\|p_{i_k}(p_{i_k}-p)\xi_\kappa\|^2\\
  &\leq \sum_{k\in \Ndb} \|(p-p_{i_k})\xi_\kappa\|^2+\sum_{k\in \Ndb}\|(p_{i_k}-p)\xi_\kappa\|^2\\
  &\leq 2\sum_{k\in \Ndb} 4^{-k}=\frac{2}{3}<1,
\end{align*}
a contradiction.

Let now $Q\subseteq \p{\A_\kappa}$ be such that $Q$ generates $\A_\kappa$ as a von Neumann algebra and that $\# Q=\kk{\A_\kappa}.$ By Lemma \ref{Mah-gen}, we can assume that $Q$ is a Boolean algebra. For any non-zero $p\in\p{\A_\kappa}$ there exists $\delta>0$ such that the set $\{j<\kappa\colon p_j\in U(p,\delta)\}$ is finite. Take $n\in\Ndb$ such that $2^{-n+1}\leq \delta.$ By Theorem \ref{equiv}(b), there is a projection $q\in Q$ such that $q\in U(p,2^{-n}).$
If
 $\| (r-q) \xi_\kappa \| \leq 2^{-n}$ then
 $$\| (r-p) \xi_\kappa \| \leq 2^{-n} + \| (p-q) \xi_\kappa \| \leq 2^{-n+1}\leq \delta.$$
So $U(q,2^{-n})\subseteq U(p,\delta).$
Hence the number of indices $j<\kappa$ such that $p_j\in U(q,2^{-n})$ is finite. Moreover, $p\in U(q,2^{-n}).$
Let $\cU$ be the family of sets of 
 the form $U = U(q,2^{-n})$ with $q\in Q$ and $n\in \Ndb,$ for which the set of indices $J_U:=\{j<\kappa\colon p_j\in U\}$ is finite.
 Then $\cU$  covers $\p{\A_\kappa}\supseteq P,$  where $P:=\{p_i\}_{i<\kappa}.$  Since $\sup_{U\in\cU}J_U\leq\aleph_0,$ we have
\[
\kappa=\#\kappa\leq \#\cU\cdot \aleph_0=\#\cU\leq \kk{\A_\kappa}\cdot\aleph_0=\kk{\A_\kappa},
\]
which ends the proof.
\end{proof}

\begin{corollary}\label{Mah-prob}
 Let $\A$ be an abelian, non-atomic and $\kappa$-homogeneous von Neumann algebra, and let $\tau_\A$ be a faithful normal trace on $\A$ with $\tau_\A(\I)=1.$ Then $(\A,\tau_\A)$, as a von Neumann measure algebra, is isomorphic to $(\A_\kappa,\tau_\kappa).$
\end{corollary}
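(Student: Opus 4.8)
The plan is to recognize that this corollary is an immediate consequence of the classification of homogeneous algebras established in Theorem \ref{homog}, combined with the structural properties of the building block $\A_\kappa$ recorded in Proposition \ref{akhom}. All the genuine content has already been absorbed into those two results, so the work here is purely to check that their hypotheses are met.

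First I would observe that since $\A$ is non-atomic and $\kappa$-homogeneous, Lemma \ref{aleph0} forces $\kappa=\kk{\A}\geq\aleph_0$, so $\kappa$ is an infinite cardinal. This is precisely the regime in which the object $\A_\kappa$ is defined and in which Theorem \ref{homog} is stated, so there is no mismatch. Next, by Proposition \ref{akhom} the von Neumann algebra $\A_\kappa$ is abelian, $\sigma$-finite, non-atomic, and $\kappa$-homogeneous; moreover $\tau_\kappa$ is a faithful normal tracial state on $\A_\kappa$, so in particular $\tau_\kappa(\I)=1$. Thus both $\A$ and $\A_\kappa$ are $\kappa$-homogeneous for the same infinite cardinal $\kappa$, and by hypothesis $\tau_\A(\I)=1=\tau_\kappa(\I)$.

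With these verifications in place, Theorem \ref{homog} applies verbatim and yields an isomorphism $(\A,\tau_\A)\cong(\A_\kappa,\tau_\kappa)$ of von Neumann measure algebras, which is exactly the assertion. I do not anticipate any real obstacle: the deep step is hidden inside Theorem \ref{homog}, which itself rests on the measure-algebraic Maharam isomorphism theorem \ref{Mah-iso} transported to the operator-algebraic setting via Proposition \ref{Wextn} (equivalently through the functorial correspondence of Theorem \ref{vna=ma}). The remaining steps are bookkeeping, namely confirming infiniteness of $\kappa$ and the equality of the total traces, both of which are immediate.
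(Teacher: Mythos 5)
Your proposal is correct and matches the paper's own proof, which likewise deduces the corollary directly from Proposition \ref{akhom} and Theorem \ref{homog}. Your extra check via Lemma \ref{aleph0} that $\kappa$ is infinite is a sensible piece of bookkeeping that the paper leaves implicit.
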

 
\begin{proof}
This follows directly from Theorems \ref{akhom} and \ref{homog}.
\end{proof}

The following theorem (together with Theorem \ref{bigf}) gives 
a complete classification, up to isomorphism, of 
abelian von Neumann measure algebras (c.f.\ \cite[332C]{Fremlin}):

\begin{theorem}[Maharam's theorem for von Neumann measure algebras] \label{vnmacase}
 Let $(\A,\tau_\A)$ be an 
abelian von Neumann measure algebra.
 For $\kappa=0,$ we put $\A_0:=\Cdb$ and let $\tau_0$ be the normalized trace on $\A_0.$ Then there are families $(\kappa_i)_{i\in I}$ and $(\gamma_i)_{i\in I}$ such that each $\kappa_i$ is either $0$ or an infinite cardinal, and each $\gamma_i$ is a strictly positive real number, and such that
 \[
 (\A,\tau_\A) \simeq \oplus_{i\in I} \, (\A_{\kappa_i},\gamma_i \, \tau_{\kappa_i}).
 \]
 as von Neumann measure algebras.
\end{theorem}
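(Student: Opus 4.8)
The plan is to realize the decomposition concretely by slicing $\A$ along its homogeneous components and then identifying each slice with a (possibly rescaled) copy of some $\A_{\kappa}$ via the probabilistic case of Maharam's theorem already in hand. First I would invoke Lemma \ref{hom-orth} to write $\I = \sum_\kappa e_\kappa$, an orthogonal sum over all cardinals $\kappa$ of the central homogeneous components, where by Lemma \ref{aleph0} only $\kappa=0$ and infinite $\kappa$ contribute. Since each $e_\kappa$ is central, this yields a direct sum of von Neumann measure algebras $(\A,\tau_\A)\simeq \oplus_\kappa (\A e_\kappa, \tau_\A|_{\A e_\kappa})$ in the sense of Definition \ref{defvnmamor}, and it remains to decompose each summand.

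The component $\A e_0$ is purely atomic: a projection $e$ is $0$-homogeneous exactly when it is minimal, so $e_0$ is the supremum of the minimal projections of $\A$, and as in the proof of Lemma \ref{atoms} one has $\A e_0 \simeq \oplus_{p}\, \Cdb p$ over the minimal projections $p\leq e_0$. Each such $p$ contributes a summand $(\A p, \tau_\A|_{\A p}) \simeq (\A_0, \gamma_p\,\tau_0)$ with $\gamma_p=\tau_\A(p)>0$ by faithfulness, producing all the $\kappa_i=0$ terms.

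For each infinite $\kappa$ with $e_\kappa\neq 0$ I would split into two cases according to $\tau_\A(e_\kappa)$. Note first that $\A e_\kappa$ is automatically non-atomic, since a minimal subprojection would be $0$-homogeneous and hence below $e_0\perp e_\kappa$. If $\tau_\A(e_\kappa)=\gamma<\infty$, then $\gamma^{-1}\tau_\A$ restricts to a faithful normal state, so $\A e_\kappa$ is $\sigma$-finite; moreover it is its own $\kappa$-homogeneous component (every nonzero subprojection dominates a homogeneous one by Lemma \ref{hom-maj}, necessarily of type $\kappa$ by the orthogonality of Lemma \ref{hom-orth}), whence $\A e_\kappa$ is $\kappa$-homogeneous by Lemma \ref{homcomp}. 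Corollary \ref{Mah-prob} then gives $(\A e_\kappa, \gamma^{-1}\tau_\A|_{\A e_\kappa})\simeq (\A_\kappa,\tau_\kappa)$, i.e.\ $(\A e_\kappa,\tau_\A|_{\A e_\kappa})\simeq (\A_\kappa,\gamma\,\tau_\kappa)$: one summand with $\kappa_i=\kappa$, $\gamma_i=\gamma$. If instead $\tau_\A(e_\kappa)=\infty$, Proposition \ref{prob} writes $\A e_\kappa$ as a direct sum of non-atomic probability algebras $(\A q_j,\tau_\A|_{\A q_j})$, $j\in J$, with $\tau_\A(q_j)=1$; since each $q_j\leq e_\kappa$ the same orthogonality argument together with Lemma \ref{homcomp} makes each $\A q_j$ $\kappa$-homogeneous, so Corollary \ref{Mah-prob} gives $(\A q_j,\tau_\A|_{\A q_j})\simeq (\A_\kappa,\tau_\kappa)$. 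By Lemma \ref{sfinvN} the cardinality $\# J$ equals $\dec{\A e_\kappa}=\magn{\A}{e_\kappa}=:\lambda$, so this contributes $\lambda$ summands, each with $\kappa_i=\kappa$ and $\gamma_i=1$. Assembling the three families of summands over the disjoint index set $I$ produces the asserted isomorphism, because a direct sum of measure-algebra isomorphisms is again one.

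The main obstacle is the infinite-trace case: one must verify that the probability-algebra pieces furnished by Proposition \ref{prob} remain $\kappa$-homogeneous and are exactly $\dec{\A e_\kappa}$ in number. Both points are essentially already carried out inside the proof of Theorem \ref{bigf} --- the homogeneity from Lemmas \ref{hom-maj}, \ref{hom-orth} and \ref{homcomp}, and the count from Lemma \ref{sfinvN} --- so the real work here is to extract that argument and run it componentwise on $\A$ itself rather than while comparing two algebras. Everything else is bookkeeping of central decompositions and rescaling of traces.
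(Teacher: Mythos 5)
Your proof is correct, and it uses the same toolkit as the paper's proof (Lemmas \ref{atoms}, \ref{clas1}, \ref{hom-orth}, \ref{homcomp}, Proposition \ref{prob} and Corollary \ref{Mah-prob}), but it organizes the reduction in the opposite order. The paper first splits off the purely atomic part via Lemma \ref{atoms}, then cuts the non-atomic remainder into finite-trace pieces with Proposition \ref{prob}, and only then refines into homogeneous finite-trace projections using Lemma \ref{clas1}; because homogeneity is imposed \emph{after} the finite-trace decomposition, each piece $z_j$ is homogeneous by construction and Corollary \ref{Mah-prob} applies immediately. You instead slice $\A$ along the homogeneous components $e_\kappa$ first (Lemma \ref{hom-orth}) and only afterwards apply Proposition \ref{prob} inside each infinite-trace component; this forces you to re-verify that the resulting trace-one pieces $\A q_j$ are still $\kappa$-homogeneous, which you correctly do by the Lemma \ref{hom-maj}/\ref{hom-orth}/\ref{homcomp} argument that the paper only deploys later, inside the proof of Theorem \ref{bigf}. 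The trade-off: your route costs one extra verification but produces the decomposition already grouped by the invariants $\magn{\A}{e_\kappa}$ of Theorem \ref{bigf} (your count $\#J=\dec{\A e_\kappa}$ via Lemma \ref{sfinvN} is exactly the magnitude in the infinite-trace case), so the classification data can be read off directly; the paper's route is marginally shorter but leaves that bookkeeping to Theorem \ref{bigf}. Both arguments are complete and correct.
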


\begin{proof}
Let $P$ be the collection of all minimal projections in $\A,$ and let $q:=\sup P.$
By Lemma \ref{atoms},  $\A q$ is purely atomic, isomorphic to $\Cdb^\lambda$ with $\lambda:=\# P,$ and $\A q^\perp$ is non-atomic. If $\tau_\A(q^\perp)=\infty,$ then we use Proposition \ref{prob} to decompose $q^\perp$ into a sum of non-zero finite-trace projections. By Lemma \ref{clas1}, we can find an orthogonal family $\{z_j\}_{j\in J}$ of homogeneous
 finite-trace
projections with $\sum_{j\in J}z_j=q^\perp$.
Put $I:=P\cup J$ and $z_p:=p$ for $p\in P.$ Then $\{z_i\}_{i\in I}$ form an orthogonal family of homogeneous
finite-trace projections in $\A$ with $\sum_{i\in I}z_i=\I.$

Put $\gamma_i:=\tau_\A(z_i)$ for each $i\in I.$ If $i\in P,$ then clearly
$$(\A z_i,\tau_\A|\A z_i)=(\A p,\tau_\A|\A p)\simeq(\Cdb,\gamma_i\tau_0).$$
For each $i\in J$ there is by Theorem \ref{Mah-prob}, an infinite cardinal $\kappa_i$ such that
$$(\A z_i,\frac{1}{\gamma_i} \, \tau_\A|\A z_i)\simeq (A_{\kappa_i}, \tau_{\kappa_i}) .$$
  Putting
 $\kappa_i:=0$ for each $i\in P,$ we have for each $i\in I$ an isomorphism $\Phi_i$ between $(\A z_i,\tau_\A|\A z_i)$ and $(\A_{\kappa_i},\gamma_i \, \tau_{\kappa_i}).$ The direct sum of those isomorphisms yields the desired isomorphism of $(\A,\tau_\A)$ with $\oplus_{i\in I}\, (\A_{\kappa_i},\gamma_i \, \tau_{\kappa_i}).$
 \end{proof}

Since every abelian von Neumann algebra has a faithful normal semifinite trace (see
Corollary \ref{awvn}  or  \cite[Theorem VII.2.7]{tak1}), we have:

\begin{corollary} \label{vncase} An abelian von Neumann algebra is isomorphic to a direct sum of
$\Cdb^\lambda$ for some cardinal $\lambda$, and a direct sum of $\A_\kappa$
for some infinite cardinals $\kappa$.
\end{corollary}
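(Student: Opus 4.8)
The plan is to read off this statement as an essentially immediate consequence of the Maharam classification already established in Theorem \ref{vnmacase}, which is phrased for von Neumann \emph{measure} algebras. The strategy is therefore to promote the bare abelian von Neumann algebra to a von Neumann measure algebra by installing a trace, invoke the classification, and then discard the trace data at the very end.

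First I would invoke Corollary \ref{awvn} (equivalently \cite[Theorem VII.2.7]{tak1}) to fix a faithful normal semifinite trace $\tau_\A$ on the given abelian von Neumann algebra $\A$. This makes $(\A,\tau_\A)$ an abelian von Neumann measure algebra in the sense of Definition \ref{defvnma}, which is exactly the kind of object to which Theorem \ref{vnmacase} applies. Applying that theorem produces families $(\kappa_i)_{i\in I}$ and $(\gamma_i)_{i\in I}$, with each $\kappa_i$ either $0$ or an infinite cardinal and each $\gamma_i>0$, together with an isomorphism of von Neumann measure algebras
$$(\A,\tau_\A) \simeq \oplus_{i\in I} \, (\A_{\kappa_i},\gamma_i \, \tau_{\kappa_i}).$$

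Finally I would forget the measure-theoretic structure. An isomorphism of von Neumann measure algebras is in particular a $*$-isomorphism, so at the level of von Neumann algebras we have $\A \simeq \oplus_{i\in I} \, \A_{\kappa_i}$; the positive scalars $\gamma_i$ become irrelevant once the traces are dropped, since $\gamma_i\tau_{\kappa_i}$ and $\tau_{\kappa_i}$ share the same underlying algebra $\A_{\kappa_i}$. Recalling the convention $\A_0 = \Cdb$, I would split $I = I_0 \sqcup I_\infty$ according to whether $\kappa_i = 0$ or $\kappa_i$ is infinite: collecting the summands over $I_0$ yields $\oplus_{i\in I_0}\Cdb = \Cdb^\lambda$ with $\lambda = \# I_0$, while the summands over $I_\infty$ furnish the asserted direct sum of the $\A_\kappa$ for infinite $\kappa$ (possibly with repetitions, which the statement allows). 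The only point demanding even momentary care is this regrouping of the atomic summands into a single $\Cdb^\lambda$, which is immediate from the definition of the $\ell^\infty$-direct sum; there is no genuine obstacle here, as all of the substantive work has already been carried out in Theorem \ref{vnmacase} and in the existence of a faithful normal semifinite trace from Corollary \ref{awvn}.
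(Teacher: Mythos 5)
Your proof is correct and follows exactly the paper's route: the paper also obtains the corollary by equipping $\A$ with a faithful normal semifinite trace via Corollary \ref{awvn} and then applying Theorem \ref{vnmacase}, discarding the trace data and the scalars $\gamma_i$ at the end. The regrouping of the $\kappa_i=0$ summands into $\Cdb^\lambda$ is the same bookkeeping the paper leaves implicit.
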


\begin{remark}  Together Theorems \ref{vnmacase} and \ref{bigf} have a myriad of consequences and applications,
just as in the measure algebra case in \cite{Fremlin}.   Many of the latter applications may be found in
Chapter 33 in that reference, 
for example after the statements of the analogous results and in the \emph{Notes} sections there.  We will not take the time here to spell  out a sample of these, but the interested reader could look in \cite{Fremlin}  for details
in that parallel case. 
Note that Theorem  \ref{bigf} may be used to refine Theorem \ref{vnmacase} in various ways, using the extra information about the magnitudes $\magn{\A}{e_\kappa}$ and Lemma \ref{sfinvN}.
For example one may `clean up' the `summand clumps' associated with
each cardinal.   For example for each distinct 
infinite cardinal $\kappa$ there are, up to measure isomorphism, two cases:  one may either 1)\  take all of the associated scaling factors $\gamma_i$ to be 1, or 
2)\ replace all associated  $\gamma_i$ by a single $\gamma \neq 1$.
   (See the relationship between $\magn{\A}{e_\kappa}$ and $\gamma_i$ in 332K (a) in \cite{Fremlin}.) 
In particular,  an abelian von Neumann probability algebra is isomorphic to a finite or countable direct sum
 $\oplus_n \, (\A_{\kappa_n}, \gamma_n)$, with the cardinals $\kappa_n$ infinite and distinct if they
 are nonzero, and with $\sum_n \, \gamma_n = 1$.
 If in addition the predual is separable (or under some equivalent `countability' condition)
then  $\kappa_n$ is $0$ or
 $\aleph_0$ 
 for all $n$.

We remark that there is another famous and similar sounding theorem of  Kuratowski for Polish spaces up to Borel space isomorphism \cite{Sri}.
  In particular any two uncountable standard Borel spaces are
Borel isomorphic.

One may also for example apply these results to the question
of characterizing $*$-isomorphisms  $\pi : L^\infty(X,\A,\mu) \to L^\infty(Y,\B,\nu)$,
for localizable  measure spaces.  These are all induced as in Section 7 (see
for example Proposition \ref{Wextn})
from a measure algebra isomorphism between the
measure algebras of $(X,\A,\mu)$ and $(Y,\B,\nu_h)$, where
$\nu_h$ is as in the Radon-Nikodym theorem with 
 $h : Y \to (0,\infty)$ 
 $\B$-measurable.  Such a measure algebra isomorphism  may clearly be described in terms of the Theorems \ref{bigf} and \ref{vnmacase}.   Note that $h$ is the Radon-Nikodym derivative of the measure $\nu'(E) = \bar{\mu}(\pi^{-1}(\chi_E))$ with respect to $\nu$ (see the discussion above Theorem \ref{eqca}).
The Maharam
decompositions of $\nu$ and $\nu'$ may differ very slightly.   The homogeneous components are the same, but note 
for example that $\oplus_{n=1}^\infty \, \A_\kappa$ is $*$-isomorphic to $\A_\kappa$ 
for infinite $\kappa$.
 \end{remark}

\subsection{Infinite product measure spaces}
In this subsection the reader will find a simpler, more familiar, and seemingly more attractive way of representing the `building block' tensor products as $L^\infty$-spaces over Cantor cubes $\{0,1\}^\kappa.$ The advantage 
 for operator algebraists 
 of the `building blocks'  $\A_\kappa$ used in  the previous subsection is that they are prone to natural non-commutative generalization (or `quantization', as some like to call the process). Namely, replacing the commutative 
  $\Cdb^2$-algebras a couple of paragraphs above
Proposition \ref{akhom} with two-by-two matrices, and fixing a trace or a state on such matrices, results in a plethora of  hyperfinite factors of various kinds (see, for example, \cite[Theorem 12.3.8]{Kad2}).

First, we review some facts about
the infinite product of Radon probability  measures on compact spaces.
See for example \ p.\ 230--231 in \cite{Fol} for full details of the general setting. In our case 
things are simpler since the Radon probability  measure spaces are just
the `Bernoulli' probability space $(\{ 0,1 \} , \Sigma, \mu)$, where $\Sigma:=\cP (\{0,1\})$ and $\mu(\{0\})=\mu(\{1\})=\frac{1}{2}.$
For a cardinal $\kappa$ we define $(\Omega_i, \Sigma_i, \nu_i) = (\{ 0,1 \} , \Sigma
, \mu)$ for all $i < \kappa$.
Set  $\Omega_\kappa = \{0,1\}^\kappa$ with the product topology.
A {\em measurable cylinder} is a product $C = \Pi_i \, E_i$ of
 nonempty sets $E_i \in \Sigma_i$ such that $E_i = \Omega_i$ for all but finitely many $i$.
These cylinders are the basis for the product topology.  The complement of a cylinder $C$ is a
finite union of cylinders, so $C$ is clopen.
 Define $\rho(C) = \frac{1}{2^k}$ if exactly $k$ of the $E_i$ above are not $\{ 0, 1 \}$.
The span ${\mathcal S}$ of the characteristic functions of the measurable cylinders is a
norm dense unital $*$-subalgebra of $\C(\Omega_\kappa)$.  It is written as $C_F(X)$ in
\cite[Theorem 7.28]{Fol}, and it is proved that there is a
positive contractive linear functional
$\sigma_\kappa$ on $\C(\Omega_\kappa)$ such that
$\sigma_\kappa( \chi_C ) = \rho(C)$ for every measurable cylinder $C$.
By the Riesz representation theorem  there is a Radon (in the sense of  Section
\ref{Rad}) probability measure $\nu_\kappa$ on a complete $\sigma$-algebra
$\Sigma_\kappa$ on $\Omega_\kappa$,  such that
$\sigma_\kappa(f) = \int_{\Omega_\kappa} \, f \, d \nu_\kappa$ for all $f \in \C(\Omega_\kappa)$.
We call $\nu_\kappa$ the {\em product measure}.   It clearly agrees with $\rho$ on
measurable cylinders.
Since $\nu_\kappa$ is finite it is an exercise that $\nu_\kappa$ is both inner and outer regular.
In fact $\Sigma_\kappa$ is the completion of the Borel $\sigma$-algebra. (Indeed
by a well known argument in measure theory, inner and outer  regularity forces any $E \in \Sigma_\kappa$ to be  sandwiched between
an $F_\sigma$ and a $G_\delta$ set of the same measure).

Another common way to define the product measure $\nu_\kappa$ 
(see for example Volume 2 in \cite{Fremlin}) is to first form the natural
outer measure $\rho^*$ (see \cite[Proposition 1.10]{Fol}) from the set function
$\rho$ above.   Then $\nu_\kappa$ equals the restriction $\mu$ of $\rho^*$
to the $\sigma$-algebra  on $\Omega_\kappa$  coming from Caratheodory's theorem
applied to the outer measure $\rho^*$ (see 1.11 in  \cite{Fol}).
This fact is hard to find
in the literature, but since we do not really need it we shall only
prove  that $\mu = \nu_\kappa$ in the case
where $\kappa$ is countable (for the uncountable case see for example (c) in the proof of
\cite[Theorem 415E]{Fremlin}). 
Since cylinders are open, it is clear from Caratheodory's construction
that $\mu$ is outer regular and complete. 
By taking complements, and since $\Omega_\kappa$ is compact, $\mu$ is inner regular. 
If  $\kappa$ is countable then $\Omega_\kappa$ is second countable by topology.
Hence any open set in $\Omega_\kappa$
is in $\Sigma$ (since the basis of cylinders are in  $\Sigma$).  So $\mu$ is a Radon measure. 
By the argument at the end of the last paragraph, the domain of $\mu$ is the completion of
the Borel $\sigma$-algebra.
Integration with respect to $\mu$ agrees with $\sigma_\kappa$ on the dense set ${\mathcal S}$, so agrees on $\C(\Omega_\kappa)$.  By the `uniqueness' in the Riesz representation theorem,
 $\mu = \nu_\kappa$ on Borel sets, hence on all of $\Sigma_\kappa$.

\begin{proposition}\label{cont}
 There exists a  $*$-isomorphism $\theta :
 A_\kappa=(\Cdb^2)^{\otimes \kappa}
\to \C(\{0,1\}^\kappa)$
  such that $\int \, \theta(a) \, d \nu_\kappa = \varphi_\kappa(a)$ for
  $a \in A_\kappa$, where $\varphi_\kappa$
 is the product state in the paragraphs above Proposition {\rm \ref{akhom}}.
\end{proposition}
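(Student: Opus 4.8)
The plan is to realise both $A_\kappa$ and $\C(\{0,1\}^\kappa)$ as inductive limits indexed by the finite subsets $F\Subset\kappa$ and to build $\theta$ stage by stage. The starting point is the elementary identification of the $C^*$-algebra $A=\Cdb^2$ with $\C(\{0,1\})$, the pair $(a_0,a_1)$ corresponding to the function $j\mapsto a_j$ on the two-point space; under this identification the normalized trace $\tau$ on $\Cdb^2$ becomes integration against the measure $\mu$ with $\mu(\{0\})=\mu(\{1\})=\frac12$.

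First, for each $F\Subset\kappa$ I would identify $A_F=\bigotimes_{i\in F}A_i$ with $\C(\{0,1\}^F)$. Since the $A_i$ are commutative and the minimal tensor product of commutative $C^*$-algebras $\C(Y_1)\otimes\cdots\otimes\C(Y_n)$ is canonically $\C(Y_1\times\cdots\times Y_n)$ (the spectrum of the tensor product being the product of the spectra), this identification is standard, and it sends the finite product state $\varphi_F=\bigotimes_{i\in F}\tau$ to integration against the product measure on $\{0,1\}^F$. Letting $\pi_F:\Omega_\kappa\to\{0,1\}^F$ be the coordinate projection, I define $\beta_F:A_F\to\C(\Omega_\kappa)$ as the composite of this identification with the pullback $\pi_F^*=(\,\cdot\,)\circ\pi_F$. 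Each $\beta_F$ is an injective (hence isometric) unital $*$-homomorphism, and I would check the compatibility $\beta_G\circ\theta_{GF}=\beta_F$ for $F\subseteq G$, which reduces to the factorisation $\pi_F=\rho^G_F\circ\pi_G$ of coordinate projections (where $\rho^G_F:\{0,1\}^G\to\{0,1\}^F$ is the restriction) together with the fact that the connecting map is $\theta_{GF}(a_F)=a_F\otimes\I_{G\setminus F}$.

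By the universal property of the inductive limit these maps assemble into a single unital $*$-homomorphism $\theta:A_\kappa\to\C(\Omega_\kappa)$ satisfying $\theta\circ\theta_F=\beta_F$ for all $F$. It is isometric, because it is isometric on each $\beta_F(A_F)$ and these exhaust a uniformly dense subalgebra of $A_\kappa$; hence its range is closed. The range contains each $\beta_F(A_F)=\pi_F^*(\C(\{0,1\}^F))$, and the union of these over $F\Subset\kappa$ is exactly the span $\mathcal{S}$ of characteristic functions of measurable cylinders, which is dense in $\C(\Omega_\kappa)$. A closed dense range is everything, so $\theta$ is a surjective isometric $*$-homomorphism, that is, a $*$-isomorphism.

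Finally I would verify the measure condition. Both $a\mapsto\int_{\Omega_\kappa}\theta(a)\,d\nu_\kappa$ and $\varphi_\kappa$ are bounded functionals on $A_\kappa$, so by density it suffices to match them on the dense union; for $a\in A_F$ with image $\tilde a\in\C(\{0,1\}^F)$ we have $\theta(\theta_F(a))=\tilde a\circ\pi_F$, whence $\int_{\Omega_\kappa}\theta(\theta_F(a))\,d\nu_\kappa=\int_{\{0,1\}^F}\tilde a\,d\big((\pi_F)_*\nu_\kappa\big)$. Since $\{0,1\}^F$ is a finite set, the pushforward $(\pi_F)_*\nu_\kappa$ is determined by its values on points, and on the cylinder $\prod_{i\in F}E_i$ it evaluates to $\nu_\kappa$ of the corresponding cylinder in $\Omega_\kappa$, which by construction is $\rho$ of that cylinder, i.e.\ $\prod_{i\in F}\mu(E_i)$; thus $(\pi_F)_*\nu_\kappa$ is precisely the product measure, and so $\int_{\{0,1\}^F}\tilde a\,d((\pi_F)_*\nu_\kappa)=\varphi_F(a)=\varphi_\kappa(\theta_F(a))$. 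The only mildly delicate point is the identification of minimal tensor products of commutative $C^*$-algebras with function algebras on product spaces and its compatibility with the two inductive systems; once that is in hand everything else is a routine check on cylinders.
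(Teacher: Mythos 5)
Your proposal is correct and follows essentially the same route as the paper's proof: identify each finite stage $A_F$ with $\C(\{0,1\}^F)$, pull back along the coordinate projection $\pi_F$ to get a compatible system of isometric $*$-homomorphisms into $\C(\Omega_\kappa)$, pass to the limit, and use density of the cylinder functions for surjectivity and for matching the two states. The only cosmetic difference is that you verify the state identity via the pushforward $(\pi_F)_*\nu_\kappa$, whereas the paper invokes multiplicativity of $\varphi_\kappa$ on products of the generators $p_i$; these are the same computation.
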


\begin{proof}
We use some of the notation established above and in the first
paragraphs of the proof of
  Proposition \ref{akhom}, such as ${\mathcal S}$ and
 $\sigma_\kappa$.
 For $F \Subset \kappa$, $A_F$ is the algebra generated by 1 and $p_j$ for $j \in F$. It is well known that $(\ell^\infty_2)^{\otimes (n)}  \cong
  \ell^\infty( \{ 0,1 \}^{(n)})$. This gives the canonical isomorphism of $A_F$ onto $C(\{ 0,1 \}^F)$, which takes the generators $p_j$ to the characteristic function of the cylinder in $\{ 0,1 \}^F$ with $\{ 1 \}$ as the $j$th slice. Composing this  isomorphism with the canonical isometry $C(\{ 0,1 \}^F) \to C(\Omega_\kappa)$ (namely, composition with the projection  $\pi_F$), gives a canonical
 faithful, hence isometric, $*$-homomorphism $A_F \to C(\Omega_\kappa)$. Since $\cup_{ F \Subset \kappa} \, A_F$ is dense in $A_\kappa$, it is an exercise that we obtain an isometric $*$-homomorphism $\theta : A_\kappa \to C(\Omega_\kappa)$. The range of $\theta$ contains the dense set ${\mathcal S}$, and so $\theta$ is onto.

Observe that $\varphi_k$ agrees with 
$\sigma_\kappa \circ \theta$ on $A_F$. (To see this one may here use the fact that $\varphi_\kappa$ is multiplicative on products of the $p_i$ for $i < \kappa$, by the product state
property). So by density these maps agree everywhere.  \end{proof}

\begin{corollary}
\label{AkapaL}
  The pair $(\A_\kappa,\tau_\kappa)$ is isomorphic, as a von Neumann measure algebra, to $(L^\infty(\Omega_\kappa,\Sigma_\kappa,\nu_\kappa),\sigma_{\kappa}), \, \int \, \cdot \, d \lambda )$ where $\sigma_{\kappa}$ is given by
  $\sigma_{\kappa}(a)=  \int_{\Omega_\kappa} \, a\, d\nu_\kappa$ for $a\in L^\infty(\Omega_\kappa,\Sigma_\kappa,\nu_\kappa).$
\end{corollary}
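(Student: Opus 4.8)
The plan is to lift the state-preserving $C^*$-isomorphism $\theta$ of Proposition \ref{cont} to the von Neumann level via the uniqueness of the GNS construction. Throughout write $\sigma_\kappa = \int_{\Omega_\kappa} \cdot \, d\nu_\kappa$, viewed first as a state on $C(\Omega_\kappa)$. Proposition \ref{cont} says precisely that $\theta : A_\kappa \to C(\Omega_\kappa)$ is a surjective $*$-isomorphism with $\varphi_\kappa = \sigma_\kappa \circ \theta$ on $A_\kappa$, i.e.\ $\theta$ intertwines the product state $\varphi_\kappa$ with integration against $\nu_\kappa$. Recall from the construction preceding Proposition \ref{akhom} that $\A_\kappa = \pi_{\varphi_\kappa}(A_\kappa)''$ acting on $H_{\varphi_\kappa}$ with cyclic and separating vector $\xi_{\varphi_\kappa}$, and that $\tau_\kappa = \overline{\varphi}_\kappa$ is the vector state at $\xi_{\varphi_\kappa}$.

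First I would identify the GNS data on the measure-theoretic side. Since $\nu_\kappa$ is a probability measure it is $\sigma$-finite, hence localizable and dualizable, so by Theorem \ref{frec} (or Theorem \ref{lsigf1}) the canonical representation of $L^\infty(\Omega_\kappa,\Sigma_\kappa,\nu_\kappa)$ as multiplication operators $f \mapsto M_f$ on $L^2(\Omega_\kappa,\nu_\kappa)$ is a maximal abelian von Neumann algebra, and $1 = \chi_{\Omega_\kappa}$ is a cyclic and separating vector for it with associated vector state $\sigma_\kappa$. Because $C(\Omega_\kappa)$ is dense in $L^2(\Omega_\kappa,\nu_\kappa)$ and $\sigma_\kappa$ is faithful on $C(\Omega_\kappa)$ (every nonempty open set contains a cylinder of strictly positive measure, so $\nu_\kappa$ has full support), the GNS representation of $(C(\Omega_\kappa),\sigma_\kappa)$ is exactly $(L^2(\Omega_\kappa,\nu_\kappa),\, f \mapsto M_f,\, 1)$. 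Its generated von Neumann algebra is all of $L^\infty(\Omega_\kappa,\nu_\kappa)$: the masa argument used to prove (i)$\Rightarrow$(v) in Theorem \ref{frec} shows that any operator commuting with every $M_f$, $f \in C(\Omega_\kappa)$, is itself multiplication by an $L^\infty$ function, so $\{M_f : f \in C(\Omega_\kappa)\}' = L^\infty(\Omega_\kappa,\nu_\kappa) = L^\infty(\Omega_\kappa,\nu_\kappa)'$ and hence $\{M_f : f \in C(\Omega_\kappa)\}'' = L^\infty(\Omega_\kappa,\nu_\kappa)$.

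Next, since $\theta$ is a $*$-isomorphism onto $C(\Omega_\kappa)$ with $\sigma_\kappa \circ \theta = \varphi_\kappa$, the representation $a \mapsto M_{\theta(a)}$ of $A_\kappa$ on $L^2(\Omega_\kappa,\nu_\kappa)$ is cyclic with cyclic vector $1$ and induces the state $\varphi_\kappa$. By uniqueness of the GNS construction \cite[Proposition 4.5.3]{KR} there is a unitary $U : H_{\varphi_\kappa} \to L^2(\Omega_\kappa,\nu_\kappa)$ with $U\xi_{\varphi_\kappa} = 1$ and $U\pi_{\varphi_\kappa}(a)U^* = M_{\theta(a)}$ for all $a \in A_\kappa$. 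Conjugation $\mathrm{Ad}_U$ is a normal $*$-isomorphism of $B(H_{\varphi_\kappa})$ onto $B(L^2(\Omega_\kappa,\nu_\kappa))$ carrying $\A_\kappa = \pi_{\varphi_\kappa}(A_\kappa)''$ onto $\{M_{\theta(a)} : a \in A_\kappa\}'' = \{M_f : f \in C(\Omega_\kappa)\}'' = L^\infty(\Omega_\kappa,\nu_\kappa)$, using surjectivity of $\theta$. Write $\Phi := \mathrm{Ad}_U|_{\A_\kappa}$. It remains to check that $\Phi$ is an isomorphism of von Neumann measure algebras, i.e.\ that $\sigma_\kappa \circ \Phi = \tau_\kappa$; since $\tau_\kappa$ is the vector state at $\xi_{\varphi_\kappa}$ and $\sigma_\kappa$ the vector state at $1 = U\xi_{\varphi_\kappa}$, for each $a \in \A_\kappa$ we get $\sigma_\kappa(\Phi(a)) = \langle UaU^*\,1,\,1\rangle = \langle a\,\xi_{\varphi_\kappa},\,\xi_{\varphi_\kappa}\rangle = \tau_\kappa(a)$, and in particular on $(\A_\kappa)_+$, which is exactly the requirement of Definition \ref{defvnmamor}. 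The only step needing genuine care is the identification in the second paragraph—that the von Neumann algebra generated by $C(\Omega_\kappa)$ in the GNS representation is all of $L^\infty(\Omega_\kappa,\nu_\kappa)$ rather than a proper weak* closed subalgebra—but this is supplied by the masa property from Theorem \ref{frec}; once the states are matched, everything else is formal.
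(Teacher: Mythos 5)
Your proof is correct and follows the same overall route as the paper: reduce, via uniqueness of the GNS construction and the state-matching $\varphi_\kappa = \sigma_\kappa\circ\theta$ from Proposition \ref{cont}, to the claim that the GNS representation of $(C(\Omega_\kappa),\sigma_\kappa)$ is the multiplication representation on $L^2(\Omega_\kappa,\nu_\kappa)$ and that it generates all of $L^\infty(\Omega_\kappa,\nu_\kappa)$. The one step the paper flags as non-formal --- that the generated von Neumann algebra is all of $L^\infty(\Omega_\kappa,\nu_\kappa)$ rather than a proper weak* closed subalgebra --- is where you diverge. You argue via the commutant: any operator commuting with every $M_f$, $f\in C(\Omega_\kappa)$, is multiplication by $T(1)\in L^\infty$, so $\{M_f : f\in C(\Omega_\kappa)\}' = L^\infty(\Omega_\kappa,\nu_\kappa)$, and the masa property from Theorem \ref{frec} (v) then gives $\{M_f : f\in C(\Omega_\kappa)\}'' = L^\infty(\Omega_\kappa,\nu_\kappa)$. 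The paper instead shows directly that $C(\Omega_\kappa)$ is weak* dense in $L^\infty(\Omega_\kappa,\nu_\kappa)$, using the predual duality $L^\infty=(L^1)^*$ together with the uniqueness clause of the Riesz representation theorem (if $f\in L^1$ and the measure $f\,d\nu_\kappa$ annihilates $C(\Omega_\kappa)$, then $f=0$). Both are legitimate: your bicommutant argument imports the full masa machinery of Theorem \ref{frec}, whereas the paper's density argument needs only $L^1$--$L^\infty$ duality for the probability measure $\nu_\kappa$ and Riesz uniqueness, and so is marginally more self-contained at this point. Your explicit verification that $\sigma_\kappa\circ\Phi=\tau_\kappa$ via the matched cyclic vectors is exactly what the paper leaves implicit in its closing sentence.
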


\begin{proof}
Since $(\A_\kappa, \tau_\kappa)$ was obtained from $A_\kappa$ via the GNS-construction with
 $\varphi_\kappa,$
and since $\varphi_\kappa = \sigma_\kappa \circ \theta$  by the last result,
it is enough to check that the GNS-construction with $\sigma_\kappa$ leads from $\C(\Omega_\kappa)$  to $L^\infty(\Omega_\kappa,\Sigma_\kappa,\nu_\kappa)$ acting by multiplication on $L^2(\Omega_\kappa,\Sigma_\kappa,\nu_\kappa).$    However this relation between
the GNS construction
applied to an integral, and $L^\infty$ of the associated measure,
is a general fact about Radon probability measures.
We  prove this general fact in our case (see also \cite[Theorem III.1.2]{tak1}).   We suppress the subscript 
$\kappa$ below, writing $\nu_\kappa$ as $\nu$, etc.  Note that $H_{\sigma}$, obtained as the completion of $\C(\Omega)$
 with the inner product $\langle f , g \rangle_\sigma = \int_{\Omega} \, f\bar{g}\, d \nu,$
is $L^2(\Omega, \nu)$. This follows from the density of $\C(\Omega)$ in $L^2(\Omega, \nu)$ 
for a Radon measure (see for example\ \cite[Proposition 7.9]{Fol}).
Let $\pi_{\sigma}$ be the GNS-representation of $\C(\Omega)$ on $H_{\sigma}$. 
Now $L^\infty(\Omega, \nu)$
is  isometrically isomorphic and
 weak* homeomorphic to the canonical  `multiplication'
 von Neumann algebra on $L^2(X,\mu)$ (see Theorem \ref{lsigf1}). 
 Since $\pi_{\sigma}(\C(\Omega))$ is a subset of the latter, 
 it is enough to show that  $\C(\Omega)$ is 
weak* dense in $L^\infty(\Omega, \nu)$. 
This follows from the uniqueness in the Riesz representation theorem: if $f \in L^1(\Omega, \nu)$ 
and the measure  $f \, d \nu$ annihilates $\C(\Omega)$ then $f d \nu = 0$ and $f = 0$.

The measure isomorphism follows by weak* density and the relation  $\varphi_\kappa = \sigma_\kappa \circ \theta$ on $A_\kappa$.
 \end{proof}

\begin{corollary} \label{leb}
  For $\kappa=\aleph_0,$ the 
  pair $(\A_\kappa,\tau_\kappa)$ is isomorphic, as a von Neumann measure algebra, to $(L^\infty([0,1],\Lambda,\lambda),$ where $\lambda$ is the Lebesgue measure on $[0,1]$ with the $\sigma$-algebra $\Lambda$ of Lebesgue-measurable subsets of $[0,1].$
\end{corollary}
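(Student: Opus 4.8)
The plan is to recognize $\M = L^\infty([0,1],\Lambda,\lambda)$ as an abelian, non-atomic, $\aleph_0$-homogeneous von Neumann algebra carrying a faithful normal trace of total mass $1$, and then to quote Corollary \ref{Mah-prob}. First I would set up the objects: representing $\M$ as multiplication operators on $L^2([0,1],\lambda)$ makes it a concrete abelian von Neumann algebra by Theorem \ref{lsigf1}, and $\tau = \int_{[0,1]} \, \cdot \, d\lambda$ is a faithful normal trace with $\tau(\I) = \lambda([0,1]) = 1$ (normality holds by Corollary \ref{intno}, since a finite, hence localizable, space is covered by that result). Because Lebesgue measure has no atoms, $\M$ has no minimal projections.

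The next step is to compute $\kk{\M}$. For the upper bound, let $P \subseteq \p{\M}$ be the countable set of projections corresponding to the dyadic subintervals of $[0,1]$. The $*$-algebra they generate consists of dyadic step functions, and for any $f \in \M$ the conditional expectations of $f$ onto the increasing sequence of finite dyadic $\sigma$-algebras form a uniformly bounded martingale converging to $f$ a.e., hence weak*. Thus $P$ generates $\M$ as a von Neumann algebra, so by Theorem \ref{equiv}(c) we get $\kk{\M} = \kappa(\p{\M}) \leq \aleph_0$. For the lower bound, since $\M$ is non-atomic, Lemma \ref{aleph0} gives $\kk{\M} \geq \aleph_0$, whence $\kk{\M} = \aleph_0$.

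Homogeneity then follows readily. For any nonzero projection $p = [\chi_E] \in \p{\M}$, so $\lambda(E) > 0$, note that if $G$ generates $\M$ then $\{\, gp : g \in G \,\}$ generates $\M p$, giving $\kk{\M p} \leq \kk{\M} = \aleph_0$; on the other hand $\M p \cong L^\infty(E,\lambda_{|E})$ is again non-atomic, so $\kk{\M p} \geq \aleph_0$ by Lemma \ref{aleph0}. Hence $\kk{\M p} = \aleph_0 = \kk{\M}$ for every nonzero $p$, which is exactly the condition that $\M$ be $\aleph_0$-homogeneous in the sense of Definition \ref{hom}. Corollary \ref{Mah-prob} now applies to $(\M,\tau)$ with $\kappa = \aleph_0$ and yields a von Neumann measure algebra isomorphism $(\M,\tau) \simeq (\A_{\aleph_0},\tau_{\aleph_0})$, which is the claimed statement (one could equally route this through Corollary \ref{AkapaL}, identifying both sides with the Cantor cube $\{0,1\}^{\aleph_0}$).

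The hard part is the single equality $\kk{\M} = \aleph_0$, i.e. that the Lebesgue measure algebra has Maharam type exactly $\aleph_0$; the only genuinely delicate ingredient is the weak* density of the dyadic step functions in $\M$ (equivalently, complete generation of $\p{\M}$ by the dyadic projections), which the martingale-convergence argument above supplies. Everything else is a bookkeeping application of the machinery already assembled in this section, with Lemma \ref{aleph0} and Theorem \ref{equiv}(c) doing the conceptual work and Corollary \ref{Mah-prob} delivering the conclusion.
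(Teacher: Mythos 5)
Your proof is correct, but it takes a genuinely different route from the one in the paper. The paper deduces the corollary from Corollary \ref{AkapaL} together with the classical measure-preserving (almost) bijection $\{0,1\}^{\aleph_0}\to[0,1]$, $a\mapsto\sum_i 2^{-i-1}a_i$, ``mended'' at countably many points (citing 254K in \cite{Fremlin}); this is explicit, elementary, and does not touch the Maharam machinery at all. You instead verify directly that $L^\infty([0,1],\lambda)$ with $\int\cdot\,d\lambda$ is an abelian, non-atomic, $\aleph_0$-homogeneous von Neumann probability algebra and then invoke Corollary \ref{Mah-prob}. All your steps check out: the dyadic projections do generate (the martingale/conditional-expectation argument gives weak* density of the dyadic step functions, and the weak* closure of the unital $*$-algebra they span is the von Neumann algebra they generate), Lemma \ref{aleph0} supplies the lower bound, and the reduction $\kk{\M p}=\aleph_0$ for every nonzero $p$ is exactly Definition \ref{hom}. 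What your route buys is the worthwhile byproduct that the Lebesgue measure algebra has Maharam type exactly $\aleph_0$; what it costs is that it channels the whole argument through Theorem \ref{Mah-iso} (Fremlin's 331I), the one deep result the paper deliberately leaves unproved, whereas the paper's proof of this particular corollary is self-contained modulo Corollary \ref{AkapaL} and a concrete construction. There is no circularity in your argument, since Corollary \ref{Mah-prob} does not depend on the statement being proved.
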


\begin{proof}
The complete proof can be found in \cite[254K]{Fremlin}. The main idea is the (well known)
 construction of a measure preserving bijection between 
  $\Omega_\kappa$ and $[0,1]$
  by putting first $\Phi(a)=\sum_{i=0}^{\infty}2^{-i-1}a_i$ for $a=(a_i)_{i<\kappa},$ and then mending it  in a countable number of points to make it a bijection.
\end{proof}

The energetic reader may show, using associativity of the product of cardinals, that the last corollary holds
for $\kappa> \aleph_0$ with Lebesgue measure $\lambda$ replaced by its power $\lambda^\kappa$.

\bigskip

{\em Acknowledgments.}   We thank  Adam Paszkiewicz for several conversations and
inputs, and  Alex Bearden, Lander Besabe, Arianna Cecco, 
and Cedric Arhancet for some reading of parts of this manuscript and comments. Our huge debt to D.\ H.\ Fremlin and his extraordinary magnum opus \cite{Fremlin} will be clear almost throughout. 
particularly his Volume 3 of course (the standard text on measure algebras).

\end{document}